\newtheorem{thm}{Theorem}[section]
\newtheorem{theorem}[thm]{Theorem}
\newtheorem{cor}[thm]{Corollary}
\newtheorem{corollary}[thm]{Corollary}
\newtheorem{lem}[thm]{Lemma}
\newtheorem{lemma}[thm]{Lemma}
\newtheorem{prop}[thm]{Proposition}
\newtheorem{proposition}[thm]{Proposition}
\theoremstyle{definition}
\newtheorem{defn}[thm]{Definition}
\newtheorem{example}[thm]{Example}
\newtheorem{definition}[thm]{Definition}
\newtheorem{rem}[thm]{Remark}  
\newtheorem{remark}[thm]{Remark}
\numberwithin{equation}{section}
\newcommand{\Cone}{\operatorname{Cone}}
\newcommand{\Zones}{\operatorname{Zones}(\checkX)}
\newcommand{\RZones}{\operatorname{Trop}(\checkX)}
\newcommand{\LL}{\mathbf L}
\newcommand{\LW}{\mathbf W}
\newcommand{\Match}{\operatorname{Match}}
\newcommand{\Lattice}{\operatorname{Lattice}}
\newcommand{\conv}{\operatorname{Conv}}
\newcommand{\Conv}{\operatorname{Conv}}
\newcommand{\Mut}{\operatorname{Mut}}
\newcommand{\MutVar}{\operatorname{MutVar}}
\newcommand{\NO}{\Delta}
\newcommand{\R}{\mathbb R}
\newcommand{\To}{\longrightarrow}
\newcommand{\Max}{{\mathrm{max}}}
\newcommand{\prin}{{\mathrm{prin}}}
\newcommand{\RG}{r_G}
\newcommand{\Vol}{\operatorname{Volume}}
\newcommand{\gr}{\operatorname{gr}}
\newcommand{\oomega}{w}
\newcommand{\PDelta}{\Delta}
\newcommand{\Q}{\Gamma}
\newcommand{\pyo}{p_{\ydiagram{1}}}
\newcommand{\pyz}{p_{\ydiagram{2}}}
\newcommand{\pyoo}{p_{\ydiagram{1,1}}}
\newcommand{\pyzz}{p_{\ydiagram{2,2}}}
\newcommand{\pyt}{p_{\ydiagram{3}}}
\newcommand{\pytt}{p_{\ydiagram{3,3}}}
\newcommand{\Sh}{\mathcal{P}}
\newcommand{\PGhat}{\tilde{\mathcal{P}}_G \setminus \max}
\newcommand{\Shkn}{{\mathcal P_{k,n}}}
\newcommand{\rect}{\mathrm{rec}}
\newcommand{\Mono}{\mathrm{Mono}}
\newcommand{\wt}{\mathrm{wt}}
\def\O{\mathcal{O}}
\newcommand{\C}{\mathbb{C}}
\newcommand{\Z}{\mathbb{Z}}
\newcommand{\inv}{^{-1}}
\newcommand{\x}{\times}
\newcommand{\Dac}{D_{\operatorname{ac}}}
\newcommand{\Grec}{G_{\operatorname{rec}}}
\newcommand{\Hom}{{\operatorname{Hom}}}
\newcommand{\val}{{\operatorname{val}}}
\newcommand{\ValK}{{\operatorname{Val}_{\mathbf K}}}
\newcommand{\ValKhigh}{{\operatorname{Val}^{\mathbf K}}}
\newcommand{\ValL}{{\operatorname{Val}_{\mathbf L}}}
\renewcommand{\min}{{\operatorname{min}}}
\newcommand{\Pmin}{{\operatorname{m}}}
\newcommand{\maxdiag}{{\operatorname{MaxDiag}}}
\newcommand{\diag}{{\operatorname{Diag}}}
\newcommand{\PCG}{{\mathcal A\hskip -.05cm\operatorname{Coord}_{\checkX}(G)}}
\newcommand{\PC}{{\mathcal A\hskip -.05cm\operatorname{Coord}_{\checkX}}}
\newcommand{\wPC}{\widetilde{\mathcal A\hskip -.05cm\operatorname{Coord}}_{\checkX}}
\newcommand{\PCGA}{{\mathcal A\hskip -.05cm\operatorname{Coord}_{\X}}(G)}
\newcommand{\Proj}{\operatorname{Proj}}
\newcommand{\Trop}{\operatorname{Trop}}
\newcommand{\Sym}{\operatorname{Sym}}
\newcommand{\TBG}{{\mathcal X\hskip -.05cm\operatorname{Coord}_{\X}}(G)}
\newcommand{\TB}{{\mathcal X\hskip -.05cm\operatorname{Coord}_{\X}}}
\newcommand{\Network}{{\mathcal X\hskip -.05cm\operatorname{Coord}_{\X}}}
\newcommand{\PosSet}{\operatorname{PosSet}}
\newcommand{\AAA}{\mathbf{A}}
\newcommand{\BBB}{\mathbf{B}}
\newcommand{\CCC}{\mathbf{C}}
\newcommand{\DDD}{\mathbf{D}}
\newcommand{\EEE}{\mathbf{E}}
\newcommand{\FFF}{\mathbf{F}}
\newcommand{\mathbbX}{\mathbb X}
\newcommand{\checkX}{\check{\mathbb X}}
\newcommand{\X}{\mathbb X}
\newcommand{\opencheckX}{\check{\mathbb X}^\circ}
\newcommand{\openX}{{\mathbb X}^\circ}
\newcommand{\Xcirc}{{\mathbb X}^\circ}
\newcommand{\MonSeq}{\operatorname{MonSeq}}
\newcommand{\righttwist}{\overrightarrow{\tau}}
\newcommand{\lefttwist}{\overleftarrow{\tau}}
\newcommand{\leftdel}{\overleftarrow{\partial}}
\newcommand{\ac}{\operatorname{ac}}
\newcommand{\p}{{p}}
\def\CA{{\mathcal A}}
\def\CX{{\mathcal X}}
\newcommand{\mui}{{\mu_i}}
\newcommand{\muibox}{{\mu_i^{\ydiagram{1}}}}
\begin{document}

\title
{Newton-Okounkov bodies, cluster duality and mirror symmetry for Grassmannians}

\author{K. Rietsch}
\address{Department of Mathematics,
            King's College London,
            Strand, London
            WC2R 2LS
            UK
}
\email{konstanze.rietsch@kcl.ac.uk}%
\author{L. Williams}%
\address{Department of Mathematics,
            Harvard University,
            Cambridge, MA
            USA
}
\email{williams@math.harvard.edu}

\subjclass[2010]{14M15, 14J33, 52B20, 13F60}

\thanks{}
\subjclass{}

\begin{abstract}
In this article we use 
cluster
 structures 
and mirror symmetry to explicitly describe a natural class of Newton-Okounkov 
bodies for Grassmannians.
We consider the Grassmannian $\mathbbX=Gr_{n-k}(\C^n)$, as well 
as the mirror dual %
 \emph{Landau-Ginzburg model} $(\opencheckX, W:\opencheckX \to \C)$,
where $\opencheckX$ is the complement of a particular anti-canonical divisor
in a
Langlands dual Grassmannian $\checkX = Gr_k((\C^n)^*)$, and the superpotential
$W$ has a simple expression  in
terms of Pl\"ucker coordinates \cite{MarshRietsch}. Grassmannians
simultaneously have the structure of an $\mathcal{A}$-cluster variety and  an 
$\mathcal{X}$-cluster variety \cite{Scott, Postnikov}; roughly speaking, a cluster variety is 
obtained by gluing together a collection of 
tori along birational maps \cite{ca1, FG1}.
Given a plabic graph or, more generally, a cluster seed $G$,
we consider two associated coordinate systems: 
 a {\it network} or \emph{$\mathcal X$-cluster chart}
$\Phi_G:(\C^*)^{k(n-k)}\to \openX$ 
and a
{\it Pl\"ucker cluster} or \emph{$\mathcal A$-cluster chart}
$\Phi_G^{\vee}:(\C^*)^{k(n-k)}\to \opencheckX$.
Here $\openX$ 
and $\opencheckX$ 
 are the open positroid varieties in $\X$ and 
$\checkX$, respectively.
To each $\mathcal X$-cluster chart $\Phi_G$ 
and ample `boundary divisor' $D$ in $\mathbbX\setminus\Xcirc$, we associate 
a 
\emph{Newton-Okounkov  body} $\Delta_G(D)$ in $\R^{k(n-k)}$, 
which is defined as the convex hull of rational points; these points are obtained from the multi-degrees of 
leading terms of the Laurent polynomials $\Phi_G^*(f)$ for 
$f$ on $\X$ with poles bounded by some multiple of $D$.
On the other hand using the $\mathcal A$-cluster chart $\Phi_G^{\vee}$ on the mirror  side,
we obtain a set of rational polytopes 
-- described in terms of inequalities --
by writing the superpotential $W$ 
as a Laurent polynomial in the $\mathcal A$-cluster coordinates, and then ``tropicalising".  
Our first main result is that the Newton-Okounkov bodies $\Delta_G(D)$ and the polytopes obtained by tropicalisation on the mirror side 
coincide.  As an application, we construct degenerations of the Grassmannian to normal toric varieties corresponding to
(dilates of) these Newton-Okounkov bodies.  Our second main result is an explicit combinatorial formula in terms of Young 
diagrams, for the lattice
points of the Newton-Okounkov  bodies, in the case that the cluster seed $G$ corresponds
to a plabic graph.
This formula has an interpretation 
in terms of 
the quantum Schubert calculus of Grassmannians  \cite{FW}.
  \end{abstract}

\maketitle
\setcounter{tocdepth}{1}
\tableofcontents

\section{Introduction}

\subsection{}
Suppose that $\mathbb X=Gr_{n-k}(\C^n)$ is the Grassmannian of codimension $k$ planes in $\C^n$, embedded in $\mathbb P(\bigwedge^{n-k} \C^{n})$ via the Pl\"ucker embedding. Let 
 $N:=k(n-k)$ denote the dimension of $\X$. 
Grassmannians can be thought of as very close to toric varieties. Indeed, both Grassmannians and toric varieties are examples of spherical varieties.
Moreover the Grassmannian $\mathbbX$ has a distinguished anticanonical divisor $D_{\operatorname{ac}}=D_1+\dotsc+D_n$ made up of $n$ 
hyperplanes,
 which generalises the usual torus-invariant 
anticanonical divisor of $\mathbb {CP}^{n-1}$. We denote the complement of the divisor $\Dac$ by $\Xcirc$; this is a generalisation of the open torus-orbit in a toric variety. 
We now view the Grassmannian $\X$ as the compactification of $\Xcirc$ by the boundary divisors $D_1,\dotsc, D_n$. We consider ample divisors of the form $D=r_1D_1+\dotsc +r_nD_n$ in $\X$, and their associated finite-dimensional subspaces 
\[
L_{D}:= H^0(\X, \mathcal O(D))\subset \C(\mathbb X).
\]
Explicitly, $L_{D}$ is the space of rational functions on $\X$ that are regular on $\Xcirc$ and for which the order of pole along $D_i$ is bounded by $r_i$. By the Borel-Weil theorem, $L_D$ 
may be identified 
with the irreducible representation $V_{r\omega_{n-k}}$ of $GL_n(\C)$ where $r=\sum r_i$ and $\omega_{n-k}$ is the fundamental weight associated to $\X=Gr_{n-k}(\C^n)$.

In the toric setting one would associate to an ample divisor such as $D$ its moment polytope $P(D)$, see  \cite{Fulton}. This is a lattice polytope in $\mathfrak t_c^*$, the dual of the Lie algebra of the compact torus $T_c$ acting on the toric variety. It has the key property that its lattice points are in bijection with a basis of $L_D$, and the lattice points of the dilation $rP(D)$ are in bijection with a basis of~$L_{rD}$. 

There is a vast generalisation of this construction initiated by Okounkov, which applies in our setting of $\X=Gr_{n-k}(\C^n)$, and which can be used to associate to an ample divisor such as $D=\sum r_iD_i$ in $\X$ a convex body $\Delta(D)$ in $\R^N$, see 
\cite{Ok96, Ok03, LazarsfeldMustata, KavehKhovanskii08, KavehKhovanskii}. This so-called {\it Newton-Okounkov  body} $\Delta(D)$  again encodes the dimension of each $L_{rD}$ via the set of lattice points in the $r$-th dilation. 
In recent years Newton-Okounkov bodies have attracted a lot of attention;
they have  applications to toric degenerations and connections 
to integrable systems \cite{Anderson, HaradaKaveh}.  
However in general, Newton-Okounkov  bodies are 
quite difficult to compute: they are not necessarily rational polytopes, or even 
polytopes \cite{KLM}.

The main goal of this paper is to use mirror symmetry to describe the Newton-Okounkov  bodies of divisors $D$ as above, for a particular class of naturally occurring valuations.  We show that they are rational polytopes, by giving
formulas for the inequalities cutting them out.  We also give explicit formulas for their lattice points. We now describe our results in more detail.  

\subsection{}
We consider certain open embedded tori inside $\Xcirc$ called {\it network tori}. 
These tori $\mathbb T_G$ were introduced by Postnikov~\cite{Postnikov}, with their 
Pl\"ucker coordinates described succinctly by Talaska~\cite{Talaska}. 
They are associated to planar bicolored (plabic) graphs $G$, which have associated dual quivers $Q(G)$; the faces of $G$ (equivalently, the 
vertices of $Q(G)$) are naturally labeled by a collection $\mathcal P_G$
of  Young diagrams.
 The network tori form part of a cluster Poisson variety structure (also known as `$\mathcal X$-cluster 
structure'), and we also consider more general $\mathcal X$-cluster tori associated to quivers
but not necessarily coming from plabic graphs; we continue to denote the
tori, quivers, and vertices of the quivers by  $\mathbb T_G$, $Q(G)$, and $\mathcal P_G$. 
As part of the data such a torus has specific $\mathcal{X}$-cluster coordinates $\TBG$
 which are 
indexed by $\mathcal P_G$.
The data of the quiver together with the torus coordinates is 
called an \emph{$\mathcal X$-cluster seed} and denoted $\Sigma_G^{\mathcal{X}}$.  
As we show in~\cref{s:twist},  for a general $\mathcal X$-cluster seed $\Sigma_G^{\mathcal{X}}$ we also have an open embedding %
\[
\Phi_G:(\C^*)^{\mathcal P_G}\overset\sim\To \mathbb T_G\subset \Xcirc,
\]
where the notation $(\C^*)^{\mathcal P_G}$ refers to the torus with coordinates labeled by the unordered set $\mathcal P_G$.
Using the embedding $\Phi_G$ and a choice of ordering on $\mathcal P_G$, we define a lowest-order-term valuation 
\[
\val_G:\C(\X)\setminus\{0\}\to \Z^{\mathcal P_G}.
\]
The Newton-Okounkov  body for a  divisor $D$ with this choice of valuation is defined to be
\[
\Delta_G(D):=\overline{\operatorname{ConvexHull}\left(\bigcup_{r=1}^\infty\frac 1r\val_G(L_{rD})\right)}.
\]
Our first goal is to describe $\Delta_G(D)$ explicitly for a general $\mathcal X$-cluster seed, using mirror symmetry for $\X$. 

\subsection{} We recall the mirror Landau-Ginzburg model $(\opencheckX, W)$ for the Grassmannian $\X$ introduced in \cite{MarshRietsch}. Here $\opencheckX$ is the analogue of $\Xcirc=\X\setminus \Dac$, but inside a Langlands dual Grassmannian $\checkX$, and $W:\opencheckX \to\C$ is a regular function called the {\it superpotential}. The superpotential is given by an explicit formula in terms of Pl\"ucker coordinates as a sum of $n$ terms (and it depends on a single parameter $q$). We may think of $W$ as an element of $\C[\opencheckX][q]$.

For example, if $\X= Gr_3(\C^7)$ then the superpotential on $\opencheckX$ is given by the expression
\[
W=\frac{p_{\ydiagram{4,1}}}{p_{\ydiagram{4}}}+\frac{p_{\ydiagram{4,4,1}}}{p_{\ydiagram{4,4}}}+q\frac{p_{\ydiagram{3,3}}}{p_{\ydiagram{4,4,4}}}+\frac{p_{\ydiagram{4,3,3}}}{p_{\ydiagram{3,3,3}}}
+\frac{p_{\ydiagram{3,2,2}}}{p_{\ydiagram{2,2,2}}}+\frac{p_{\ydiagram{2,1,1}}}{p_{\ydiagram{1,1,1}}}+\frac{p_{\ydiagram{1}}}{p_\emptyset},\]
where the $p_\lambda$ are Pl\"ucker coordinates for $\checkX=Gr_4(\C^7)$; see Section~\ref{s:notation}
for an explanation of the notation. 

As another example, if  $\X=Gr_2(\C^5)$, then the superpotential on $\opencheckX$ is 
\begin{equation}\label{G25-super}
W=\frac{p_{\ydiagram {3,1}}}{p_{\ydiagram {3}}}+
q\frac{p_{\ydiagram {2}}}{p_{\ydiagram {3,3}}}+\frac{p_{\ydiagram {3,2}}}{p_{\ydiagram {2,2}}}+
 \frac{p_{\ydiagram {2,1}}}{p_{\ydiagram {1,1}}}+
\frac{p_{\ydiagram 1}}{p_\emptyset}.
\end{equation}
The $n$ summands of the superpotential individually give rise to functions which in this case are 
\begin{equation}\label{eq:Wi}
W_1=
\frac{p_{\ydiagram {3,1}}}{p_{\ydiagram {3}}},
\quad
W_2=
\frac{p_{\ydiagram {2}}}{p_{\ydiagram {3,3}}},
\quad
W_3=
\frac{p_{\ydiagram {3,2}}}{p_{\ydiagram {2,2}}},
\quad
W_4=
\frac{p_{\ydiagram {2,1}}}{p_{\ydiagram {1,1}}},
\quad
W_5=
\frac{p_{\ydiagram 1}}{p_\emptyset}.
\end{equation}
We will often  use the normalisation $p_{\emptyset}=1$ so that the Pl\"ucker coordinates are
 actual coordinates on~$\opencheckX$.

\subsection{} Besides the network tori $\mathbb T_G$,
there is a different 
collection of open tori $\mathbb T^{\vee}_G$ in  
$\opencheckX$ indexed by plabic graphs $G$, each one corresponding to a maximal algebraically independent set of Pl\"ucker 
coordinates \cite{Postnikov, Scott}. We call these collections of Pl\"ucker coordinates the {\it Pl\"ucker clusters} of $\checkX$. 
By \cite{Scott} they are part of an $\mathcal A$-cluster structure on $\C[\opencheckX]$ in the sense of 
Fomin and Zelevinsky~\cite{ca1}. As before we also consider more general $\mathcal A$-cluster tori associated to quivers
 not necessarily coming from plabic graphs; we continue to denote them by  $\mathbb T^{\vee}_G$.
As part of the data such a torus has specific $\mathcal{A}$-cluster coordinates $\PCG$ indexed by the vertices 
$\mathcal{P}_G$ of the quiver, which are Pl\"ucker coordinates when the quiver comes from 
a plabic graph.
The data of the quiver together with the torus coordinates is 
called an \emph{$\mathcal A$-cluster seed} and denoted $\check\Sigma_G^{\mathcal{A}}$.  
We think of the $\mathcal A$-cluster coordinates as encoding an open embedding
\[
\Phi^{\vee}_G:(\C^*)^{\mathcal P_G}\overset\sim\To \mathbb T^{\vee}_G\subset \opencheckX.
\]

Given an $\mathcal A$-cluster torus $\mathbb T^\vee_G$, we may restrict $W$ and 
each $W_i$ to the torus $\mathbb T^\vee_G$. The ring of regular functions on $\mathbb T^\vee_G$ is just the Laurent polynomial ring in the coordinates $\PCG=\{p_\mu\mid \mu\in\mathcal P_G\}$ of the $\mathcal A$-cluster (and the restriction of $W$ lies in this ring tensored with $\C[q]$). 
From the $\mathcal A$-cluster seed and the superpotential together we thus obtain Laurent polynomials 
\[ \LW_i^G=W_i|_{\mathbb T_G^\vee},\quad i=1,\dotsc, n, \quad\text{ and }\quad \LW^G=\sum_i q^{\delta_{i,n-k}}\LW^G_i.
\]
To these Laurent polynomials, together with a choice of integers $r_1,\dotsc, r_n$, we may associate a (possibly empty or unbounded) intersection of half-spaces
 $\Q_G(r_1,\dotsc, r_n)$ by a tropicalisation construction, see \cref{sec:superpolytope}. 
We describe this construction by giving an example. 

Let $\X=Gr_2(\C^5)$, with superpotential given by
\eqref{G25-super}.
If we write $W$ and the $W_i$ from \eqref{eq:Wi}
in terms of the Pl\"ucker cluster indexed by %
$\mathcal P_{G}=\left\{{\ydiagram{1},\ydiagram{1,1},\ydiagram{2}, \ydiagram{2,2}, \ydiagram{3}, \ydiagram{3,3}}\right\}$, we get
 the Laurent polynomial
\begin{equation}\label{e:superIntro}
\LW^G=\frac{\pyoo}{\pyo}+
\frac{\pyzz }{\pyo \ \pyz}+ \frac{\pytt }{\pyz \ \pyt}+ q\frac{\pyz}{\pytt}+\frac{\pyt}{\pyz}+\frac{\pytt \ \pyo}{\pyz \ \pyzz}+\frac{\pyz}{\pyo} +
\frac{\pyzz}{\pyo \ \pyoo}+{\pyo}, 
\end{equation}
as well as 
\begin{equation}\label{e:Li}
\LW^G_1=\frac{\pyoo}{\pyo}+
\frac{\pyzz }{\pyo \ \pyz}+ \frac{\pytt }{\pyz \ \pyt}, \qquad
\LW^G_2= \frac{\pyz}{\pytt},\qquad
\LW^G_3=\frac{\pyt}{\pyz}+\frac{\pytt \ \pyo}{\pyz \ \pyzz}, \qquad
\LW^G_4=\frac{\pyz}{\pyo} +
\frac{\pyzz}{\pyo \ \pyoo},
\qquad
\LW^G_5 = {\pyo}.
\end{equation}

Each Laurent polynomial $\LW^G_i$ gives rise to a piecewise-linear function $\Trop(\LW^G_i):\R^{\mathcal P_G}\to \R$ obtained by replacing multiplication by addition, division by subtraction, and addition by $\min$. 
For any choice of $r_1,\dotsc, r_5\in\Z$ we then define
$\Q_{G}(r_1,\dotsc, r_5)\subset \R^{\mathcal P_{G}}$  by the following
explicit inequalities in terms of
variables  $d=(d_{\ydiagram{1}}, d_{\ydiagram{1,1}},d_{\ydiagram{2}}, d_{\ydiagram{2,2}}, d_{\ydiagram{3}}, d_{\ydiagram{3,3}})\in\R^{\mathcal P_{G}}$:
\begin{align*}\label{e:QDIntro}
&\Trop(\LW^G_1)(d)+r_1=\min\left(d_{\ydiagram{1,1}}-d_{\ydiagram{1}}\, , d_{\ydiagram{2,2}}-d_{\ydiagram{1}}-d_{\ydiagram{2}}\, , d_{\ydiagram{2,2}}-d_{\ydiagram{2}}-d_{\ydiagram{3}}\right)+r_1  \ge 0 ,
\\
&\Trop(\LW^G_2)(d)+r_2=d_{\ydiagram{2}}-d_{\ydiagram{3,3}}+r_2 \ge 0 ,
\\
&\Trop(\LW^G_3)(d)+r_3=\min\left( d_{\ydiagram{3}}-d_{\ydiagram{2}}\, , d_{\ydiagram{3,3}}+d_{\ydiagram{1}}-d_{\ydiagram{2}}-d_{\ydiagram{3,3}}\right)+r_3\ge 0 ,
\\
&\Trop(\LW^G_4)(d)+r_4=\min\left( d_{\ydiagram{2}}-d_{\ydiagram{1}}\, , d_{\ydiagram{2,2}}-d_{\ydiagram{1}}-d_{\ydiagram{1,1}}\right)+r_4\ge 0 ,
\\
&\Trop(\LW^G_5)(d)+r_5=d_{\ydiagram{1}}+r_5 \ge 0.
\end{align*}

There is an important
 special case where $r=r_{n-k}\ge 0$ and $r_i=0$ for all other $i$. (In the running example
$n=5$ and $k=3$, so  $r=r_2$.) In this case the polytope  defined by the construction is also denoted $\Q_G^r$. The polytope $\Q_G^r$ can be expressed directly in terms of the superpotential $\LW^G=W|_{\mathbb T_G^\vee\x\C^*}$ as
\begin{equation}\label{e:QrGIntro}
\Q_G^r=\{d\in\R^{\mathcal P_G}\mid \Trop(\LW^G) (d,r)\ge 0\},
\end{equation}
see \cref{def:Tropicalisation} for the notation.
When $r=1$, we refer to this polytope as the {\it superpotential polytope} 
$\Q_G^1 = \Q_G$ for the seed $\check\Sigma_G^{\mathcal A}$. 

\subsection{} We now put the two sides together to state the first main theorem. Recall the original Grassmannian $\X=Gr_{n-k}(\C^n)$ with its anti-canonical divisor $\Dac=D_1+\dotsc +D_n$, its $\mathcal X$-cluster seeds, and the definition of the Newton-Okounkov  body.  

\begin{theorem}\label{t:mainIntro} Suppose $D$ is an ample divisor in $\X$ of the form $D=r_1D_1+\dotsc+r_n D_n$ and $\Sigma^{\mathcal X}_G$ is an $\mathcal X$-cluster seed in $\Xcirc$. The associated Newton-Okounkov body $\Delta_G(D)$ is a rational polytope and we have
\[
\Delta_G(D)=\Q_G(r_1,\dotsc, r_n),
\]
where $\Q_G(r_1,\dotsc, r_n)$ is the polytope constructed from the superpotential $W:\opencheckX\x\C^*_q\to \C$ and the $\mathcal A$-cluster seed $\check\Sigma^{\mathcal A}_G$ of $\opencheckX$. 
\end{theorem}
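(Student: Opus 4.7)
The plan is to prove both inclusions of the claimed equality $\Delta_G(D) = \Q_G(r_1,\dotsc,r_n)$ by translating the pole bounds defining $L_{rD}$ into tropical inequalities via mirror symmetry. Both sides are built from lattice data: $\Delta_G(D)$ comes from the values $\val_G(L_{rD})$ inside the $\mathcal X$-cluster chart on $\Xcirc$, and $\Q_G(r_1,\dotsc,r_n)$ comes from the tropicalisations $\Trop(\LW^G_i)$ of the superpotential summands in the dual $\mathcal A$-cluster chart on $\opencheckX$. The key conceptual input is that each $W_i$ is the ``mirror'' of the boundary divisor $D_i$, so tropicalising $W_i$ in the $\mathcal A$-cluster coordinates should exactly measure the pole order of $\Phi_G^*(f)$ along $D_i$.

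For the inclusion $\Delta_G(D) \subseteq \Q_G(r_1,\dotsc,r_n)$, I would proceed as follows. Given $f \in L_{rD}$, I would pull back $f$ via $\Phi_G$ to a Laurent polynomial on $(\C^*)^{\mathcal P_G}$ and read off the lowest-order monomial, which computes $\val_G(f)$. The condition $\operatorname{ord}_{D_i}(f) \geq - r\, r_i$ for all $i$ should then be shown equivalent to the system of tropical inequalities $\Trop(\LW^G_i)(\val_G(f)) + r\, r_i \geq 0$. The mechanism producing this translation is the twist map constructed in \cref{s:twist}, which identifies the $\mathcal X$-cluster chart $\Phi_G$ with the $\mathcal A$-cluster chart $\Phi_G^\vee$; combined with the explicit Pl\"ucker formula for $W_i$ from~\cite{MarshRietsch}, this should match the leading-term exponents on the $\X$-side with the tropicalised exponents on the $\checkX$-side. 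Summing over $i$ and rescaling by $1/r$ gives $\tfrac{1}{r}\val_G(L_{rD}) \subseteq \Q_G(r_1,\dotsc,r_n)$, and taking closures yields the inclusion.

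For the reverse inclusion $\Q_G(r_1,\dotsc,r_n) \subseteq \Delta_G(D)$, I would need enough sections to fill out the polytope. Two natural strategies stand out. The first is a Hilbert-function comparison: use the Borel-Weil identification $L_{rD} \cong V_{r\omega_{n-k}}$ to compute $\dim L_{rD}$ via the Weyl dimension formula, match it with the Ehrhart polynomial of $\Q_G(r_1,\dotsc,r_n)$, and conclude equality of polytopes from general criteria of Kaveh-Khovanskii and Anderson once the upper bound is in hand. The second is more constructive: exhibit an explicit family of sections (for example, cluster monomials or theta-function-type elements transported through the twist) whose $\val_G$-images realise every lattice point of $r\,\Q_G(r_1,\dotsc,r_n)$, together with a linear-independence argument. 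In either approach, the rationality of $\Delta_G(D)$ drops out for free once the equality is established.

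The hard part will be the pole-order/tropicalisation dictionary of the second paragraph, and establishing it uniformly across all $\mathcal X$-cluster seeds, not only those coming from plabic graphs. This requires a careful compatibility check between the twist map, the $\mathcal A$- and $\mathcal X$-flavoured cluster mutations, and the Laurent expansions of the $W_i$, all coordinated by cluster duality. My expectation is that once the dictionary is verified for one convenient seed (say a ``rectangles'' plabic graph where the Pl\"ucker formulas for $W_i$ are most transparent), mutation-invariance of both the Newton-Okounkov description and the tropical-superpotential description should propagate the identity to every $\mathcal X$-cluster seed, completing the proof.
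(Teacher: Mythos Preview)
Your high-level architecture (verify for the rectangles seed, then propagate by mutation) matches the paper's, but there is a genuine gap in the propagation step for the Newton--Okounkov side.

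You assert that ``mutation-invariance of the Newton--Okounkov description'' will carry the identity from one seed to all others. This is exactly the delicate point. While $\Q_G$ does transform by the tropicalized $\mathcal A$-mutation $\Psi_{G,G'}$ (\cref{c:PolytopeMutation}), the set $\val_G(L_r)$ has no a~priori reason to: the valuation of a general $f\in L_r$ is computed from its leading monomial, and leading monomials of non-Pl\"ucker elements do \emph{not} transform by $\Psi_{G,G'}$. The paper makes this explicit in \cref{r:Pluckerprods}: already for a product $P_K P_J$ one can have $\Psi_{G,G'}(\val_G(P_KP_J))\neq \val_{G'}(P_KP_J)$. What the paper does instead is invoke the theta function basis $\mathcal B_r$ of \cite{GHKK} for $L_r$ (\cref{s:theta}), proving that each theta function is \emph{pointed} in every $\mathcal X$-chart (\cref{th:pointed}) and that its leading exponent, being a tropical $\mathcal A$-point, mutates by $\Psi_{G,G'}$ (\cref{lem:commute}). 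Only then does $\val_G(L_r)=\Lattice(\Q_G^r)$ propagate. Your proposal never mentions this input, and without it your mutation argument does not go through; your Hilbert-function/volume strategy likewise only closes the argument in the integral-polytope case (as the paper notes in \cref{t:intcase}), not in general.

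Separately, the ``pole-order/tropicalisation dictionary'' you outline for the inclusion $\Delta_G(D)\subseteq\Q_G(r_1,\dots,r_n)$ is not what the paper does and is not obviously available: the twist map in \cref{s:twist} is used to show that Pl\"ucker coordinates are Laurent in every $\mathcal X$-chart (\cref{p:XLaurent}), not to identify $\operatorname{ord}_{D_i}(f)$ with $-\Trop_G(W_i)(\val_G(f))$ for arbitrary $f$. Finally, the paper handles the general divisor $D=\sum r_iD_i$ not by running the whole argument with parameters $r_i$, but by reducing to $D=rD_{n-k}$: both $\Delta_G(D)$ and $\Q_G(r_1,\dots,r_n)$ are shown to be translates of $r\Delta_G$ and $\Q_G^r$ by the same vector $v_D=-\sum_j r_j\val_G(P_{\mu_j})$ (\cref{p:DeltaTrans}, \cref{p:GammaTrans}), using the balancedness of the frozen-variable exponent vectors (\cref{p:PmuiMonomial}) and \cref{l:coefficient}.
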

 When 
$D=D_{n-k}$ we also denote $\Delta_G(D_{n-k})$ simply by $\Delta_G$. The above result implies
that 
 \begin{equation}\label{e:mainQG1}
\Delta_G=\Q_G, 
 \end{equation}
where $\Q_G$ is the superpotential polytope from \eqref{e:QrGIntro}. 
This key special case is proved first and is the content of \cref{thm:main}.
To prove \cref{thm:main}, we show that for a distinguished choice of $G$ (indexing the 
``rectangles" cluster), both $\Delta_G$ and $\Q_G$ coincide with a
Gelfand-Tsetlin polytope.  We then ``lift" $\Q_G$ to generalized Puiseux series
and show that when the seed $G$ changes via a mutation, $\Q_G$ is transformed
via a piecewise linear ``tropicalized mutation".   We also show that 
when we mutate $G$, $\Delta_G$ is transformed via the same tropicalized mutation:
our proof on this side uses deep properties of the 
\emph{theta basis} of \cite{GHKK}, including the Fock-Goncharov conjecture
that elements of the theta basis are \emph{pointed}, see \cref{th:pointed}. In the case where $\Q_G$ is an integral polytope we prove that $\Q_G=\NO_G$ without using \cite{GHKK}, see \cref{t:intcase}.

If we choose a network torus coming from a plabic graph $G$, 
then the associated Laurent expansion $\LW^G$ of $W$ can be read off from $G$ using a formula of Marsh and Scott \cite{MarshScott}. 
We thus obtain an explicit formula in terms of perfect matchings
for the inequalities defining the Newton-Okounkov  body, see \cref{s:clusterexpansion}.

It follows from our results that $\Delta_G$ is a rational polytope.  
In \cref{degeneration} we build on this fact to show that from each seed $\Sigma_G^{\mathcal X}$
we obtain a flat degeneration of $\X$ to the toric variety associated to the dual fan constructed from the polytope $\Delta_G$.
Note however that $\Delta_G$ 
 is not in general integral; of the 34 polytopes $\Delta_G$ associated to 
plabic graphs for $Gr_3(\C^6)$, precisely two are non-integral, see 
\cref{sec:Milena}.
In each of those cases, there is a unique non-integral vertex which 
corresponds to the twist of a Pl\"ucker coordinate.  Since the first version of this
paper appeared on the arXiv, the polytopes arising from 
$Gr_3(\C^6)$ have been
 studied in \cite{Hering}.

In \cref{s:generalD} we prove \cref{t:mainIntro} in  the general $D=\sum r_iD_i$ case  by relating $\Delta_G(D)$ to $\Delta_G(D_{n-k})$ and $\Q_G(r_1,\dotsc, r_n)$ to $\Q_G$ and deducing the general result
from \cref{thm:main}.

\subsection{}
Our second main result
concerns an explicit description of the lattice points of the Newton-Okounkov body $\Delta_G=\Delta_G(D_{n-k})$ 
when $G$ is a plabic graph. Recall that the homogeneous coordinate ring of $\X$ is generated by Pl\"ucker coordinates 
which are
naturally indexed by the set  $\mathcal P_{k,n}$ of Young diagrams fitting inside an $(n-k)\x k$ rectangle. We denote these Pl\"ucker coordinates by $P_{\lambda}$ with $\lambda\in\mathcal P_{k,n}$.
Note that the upper-case $P_\lambda$ (Pl\"ucker coordinate of $\X$) should not be confused with the lower-case $p_\lambda$ (Pl\"ucker coordinate of $\checkX$).
The largest of the Young diagrams in $\mathcal P_{k,n}$ is the entire $(n-k)\x k$ rectangle, and we denote
its corresponding Pl\"ucker coordinate by $P_\Max$. %
The set $
\{P_{\lambda}/P_{\Max}\mid \lambda\in\mathcal P_{k,n} \}$
is a natural basis for $H^0(\X, \mathcal O(D_{n-k}))$.

The following result
says that the valuations $\val_G(P_\lambda/P_{\Max})$ are precisely the $n\choose k$ lattice points of the Newton-Okounkov body $\Delta_G$, and gives an explicit formula for them.

\begin{theorem}[\cref{l:injection}]\label{t:ValuationsFormulaIntro} 
Let $G$ be any reduced plabic graph giving a network torus for $\openX$.  
Then the Newton-Okounkov  body $\Delta_G$
has ${n \choose k}$ lattice points 
$\{\val_G(P_{\lambda}/P_{\max}) \ \vert \ \lambda \in \mathcal{P}_{k,n}\}
 \subseteq \Z^{\mathcal{P}_G}$,
 with coordinates given by 
\begin{equation*}
\val_G(P_{\lambda}/P_{\max})_{\mu} = \maxdiag (\mu \setminus \lambda)
\end{equation*}
for any partition $\mu\in \mathcal{P}_G$. 
 Here $\maxdiag (\mu \setminus \lambda)$ denotes the maximal number of boxes in a slope $-1$ diagonal in the skew partition $\mu\setminus\lambda$, see \cref{def:maxdiag}. 
\end{theorem}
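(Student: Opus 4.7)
My plan is to establish three things in sequence: an explicit Laurent expansion of $P_\lambda/P_{\max}$ in the network coordinates from which the formula for $\val_G(P_\lambda/P_{\max})_\mu$ can be read off; the injectivity of the assignment $\lambda\mapsto \val_G(P_\lambda/P_{\max})$; and the statement that the resulting $\binom{n}{k}$ integer vectors exhaust the lattice points of $\Delta_G$.

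For the formula, I would write each Plücker coordinate $P_\lambda/P_{\max}$ as a Laurent polynomial in the $\mathcal X$-cluster coordinates of the network torus $\mathbb T_G$, using Talaska's formula (equivalently, the Marsh--Scott cluster expansion). The monomials in this expansion are indexed by flows on $G$, or equivalently by perfect matchings after gauge-fixing, and the exponent of the face variable indexed by $\mu\in \mathcal P_G$ in a given monomial admits a direct combinatorial description in terms of the matching. The valuation $\val_G$ picks off the lowest-order monomial with respect to the chosen term order, so to identify its $\mu$-coordinate I would exhibit a distinguished matching whose $\mu$-statistic equals $\maxdiag(\mu\setminus\lambda)$, and then, by local swap moves along slope $-1$ diagonals in $\mu\setminus\lambda$, show that no competing matching produces a smaller face-exponent.

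Injectivity then follows from the explicit formula by a short combinatorial argument: distinct $\lambda,\lambda'\in\mathcal P_{k,n}$ differ on some box, and one can locate a $\mu\in \mathcal P_G$ whose bounding slope $-1$ diagonals are sensitive to that box, producing different values of $\maxdiag(\mu\setminus\cdot)$. To upgrade the resulting $\binom{n}{k}$ distinct lattice points to the full count, I would invoke \cref{t:mainIntro}, which identifies $\Delta_G$ with the rational polytope $\Q_G$, together with the flat toric degeneration of $\X$ produced in \cref{degeneration}. Flatness forces the Hilbert function of the degenerate toric variety at the anticanonical class to match that of $\X$, and for a normal toric variety this Hilbert function is simply the lattice point count of the polytope. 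Hence $|\Delta_G\cap\Z^{\mathcal P_G}|=\dim H^0(\X,\mathcal O(D_{n-k}))=\binom{n}{k}$, forcing our injection to be a bijection.

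The main obstacle is the combinatorial heart of the first step: producing a single matching that simultaneously realises $\maxdiag(\mu\setminus\lambda)$ in every face-coordinate $\mu$, rather than having different matchings optimise different faces. This requires choosing the ordering on $\mathcal P_G$ carefully, compatibly with the slope $-1$ diagonal combinatorics, so that the optimal matching dominates uniformly across all face-exponents of the valuation vector.
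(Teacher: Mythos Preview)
Your Step~3 is essentially the paper's argument: once $\Delta_G=\Q_G$ is known, the lattice point count $\binom{n}{k}$ follows from the Gelfand--Tsetlin identification in the rectangles seed together with mutation-invariance of lattice point counts (\cref{p:latticeQr}), and the identification of those lattice points with the Pl\"ucker valuations is then forced. So this part is fine.

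The real gap is in Step~1. You propose a direct combinatorial proof of the formula $\val_G(P_\lambda)_\mu=\maxdiag(\mu\setminus\lambda)$ for an \emph{arbitrary} plabic graph $G$, by exhibiting a distinguished matching realising the correct exponent at every face simultaneously. Two remarks. First, your suggestion to ``choose the ordering on $\mathcal P_G$ carefully'' cannot help: by \cref{prop:strongminimal} every flow polynomial $P_\lambda^G$ has a \emph{strongly minimal} term, so $\val_G(P_\lambda)$ is independent of the ordering; the minimal flow is unique and you have no freedom here. Second, and more seriously, actually computing the $\mu$-exponent of that minimal flow as $\maxdiag(\mu\setminus\lambda)$ for a general $G$ is precisely what is hard, and ``local swap moves along slope $-1$ diagonals'' is not a proof. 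The minimal flow in an arbitrary plabic graph has no transparent relationship to diagonals of $\mu\setminus\lambda$.

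The paper sidesteps this entirely. It proves the formula directly \emph{only} for $G=G^{\rect}_{k,n}$ (\cref{p:rect-val}), where the minimal flow is visibly a family of nested staircase paths and the exponent count is straightforward. The passage to general $G$ is not by analysing flows in $G$ but by a mirror-side construction: one builds an explicit point $x_\lambda(t)\in\opencheckX(\mathbf K_{>0})$ (\cref{t:matrix}) with $\ValK(p_\mu(x_\lambda(t)))=\maxdiag(\mu\setminus\lambda)$ for \emph{every} $\mu\in\mathcal P_{k,n}$. One then observes that both sides of the claimed identity $\val_G(P_\lambda)_\mu=\ValK(p_\mu(x_\lambda(t)))$ transform under the \emph{same} tropicalized $\mathcal A$-cluster mutation $\Psi_{G,G'}$ when $G$ is mutated: the left-hand side by \cref{thm1:tropcluster} (proved by a local analysis of how minimal flows change under an oriented square move), the right-hand side by \cref{l:piGmutation} (since $[x_\lambda(t)]$ is a tropical point of $\checkX$). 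Since the identity holds for the rectangles seed, it propagates to all plabic graphs. This mutation argument, together with the mirror-side point $x_\lambda(t)$, is the idea your proposal is missing.
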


Note that the right hand side of the formula depends neither on the plabic graph $G$ nor on the Grassmannian, that is, on $k$ or $n$. We illustrate the function $\maxdiag$ with an example:
\[ 
\maxdiag\left(\begin{array}{c} {\ydiagram{7,7,4,4,3,1}} \underset{\backslash}{\qquad} {\ydiagram {6,5,2,2,2,2}}\\ \qquad   \end{array}\,\right)=\maxdiag\left(\begin{array}{c}{\ydiagram{6+1,5+2,2+2,2+2,2+1}}\end{array}\right)=2.
\]
Also note that
 if $\mu\subseteq \lambda$ then necessarily $\maxdiag(\mu\setminus\lambda)=0$, so the theorem implies that the $\mu$-coordinate of $\val_G(P_\lambda/P_\Max)$ vanishes. Indeed, if $\lambda=\Max$ then the formula says that all coordinates of the valuation vanish, which recovers the fact that the constant function $1$ has valuation $0$. 

Interestingly, the function
$\maxdiag (\mu \setminus \lambda)$ in \cref{t:ValuationsFormulaIntro} has an interpretation in 
quantum cohomology: by a result of Fulton and Woodward \cite{FW}, it is equal to the
smallest degree $d$ such that $q^d$ appears in the Schubert expansion of the product of two Schubert
classes $\sigma_{\mu}\star \sigma_{\lambda^{c}}$
in the quantum cohomology ring  $QH^*(\mathbb X)$. 
We also prove a parallel result in \cref{sec:highvaluation} which says that if we consider the 
\emph{highest-order-term} valuation $\val^G$ instead of the lowest-order-term valuation $\val_G$,
then $\val^G(P_{\lambda}/P_{\max})_{\mu}$  is equal to the largest degree $d$ such that 
$q^d$ appears in the Schubert expansion of 
 $\sigma_{\mu} \star \sigma_{\lambda^{c}}$. This degree was described in \cite{PostnikovDuke}, see also  \cite{Yong}.

While our proof of \cref{t:ValuationsFormulaIntro}
does not rely on Theorem~\ref{t:mainIntro}, both proofs  use the general philosophy of mirror symmetry. We think of the valuation $\val_G(P_\lambda/P_\Max)$ as an element of the character lattice of the $\mathcal{X}$-cluster 
network torus $\mathbb T_G$.  Then we reinterpret this {\it character lattice} as the {\it cocharacter lattice} of the dual torus $\mathbb T_G^\vee$. We consider the dual torus to be naturally an $\mathcal{A}$-cluster torus in a Langlands dual Grassmannian $\checkX$, using the cluster algebra structure of~\cite{ca1, Scott}. 
Then we show that $\val_G(P_\lambda/P_\Max)$ represents a \emph{tropical point} of $\checkX$ with regard to this cluster structure. The formula in Theorem~\ref{t:ValuationsFormulaIntro} is obtained by the explicit construction of an element of $\checkX(\R_{>0}((t)))$ which represents this tropical point.

\subsection{} %
 We note that  tropicalisation in the Langlands dual world is well-known to play a fundamental role in the parameterization of basis elements of representations of a reductive algebraic 
group $\mathcal{G}$; this goes back to Lusztig and his work on the canonical basis \cite{Lus:CanonBasis, Lus:Quantum}. 
The particular construction of the polytope $\Q_G $ we use here is related to the construction of Berenstein and Kazhdan in their theory of geometric crystals \cite{BK:GeometricCrystalsII}. The cluster charts we use are specific to Grassmannians, but we note that  there is an isomorphism, \cite[Theorem~4.9]{MarshRietsch}, between the superpotential $W:\openX\to \C$ and the  function used in \cite{BK:GeometricCrystalsII} in the maximal parabolic setting. The function from \cite{BK:GeometricCrystalsII}  also agrees with the Lie-theoretic superpotential associated to 
$\X=\mathcal{G}/\mathcal{P}$ in~\cite{Rietsch}. 

 On the Newton-Okounkov side, it is interesting to note the related work of Kaveh~\cite{Kaveh} in the case of the full flag variety $\mathcal G/\mathcal B$ which describes Newton-Okounkov convex bodies associated to particular highest-order term valuations on $\C(\mathcal G/\mathcal B)$ 
 and recovers string polytopes. Combining this result with Berenstein and Kazhdan's construction of string polytopes via geometric crystals provides a similar picture to ours in the  full flag variety case of two `dual' constructions of the same polytope, and may be interpreted as an instance of mirror symmetry.  However the proofs are very different and the representation theory arguments of \cite{Kaveh} do not extend to our setting. We note also a recent paper of Judd  \cite{Judd}, which adds detail to this picture in the case of  $SL_n/B$.

The connection between the lattice points of the tropicalized superpotential polytopes and the theta basis of the dual cluster algebra, which enters into our first main theorem, appears as an instance of the cluster duality conjectures between cluster $\mathcal X$-varieties and cluster $\mathcal A$-varieties developed by Fock and Goncharov~\cite{FG,FG1}. 
For Theorem~\ref{t:mainIntro} we make use of the deep properties of the theta
basis of Gross, Hacking, Keel and Kontsevich \cite{GHKK} for a cluster $\mathcal X$-variety, see Section~\ref{s:theta}. The duality theory of cluster algebras has also been explored and applied in other works such as  \cite{GoncharovShen,GoncharovShen2,Magee}.  

For a Grassmannian $Gr_2(\C^n)$, the plabic graphs are in bijection with triangulations of an $n$-gon, 
and in this case polytopes isomorphic  to ours were obtained earlier by Nohara and Ueda. These polytopes were shown in \cite{nohara_ueda} to be integral (unlike in the general case), and were used to construct toric degenerations of the Grassmannian $Gr_2(\C^n)$, see also \cite{Hering}.


\subsection{} This project originated out of the observation that Gelfand-Tsetlin polytopes appear to be naturally associated, but by very different constructions, both to the Grassmannian $\X$ and to its mirror, using a transcendence basis as input data. It also arose out of the wish to better understand the superpotential for Grassmannians from \cite{MarshRietsch}. As far as we know this is the first time these ideas from mirror symmetry have been brought to bear on the problem of constructing Newton-Okounkov  bodies.

Since the first version of this paper was posted to the arXiv in 2017, several other 
related works have appeared, including \cite{BFMC}, which discusses
a general framework for toric degenerations of cluster varieties,
and \cite{ShenWeng}, which discusses cyclic sieving and cluster duality
for Grassmannians.

\vskip .2cm

\noindent{\bf Acknowledgements:~}
The first author thanks M.~Kashiwara for drawing her attention to the theory of geometric crystals \cite{Kashiwara:PC}.   
The authors would also like to thank Man Wei Cheung, Sean Keel, Mark Gross,  Tim Magee, and Travis Mandel for helpful conversations about the theta basis.
They would also like to thank Mohammad Akhtar, Dave Anderson, Chris Fraser, Steven Karp, Ian Le,
Alex Postnikov, and Kazushi Ueda for useful discussions.
They are grateful to Peter Littelmann for helpful comments as well as Xin Fang and Ghislain Fourier.  
And they would like to thank Milena Hering and Martina Lanini  and collaborators Lara Bossinger, Xin Fang, and Ghislain Fourier  for 
bringing an important example to their attention. 
Finally, we would like to thank three very diligent referees (including the one who gave us $93$ itemized comments), whose feedback
has led to some substantial improvements in the exposition.
This material is based upon work supported by the Simons foundation,
a Rose-Hills Investigator award, as well as the National Science
Foundation under agreement No. DMS-1128155 and No. DMS-1600447.  
Any opinions, findings
and conclusions or recommendations expressed in this material are those of 
the authors and do not necessarily reflect the views of the National
Science Foundation.

\section{Notation for Grassmannians}\label{s:notation}
\color{black} 
\subsection{The Grassmannian 
$\mathbbX$}

Let $\mathbbX$ be the Grassmannian of $(n-k)$-planes in $\C^n$. We will denote its dimension  by $N=k(n-k)$. 
An element of $\mathbbX$
can be represented as the column-span of a full-rank $n\times (n-k)$ matrix modulo right
multiplication by nonsingular $(n-k)\times (n-k)$ matrices.  
Let $\binom{[n]}{n-k}$ be the set of all $(n-k)$-element subsets of 
$[n]:=\{1,\dots,n\}$.
For $J\in \binom{[n]}{n-k}$, let $P_J(A)$
denote the maximal minor of an $n\times (n-k)$ matrix 
$A$ located in the row set $J$.
The map $A\mapsto (P_J(A))$, where $J$ ranges over $\binom{[n]}{n-k}$,
induces the {\it Pl\"ucker embedding\/} $\mathbbX\hookrightarrow \mathbb{P}^{\binom{n}{n-k}-1}$, and the $P_J$ are called \emph{Pl\"ucker coordinates}.

We also think of $\mathbbX$ as a homogeneous space for the group $GL_n(\C)$, acting on the left.
 We fix the standard pinning of $GL_n(\C)$ consisting of upper and lower-triangular Borel subgroups $B_+, B_-$, maximal torus $T$ in the intersection, and simple root subgroups $x_i(t)$ and $y_i(t)$ given by exponentiating the standard upper and lower-triangular Chevalley generators $e_i, f_i$ with $i=1,\dotsc, n-1$. We denote the Lie algebra of $T$ by $\mathfrak h$ and we have fundamental weights $\omega_i\in \mathfrak h^*$ as well as simple roots $\alpha_i\in \mathfrak h^*$.  
 
 For $\mathbbX=Gr_{n-k}(\C^n)$ there is a natural identification between $H^2(\mathbbX,\C)$ and the subspace of $\mathfrak h^*$  spanned by $\omega_{n-k}$, under which $\omega_{n-k}$ is identified with the 
 hyperplane class of $\X$ in the Pl\"ucker embedding.

\subsection{The mirror dual Grassmannian~$\checkX$}
\label{s:mirrordualGr}
Let $(\C^n)^*$ denote a vector space of row vectors.
We then let
 $\checkX=Gr_{k}((\C^n)^*)$ 
be the `mirror dual' Grassmannian of $k$-planes in the vector space $(\C^n)^*$. 
 An element of $\checkX$ can be represented as the row-span of a 
 full-rank $k\x n$ matrix $M$. This new Grassmannian is considered to be a homogeneous space via a {\it right} action by a rank $n$ general linear group. To be precise, the group acting on $\checkX$ is the Langlands dual group to the general linear group acting on $\X$, and we denote it $GL_n^\vee(\C)$ to keep track of this duality.\footnote{ The Langlands duality we mean here is a generalisation to complex reductive algebraic groups of duality of tori, in which two Langlands dual groups have dual maximal tori and roots and coroots are interchanged.}  For this group we use 
all the same notations as introduced in the preceding paragraph for $GL_n(\C)$, but with an added superscript $^{\vee}$. To illustrate the duality, the dominant character 
$r\omega_{n-k}$ of $GL_n(\C)$ that corresponded to a line bundle on $\mathbbX$ can be considered 
as representing a one-parameter subgroup of the maximal torus $T^\vee$ of $GL_n^\vee(\C)$. It determines an element of $T^\vee(\C((t)))$ (where $t$ is the parameter), and this element maps to
$t^r$ under $\alpha_{n-k}^\vee$. 

Note that the Pl\"ucker coordinates of $\checkX$ are naturally parameterized by $\binom{[n]}{k}$; for every $k$-subset $I$ in $[n]$ the Pl\"ucker coordinate $p_I$ is associated to the $k\x k$ minor of $M$ with column set given by~$I$.

\color{black}
\subsection{Young diagrams}\label{Young}
It is convenient to index Pl\"ucker coordinates of both 
$\mathbbX$ and $\checkX$ using Young diagrams. 
Recall that $\Shkn$ denotes the set of Young diagrams fitting in an 
$(n-k)\x k $ rectangle.   There is a natural bijection between 
$\Shkn$ and 
${[n] \choose n-k}$, defined as follows.  Let $\mu$ be an element of 
$\Shkn$, justified so that its top-left corner coincides with 
the top-left corner of the 
$(n-k) \times k$ rectangle.  The south-east border of $\mu$ is then
cut out by a path from the 
northeast to southwest corner of the rectangle, which consists of $k$
west steps and $(n-k)$ south steps.  After labeling the $n$ steps by 
the numbers $\{1,\dots,n\}$, we map $\mu$ to the labels of the 
south steps.  This gives a bijection 
from $\Shkn$ to
${[n] \choose n-k}$.  If we use the labels of the west steps instead,
we get a bijection
from $\Shkn$ to
${[n] \choose k}$.
Therefore the elements of $\Shkn$ 
index the Pl\"ucker coordinates $P_\mu$ on $\mathbbX$ and 
simultaneously the Pl\"ucker coordinates on $\checkX$, which we denote
by $p_\mu$.

For $0 \leq i \leq n-1$, set $J_i:=[i+1,i+k]$, 
interpreted cyclically as a subset of  $[1,n]$. 
We let $\mu_i$ denote the Young diagram with west steps given by $J_i$. 
Then when $i\le n-k$, we have that $\mu_i$ is the rectangular $i \x k$ Young diagram, and when $i\ge n-k$, it is the rectangular $(n-k)\x (n-i)$ 
Young diagram.  
Note that  $\mu_{n-k}$ is the whole $(n-k) \times k$ rectangle, so we also 
write $\Max:=\mu_{n-k}$.

\subsection{The open positroid strata $\openX$ and $\checkX^{\circ}$}\label{s:positroid}

We use the special Young diagrams $\mu_i$ 
from \cref{Young} to 
 define a distinguished 
 anticanonical divisor $D_{\ac}=\bigcup_{i=1}^n D_i$ where $D_i=\{P_{\mu_i}=0\}$
in $\mathbb X$, and similarly 
  an 
 anticanonical divisor $\check D_{\ac}=\bigcup_{i=1}^n\{p_{\mu_i}=0\}$
in $\checkX$.

\begin{definition}\label{d:openX}
We define $\Xcirc$ to be the complement of the divisor 
$D_{\ac}=\bigcup_{i=1}^n \{P_{\mu_i}=0\}$, 
\begin{equation*}
\Xcirc:=\mathbb X \setminus  D_{\ac} =\{x\in\mathbb X  \ | \ P_{\mu_i}(x)\ne 0 \ \forall i\in[n] \}.
\end{equation*}

	And we define $\checkX^{\circ}$ to be the complement of the divisor 
$\check D_{\ac}=\bigcup_{i=1}^n \{p_{\mu_i}=0\}$, 
\begin{equation*}
	\checkX^{\circ}:=\checkX\setminus \check D_{\ac} =\{x\in\checkX  \ | \ p_{\mu_i}(x)\ne 0 \ \forall i\in[n] \}.
\end{equation*}
  \end{definition}
These varieties come up in \cite{GSV:Grass} and \cite{KLS:positroid}.

\section{Plabic graphs for Grassmannians}\label{sec:moves}

In this section 
we review Postnikov's notion of \emph{plabic graphs} \cite{Postnikov}, which 
we will then use to define network charts and cluster charts for the Grassmannian.

\begin{definition}
A {\it plabic (or planar bicolored) graph\/}
is an undirected graph $G$ drawn inside a disk
(considered modulo homotopy)
with $n$ {\it boundary vertices\/} on the boundary of the disk,
labeled $1,\dots,n$ in clockwise order, as well as some
colored {\it internal vertices\/}.
These  internal vertices
are strictly inside the disk and are
colored in black and white. 
We will always assume that $G$ is bipartite, and that 
each boundary vertex $i$ is adjacent to one white vertex and no other vertices.
\end{definition}

See Figure \ref{G25} for an example of a plabic graph.
\begin{figure}[h]
\centering
\includegraphics[height=1in]{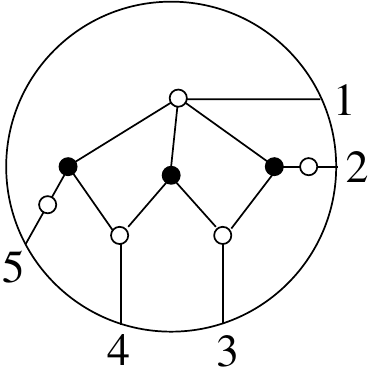}
\caption{A plabic graph}
\label{G25}
\end{figure}

There is a natural set of local transformations (moves) of plabic graphs, which we now describe.
Note that we will always assume that a plabic graph $G$ has no isolated 
components (i.e. every connected component contains at least
one boundary vertex).  We will also assume that $G$ is \emph{leafless}, 
i.e.\ if $G$ has an 
internal vertex of degree $1$, then that vertex must be adjacent to a boundary
vertex.

(M1) SQUARE MOVE (Urban renewal).  If a plabic graph has a square formed by
four trivalent vertices whose colors alternate,
then we can switch the
colors of these four vertices (and add some degree $2$ vertices to preserve
the bipartiteness of the graph).

(M2) CONTRACTING/EXPANDING A VERTEX.
Any degree $2$ internal vertex not adjacent to the boundary can be deleted,
and the two adjacent vertices merged.
This operation can also be reversed.  Note that this operation can always be used
to change an arbitrary
 square face of $G$ into a square face whose four vertices are all trivalent.

(M3) MIDDLE VERTEX INSERTION/REMOVAL.
We can always remove or add degree $2$ vertices at will, subject to the 
  condition that the graph remains bipartite.

See \cref{M1} for depictions of these three moves.

\begin{figure}[h]
\centering
\includegraphics[height=.5in]{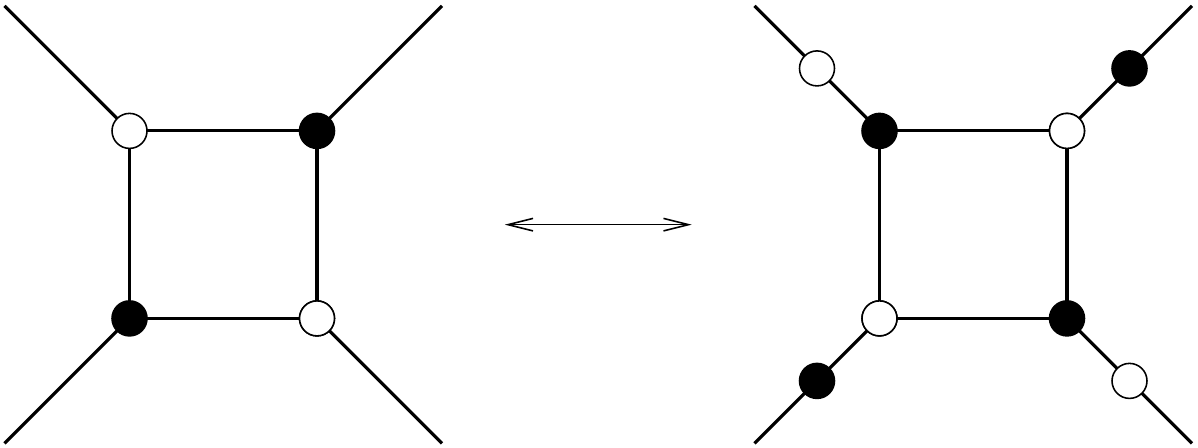}
\hspace{.5in}
\raisebox{6pt}{\includegraphics[height=.4in]{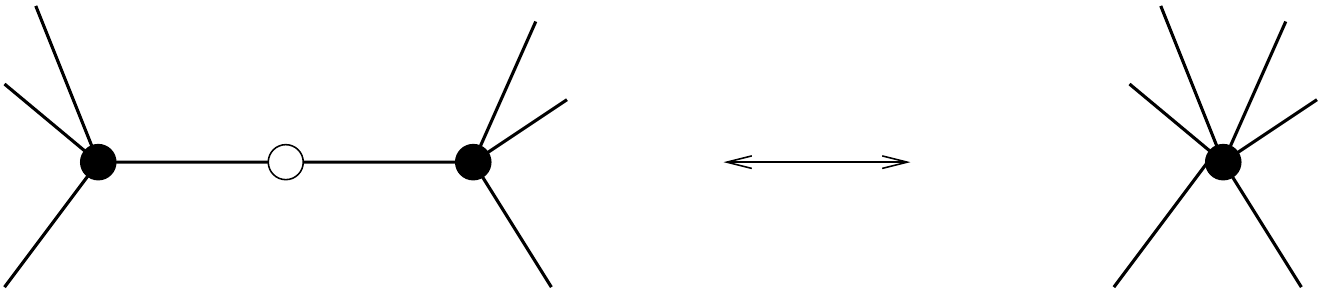}}
\hspace{.5in}
\raisebox{16pt}{\includegraphics[height=.07in]{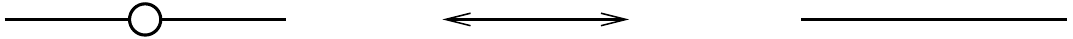}}
\caption{%
A square move, an edge 
contraction/expansion, and a vertex insertion/removal.}
\label{M1}
\end{figure}

(R1) PARALLEL EDGE REDUCTION.  If a plabic graph contains
two trivalent vertices of different colors connected
by a pair of parallel edges, then we can remove these
vertices and edges, and glue the remaining pair of edges together.

\begin{figure}[h]
\centering
\includegraphics[height=.25in]{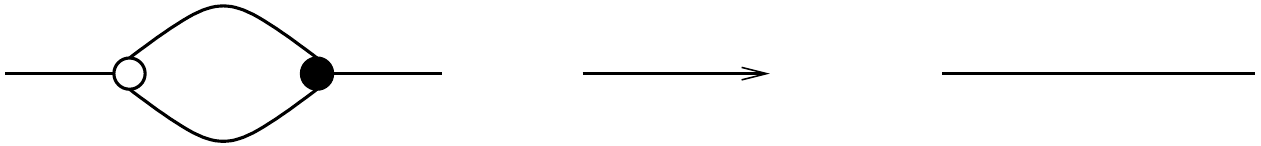}
\caption{Parallel edge reduction}
\label{R1}
\end{figure}

\begin{definition}
Two plabic graphs are called \emph{move-equivalent} if they can be obtained
from each other by moves (M1)-(M3).  The \emph{move-equivalence class}
of a given plabic graph $G$ is the set of all plabic graphs which are move-equivalent
to $G$.
A leafless plabic graph without isolated components
is called \emph{reduced} if there is no graph in its move-equivalence
class to which we can apply (R1).
\end{definition}

\begin{definition}\label{def:rules}
Let $G$ be a reduced plabic graph 
with boundary vertices $1,\dots, n$.
The \emph{trip} $T_i$ from $i$ is the path
obtained by starting from $i$ and traveling along
edges of $G$ according to the rule that each time we reach an internal black vertex we turn
(maximally) right, and each time we reach an internal white vertex we turn (maximally) left.
This trip ends at some boundary vertex ${\pi(i)}$.
In this way we associate a \emph{trip permutation}
$\pi_G=(\pi(1),\dots,\pi(n))$ to each reduced plabic graph $G$,
and we say that $G$ has \emph{type $\pi_G$}.
	(Because one can reverse each trip, it is clear that $\pi_G$ is a permutation.)
\end{definition}

As an example, the trip permutation associated to the
reduced plabic graph in Figure \ref{G25} is $(3,4,5,1,2)$.

\begin{remark}\label{rem:moves}
Let $\pi_{k,n} = 
(n-k+1, n-k+2,\dots, n, 1, 2, \dots, n-k)$.  
In this paper we will 
be particularly concerned with reduced plabic graphs whose trip permutation
is $\pi_{k,n}$.  Note that the trip permutation of a plabic graph is preserved 
by the local moves (M1)-(M3), but not by (R1). For reduced plabic graphs the converse holds, namely 
it follows from \cite[Theorem 13.4]{Postnikov} 
that any two reduced plabic graphs with trip permutation $\pi_{k,n}$ are 
move-equivalent.
\end{remark}

Next we use the trips to label each face of a reduced plabic graph 
by a partition.

\begin{definition}\label{def:facelabels}
Let $G$ be a reduced plabic graph of type $\pi_{k,n}$.  Note that 
each trip $T_i$ partitions the disk containing $G$ into two parts:
the part on the left of $T_i$, and the part on the right.  Place
an $i$ in each face of $G$ which is to the left of $T_i$.  After doing
this for all $1 \leq i \leq n$, each face will contain an $(n-k)$-element
subset of $\{1,2,\dots,n\}$.  Finally we realise that
$(n-k)$-element subset as the south steps of a corresponding Young diagram 
in $\Shkn$. We let $\widetilde{\mathcal P}_G$ denote the set of Young diagrams inside $\Shkn$ associated in this way to $G$.   Note that the boundary regions of 
	$\widetilde{\mathcal P}_G$  are labeled by the Young diagrams
	 $\mu_i$ for $0 \leq i \leq n-1$ (see \cref{Young}); in particular $\widetilde{\mathcal P}_G$  contains $\Max$ and $\emptyset$.  We set 
${\mathcal P}_G:=\widetilde{\mathcal P}_G\setminus\{\emptyset\}$.
Each reduced plabic graph $G$ of type $\pi_{k,n}$ will have 
	precisely $N+1$ faces, where $N=k(n-k)$ \cite[Theorem 12.7]{Postnikov}.
\end{definition}

The left of Figure \ref{G25-partitions} shows the labeling
of each face
of our running example by a Young diagram in $\Shkn$
(here $k=3$ and $n=5$).
\begin{figure}[h]
\centering
\includegraphics[height=1in]{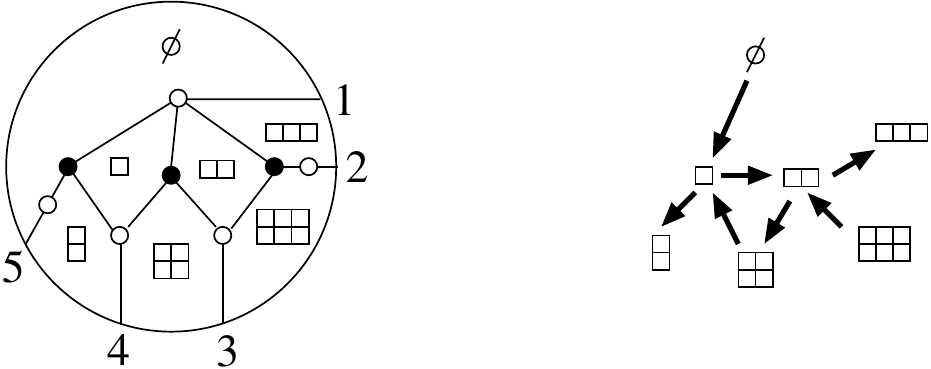}
\caption{A plabic graph $G$ with trip permutation $\pi_{3,5}$, with 
faces labeled by Young diagrams in $\Sh_{3,5}$, and the corresponding quiver $Q(G)$.  Here $\mathcal P_G=\left\{\ydiagram{3,3},\ydiagram{2,2},\ydiagram{1,1},\ydiagram{3},\ydiagram{2},\ydiagram{1}\right\}$.}
\label{G25-partitions}
\end{figure}

We next describe quivers and quiver mutation, and how they relate to moves on plabic graphs.
Quiver mutation was first defined by Fomin and Zelevinsky \cite{ca1}
 in order to define cluster algebras.

\begin{definition}[Quiver]\label{quiver}
A \emph{quiver} $Q$ is a directed graph; we will assume that $Q$ has no 
loops or $2$-cycles.
If there are $i$ arrows from vertex $\lambda$ to $\mu$, then 
we will set $b_{\lambda \mu} = i$ and $b_{\mu \lambda} = -i$.
Each vertex is designated either  \emph{mutable} or \emph{frozen}.
The skew-symmetric matrix $B = (b_{\lambda \mu})$ is called the \emph{exchange matrix} of $Q$.
\end{definition}

\begin{definition}[Quiver Mutation]
Let $\lambda$ be a mutable vertex of quiver $Q$.  The quiver mutation 
$\Mut_\lambda$ transforms $Q$ into a new quiver $Q' = \Mut_\lambda(Q)$ via a sequence of three steps:
\begin{enumerate}
\item For each oriented two path $\mu \to \lambda \to \nu$, add a new arrow $\mu \to \nu$
(unless $\mu$ and $\nu$ are both frozen, in which case do nothing).
\item Reverse the direction of all arrows incident to the vertex $\lambda$.
\item Repeatedly remove oriented $2$-cycles until unable to do so.
\end{enumerate}
	If $B$ is the exchange matrix of $Q$, then we let $\Mut_{\lambda}(B)$ denote the 
	exchange matrix of $\Mut_{\lambda}(Q)$.

\end{definition}

We say that two quivers $Q$ and $Q'$ are \emph{mutation equivalent} if $Q$
can be transformed into a quiver isomorphic to $Q'$ by a sequence of mutations.

\begin{definition}
Let $G$ be a reduced plabic graph.  We associate a quiver $Q(G)$ as follows.  The vertices of 
$Q(G)$ are labeled by the faces of $G$.  We say that a vertex of $Q(G)$ is \emph{frozen}
if the 
corresponding face is incident to the boundary of the disk, and is \emph{mutable} otherwise.
For each edge $e$ in $G$ which separates two faces, at least one of which is mutable, 
we introduce an arrow connecting the faces;
 this arrow is oriented so that it ``sees the white endpoint of $e$ to the left and the 
black endpoint to the right'' as it crosses over $e$.  We then remove oriented $2$-cycles
from the resulting quiver, one by one, to get $Q(G)$. See \cref{G25-partitions}.
\end{definition}

The following lemma is straightforward, and is implicit in \cite{Scott}.

\begin{lemma}\label{lem:mutG}
If $G$ and $G'$ are related via a square move at a face,
then $Q(G)$ and $Q(G')$ are related via mutation at the corresponding vertex.
\end{lemma}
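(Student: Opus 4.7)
My plan is to verify the lemma by directly tracking how the combinatorial data defining $Q(G)$ changes under a square move at a face $f$, and to check that this local change is identical to the three-step recipe for quiver mutation at the vertex of $Q(G)$ corresponding to $f$. I would set up local notation: let $f$ be the square face with trivalent corner vertices $v_1, v_2, v_3, v_4$ in cyclic order and alternating colors (say $v_1, v_3$ white and $v_2, v_4$ black), and let $f_N, f_E, f_S, f_W$ denote the four faces of $G$ adjacent to $f$ across the top, right, bottom and left edges of the square. Each $v_i$ has a third (``outward'') edge to some vertex $u_i$ outside the square; by bipartiteness $u_i$ has color opposite to $v_i$, and this outward edge separates the two neighbours of $f$ that meet at $v_i$. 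Throughout I will use the following equivalent restatement of the ``white to the left'' rule, which is easier to apply locally: around a white (resp.\ black) vertex, the arrows on its incident edges cycle counterclockwise (resp.\ clockwise) through the cyclic order of adjacent faces.

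A direct case analysis using this rule yields that the four square edges contribute to $Q(G)$ the arrows $f\to f_N$, $f_E\to f$, $f\to f_S$, $f_W\to f$, and the four outward edges contribute the ``diagonal'' arrows $f_N\to f_W$, $f_N\to f_E$, $f_S\to f_E$, $f_S\to f_W$. Next I would analyse $Q(G')$: after the square move the corner colors swap, and to preserve bipartiteness one inserts a degree-$2$ vertex $w_i$ of the opposite colour in each outward edge (which move~M3 cannot subsequently remove, since $v_i$ and $u_i$ now share a colour). Reapplying the arrow rule to $G'$ shows that all four square-edge arrows of $Q(G)$ reverse direction, while each broken outward edge contributes two oppositely directed arrows between the same pair of neighbours---a $2$-cycle that cancels in $Q(G')$.

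Finally I would run $\Mut_f$ on $Q(G)$ and compare. The four $2$-paths through $f$ are $f_E\to f\to f_N$, $f_W\to f\to f_N$, $f_E\to f\to f_S$, $f_W\to f\to f_S$, so Step~1 of mutation adds the arrows $f_E\to f_N$, $f_W\to f_N$, $f_E\to f_S$, $f_W\to f_S$; Step~2 reverses all four arrows incident to $f$, matching the square-edge reversal in $Q(G')$; and Step~3 cancels $2$-cycles. The crucial observation, which is the main point of the argument, is that the arrows added in Step~1 are oriented exactly opposite to the four diagonal outward-edge arrows of $Q(G)$, so Step~3 wipes out precisely the same arrows that the inserted degree-$2$ vertices kill on the plabic-graph side. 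Frozen-face edge cases cause no difficulty: an edge between two frozen faces contributes no arrow to $Q(G)$, and correspondingly Step~1 of mutation does not add any new frozen-frozen arrow, so both constructions agree in every case. Hence $Q(G')=\Mut_f(Q(G))$, and the main obstacle is simply keeping the planar orientations straight when applying the arrow rule so that the two cancellations above line up correctly.
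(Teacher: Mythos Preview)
Your proof is correct. The paper does not actually supply a proof of this lemma; it merely declares the statement ``straightforward'' and refers to \cite{Scott}. Your direct local computation---tracking the arrows contributed by the four square edges and the four outward edges, and checking that the color swap plus the inserted degree-$2$ vertices produce exactly the same net effect as the three steps of quiver mutation at $f$---is precisely the verification the paper is leaving to the reader, and your handling of the frozen--frozen edge case is the right observation to complete it.
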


Because of \cref{lem:mutG}, we will subsequently refer to 
``mutating" at a nonboundary face of $G$, meaning that we mutate
at the corresponding vertex of quiver $Q(G)$.
Note that in general the quiver $Q(G)$ admits mutations at vertices
which do not correspond to moves of plabic graphs.  For example, $G$ might have a 
hexagonal face, all of whose vertices are trivalent; 
in that case, $Q(G)$ admits a mutation at the corresponding
vertex, but there is no move of plabic graphs which corresponds 
to this mutation.  

In \cref{sec:cluster} and \cref{sec:poschart},
we will explain how to associate to each plabic graph $G$
a \emph{network chart} and a \emph{cluster chart} in $\Xcirc$, and similarly in $\opencheckX$.

\section{The rectangles plabic graph}\label{sec:Grectangles}
We define a particular reduced plabic graph $G_{k,n}^{\rect}$ with trip permutation
$\pi_{k,n}$ which will play a central role in our proofs.  This is a reduced plabic graph whose internal faces are 
arranged into an $(n-k) \times k$ grid pattern, as shown in Figure~\ref{G-rectangles}.  
(It is easy to check that the plabic graph $G_{k,n}^{\rect}$ is reduced, 
using e.g. 
\cite[Theorem 10.5]{KW}.)
When one uses Definition~\ref{def:facelabels} to label
faces by Young diagrams, one obtains the labeling of faces by rectangles
which is shown in the figure.  The generalization of this figure for 
arbitrary $k$ and $n$ is straightforward.
Note that the plabic graph from \cref{G25-partitions}
is $G_{3,5}^{\rect}.$

\begin{figure}[h]
\centering
\includegraphics[height=1.8in]{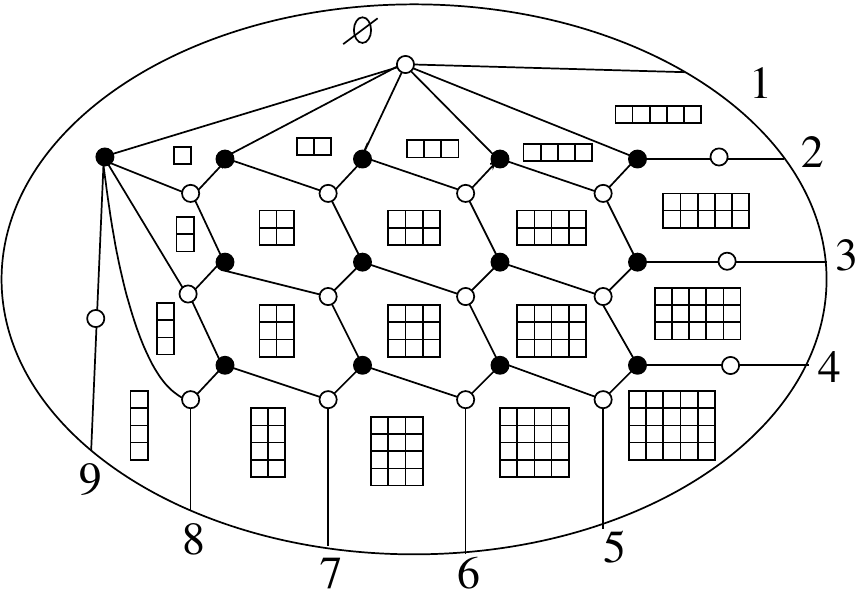}
\caption{The plabic graph $G_{5,9}^{\rect}$ with trip permutation $\pi_{5,9}$, with 
faces labeled by  $\Sh_{5,9}$.}
\label{G-rectangles}
\end{figure}

\section{Cluster charts 
from plabic graphs}\label{sec:cluster}

In this section we
fix a reduced plabic graph $G$ of type $\pi_{k,n}$ and
use it to construct a cluster chart 
for each of the open positroid varieties $\openX$ and $\checkX^\circ$ from Definition~\ref{d:openX}. Our exposition will for the most part focus on $\checkX^\circ$. References for this construction are~\cite{Scott, Postnikov}, see also \cite[Section~7]{MarshRietsch}.

Recall from Definition~\ref{def:facelabels}
that we have a labeling of each face of $G$ by some Young diagram 
in $\widetilde{\mathcal P}_G \subset \Shkn$.  
We now interpret each 
Young diagram in $\Shkn$ as a $k$-element subset of 
$\{1,2,\dots, n\}$ via its west steps, see Section~\ref{s:notation}. It follows from 
\cite{Scott}  that the collection 
\begin{equation}
\wPC(G):= \{p_{\mu} \ \vert \ \mu \in \widetilde{\mathcal{P}}_G\} 
\end{equation} of 
Pl\"ucker coordinates indexed by the faces of $G$
is a \emph{cluster} for the \emph{cluster algebra}  
associated to the 
homogeneous coordinate ring of $\checkX$.  
In particular, these Pl\"ucker coordinates are called \emph{cluster variables} and are
algebraically independent; moreover,  \emph{any} 
Pl\"ucker coordinate for $\checkX$ can be written as a 
positive Laurent polynomial in the variables from ${\wPC}(G)$.

Among the elements of $\wPC(G)$
there are always $n$ Pl\"ucker coordinates 
$\{p_{\mu_i} \ \vert \ 0 \leq i \leq n-1\}$, called 
\emph{frozen variables}.  They are present in each $\wPC(G)$
because each reduced plabic graph of type $\pi_{k,n}$ has $n$ boundary
regions which are labeled by the Young diagrams $\mu_i$ defined in \cref{Young}.

Let 
\[
\PCG:=\left\{\frac{p_{\mu}}{p_{\emptyset}}\ |\ p_\mu\in\widetilde{\PC}(G)\setminus\{p_{\emptyset}\}\right\}\subset \C(\mathbbX).
\]
If we choose  the normalization of 
Pl\"ucker coordinates on $\opencheckX$ such that $p_{\emptyset} = p_{\{1,\dots,k\}} =1$, 
we get a map
\begin{equation}
\label{eq:clusterchart}
\Phi_G^{\vee}=\Phi_{G, \mathcal A}^{\vee}: (\C^*)^{{\mathcal P}_G}\to 
	\opencheckX 
	\subset \checkX
\end{equation}
which we call a \emph{cluster chart} for $\opencheckX$, which satisfies $p_{\nu}(\Phi^\vee_G((t_\mu)_{\mu}))=t_\nu$ for $\nu\in{\mathcal P}_G$. 
Here $\mathcal P_G$ is as in Definition~\ref{def:facelabels}. 
When it is clear that we are setting $p_\emptyset=1$ we may write 
\begin{equation}\label{e:PCG}
\PCG:=\left\{p_{\mu}\ |\ \mu\in{\mathcal P}_G\right\}.
\end{equation}

\begin{definition}[Cluster torus $\mathbb T^\vee_G$]  \label{rem2:convention}
Define the open dense torus $\mathbb T^\vee_G $ in $\opencheckX$ as the image  of the cluster chart $\Phi^\vee_G$,
\[
\mathbb T^\vee_G
:=\Phi_G^\vee((\C^*)^{\mathcal{P}_G})=\{x\in\checkX\mid \p_{\mu}(x)\ne 0 \text { for all $\mu\in \mathcal P_G$}\}.
\] 
We call $\mathbb T^\vee_G$ the {\it cluster torus} in $\opencheckX$ associated to $G$. 
\end{definition}

\begin{defn}[Positive transcendence bases] \label{d:TranscBasis}
We say that a transcendence basis $\mathcal T$ for the field of rational functions on a Grassmannian 
is \emph{positive}
if each Pl\"ucker coordinate 
is a rational function in the elements of $\mathcal T$
with coefficients which are all nonnegative.  
\end{defn}

\begin{remark} The $\p_\mu\in\PCG$ restrict to coordinates on the open torus $\mathbb T^\vee_G$ in $\checkX$.
Therefore we can think of $\PCG$ as a transcendence basis of $\C(\checkX)$.
Moreover by iterating the Pl\"ucker relations, we can express any Pl\"ucker coordinate as a 
rational function in the elements of $\PCG$ with coefficients which are all nonnegative,
so $\PCG$ is a positive transcendence basis.
\end{remark}

\begin{example}
We continue our example from Figure \ref{G25-partitions}.
The Pl\"ucker coordinates labeling the faces of $G$ 
are
$\wPC(G) = 
\{p_{\{1,2,3\}}, p_{\{1,2,4\}}, p_{\{1,3,4\}}, p_{\{2,3,4\}},
p_{\{1,2,5\}}, p_{\{1,4,5\}}, p_{\{3,4,5\}}\}.$
\end{example} 
We next describe cluster $\mathcal{A}$-mutation, and how it relates to the clusters associated to 
plabic graphs $G$.

\begin{definition}\label{Aseed}
Let $Q$ be a quiver with vertices $V$ and associated exchange matrix $B$.
We associate a \emph{cluster variable} $a_{\mu}$ to each vertex $\mu \in V$.
If $\lambda$ is a mutable vertex of $Q$,
then we define a new set of 
variables  $\MutVar_{\lambda}^{\mathcal{A}}(\{a_{\mu}\}) := \{a'_{\mu}\}$  where
$a'_{\mu} = a_{\mu}$ if $\mu \neq \lambda$, and otherwise, $a'_{\lambda}$ is determined
by the equation
\begin{equation}\label{e:AclusterMut}
a_{\lambda} a'_{\lambda} = 
\prod_{b_{\mu \lambda} > 0} a_{\mu}^{b_{\mu \lambda}} + 
\prod_{b_{\mu \lambda} < 0} a_{\mu}^{-b_{\mu \lambda}}.
\end{equation}
We say that 
$(\Mut_{\lambda}(Q), \{a'_{\mu}\})$ is obtained from 
 $(Q, \{a_{\mu}\})$ by \emph{$\mathcal{A}$-seed mutation}
 in direction $\lambda$, and we refer to the ordered pairs 
$(\Mut_{\lambda}(Q), \{a'_{\mu}\})$  and
 $(Q, \{a_{\mu}\})$ as \emph{labeled $\mathcal{A}$-seeds}.
	We say that two labeled $\mathcal{A}$-seeds are 
	$\mathcal{A}$-mutation equivalent if one can be obtained
	from the other by a sequence of $\mathcal{A}$-seed mutations.
\end{definition}

Using the terminology of \cref{Aseed}, each reduced plabic graph $G$ gives rise to a 
labeled $\mathcal{A}$-seed $(Q(G), \wPC(G))$.
\cref{lem:Amove} below, which is easy to check, shows that our labeling of faces of each 
plabic graph by a Pl\"ucker coordinate 
is compatible with the $\mathcal{A}$-mutation.
More specifically, performing a square move 
on a plabic graph corresponds to a three-term Pl\"ucker relation.
Therefore  whenever two plabic graphs are connected by moves, the corresponding
$\mathcal{A}$-seeds are $\mathcal{A}$-mutation equivalent.

\begin{lemma}\label{lem:Amove}
Let $G$ be a reduced plabic graph with cluster 
variables $\wPC(G) := \{p_{\mu}\ |\ \mu\in \widetilde{\mathcal P}_G \}$,
and let $\nu_1$ be a square face of $G$ formed by four trivalent vertices, see 
\cref{labeled-square}.
Let $G'$ be obtained from $G$ by performing a square move at the face $\nu_1$,
and $\wPC(G')$ be the corresponding cluster variables.  
Then 
$\wPC(G') = 
 \MutVar_{\nu_1}^{\mathcal{A}}(\{p_{\mu}\})$.  In particular, 
the Pl\"ucker coordinates labeling the faces of $G$ and $G'$ satisfy
the \emph{three-term Pl\"ucker relation} $$p_{\nu_1} p_{\nu'_1} = 
p_{\nu_2} p_{\nu_4} + p_{\nu_3} p_{\nu_5}.$$
\end{lemma}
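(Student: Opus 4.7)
The plan is to reduce the lemma to a local computation at the square face $\nu_1$, combined with the classical three-term Pl\"ucker relation for $\checkX$ and the quiver mutation rule from \cref{Aseed}.

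First, I would analyze how the face labels change under the square move. Recall from \cref{def:facelabels} that the label of a face is the $(n-k)$-subset $\{i : T_i \text{ passes with the face on its left}\}$. Of the $n$ trips, only two enter the square region at all, and their local behavior is reversed by the square move (they turned ``across'' the square before, and after the move they turn to the other pair of opposite corners, or vice versa). A careful trip bookkeeping around the square then shows that the four surrounding face labels $\nu_2, \nu_3, \nu_4, \nu_5$ are unchanged, while the central label changes from $\nu_1$ to some new $(n-k)$-subset $\nu'_1$. Moreover, there exist indices $a,b,c,d \in [n]$ and an $(n-k-2)$-subset $S \subset [n]$ such that, after matching the cyclic order on the square,
\[
\nu_2 = S \cup \{a,b\}, \quad \nu_3 = S \cup \{b,c\}, \quad \nu_4 = S \cup \{c,d\}, \quad \nu_5 = S \cup \{d,a\},
\]
\[
\nu_1 = S \cup \{a,c\}, \quad \nu'_1 = S \cup \{b,d\}.
\]

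Second, with the face labels in this explicit form, the classical three-term Pl\"ucker relation on $\checkX$ gives
\[
p_{S \cup \{a,c\}}\, p_{S \cup \{b,d\}} \;=\; p_{S \cup \{a,b\}}\, p_{S \cup \{c,d\}} \;+\; p_{S \cup \{b,c\}}\, p_{S \cup \{a,d\}},
\]
which is exactly $p_{\nu_1} p_{\nu'_1} = p_{\nu_2} p_{\nu_4} + p_{\nu_3} p_{\nu_5}$.

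Third, I would compare this with the $\mathcal{A}$-mutation formula \eqref{e:AclusterMut} at $\nu_1$. By \cref{lem:mutG}, $Q(G')=\Mut_{\nu_1}(Q(G))$, and the arrows of $Q(G)$ incident to $\nu_1$ come exactly from the four edges of the square; the white/black endpoint rule orients two of them into $\nu_1$ (from the two opposite faces corresponding to one of the two pairs $\{\nu_2,\nu_4\}$ or $\{\nu_3,\nu_5\}$) and two out of $\nu_1$. The mutation formula then produces $p_{\nu_1} p'_{\nu_1} = p_{\nu_2} p_{\nu_4} + p_{\nu_3} p_{\nu_5}$, so comparing with the Pl\"ucker relation gives $p'_{\nu_1} = p_{\nu'_1}$, and hence $\wPC(G') = \MutVar_{\nu_1}^{\mathcal{A}}(\wPC(G))$.

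The main obstacle is the trip bookkeeping in the first step, which identifies the four surrounding faces with the four pairs $\{a,b\},\{b,c\},\{c,d\},\{d,a\}$ in the correct cyclic order so that opposite faces of the square match opposite edges of the quadrilateral $abcd$; this is the step on which both the Pl\"ucker relation and the sign pattern in the quiver mutation depend, but it is a finite local check and not conceptually difficult.
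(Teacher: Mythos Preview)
Your proposal is correct and is essentially the standard argument (due to Scott) that the paper has in mind; the paper itself gives no proof beyond remarking that the lemma ``is easy to check'' and is implicit in \cite{Scott}. One small point of bookkeeping: the trip labeling of \cref{def:facelabels} produces $(n-k)$-subsets, whereas the Pl\"ucker coordinates $p_\mu$ of $\checkX$ are indexed by $k$-subsets via west steps, so strictly speaking you should pass through the Young-diagram bijection (or take complements) before invoking the three-term relation---but this preserves the pattern $S\cup\{a,b\},\dots$ and does not affect the argument.
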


\begin{figure}[h]
\centering
\includegraphics[height=.65in]{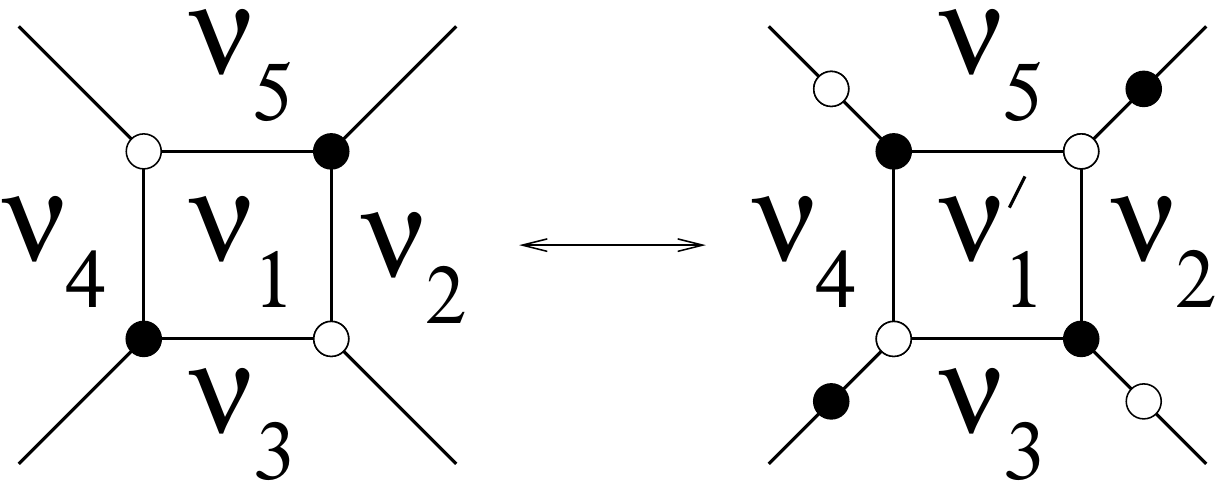}
\caption{}
\label{labeled-square}
\end{figure}

\begin{remark}\label{rem:Acluster}
By \cref{lem:Amove} and \cref{rem:moves}, all $\mathcal{A}$-seeds coming from plabic graphs $G$ of 
type $\pi_{k,n}$ are $\mathcal{A}$-mutation equivalent.
We can mutate the quiver $Q(G)$ at a vertex which does not correspond to a square face of $G$; 
 however, this will 
lead to quivers that no longer correspond to plabic graphs.
Nevertheless, 
	we can consider an arbitrary labeled $\mathcal{A}$-seed
	$(Q, \{a_{\mu}\})$ which is $\mathcal{A}$-mutation equivalent 
	to an $\mathcal{A}$-seed coming from a reduced plabic graph
 of type $\pi_{k,n}$;
  we say that 
	$(Q, \{a_{\mu}\})$ also has type $\pi_{k,n}$.
In this 
case we still have a cluster chart %
for $\opencheckX$ which
is obtained from the cluster chart $\Phi_G^{\vee}$
of \cref{eq:clusterchart}
	 by composing the $\mathcal{A}$-seed 
	mutations of 
\cref{e:AclusterMut}, and it will have a corresponding cluster torus.
Abusing notation, we will continue to index such $\mathcal{A}$-seeds, cluster charts, and cluster tori by $G$ (rather than 
$(Q, \{a_{\mu}\})$, but 
will take care to indicate when we are working with an arbitrary $\mathcal{A}$-seed rather than one coming 
from a plabic graph.
	
	We will sometimes use the notation $\Phi^{\vee}_{G,\mathcal{A}}$ for $\Phi^{\vee}_G$
	 and $\mathbb{T}_{G,\mathcal{A}}$ for $\mathbb{T}_G$
	to emphasize the $\mathcal{A}$-cluster structure.
	Note that the formulas 
\eqref{e:AclusterMut}
	naturally define a birational map
	$\mathcal{M}_{\mathcal{A},\lambda}: \mathbb{T}_{G,\mathcal{A}} \dashrightarrow \mathbb{T}_{G',\mathcal{A}}$.
\end{remark}

\begin{rem}[The case of $\openX$] The plabic graph $G$ which determines a seed of an $\mathcal A$-cluster structure on $\opencheckX$ also determines a seed of an $\mathcal A$-cluster structure on $\openX$. Namely we set 
\[
\PCGA=\left\{\frac{P_\mu}{P_{\Max}}\mid \mu\in\widetilde{\mathcal P}_{G}\setminus\{\Max\}\right\}.
\]
If we choose  the normalization of 
Pl\"ucker coordinates on $\openX$ such that $P_{\emptyset} = P_{\{1,\dots,n-k\}} =1$, 
we get a map
\begin{equation}
\label{eq:clusterchart2}
\Phi_G=\Phi_{G, \mathcal A}: (\C^*)^{{\mathcal P}_G}\to 
	\openX 
	\subset \X
\end{equation}
which we call a \emph{cluster chart} for $\openX$, which satisfies $P_{\nu}(\Phi_G((t_\mu)_{\mu}))=t_\nu$ for $\nu\in{\mathcal P}_G$. 

Again quiver mutation in general gives rise to many more seeds than these. But these seeds still correspond to torus charts in $\openX$ and we use the same notation  $\Phi_{G,\mathcal A}$ also for these more general charts.
\end{rem}

\section{Network charts %
from plabic graphs}
\label{sec:poschart}

In this section we will explain how to use a 
reduced plabic graph $G$ of type $\pi_{k,n}$ to 
construct   
a network chart for $\mathbbX^\circ$, the open positroid variety in $ \X=Gr_{n-k}(\C^n)$.
Network charts were originally 
introduced in \cite{Postnikov, Talaska} as a way to 
parameterize the \emph{positive part} of the Grassmannian. 
There is a notion of mutation for network charts, which was described in the
Grassmannian setting by Postnikov \cite[Section 12]{Postnikov}.
More generally, the notion of mutation can be defined for arbitrary quivers;
it is called \emph{mutation of $y$-patterns} in \cite[(2.3)]{ca4}
and cluster $\mathcal{X}$-%
mutation by Fock and Goncharov \cite[Equation 13]{FG}.
In this article we will not restrict ourselves to network charts from plabic graphs,
but will consider more general 
network charts associated to quivers $Q$ 
mutation equivalent to $Q(G)$, see Section~\ref{s:twist}.

\begin{defn}\label{def:posGrass}
The 
\emph{totally positive part}  
$\mathbbX(\mathbf \R_{>0})$ 
of the  Grassmannian $\mathbbX$ is the 
subset of the real Grassmannian $Gr_{n-k}(\R^n)$  consisting of the elements for which all Pl\"ucker coordinates are in $\R_{>0}$. 
\end{defn}
This definition is equivalent to Lusztig's  original definition \cite{Lusztig3} of the totally positive part of a generalized partial flag variety $G/P$  applied in the Grassmannian case.
(One proof of the equivalence of definitions comes from 
 \cite{TalaskaWilliams}, which related 
the Marsh-Rietsch parameterizations of cells \cite{MR}
of Lusztig's totally non-negative Grassmannian  to the 
parameterizations of cells coming from network charts.)

Network charts are defined using \emph{perfect orientations}
 and \emph{flows} in plabic graphs.
\begin{definition}\label{rem:normalization}
A {\it perfect orientation\/} $\O$ of a plabic graph $G$ is a
choice of orientation of each of its edges such that each
black internal vertex $u$ is incident to exactly one edge
directed away from $u$; and each white internal vertex $v$ is incident
to exactly one edge directed towards $v$.
A plabic graph is called {\it perfectly orientable\/} if it admits a perfect orientation.
The {\it source set\/} $I_\O \subset [n]$ of a perfect orientation $\O$ is the set of all $i$ which 
are sources of $\O$ (considered as a directed graph). Similarly, if $j \in \overline{I}_{\O} := [n] - I_{\O}$, then $j$ is a sink of $\O$.
If $G$ has type  $\pi_{k,n}$, then 
each perfect orientation of $G$ will have 
a source set 
	of size  $n-k$ \cite[Lemma 9.4 and discussion after Remark 11.6]{Postnikov}.
\end{definition}

\begin{figure}[h]
\centering
\includegraphics[height=1in]{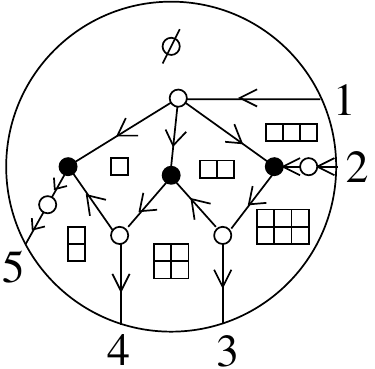}
\caption{The perfect orientation $\O$ of a 
plabic graph $G$ of type $\pi_{3,5}$ with source set 
$I_\O = \{1,2\}$.}
\label{G25-orientation}
\end{figure}

The following lemma appeared in \cite{PSWv1}.\footnote{The
published version of \cite{PSWv1}, namely \cite{PSW}, did not include the lemma, because it turned out
to be unnecessary.}

\begin{lem}[{\cite[Lemma 3.2 and its proof]{PSWv1}}]\label{PSW-lemma} Each reduced plabic graph $G$ has an acyclic perfect orientation $\O$.  Moreover, we may choose $\O$ so that 
the set of boundary sources $I$ is the index set for the lexicographically minimal non-vanishing Pl\"ucker coordinate
on the corresponding cell. 
(In particular, if $G$ is of type $\pi_{k,n}$, then we can choose $\O$ so that $I = \{1,\dots,n-k\}$.)
Then given another reduced plabic graph $G'$ which is move-equivalent to $G$, 
we can transform $\O$ into a perfect orientation $\O'$ for $G'$, such that $\O'$ is also acyclic with boundary 
sources $I$, using oriented versions of the moves (M1), (M2), (M3).  Up to rotational symmetry, we will only 
need to use the oriented 
version of the move (M1) shown in Figure \ref{oriented-square-move}.
\end{lem}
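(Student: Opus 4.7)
The lemma has two independent claims: (i) the existence, on a reduced plabic graph $G$, of an acyclic perfect orientation $\O$ whose boundary source set $I$ indexes the lex-minimal non-vanishing Pl\"ucker coordinate; and (ii) the compatibility of such an $\O$ with the moves (M1), (M2), (M3). I would prove them in that order.

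For (i), the plan is to combine Postnikov's bijection between perfect orientations of $G$ and bases of the positroid matroid of $G$, under which an orientation is sent to its boundary source set, with a choice of the lex-minimal base $I$. In the type $\pi_{k,n}$ case this forces $I=\{1,\dots,n-k\}$ because the corresponding Pl\"ucker coordinate is nonzero on the entire top cell. Among the perfect orientations with source set $I$, I would then pick an acyclic representative. The point is that a directed cycle in a perfect orientation can be reversed without affecting the boundary source set, so cycle reversals act on the set of perfect orientations with fixed source $I$; acyclicity is obtained by choosing a minimal representative under this action, or equivalently by a greedy construction that traces the trips of $G$ starting from the vertices in $I$ and extends the orientation outward. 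Reducedness of $G$ plus the lex-minimality of $I$ ensures that this greedy procedure terminates without creating cycles.

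For (ii), I would handle the three moves separately. The move (M3) (insertion or removal of a bivalent vertex) and the move (M2) (edge contraction or expansion) both admit an essentially unique orientation extension forced by the perfect-orientation condition, and acyclicity passes through trivially. The substantive case is the square move (M1). Here the plan is to analyze the four external edges incident to the square face together with the alternating coloring of its four trivalent vertices. The perfect-orientation condition at each vertex determines the orientation of the four internal edges of the square once the external edges are fixed, and leaves only a small list of admissible external-edge patterns. Using global acyclicity of $\O$ together with the fact that $I$ is fixed, I expect to reduce all of these patterns, up to rotational symmetry of the square, to the single case depicted in \cref{oriented-square-move}. For that case, $\O'$ is produced by leaving the orientation outside the square unchanged and orienting the color-swapped square in the unique perfect way compatible with its external edges.

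The main obstacle is the last verification in the square move analysis: showing that $\O'$ remains \emph{globally} acyclic, not merely acyclic inside the square, and that the non-standard boundary patterns really do not arise. Since no boundary edges are touched, the source set $I$ is automatically preserved, so the crux is that any new directed cycle in $\O'$ must enter and leave the square an equal number of times along the external edges; the restricted form of the oriented square move shown in \cref{oriented-square-move} must then be shown incompatible with such re-entry, given that the analogous cycle in $\O$ would have had to traverse the old square and hence would contradict acyclicity of $\O$. Establishing this cycle-tracing argument, together with the case elimination for the external-edge patterns, is the technical heart of the proof.
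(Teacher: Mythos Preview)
The paper does not prove this lemma at all: it is stated as a citation of \cite[Lemma~3.2 and its proof]{PSWv1}, with a footnote explaining that the published version \cite{PSW} omitted it. So there is no ``paper's own proof'' to compare against; your proposal is an attempt to reconstruct the argument from \cite{PSWv1}.

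As a reconstruction, your outline is broadly on the right track but has real gaps in both parts. For (i), the assertion that one can always reach an acyclic representative by cycle reversals, or by a ``greedy procedure that terminates without creating cycles,'' is exactly the nontrivial content and you have not supplied an argument. Cycle reversal does preserve the source set, but you need a potential function or an inductive invariant to show the process terminates in an acyclic orientation; ``choose a minimal representative'' is not a proof without specifying the partial order and why minimal elements are acyclic. The actual PSW argument is more constructive, building the orientation from the Le-diagram or an equivalent combinatorial model for the lex-minimal base.

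For (ii), your treatment of (M2) and (M3) is fine, but the square move analysis is incomplete. You write that you ``expect to reduce all of these patterns, up to rotational symmetry, to the single case depicted in \cref{oriented-square-move},'' but you have not done the enumeration. At a square face with four trivalent alternating vertices there are several a priori admissible external-edge patterns compatible with the perfect-orientation constraints; one must use acyclicity to eliminate the configurations in which the four internal edges of the square form a directed cycle, and then check that the remaining patterns are all rotations of one another. Your final paragraph correctly identifies global acyclicity of $\O'$ as the crux, but the ``cycle-tracing'' argument you sketch needs to be made precise: you must show that any putative cycle in $\O'$ passing through the new square corresponds, after undoing the move, to a cycle in $\O$, and this requires knowing exactly how the internal edges reorient. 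Without the explicit case list this step is not verifiable.
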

\begin{figure}[h]
\centering
\includegraphics[height=.6in]{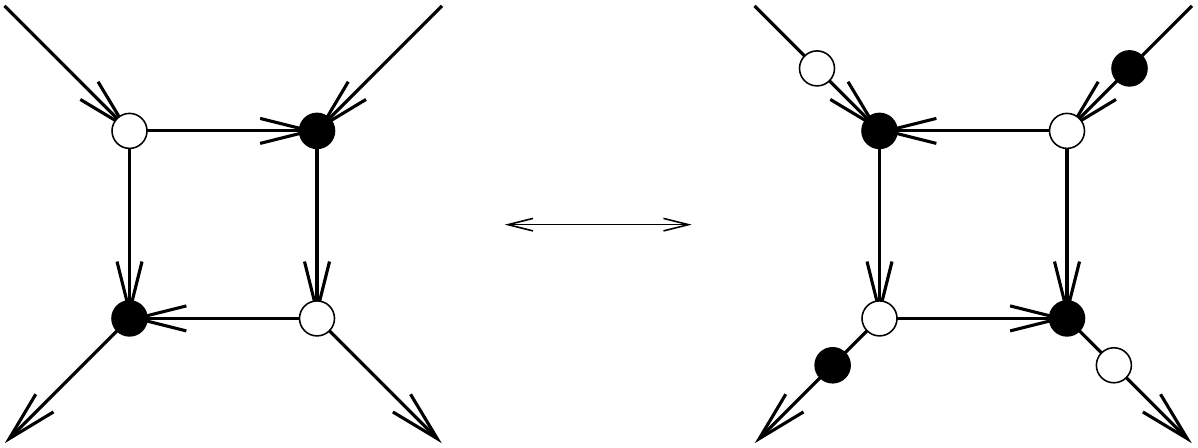}
\caption{Oriented square move}
\label{oriented-square-move}
\end{figure}

\begin{remark}\label{acyclic}
By Lemma \ref{PSW-lemma}, a reduced plabic 
graph $G$ of type $\pi_{k,n}$ always has 
  an acyclic
 perfect orientation $\mathcal O$ with 
source set $I_{\mathcal O}=\{1,\dotsc, n-k\}$, as in 
\cref{G25-orientation}.  Moreover it follows from \cite[Lemma 4.5]{PSW} that this
is the unique perfect orientation %
with  
source set $\{1,\dotsc, n-k\}$.
From now on we will always 
choose our perfect orientation to be acyclic with source set $\{1,\dotsc,n-k\}$;
we prefer this choice because 
then the variable
$x_{\emptyset}$ never appears in the expressions for flow polynomials, 
and 
we always have 
$P_{\max}=1$.
\end{remark}

Recall from Definition~\ref{def:facelabels}
that we  label each face of $G$ by a Young diagram 
in $\widetilde{\mathcal{P}}_G \subset \Shkn$.  
Let \begin{equation}\label{e:TB}
\widetilde{\TB}(G) := \{x_{\mu}\ |\ \mu\in \widetilde{\mathcal P}_G \}
\end{equation}
be a set of  
parameters which are indexed by the Young diagrams $\mu$ labeling
faces of $G$.  
Since one of the faces of $G$ 
is labeled by the empty partition, $\emptyset$, we also set
\begin{equation}
\TBG:= \{x_\mu \ |\ \mu\in \mathcal P_G\}=\widetilde{\TB}(G) \setminus \{x_{\emptyset}\}.
\end{equation}

A \emph{flow} $F$ from $I_{\O}$ to a set $J$ of boundary vertices
with 
$|J|=|I_{\O}|$ 
is a collection of paths
in $\O$, all pairwise vertex-disjoint, 
such that the sources of these paths are $I_{\O} - (I_{\O} \cap J)$
and the destinations are $J - (I_{\O} \cap J)$.

Note that each path %
$w$ in $\O$ partitions the faces of $G$ into those 
which are on the left and those which are on the right of the 
walk. %
We define the \emph{weight} $\wt(w)$ of 
each such path %
to be the product of 
parameters $x_{\mu}$, where $\mu$ ranges over all face labels 
to the left of the path.  And we define the 
\emph{weight} $\wt(F)$ of a flow $F$ to be the product of the weights of 
all paths in the flow.

Fix a perfect orientation $\O$ of a reduced plabic graph $G$
of type $\pi_{k,n}$.  
Given $J \in {[n] \choose n-k}$, 
we define the {\it flow polynomial}
\begin{equation}\label{eq:Plucker}
P_J^G = \sum_F \wt(F),
\end{equation}
where $F$ ranges over all flows from $I_{\O}$ to $J$.

\begin{example}
We continue with our running example
from Figure \ref{G25-orientation}. There are two flows $F$
from $I_{\O}$ to $\{2,4\}$, and 
$P^G_{\{2,4\}} = x_{\ydiagram{3}} x_{\ydiagram{2,2}} x_{\ydiagram{3,3}}
+x_{\ydiagram{2}} x_{\ydiagram{3}} x_{\ydiagram{2,2}} x_{\ydiagram{3,3}}$.
There is one flow from $I_{\O}$ to $\{3,4\}$, and
$P^G_{\{3,4\}} = x_{\ydiagram{2}} x_{\ydiagram{3}} x_{\ydiagram{2,2}}
x_{\ydiagram{3,3}}^2.$
\end{example}

We now describe the network chart for $\openX$ associated to a plabic graph $G$.
The result concerning the totally positive Grassmannian
below comes from \cite[Theorem 12.7]{Postnikov}, while the extension to $\openX$ comes from 
\cite{TalaskaWilliams} (see also \cite{MullerSpeyer}).

\begin{theorem}[{\cite[Theorem 12.7]{Postnikov}}]\label{network_param}
Let $G$ be a reduced plabic graph of type $\pi_{k,n}$, and choose
an ayclic perfect orientation $\O$ with source set $I_{\O}=\{1,\dotsc,n-k\}$.
Let $A$ be the $(n-k)\times n$ matrix with rows indexed by $I_{\O}$ whose $(i,j)$-entry equals
$$
(-1)^{|\{i'\in [n-k]: i < {i'} < j\}|}\sum_{p:i\to j}\wt(w),
$$
where the sum is over all paths $w$ in $\O$ from $i$ to  $j$. Then the map $\Phi_G$ sending 
$(x_{\mu})_{\mu \in \mathcal{P}_G}
\in (\C^*)^{\mathcal{P}_G}$ to the element of $\mathbb X$ represented by $A$ 
is an injective map onto a dense open subset of $\openX$.    The restriction of $\Phi_G$ to 
$(\R_{>0})^{\mathcal{P}_G}$ gives a parameterization of the totally positive Grassmannian
$\mathbbX(\mathbf \R_{>0})$.  We call the map $\Phi_G$ a \emph{network chart} for $\openX$.
\end{theorem}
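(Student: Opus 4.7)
The plan is to proceed in three stages: identify the flow polynomials with Plücker minors of the matrix $A$, verify positivity and injectivity on the positive torus, and then bootstrap to the full complex statement by comparing dimensions and reducing to a reference plabic graph via moves.

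First I would compute the maximal minors of $A$ directly. Fix $J \in \binom{[n]}{n-k}$ and expand $\det(A_J)$ using the Leibniz formula. Each permutation contribution decomposes, via the vertex-disjoint path interpretation, into a signed sum over tuples of directed paths in $\O$ from $I_\O$ to $J$. The sign prefactors $(-1)^{|\{i'\in[n-k]:i<i'<j\}|}$ in the definition of $A$ are designed precisely so that the Lindström--Gessel--Viennot cancellation of crossings eliminates all non-flow contributions, leaving $P_J(A) = P_J^G = \sum_F \wt(F)$, where the sum is over flows. In particular, from the chosen perfect orientation one sees that $P_{\{1,\dots,n-k\}}^G \equiv 1$ (the empty flow), so $A$ always has full rank and $P_{\mu_i}^G$ is a nontrivial monomial in the $x_\mu$, ensuring the image lies in $\openX$.

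Next I would handle the positive case. If each $x_\mu \in \R_{>0}$, then every flow polynomial $P_J^G$ is a sum of positive monomials, hence strictly positive, so $\Phi_G$ maps $(\R_{>0})^{\mathcal{P}_G}$ into $\X(\R_{>0})$. For injectivity on the positive torus (and on the complex torus), I would invoke Talaska's inversion formula, which expresses each face weight $x_\mu$ as an explicit ratio of Plücker coordinates (built from non-crossing collections of paths that enclose the face $\mu$). This gives a rational left inverse to $\Phi_G$ defined on the image, so $\Phi_G$ is injective, and comparing source and target dimensions (both equal to $N = k(n-k)$) together with the positivity parameterization of the top cell from \cite{Postnikov} shows the restricted map is a bijection onto $\X(\R_{>0})$.

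To upgrade to density in $\openX$ over $\mathbb C$, I would argue that $\Phi_G$ is a morphism of algebraic varieties of equal dimension whose image contains the real positive locus, which is Zariski dense in $\openX$; combined with injectivity this forces $\Phi_G$ to be a birational isomorphism onto its image, and the image is an open subvariety of $\openX$ containing $\X(\R_{>0})$, hence dense. Alternatively, because both $\Phi_G$ and $\Phi_{G'}$ for move-equivalent $G,G'$ are compatible with the transformations of face weights under the oriented moves of \cref{PSW-lemma}, one can reduce to verifying the statement for a single convenient reduced plabic graph of type $\pi_{k,n}$ such as $G_{k,n}^{\rect}$, where the matrix $A$ can be written explicitly and the birationality onto $\openX$ can be checked by inspection. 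The main obstacle is the LGV sign bookkeeping in the first step: one must verify that the chosen prefactors exactly match the signs produced by path crossings under the cyclic labelling of boundary vertices, which requires a careful case analysis depending on the relative position of $i \in I_\O$ and $j \in J$.
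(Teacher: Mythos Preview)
The paper does not give its own proof of this theorem: it is stated as a citation, with the totally positive part attributed to \cite[Theorem~12.7]{Postnikov} and the extension to $\openX$ attributed to \cite{TalaskaWilliams} (see also \cite{MullerSpeyer}). So there is no ``paper's proof'' to compare against; you are sketching an argument for a result the authors import as a black box.

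Your outline is broadly in the right spirit, but there is a circularity in the most delicate step. For surjectivity of $\Phi_G|_{(\R_{>0})^{\mathcal P_G}}$ onto $\X(\R_{>0})$ you write that you would invoke ``the positivity parameterization of the top cell from \cite{Postnikov}''. That parameterization \emph{is} the content of the theorem you are proving; you cannot cite it here. The actual proof in \cite{Postnikov} that every totally positive point arises from some choice of positive face weights is nontrivial and proceeds via the combinatorics of Le-diagrams and bridge decompositions (or equivalently via an inductive construction of the plabic graph), not by a dimension count. Likewise, your ``Talaska inversion formula'' for recovering $x_\mu$ from Pl\"ucker coordinates is not quite what Talaska proves; the inversion you want is closer to what is established in \cite{TalaskaWilliams} or \cite{MullerSpeyer} via the twist map, and again this is part of what the cited references supply rather than something you can assume.

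Your alternative route---reduce to $G^{\rect}_{k,n}$ via moves and check birationality there by hand---is closer to how one would actually make this self-contained, and is essentially the strategy behind \cite{TalaskaWilliams}. But to carry it out you would still need to prove that the face-weight transformations under moves are compatible with the network parameterizations (your \cref{lem:mutXface}), and then verify the full statement for the rectangles graph, which amounts to re-deriving a chunk of \cite{Postnikov}. The LGV sign analysis you flag is genuinely the easy part; the hard part is surjectivity onto the totally positive locus, and your sketch does not address it.
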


\begin{example}\label{ex:matrix}
For example, the graph and orientation in 
\cref{G25-orientation}
 gives for the matrix $A$
$$
\Phi_G((x_{\mu})_{\mu \in \mathcal{P}_G}) = 
\kbordermatrix{
& 1 & 2 & 3 & 4 & 5 \cr
1 & 1 & 0 & -x_{\ydiagram{3}} x_{\ydiagram{3,3}} & -x_{\ydiagram{3}} x_{\ydiagram{3,3}} x_{\ydiagram{2,2}} (1+x_{\ydiagram{2}}) & -x_{\ydiagram{3}} x_{\ydiagram{3,3}} x_{\ydiagram{2,2}} x_{\ydiagram{1,1}} (1+x_{\ydiagram{2}} + x_{\ydiagram{2}} x_{\ydiagram{1}})  \cr
2 & 0 & 1  & x_{\ydiagram{3,3}} &  x_{\ydiagram{3,3}} x_{\ydiagram{2,2}}  & x_{\ydiagram{3,3}} x_{\ydiagram{2,2}} x_{\ydiagram{1,1}}
}.
$$
\end{example}

The following result gives a 
formula for the Pl\"ucker coordinates 
of points in the image of $\Phi_G$.
In our setting, the result is essentially the 
Lindstrom-Gessel-Viennot Lemma.  However
 \cref{thm:Talaska}
can be generalized to  
arbitrary perfect orientations of a reduced plabic graph,
see  \cite[Theorem 1.1]{Talaska}.
\begin{thm}
\label{thm:Talaska}
Let $G$ be as in \cref{network_param} and let $J \in {[n] \choose n-k}$.
Then the Pl\"ucker coordinate $P_\lambda$ of $\Phi_G((x_{\mu})_{\mu \in \mathcal{P}_G})$, i.e. the minor with column set $J$ of the matrix
$A$, %
is equal to the flow polynomial
$P_J^G$ from \eqref{eq:Plucker}.
\end{thm}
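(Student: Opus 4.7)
The plan is to prove the theorem by a careful application of the Lindström--Gessel--Viennot (LGV) lemma to the signed path matrix $A$. First I would expand the minor with column set $J = \{j_1 < \dots < j_{n-k}\}$ using the Leibniz formula:
\[
P_J = \det(A_J) = \sum_{\sigma \in S_{n-k}} \operatorname{sgn}(\sigma) \prod_{i=1}^{n-k} A_{i,\, j_{\sigma(i)}}.
\]
Substituting the definition of $A_{ij}$, each nonzero summand corresponds to a permutation $\sigma$ together with a tuple of paths $(w_1, \dots, w_{n-k})$, where $w_i$ runs from source $i$ to sink $j_{\sigma(i)}$ in the oriented graph $\O$, and contributes $\operatorname{sgn}(\sigma) \prod_i (-1)^{\epsilon_{i,\, j_{\sigma(i)}}} \wt(w_i)$, where $\epsilon_{i,j} := |\{i' \in [n-k] : i < i' < j\}|$. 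Note that for $i \in I_\O \cap J$ the only available path is the trivial one at $i$, contributing weight $1$ and forcing $\sigma$ to fix the corresponding indices.

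Second, I would apply the standard LGV involution: on the set of path tuples that are not pairwise vertex-disjoint, locate the lexicographically first pair of paths $(w_a, w_b)$ sharing a vertex and swap their tails at their earliest common vertex. This preserves the product of path weights but replaces $\sigma$ by $\sigma \cdot (ab)$. The key point is that the combined factor $\operatorname{sgn}(\sigma) \prod_i (-1)^{\epsilon_{i,\, j_{\sigma(i)}}}$ is reversed by this swap, so the contributions from non-disjoint path tuples cancel in pairs. For the surviving vertex-disjoint tuples, planarity of $G$ together with the acyclic orientation forces the path family to be non-crossing, which uniquely determines $\sigma$ given $I_\O$ and $J$. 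I would then verify that for this canonical non-crossing $\sigma$ the overall sign collapses to $+1$, giving $P_J = \sum_F \wt(F) = P_J^G$ as desired.

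The main obstacle is the sign verification, both for the involution step and for the surviving vertex-disjoint terms. The convention $(-1)^{\epsilon_{i,j}}$ in the definition of $A$ is tailored precisely so that the LGV cancellations go through; the cleanest route is to show that, for the non-crossing permutation $\sigma$ determined by $I_\O = \{1,\dots,n-k\}$ and $J$, the quantity $\sum_i \epsilon_{i,\, j_{\sigma(i)}}$ equals the number of inversions of $\sigma$ modulo $2$, so that the overall sign becomes $\operatorname{sgn}(\sigma)^2 = 1$. Under the swap step, the change in $\operatorname{sgn}(\sigma)$ and the change in $\sum_i \epsilon_{i,\, j_{\sigma(i)}}$ (both driven by the transposition $(ab)$) must be tracked together; a direct case analysis on the relative positions of $a, b, j_{\sigma(a)}, j_{\sigma(b)}$ in $[n]$ confirms that the combined sign is always reversed, validating the involution.
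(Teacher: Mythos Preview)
Your proposal is correct and matches the paper's approach exactly: the paper does not spell out a proof but simply remarks that ``in our setting, the result is essentially the Lindstr\"om--Gessel--Viennot Lemma'' and cites Talaska for the general (non-acyclic) case. Your write-up fills in precisely the LGV argument the paper gestures at, including the sign bookkeeping that makes the convention $(-1)^{|\{i'\in [n-k]:\, i<i'<j\}|}$ work.
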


\begin{definition}[Network torus $\mathbb T_G$]  \label{d:networktorus}
Define the open dense torus $\mathbb T_G $ in $\openX$ to be the image  of the network chart $\Phi_G$, namely
$\mathbb T_G
:=\Phi_G((\C^*)^{\mathcal{P}_G})$.
We call $\mathbb T_G$ the {\it network torus} in $\openX$ associated to $G$. 
\end{definition}

\begin{example}\label{Ex:Transc}
Since the image of $\Phi_G$ lands in 
	$\X^\circ$ (see \cite[Section 1.1]{MullerSpeyer}), %
	we can view the parameters $\TBG$ as rational functions on $\mathbbX$ which restrict to coordinates on the open torus $\mathbb T_G$. Therefore we can think of $\TBG$ as a transcendence basis of $\C(\mathbbX)$.
Moreover it is clearly positive in the sense of Definition~\ref{d:TranscBasis}.
\end{example}

\begin{example}\label{ex:A-example}
We continue with our running example
from Figure \ref{G25-orientation} and \cref{ex:matrix}.
 The formulas for the 
Pl\"ucker coordinates of $\Phi_G((x_\mu)_{\mu\in \mathcal P_G})$ are:
\begin{align*}
P_{\{1,2\}} &= 1, & &
P_{\{1,3\}} = x_{\ydiagram{3,3}},\\
P_{\{1,4\}} &= x_{\ydiagram{2,2}} x_{\ydiagram{3,3}}, & &
P_{\{1,5\}} = 
x_{\ydiagram{1,1}} x_{\ydiagram{2,2}} x_{\ydiagram{3,3}}, \\
P_{\{2,3\}} &= x_{\ydiagram{3}} x_{\ydiagram{3,3}}, & &
P_{\{2,4\}} = x_{\ydiagram{3}} x_{\ydiagram{2,2}} x_{\ydiagram{3,3}}(1+
x_{\ydiagram{2}}),\\
P_{\{2,5\}} &= x_{\ydiagram{3}} x_{\ydiagram{1,1}} x_{\ydiagram{2,2}}
x_{\ydiagram{3,3}}(1+x_{\ydiagram{2}} +x_{\ydiagram{1}} x_{\ydiagram{2}}), &&
P_{\{3,4\}} = x_{\ydiagram{2}} x_{\ydiagram{3}} x_{\ydiagram{2,2}}
x_{\ydiagram{3,3}}^2,\\
P_{\{3,5\}} &= x_{\ydiagram{2}} x_{\ydiagram{3}} x_{\ydiagram{1,1}}
x_{\ydiagram{2,2}} x_{\ydiagram{3,3}}^2 (1+x_{\ydiagram{1}}),  &&
P_{\{4,5\}} = x_{\ydiagram{1}} x_{\ydiagram{2}} x_{\ydiagram{3}}
x_{\ydiagram{1,1}} x_{\ydiagram{2,2}}^2 x_{\ydiagram{3,3}}^2.
\end{align*}
One may obtain these Pl\"ucker coordinates either directly from the matrix in 
\cref{ex:matrix} or by computing the flow polynomials from \cref{G25-orientation}.
Note that $x_\emptyset$ does not appear in the flow polynomials 
since the region labeled by $\emptyset$ is to the right of every path from $I_\O$ to $[n]\setminus I_\O$. 
One may 
invert the map $\Phi_G$ and express the $x_\mu$ as rational functions in the Pl\"ucker coordinates, thus describing $\TBG$ as a subset of $\C(\mathbb X)$. 
\end{example}

\begin{definition}[Strongly minimal, strongly maximal,
 and pointed]\label{def:minimal}
We say that a Laurent monomial 
$\prod_\mu x_\mu^{a_\mu}$ appearing
in a Laurent polynomial $P$ is \emph{strongly minimal}
(respectively, \emph{strongly maximal}) in $P$  if 
for every other Laurent monomial $\prod_\mu x_\mu^{b_\mu}$ occurring in $P$,
we have $a_\mu\le b_{\mu}$ (respectively, $a_\mu \geq b_{\mu}$) for all $\mu$.  

If $P$ has a strongly minimal Laurent
monomial with coefficient $1$, then we say  $P$ is \emph{pointed}.
Consider 
a plabic graph $G$ and perfect orientation with source set $\{1,\dotsc, n-k\}$. Recall that the flow polynomial 
$P_J$ %
is a sum over  flows from $\{1,\dotsc, n-k\}$ to $J$. 
We call a flow from $\{1,\dotsc, n-k\}$ to $J$ \emph{strongly minimal} 
(respectively, \emph{strongly maximal})
if it has a  strongly minimal (respectively, strongly 
maximal) weight monomial in $P_J$. 
\end{definition}

\begin{remark}
In Example \ref{ex:A-example}, 
each flow polynomial $P_{\{i,j\}}$ 
has a strongly minimal and a strongly maximal term.
This is true in general; see \cref{prop:strongminimal}.
\end{remark}

We next describe cluster $\mathcal{X}$-mutation, and how it relates to network parameters.

\begin{definition}\label{Xseed}
Let $Q$ be a quiver with vertices $V$,  associated exchange matrix $B$
(see \cref{quiver}), and with a 
parameter $x_{\mu}$ associated to each vertex $\mu \in V$.
If $\lambda$ is a mutable vertex of $Q$,
then we define a new set of 
parameters  $\MutVar_{\lambda}^{\mathcal{X}}(\{x_{\mu}\}) := \{x'_{\mu}\}$  where
\begin{equation}\label{e:XclusterMut}
x'_{\mu} = \begin{cases}
       \ \frac{1}{x_{\lambda}} & %
      \text{if }\mu = \lambda, \\
   \ x_{\mu}(1+x_{\lambda})^{b_{\lambda \mu}}  
  &\text{if there are }b_{\lambda \mu}
\text{ arrows from }\lambda \text{ to } \mu \text{ in }Q, \\
  \ \frac{x_{\mu}}{(1+x_{\lambda}^{-1})^{b_{\mu \lambda}}} &%
 \text{if there are }b_{\mu \lambda}
\text{ arrows from } \mu \text{ to } \lambda \text{ in }Q,\\
  \ x_{\mu} & %
 \text{ otherwise.}
\end{cases}
\end{equation}
We say that 
$(\Mut_{\lambda}(Q), \{x'_{\mu}\})$ is obtained from 
 $(Q, \{x_{\mu}\})$ by \emph{$\mathcal{X}$-seed mutation}
 in direction $\lambda$, and we refer to the ordered pairs 
$(\Mut_{\lambda}(Q), \{x'_{\mu}\})$  and
 $(Q, \{x_{\mu}\})$ as \emph{labeled $\mathcal{X}$-seeds}.
	Note that if we apply the $\mathcal{X}$-seed mutation in direction $\lambda$ to 
$(\Mut_{\lambda}(Q), \{x'_{\mu}\})$, we obtain  
 $(Q, \{x_{\mu}\})$ again.
	
	We say that two labeled $\mathcal{X}$-seeds are 
	\emph{$\mathcal{X}$-mutation equivalent} if one can be obtained
	from the other by a sequence of $\mathcal{X}$-seed mutations.

	If $f$ is a rational expression in the parameters $\{x_{\mu}\}$, 
	we use $\Mut_{\mathcal{X},\lambda}(f)$ to denote the new expression for $f$
obtained by rewriting it in terms of the  $\{x'_{\mu}\}$.
\end{definition}

Using the terminology of \cref{Xseed}, each reduced plabic graph $G$ gives rise to a 
labeled $\mathcal{X}$-seed $(Q(G), \widetilde{\Network}(G))$.
The following lemma, which is easy to check, shows that our flow polynomial 
expressions for Pl\"ucker coordinates are compatible with the $\mathcal{X}$-mutation.
In other words, whenever two plabic graphs are connected by moves, the corresponding
$\mathcal{X}$-seeds are $\mathcal{X}$-mutation equivalent.

\begin{lemma}\label{lem:mutXface}
Let $G$ be a reduced plabic graph with network parameters
$\widetilde{\TB}(G) := \{x_{\mu}\ |\ \mu\in \widetilde{\mathcal P}_G \}$,
associated quiver $Q(G)$, and with a fixed perfect orientation e.g. as in Remark~\ref{acyclic}.  
Let $\lambda$ be a square face of $G$ formed by four trivalent vertices.
Let $G'$ be another reduced plabic graph with associated data, obtained from $G$ by performing an oriented square move at 
$\lambda$, see
\cref{oriented-square-move}.
	Then for each $J\in {[n] \choose n-k}$,
	the mutation $\Mut_{\mathcal{X},\lambda}(P_J^G(\{x_{\mu}\}))$ of the flow polynomial $P_J^G$ 
is equal to the flow polynomial $P_J^{G'}(\{x'_{\mu}\})$ expressed in the network 
parameters of 
$G'$.
\end{lemma}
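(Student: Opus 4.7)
The plan is to prove this by a direct local computation, exploiting the fact that a square move only modifies the plabic graph in a small neighborhood $N$ of the square face $\lambda$. Both $P_J^G$ and $P_J^{G'}$ decompose according to the behavior of their constituent paths inside $N$: paths that avoid $N$ contribute the same to both polynomials (modulo the controlled relabeling of the few nearby face parameters), and paths that enter $N$ must do so through one of finitely many boundary edges of $N$ (inherited from the orientation of Figure~\ref{oriented-square-move}).

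The heart of the proof is an enumeration, for each ordered pair of boundary edges $(e_{\text{in}}, e_{\text{out}})$ of $N$, of the possible subpaths from $e_{\text{in}}$ to $e_{\text{out}}$ lying inside $N$, carried out in both $G$ and $G'$. Summing the weights of these subpaths produces a rational expression in $x_\lambda$ and the weights of the four faces adjacent to $\lambda$. The assertion is that the local generating functions for $G$ and $G'$ match after substituting the mutation formula $x'_\mu = \MutVar_\lambda^{\mathcal X}(x_\mu)$. The characteristic factors $(1+x_\lambda)$ and $(1+x_\lambda^{-1})$ appearing in Definition~\ref{Xseed} arise naturally here: they encode the binary choice a path crossing $N$ has between traversing the square face $\lambda$ (picking up a factor involving $x_\lambda$) or bypassing it. The $x'_\lambda = 1/x_\lambda$ rule, in particular, reflects the reversal of the roles of ``inside'' and ``outside'' the square under the swap of vertex colors.

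The main obstacle is the combinatorial bookkeeping of which faces lie to the left of each subpath, since the weight of a path depends on the product of the $x_\mu$ over faces to its left. The four faces adjacent to $\lambda$ get rearranged relative to the square's boundary edges under the move, because the colors of the square's vertices swap and hence the orientation of the internal edges reverses; what was a ``left face'' for one subpath can become a ``right face'' for the corresponding subpath in $G'$. Tracking this across all $(e_{\text{in}}, e_{\text{out}})$ configurations is the bulk of the verification, though each case is routine. An alternative route, which bypasses some bookkeeping, would be to verify directly that the matrices $\Phi_G((x_\mu))$ and $\Phi_{G'}((x'_\mu))$ from Theorem~\ref{network_param} represent the same point of $\openX$; equality of all Pl\"ucker coordinates, and hence of all flow polynomials via \cref{thm:Talaska}, would then follow at once.
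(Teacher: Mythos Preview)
Your proposal is correct and is precisely the kind of local verification the paper has in mind: the paper gives no proof beyond the remark that the lemma ``is easy to check,'' and what you outline---decomposing flows by their behavior in the neighborhood of the square face and matching local weight generating functions under the substitution \eqref{e:XclusterMut}---is exactly the routine case analysis this phrase is pointing to. Your alternative route via the matrices of Theorem~\ref{network_param} is also valid and is essentially how Postnikov originally established the compatibility in \cite[Section~12]{Postnikov}.
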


\begin{example}
We continue with our running example from 
\cref{G25-partitions}, 
  \cref{G25-orientation}, and 
	\cref{ex:A-example}.
Then we have that 
	$P^G_{\{1,3\}} = x_{\ydiagram{3,3}}.$  
	If we perform an oriented square move on $G$ at the vertex 
	$\ydiagram{2}$, we obtain a new perfectly oriented plabic graph $G'$ (with network parameters labeled as before but with a prime).
Using $G'$ we obtain 
	$P^{G'}_{\{1,3\}} = x'_{\ydiagram{3,3}}+x'_{\ydiagram{3,3}}x'_{\ydiagram{2}}$.  
	On the other hand, applying the $\mathcal{X}$-seed mutation to the network parameters of $G$ gives 
	$x'_{\ydiagram{2}} = \frac{1}{x_{\ydiagram{2}}}$ and $x'_{\ydiagram{3,3}} = \frac{x_{\ydiagram{3,3}}}{(1+x_{\ydiagram{2}}^{-1})}$.
	Substituting these expressions into 
	$P^{G'}_{\{1,3\}}$ gives back 
	$x_{\ydiagram{3,3}} = P^G_{\{1,3\}}.$  
\end{example}

\begin{remark}\label{rem:Xcluster}
By \cref{lem:mutXface} and \cref{rem:moves}, all $\mathcal{X}$-seeds coming from plabic graphs $G$ of 
type $\pi_{k,n}$ are $\mathcal{X}$-mutation equivalent.
	We can mutate the quiver $Q(G)$ at a vertex which does not correspond to a square face of $G$; 
	however, this will 
lead to quivers that no longer correspond to plabic graphs.
Nevertheless, 
	we can consider an arbitrary labeled $\mathcal{X}$-seed
	$(Q, \{x_{\mu}\})$ which is $\mathcal{X}$-mutation equivalent 
	to an $\mathcal{X}$-seed coming from a reduced plabic graph
 of type $\pi_{k,n}$; we say that 
	$(Q, \{x_{\mu}\})$ also has type $\pi_{k,n}$.
  In this 
case we still have a (generalized) network chart, also called $\mathcal X$-cluster chart, which
is obtained from the network chart $\Phi_G$
	of \cref{network_param} by composing the $\mathcal{X}$-seed 
	mutations of 
\cref{e:XclusterMut}; and there is 
a corresponding network or $\mathcal X$-cluster  torus, see also \cref{s:twist}.
Abusing notation, we will continue to index such $\mathcal{X}$-seeds, network charts, and network tori by $G$ (rather than 
$(Q, \{x_{\mu}\})$, but 
will take care to indicate when we are working with an arbitrary $\mathcal{X}$-seed rather than one coming 
	from a plabic graph.  
	
	We will sometimes use the notation $\Phi_{G,\mathcal{X}}$ for $\Phi_G$
	and $\mathbb{T}_{G,\mathcal{X}}$ for $\mathbb{T}_G$
	to emphasize the $\mathcal{X}$-cluster structure. 
	Note that the formulas 
\eqref{e:XclusterMut}
	naturally define a birational map
	$\mathcal{M}_{\mathcal{X},\lambda}: \mathbb{T}_{G,\mathcal{X}} \dashrightarrow \mathbb{T}_{G',\mathcal{X}}$.
	The mutation 
	$\Mut_{\mathcal{X},\lambda}$ 
	of rational functions 
	defined earlier is just the 
	pullback of $\mathcal{M}_{\mathcal{X},\lambda}^{-1}$.

Recall from \cref{acyclic} that our conventions guarantee that $x_{\emptyset}$ never appears in the expressions
for flow polynomials (which are Pl\"ucker coordinates).  Since $\emptyset$ labels a frozen vertex of our quiver $Q(G)$,
when we perform arbitrary mutations on $Q(G)$ (possibly leaving the setting of plabic graphs),
our expressions for Pl\"ucker coordinates will continue to be independent of the 
parameter associated to the frozen vertex $\emptyset$.  
\end{remark}

\section{The twist map and general $\mathcal X$-cluster tori}\label{s:twist}

In this section we define the \emph{twist map} on 
$\openX$, and, following Marsh-Scott \cite{MarshScott} and Muller-Speyer \cite{MullerSpeyer}, we 
explain how it connects network and cluster parameterizations coming from the same plabic graph $G$. We then use the twist map to deduce that the regular function on a network torus $\mathbb T_G$ coming from a Pl\"ucker coordinate stays regular after an arbitrary sequence of $\mathcal X$-cluster mutations. Thus we will see that the $\mathcal X$-cluster tori embed into $\Xcirc$ where they glue together.

The \emph{twist map} is an automorphism of $\openX$ which allows one to relate 
cluster charts and network charts.  It was first defined in the context of double Bruhat
cells by Berenstein, Fomin, and Zelevinsky \cite{BFZ, BZ}, and subsequently defined
for $\openX$ by Marsh and Scott \cite{MarshScott}.  
  Shortly thereafter it was defined 
for all positroid varieties (including $\openX$) by Muller and Speyer \cite{MullerSpeyer}, using a slightly 
different convention; they also relate their version of the twist to 
the one from \cite{BFZ}, see \cite[Section A.4]{MullerSpeyer} and references
therein.  We follow the conventions and terminology
of \cite{MullerSpeyer} in this paper.

\begin{definition}
Let $A$ denote an $(n-k) \times n$ matrix representing an element of 
$\openX$.  Let $A_i$ denote the $i$th column of $A$, with indices
taken cyclically; that is, $A_{i+n} = A_i$.  
Let $\langle -,-\rangle$ denote the standard Euclidean inner product on $\C^{n-k}$.

The \emph{left twist} of $A$ is the 
	$(n-k) \times n$ matrix $\lefttwist(A)$ such that, for all $i$, the $i$th column
$\lefttwist(A)_i$ satisfies 
$$\langle \lefttwist(A)_i \ \vert \ A_i \rangle = 1,\text{ and }$$ 
$$\langle \lefttwist(A)_i \ \vert \ A_j \rangle = 0 \text{ if }A_j
\text{ is not in the span of }\{A_{j+1}, A_{j+2},\dots, A_{i-1}, A_{i}\}.$$ 
\end{definition}

\begin{theorem} [{\cite[Theorem 6.7 and Corollary 6.8]{MullerSpeyer}}]
The map $\lefttwist$ is a 
regular automorphism of $\openX$. 
\end{theorem}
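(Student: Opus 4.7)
The plan is to show in order that (a) $\lefttwist(A)$ is well-defined as a matrix, (b) the construction descends to a well-defined map on the Grassmannian, (c) the map sends $\openX$ into $\openX$, (d) it is regular on $\openX$, and finally (e) it has a two-sided inverse (the right twist $\righttwist$), which will give the automorphism statement.

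For step (a), I would fix $i$ and analyze the set $S_i \subseteq [n]$ of indices $j$ for which the condition $\langle \lefttwist(A)_i, A_j\rangle = 0$ (or $=1$, when $j=i$) is imposed. Starting at $i$ and reading the indices $j = i, i-1, i-2, \ldots$ cyclically, an index $j$ lies in $S_i$ exactly when $A_j$ is not in the span of $\{A_{j+1}, \ldots, A_{i-1}, A_i\}$; since the columns of $A$ cyclically contain a basis of $\C^{n-k}$, one sees that $|S_i| = n-k$ and that $\{A_j : j \in S_i\}$ is a basis. Hence the conditions determine $\lefttwist(A)_i$ uniquely. For (b), I would check that under a left action $A \mapsto gA$ with $g \in GL_{n-k}$ (the equivalence for row-span presentations used in \cite{MullerSpeyer}), the essential-index sets $S_i$ are unchanged, and the defining conditions transform in a way that produces $\lefttwist(gA)_i = (g^T)^{-1}\lefttwist(A)_i$, so the row span is preserved up to the same $GL_{n-k}$-action.

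For steps (c) and (d), the key is a \emph{twist formula} expressing the Pl\"ucker coordinates $P_J(\lefttwist(A))$ as explicit rational functions in the $P_J(A)$, with denominators that are monomials in the frozen Pl\"ucker coordinates $P_{\mu_0},\ldots,P_{\mu_{n-1}}$. I would derive this by solving, for each $i$, the $(n-k)\times(n-k)$ linear system that defines $\lefttwist(A)_i$ using Cramer's rule: each numerator becomes a minor of $A$ and each denominator is the Gram-type determinant $\det(A|_{S_i})$, which, for our cyclic choice of constraints, equals a Pl\"ucker coordinate of the form $\pm P_{\mu_i}$. Expanding a general maximal minor of $\lefttwist(A)$ along these columns then yields the formula and shows that each $P_J(\lefttwist(A))$ is a Laurent polynomial in the $P_{\mu_i}$ with polynomial numerator in all $P_J(A)$. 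Since $P_{\mu_i}\neq 0$ on $\openX$ by \cref{d:openX}, this immediately proves regularity on $\openX$; and specializing $J = \mu_i$ shows $P_{\mu_i}(\lefttwist(A)) \neq 0$, so the image lies in $\openX$.

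For step (e), I would define the right twist $\righttwist$ symmetrically (swapping the cyclic direction: replace the span $\{A_{j+1},\ldots,A_i\}$ by $\{A_{i},\ldots,A_{j-1}\}$ in the defining conditions) and perform exactly the same analysis, producing a regular map $\righttwist\colon \openX \to \openX$. To check $\righttwist \circ \lefttwist = \mathrm{id}$, I would fix $A$, set $B = \lefttwist(A)$, and verify that the columns $A_i$ satisfy the linear conditions characterizing $\righttwist(B)_i$. This is a direct computation using the biorthogonality built into the definition of $\lefttwist$, once one matches the essential-index sets of $B$ with those of $A$; the other composition is symmetric. The main obstacle will be step (d), specifically the careful combinatorial identification of the determinants arising from Cramer's rule with the frozen Pl\"ucker coordinates $P_{\mu_i}$, together with the bookkeeping of cyclic indices and signs; once this twist formula is in hand, the remaining steps are largely formal.
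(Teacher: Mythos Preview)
The paper does not give its own proof of this theorem: it is stated with a citation to \cite[Theorem 6.7 and Corollary 6.8]{MullerSpeyer} and used as a black box. So there is no in-paper argument to compare your plan against.

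That said, your outline is a reasonable sketch of how the Muller--Speyer argument goes. A few comments. In step~(a) you should make explicit that because $A\in\openX$, every cyclic interval of $n-k$ consecutive columns is a basis of $\C^{n-k}$; this forces $S_i$ to equal the cyclic interval $\{i-(n-k)+1,\dots,i\}$ exactly, not merely to have size $n-k$. This identification is what makes $\det(A|_{S_i})$ a frozen Pl\"ucker coordinate in step~(d) rather than an arbitrary minor. In step~(e), the phrase ``once one matches the essential-index sets of $B$ with those of $A$'' hides the substantive point: you need to know that $\lefttwist(A)$ again lies in $\openX$ so that \emph{its} essential-index sets are again cyclic intervals, and then check the biorthogonality carefully with the reversed cyclic direction. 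None of this is wrong, but it is where the actual work lives, and your plan correctly flags it as the main obstacle.
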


The inverse of $\lefttwist$, though we will not need it here, is called the right twist. 
The following theorem is a version of \cite[Theorem 7.1]{MullerSpeyer}. (It is also closely related to \cite[Theorem 1.1]{MarshScott}.) However, in \cite{MullerSpeyer}, the network tori 
were parameterized in terms of variables associated to {edges} rather than faces of $G$, so the notation looks different.

\begin{theorem}[{\cite[Theorem 7.1]{MullerSpeyer}}]
\label{thm:twist} 
There is an isomorphism 
$\leftdel=\leftdel_G$  of tori
	such that the following diagram commutes. 

	\[
\begin{tikzcd}
	\mathbb (\C^*)^{\PGhat} \arrow{r}{\leftdel} \arrow{d}{\Phi_{G,\mathcal A}} & \arrow{d}{\Phi_{G,\mathcal X}}(\C^*)^{\tilde{\mathcal{P}}_G\setminus\emptyset} \\
\openX \arrow{r}{\lefttwist} & \openX \\
\end{tikzcd}
\]

\end{theorem}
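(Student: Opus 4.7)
The theorem is essentially a reformulation of \cite[Theorem 7.1]{MullerSpeyer} in face-variable (rather than edge-weight) conventions, so the proof plan is to construct $\leftdel$ explicitly from the Marsh-Scott/Muller-Speyer formulas for Pl\"ucker coordinates of twisted matrices, and then check that commutativity and bijectivity drop out.

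The first step is to invoke the core computation: for a reduced plabic graph $G$ of type $\pi_{k,n}$ with the acyclic perfect orientation fixed in \cref{acyclic}, one computes that for each $\mu \in \widetilde{\mathcal{P}}_G$, the Pl\"ucker coordinate $P_\mu$ of $\lefttwist(\Phi_{G,\mathcal{X}}(\mathbf{x}))$ is an explicit Laurent \emph{monomial} $M_\mu(\mathbf{x})$ in the face variables $\mathbf{x} = (x_\nu)_{\nu \in \widetilde{\mathcal{P}}_G \setminus \{\emptyset\}}$. In \cite{MullerSpeyer} this monomial is expressed in terms of edge weights through a particular matching; one translates to face variables via the standard procedure assigning to each face the product of incident edge weights (gauge-fixed so that boundary edges at sinks carry trivial weight). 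Because the acyclic orientation forces $x_{\emptyset}$ not to appear (cf.\ \cref{acyclic} and \cref{rem:Xcluster}), the resulting monomials $M_\mu$ involve only the variables indexed by $\widetilde{\mathcal{P}}_G \setminus \{\emptyset\}$.

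The second step is to \emph{define} $\leftdel$ by these monomial formulas. Concretely, writing the ratios $M_\mu(\mathbf{x})/M_{\max}(\mathbf{x}) = \prod_{\nu} x_\nu^{a_{\mu\nu}}$ for $\mu \in \mathcal{P}_G$ gives an $N \times N$ integer exponent matrix $A = (a_{\mu\nu})$. The map $\leftdel$ is then the homomorphism of tori dual to the transpose of $A$: that is, on character lattices $\leftdel^*$ sends the coordinate $p_\mu$ on $(\C^*)^{\PGhat}$ to the Laurent monomial $\prod_\nu x_\nu^{a_{\mu\nu}}$. By construction one has the identity
\[
P_\mu\bigl(\lefttwist(\Phi_{G,\mathcal{X}}(\mathbf{x}))\bigr)\big/ P_{\max}\bigl(\lefttwist(\Phi_{G,\mathcal{X}}(\mathbf{x}))\bigr) = \leftdel^*(p_\mu)(\mathbf{x})
\]
for every $\mu \in \mathcal{P}_G$, which, together with the characterizing property $p_\mu \circ \Phi_{G,\mathcal{A}} = p_\mu$, gives commutativity of the square: the Pl\"ucker coordinates of $\lefttwist \circ \Phi_{G,\mathcal{A}}(\mathbf{p})$ and $\Phi_{G,\mathcal{X}} \circ \leftdel(\mathbf{p})$ agree, so they represent the same point of $\openX$.

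It remains to show $\leftdel$ is an \emph{isomorphism} of tori, equivalently that $A$ is unimodular. The main obstacle is precisely this combinatorial unimodularity; one convenient route is by induction using compatibility with mutation. Namely, I would first verify unimodularity directly for the rectangles seed $G = G_{k,n}^{\rect}$ of \cref{sec:Grectangles}, where the monomials $M_\mu$ can be read off combinatorially and the exponent matrix is triangular with $\pm 1$ diagonal. For an arbitrary reduced plabic graph of type $\pi_{k,n}$, \cref{rem:moves} lets us connect to $G_{k,n}^{\rect}$ by a sequence of square moves; each square move changes $\Phi_{G,\mathcal{A}}$ by the $\mathcal A$-mutation of \cref{lem:Amove} and $\Phi_{G,\mathcal{X}}$ by the $\mathcal X$-mutation of \cref{lem:mutXface}, so the exponent matrix changes by the standard tropicalized $\mathcal A$-$\mathcal X$ mutation rule, which preserves unimodularity (both mutations are $\Z$-invertible piecewise-linear transformations on the common lattice). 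Combining invertibility at the base case with preservation under mutation completes the proof. Alternatively, one can quote the version of Muller-Speyer \cite[Theorem~7.1]{MullerSpeyer} directly, since the conversion from their edge-weight torus to our face-variable torus is itself an isomorphism of tori, and then the conclusion is immediate.
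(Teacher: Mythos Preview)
The paper does not give its own proof of this statement: it is quoted verbatim as \cite[Theorem~7.1]{MullerSpeyer} and used as a black box (notably in the proof of \cref{p:XLaurent}). So there is nothing to compare your argument against; your final sentence, invoking Muller--Speyer directly together with the edge-to-face change of variables, is exactly how the paper treats it.

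That said, a couple of remarks on your sketch. First, there is a slip in the setup: you compute $P_\mu\bigl(\lefttwist(\Phi_{G,\mathcal X}(\mathbf x))\bigr)$, but the diagram asks for $\lefttwist\circ\Phi_{G,\mathcal A}$ on the left, so you want the Pl\"ucker coordinates of $\lefttwist(\Phi_{G,\mathcal A}(\mathbf p))$ to be monomials in the $\mathcal A$-variables $\mathbf p$; this is indeed the content of \cite[Theorem~7.1]{MullerSpeyer} (and of \cref{p:lefttwist} here, which records that the monomial exponents are given by an adjusted exchange matrix $\tilde B$). Second, your inductive argument for unimodularity via mutation is not quite right as stated: the tropicalized $\mathcal A$- and $\mathcal X$-mutations are piecewise linear, not linear, so they do not act on a single exponent matrix in a way that ``preserves unimodularity.'' The cleaner argument, and the one the paper implicitly relies on when it later says ``mutation preserves the rank of a matrix \cite[Lemma~3.2]{BFZ2},'' is that $\leftdel$ is given on characters by the matrix $\tilde B$ of \cref{p:lefttwist}, and that matrix mutation preserves rank; checking full rank at one seed (e.g.\ the rectangles seed, or any seed where Muller--Speyer's edge description makes it transparent) then suffices.
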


The left twist is closely related to the exchange matrix. 

\begin{prop}[{\cite[Corollary 5.11]{MullerSpeyer}\cite{Muller:PC}}]
	\label{p:lefttwist} 
	Let $G$ be a reduced plabic graph of type $\pi_{k,n}$, and $B$ the associated exchange matrix. Then there exists an adjusted exchange matrix $\tilde B=B+M$, where $M\in\Z^{\mathcal P_G\x\mathcal P_G}$ has the property that $M_{\mu,\nu}=0$ unless both $\mu$ and $\nu$ 
index frozen variables, such that the left twist is given by 
\[
(\leftdel)^*(x_\mu)= 
	\prod_{\nu\in \mathcal P_G} P_{\nu}^{\tilde B_{\mu,\nu}} \text{ for }\mu\in {\tilde{\mathcal{P}}_G\setminus \emptyset}  
,
\]
in terms of the $\mathcal X$- and $\mathcal A$-cluster charts associated to $G$.
In particular the pullback of the network parameter $x_\mu$, when $\mu$ is  mutable, is encoded in the original exchange matrix. 
\end{prop}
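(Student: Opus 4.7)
The plan is as follows. First, I would invoke Theorem~\ref{thm:twist} to conclude that $\leftdel = \leftdel_G$ is an isomorphism of tori $(\C^*)^{\tilde{\mathcal P}_G \setminus \{\Max\}} \overset{\sim}\to (\C^*)^{\tilde{\mathcal P}_G \setminus \emptyset}$. Hence the pullback of each $\mathcal{X}$-cluster coordinate $x_\mu$ is a Laurent monomial in the Pl\"ucker coordinates $\{P_\nu \mid \nu \in \mathcal{P}_G\}$; write $(\leftdel)^* x_\mu = \prod_\nu P_\nu^{c_{\mu,\nu}}$. The problem is to identify $c_{\mu,\nu}$ with $B_{\mu,\nu}$ whenever $\mu$ or $\nu$ is mutable, and then simply set $M := c - B$, which by construction will be supported on the frozen-frozen block.

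The conceptual content is that on the mutable part, the twist realizes the Fock--Goncharov ``ensemble map'' between the $\mathcal{A}$- and $\mathcal{X}$-cluster structures. To prove this concretely, I would first verify the formula at a single distinguished seed, say the rectangles plabic graph $G_{k,n}^{\rect}$ of Section~\ref{sec:Grectangles}. In this case the face partitions are rectangles, the quiver $Q(G_{k,n}^{\rect})$ is the standard grid, and one can compute both sides by hand: the twisted columns are determined by the dual basis conditions defining $\lefttwist$, and the flow polynomial formula of Theorem~\ref{thm:Talaska} expresses each $P_\nu$ as an explicit monomial in the network parameters. Matching exponents column by column yields the result for this base case and simultaneously pins down the frozen correction $M$.

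The second step is to propagate the formula by mutation. If $G \leadsto G'$ is a square move at a mutable face $\lambda$, then $\mathcal{X}$-cluster coordinates transform by Lemma~\ref{lem:mutXface} via \eqref{e:XclusterMut}, the $\mathcal{A}$-cluster coordinates transform via the three-term Pl\"ucker relation of Lemma~\ref{lem:Amove}, and the exchange matrix $B$ transforms by matrix mutation. The key input is that Theorem~\ref{thm:twist} applies simultaneously to $G$ and $G'$, giving compatibility of the twist with both cluster mutations. Writing the four cases of \eqref{e:XclusterMut} and substituting the inductive expression $x_\mu = \prod_\nu P_\nu^{\tilde B_{\mu,\nu}}$, the identity $(\leftdel)^* x'_\mu = \prod_\nu (P'_\nu)^{\tilde B'_{\mu,\nu}}$ follows from a direct Laurent-monomial manipulation; the matrix mutation rule is designed exactly so that this ensemble relation is preserved on the mutable block, while any frozen-frozen discrepancy is carried along unchanged into $M$. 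By Remark~\ref{rem:moves} all reduced plabic graphs of type $\pi_{k,n}$ are move-equivalent to $G_{k,n}^{\rect}$, and by Remark~\ref{rem:Xcluster} every $\mathcal{X}$-seed of type $\pi_{k,n}$ is reached from a plabic seed by further mutations, so the formula extends to all seeds.

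The main obstacle will be the base-case verification at $G_{k,n}^{\rect}$: one must compute the Muller--Speyer twist explicitly on the image of $\Phi^\vee_{G_{k,n}^{\rect}, \mathcal{A}}$ and match the resulting Pl\"ucker exponents against the entries of the grid quiver's exchange matrix. This is essentially a finite but delicate Pl\"ucker identity check. The appearance of the frozen correction $M$ is intrinsic rather than accidental: the quiver $Q(G)$ does not record arrows between two frozen vertices, and the normalization relating the boundary $\mathcal{X}$-coordinate $x_{\mu_i}$ to the Pl\"ucker coordinate $P_{\mu_i}$ introduces exponents along frozen-frozen pairs that $B$ cannot see, so these are what $M$ is forced to absorb.
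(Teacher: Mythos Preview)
The paper does not actually prove this proposition: it simply records that the mutable case is a restatement of \cite[Corollary~5.11]{MullerSpeyer} in exchange-matrix language, and that the frozen correction $M$ is a technical adjustment attributed to a personal communication \cite{Muller:PC}. So there is no ``paper's own proof'' to match; your proposal is instead an independent argument for a cited result.

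Your strategy---verify at $G_{k,n}^{\rect}$ and propagate by mutation---is sound, and in fact the mutation step you sketch is precisely the content of Proposition~\ref{p:HWilliams} (the ensemble map $p_M$ commutes with $\mathcal A$- and $\mathcal X$-mutation, with $M$ held fixed on the frozen block). Combined with Theorem~\ref{thm:twist}, which gives $\leftdel_{G'}=\mathcal M_{\mathcal X,\lambda}\circ\leftdel_G\circ\mathcal M_{\mathcal A,\lambda}^{-1}$, this reduces the whole statement to the base-case computation. Note that your argument actually proves slightly more than the proposition asserts: it shows that a \emph{single} frozen correction $M$ works uniformly for all plabic graphs of type $\pi_{k,n}$, not merely that some $M$ exists for each $G$.

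By contrast, the Muller--Speyer proof of \cite[Corollary~5.11]{MullerSpeyer} does not proceed by induction on moves. They work directly with an arbitrary reduced plabic graph, using their edge-weight parameterization and the explicit description of the twist in terms of downstream wedges, and read off the exponents from the local combinatorics around each face. That approach avoids the base-case Pl\"ucker check entirely, at the cost of setting up the edge-weight formalism; your approach trades that for a single explicit computation at the rectangles graph plus the general cluster-theoretic mutation compatibility. Both are legitimate; yours is perhaps more in the spirit of the present paper, since Proposition~\ref{p:HWilliams} is invoked anyway in the proof of Proposition~\ref{p:XLaurent}.
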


For mutable $\mu$ this proposition is simply \cite[Corollary 5.11]{MullerSpeyer}, restated using the exchange matrix. The adjustment required for frozen $\mu$ (choice of $M$) is technical and was left out from the paper \cite{MullerSpeyer} on those grounds, \cite{Muller:PC}.

\begin{example}
        Let us continue our running example using the plabic graph $G$ from
\cref{G25-partitions}.
We can express the element
	$\Phi_{G,\mathcal{A}}((P_{\mu})_{\mu\in \PGhat})$ as the matrix
$$A = \kbordermatrix{
& 1 & 2 & 3 & 4 & 5 \cr
	1 & 1 & 0 & -P_{\ydiagram{2,2}} 
	&  -(P_{\ydiagram{2,2}} P_{\emptyset} + P_{\ydiagram{2}} P_{\ydiagram{1,1}})/{P_{\ydiagram{1}}} 
	& -P_{\ydiagram{2}} \cr
	2 & 0 & 1  & ({P_{\ydiagram{1}} + P_{\ydiagram{3}} P_{\ydiagram{2,2}}})/{P_{\ydiagram{2}}} &  
	({P_{\ydiagram{1}} P_{\emptyset} + P_{\ydiagram{3}} P_{\ydiagram{2,2}} P_{\emptyset} + P_{\ydiagram{3}} P_{\ydiagram{2}} P_{\ydiagram{1,1}}})/{P_{\ydiagram{1}} P_{\ydiagram{2}}}
	& P_{\ydiagram{3}}
}. $$
	To get this matrix, we simply express the entries of a (row-reduced) matrix representing 
	an element of $\openX$ in terms of the set of Pl\"ucker coordinates  in the
	$\mathcal{A}$-cluster of $G$.

If we apply the left twist to $A$, we obtain the matrix 
	$$\lefttwist(A) 
	= \kbordermatrix{
& 1 & 2 & 3 & 4 & 5 \cr
	1 & 1 & 0 & -1/P_{\ydiagram{2,2}} 
	&  -(P_{\ydiagram{1}}+P_{\ydiagram{3}}P_{\ydiagram{2,2}})/P_{\ydiagram{2}}P_{\ydiagram{1,1}}
	&  
	-({P_{\ydiagram{1}} P_{\emptyset} + P_{\ydiagram{3}} P_{\ydiagram{2,2}} P_{\emptyset} + P_{\ydiagram{3}} P_{\ydiagram{2}} P_{\ydiagram{1,1}}})/{P_{\emptyset} P_{\ydiagram{1}} P_{\ydiagram{2}}}
	\cr
	2 & P_{\ydiagram{2}}/P_{\ydiagram{3}}  & 1  & 0 &  
	-P_{\ydiagram{2,2}}/P_{\ydiagram{1,1}}	
	& -(P_{\ydiagram{2,2}} P_{\emptyset} + P_{\ydiagram{2}} P_{\ydiagram{1,1}})/P_{\emptyset} P_{\ydiagram{1}}
}. $$

	Meanwhile, the adjusted exchange matrix $\tilde{B}$ is given by \cref{table:BM}.
Using 
	\cref{p:lefttwist}, we compute 
	\begin{align*}
		(\leftdel)^*(x_{\ydiagram{1}})&= 
		\frac{P_{\ydiagram{1,1}}P_{\ydiagram{2}}}{P_{\emptyset} P_{\ydiagram{2,2}}} &&
		(\leftdel)^*(x_{\ydiagram{2}})= \frac{P_{\ydiagram{2,2}} P_{\ydiagram{3}}}{P_{\ydiagram{1}}} &&
		(\leftdel)^*(x_{\ydiagram{3}})= \frac{P_{\ydiagram{3}}}{P_{\ydiagram{2}}} \\
		(\leftdel)^*(x_{\ydiagram{3,3}})&= \frac{P_{\ydiagram{2}}}{P_{\ydiagram{2,2}} P_{\ydiagram{3}}} &&
		(\leftdel)^*(x_{\ydiagram{2,2}})= \frac{P_{\ydiagram{2,2}}P_{\ydiagram{1}}}{P_{\ydiagram{2}} P_{\ydiagram{1,1}}}  &&
		(\leftdel)^*(x_{\ydiagram{1,1}})= \frac{P_{\ydiagram{1,1}}}{P_{\ydiagram{1}}}
	\end{align*}
If we now substitute the 
expressions for the $(\leftdel)^*(x_\mu)$ into the matrix from 
\cref{ex:matrix}, we obtain a matrix
	whose Pl\"ucker coordinates agree with those of 
	$\lefttwist(A)$, illustrating that the diagram from 
\cref{thm:twist} commutes.

\begin{center}
 \renewcommand{\arraystretch}{1.5}
    \begin{table}[h]
	    \begin{tabular}{| c | p{.7cm} | p{.7cm} | p{.7cm} | p{.7cm} | p{.7cm} | p{.7cm} | p{.7cm}| }
            \hline
        & $\ydiagram{1}$ & $\ydiagram{2}$ & $\ydiagram{3}$ 
	     & $\ydiagram{3,3}$ & $\ydiagram{2,2}$ & $\ydiagram{1,1}$ & $\emptyset$ \\
                \hline
		$\ydiagram{1}$ & $0$ & $1$ & $0$ & $0$ & $-1$ & $1$ & $-1$\\ \hline
		$\ydiagram{2}$ & $-1$ & $0$ & $1$ & $-1$ & $1$ & $0$& $0$ \\ \hline
		    $\ydiagram{3}$ & $0$ & $-1$ & $\mathbf{1}$ & $\mathbf{0}$ & $\mathbf{0}$ & $\mathbf{0}$ & $\mathbf{0}$ \\ \hline
		    $\ydiagram{3,3}$ & $0$ & $1$ & $\mathbf{-1}$ & $\mathbf{0}$ & $\mathbf{-1}$ & $\mathbf{0}$ & $\mathbf{0}$\\ \hline
		    $\ydiagram{2,2}$ & $1$ & $-1$ & $\mathbf{0}$ & $\mathbf{0}$ & $\mathbf{1}$ & $\mathbf{-1}$ & $\mathbf{0}$\\ \hline
		    $\ydiagram{1,1}$ & $-1$ & $0$ & $\mathbf{0}$ & $\mathbf{0}$ & $\mathbf{0}$ & $\mathbf{1}$ & $\mathbf{0}$ \\ \hline
		    $\emptyset$ & $1$ & $0$ & $\mathbf{0}$ & $\mathbf{0}$ & $\mathbf{0}$ & $\mathbf{0}$ & $\mathbf{0}$\\ \hline
                \end{tabular}
\vspace{0.2cm}
		\caption{The adjusted exchange matrix $\tilde{B} = B+M$.  The entries $M_{\mu, \nu}$ where both $\mu$ and $\nu$ index 
		frozen variables are displayed in bold.}
                \label{table:BM}
        \end{table}
\end{center}


\end{example}

Let $\mathcal X$ and $\mathcal A$ denote the spaces obtained by gluing together all of the $\mathcal X$-cluster tori, respectively, the $\mathcal A$-cluster tori, for varying seeds, using the rational maps given by mutation. From the work of Scott~\cite{Scott} we know that we have an embedding $\mathcal A\hookrightarrow \Xcirc$. Our goal is to prove the analogous result for $\mathcal X$. 

Let $\mathcal X^{\operatorname{net}}$ be the union of the network tori (associated to plabic graphs) glued together via the mutation maps. Recall that the network parameterizations define an embedding $\mathcal X^{\operatorname{net}}\hookrightarrow \Xcirc$.

\begin{prop}\label{p:XLaurent}
The map $\mathcal X^{\operatorname{net}}\hookrightarrow \Xcirc$ extends to an embedding
$
\mathcal X\hookrightarrow \Xcirc.
$
	Moreover, any Pl\"ucker coordinate $P_{\lambda}$, when expressed in terms of a general $\mathcal X$-cluster $G$, is 
	a Laurent polynomial in $\TB(G)$.
\end{prop}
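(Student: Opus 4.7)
The plan is to use the twist map together with the known $\mathcal A$-side cluster structure to prove both assertions, extending from plabic graph seeds to general $\mathcal X$-seeds via a compatibility-with-mutation argument.

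First, for a plabic graph seed $G_0$, \cref{thm:twist} and \cref{p:lefttwist} exhibit the network chart as $\Phi_{G_0,\mathcal X} = \lefttwist \circ \Phi_{G_0,\mathcal A} \circ \leftdel_{G_0}^{-1}$, where $\leftdel_{G_0}$ is the torus isomorphism given by explicit Laurent monomials in the adjusted exchange matrix $\tilde B_{G_0} = B_{G_0} + M$. Since $\lefttwist$ is a regular automorphism of $\Xcirc$ and $\Phi_{G_0,\mathcal A}$ is an open embedding into $\Xcirc$ by the $\mathcal A$-cluster structure of Scott, this presents $\mathbb T_{G_0,\mathcal X}$ as an open subset of $\Xcirc$. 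For any Pl\"ucker coordinate $P_\lambda$, the function $\lefttwist^*(P_\lambda)$ is regular on $\Xcirc$, so $\Phi_{G_0,\mathcal A}^*(\lefttwist^*(P_\lambda))$ is a Laurent polynomial in $\PCGA$ by the Laurent phenomenon; the monomial substitution $(\leftdel_{G_0}^{-1})^*$ then converts this into a Laurent polynomial in $\TB(G_0)$.

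Second, to treat an arbitrary $\mathcal X$-seed $G$ mutation-equivalent to a plabic graph seed, I define $\leftdel_G: \mathbb T_{G,\mathcal A} \to \mathbb T_{G,\mathcal X}$ at every seed by the same monomial formula applied to the mutated adjusted exchange matrix; since $M$ is supported on frozen-to-frozen entries, it is unchanged by any mutation at a mutable vertex, so $\tilde B_G$ is well-defined. The central technical claim is that for any mutable $\nu$ and the mutated seed $G'$,
\begin{equation*}
\leftdel_{G'} \circ \mathcal M_{\mathcal A,\nu} = \mathcal M_{\mathcal X,\nu} \circ \leftdel_G
\end{equation*}
as rational maps. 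This is the standard Fock--Goncharov intertwining of the $p$-map $\mathcal A \to \mathcal X$ with mutations, verified by directly comparing \eqref{e:AclusterMut} with \eqref{e:XclusterMut} under the monomial pullback encoded by $\tilde B$, with the $M$-correction causing no issues since it sits in the frozen block. Granting this, $\Phi_{G,\mathcal X}$ (defined by iterating $\mathcal X$-mutations from $\Phi_{G_0,\mathcal X}$) factors as $\lefttwist \circ \Phi_{G,\mathcal A} \circ \leftdel_G^{-1}$; each factor is regular in the appropriate sense, so this is an open embedding of $\mathbb T_{G,\mathcal X}$ into $\Xcirc$. The intertwining ensures consistency across seeds, yielding a global embedding $\mathcal X \hookrightarrow \Xcirc$ extending the network embedding, and the three-step argument of the plabic case then shows that $\Phi_{G,\mathcal X}^*(P_\lambda)$ is a Laurent polynomial in $\TB(G)$.

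The main obstacle is the intertwining claim: verifying that $\leftdel$, viewed as a monomial map determined by the adjusted exchange matrix, commutes with mutation, with proper bookkeeping of the frozen correction $M$. This is a concrete but slightly delicate computation with the explicit mutation formulas, and one must also check that the choice of $M$ is compatible across the mutation class so that the formula for $\leftdel_G$ is unambiguous for every seed.
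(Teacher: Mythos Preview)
Your approach is essentially the same as the paper's. The paper packages your ``central technical claim'' (the intertwining $\leftdel_{G'}\circ\mathcal M_{\mathcal A,\nu}=\mathcal M_{\mathcal X,\nu}\circ\leftdel_G$) as \cref{p:HWilliams}, quoting it from the literature rather than reproving it, and then runs the same commutative-diagram argument: the $\mathcal A$-side embeds into $\Xcirc$ by Scott, the twist is an automorphism of $\Xcirc$, and the monomial map $p_M$ intertwines the two sides seed by seed, forcing the $\mathcal X$-side to embed as well.

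One point you should make explicit: the invertibility of $\leftdel_G$ at a \emph{general} seed is not automatic from your setup. You know $\leftdel_{G_0}$ is invertible at a plabic seed by \cref{thm:twist}, but to pass to an arbitrary seed you need that the matrix $\tilde B_G$ remains of full rank after mutation. The paper handles this by invoking the fact that mutation preserves the rank of the exchange matrix (citing \cite[Lemma~3.2]{BFZ2}); you should add this ingredient, since otherwise your factorization $\Phi_{G,\mathcal X}=\lefttwist\circ\Phi_{G,\mathcal A}\circ\leftdel_G^{-1}$ is not justified.
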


\begin{remark} The second statement of \cref{p:XLaurent} is obvious when $G$ is a plabic graph (indeed, the network 
	expansions are even polynomial).  However, it is 
	not obvious when $G$ is a general $\mathcal X$-cluster because there is no 
	``Laurent phenomenon" for $\mathcal X$-cluster varieties; 
 the mutation formulas of 
\cref{e:XclusterMut}
	 are rational but not Laurent.
\end{remark}

We recall a result about twists, generalizing a construction from \cite{GSV03} and \cite{FG}, which applies in our setting as follows.

\begin{prop}[{\cite[Proposition~4.7]{HWilliams:KMcluster}}]\label{p:HWilliams}
Fix a seed $G$ with exchange matrix $B$. Suppose $M\in\Z^{\mathcal P_G\x\mathcal P_G}$ satisfies that $M_{\mu,\nu}=0$ unless both $\mu$ and $\nu$ 
index frozen variables. Let $\tilde B=B+M$. Let us denote by $\{X_\mu\}$ the $\mathcal X$-cluster variables associated to $G$, and by $\{A_\mu\}$ the $\mathcal A$-cluster variables associated to $G$. Consider the map $p_M^G$ from the $\mathcal X$-cluster torus $\mathbb T_{G,\mathcal{X}}$ 
to the $\mathcal A$-cluster torus $\mathbb T_{G,\mathcal{A}}$
associated to $G$ defined by the formula
\[
(p_M^G)^*(X_\mu)= \prod_{\nu\in \mathcal P_G} A_{\nu}^{\tilde B_{\mu,\nu}}.
\]
This map is compatible with mutation and extends to a regular map $p_M:\mathcal A\to \mathcal X$. 
In particular, whenever $G$ and $G'$ are adjacent seeds related by mutation at $\nu$, 
we have a commutative diagram
 \[
\begin{tikzcd}
	\mathbb T_{G,\mathcal{A}} \arrow{r}{p_M^G} \arrow[dashed]{d}{\mathcal{M}_{\mathcal{A}, \nu}} & \arrow[dashed]{d}{\mathcal{M}_{\mathcal{X}, \nu}}\mathbb T_{G,\mathcal{X}} \\
\mathbb T_{G',\mathcal A} \arrow{r}{p_M^{G'}} & \mathbb T_{G', \mathcal X} \\
\end{tikzcd}
\]
where $p_M^{G'}$ is defined in terms of the matrix 
$\Mut_{\nu}(B)+M$.
\end{prop}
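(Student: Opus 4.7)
The plan is to prove the proposition in two stages: first establish the local statement that the displayed square commutes at a single mutation, and then deduce the global statement that the torus maps glue into a regular map $p_M \colon \mathcal A \to \mathcal X$.

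The global conclusion is essentially formal from the local one. The cluster variety $\mathcal A$ is built by gluing the tori $\mathbb T_{G,\mathcal A}$ along the birational maps $\mathcal M_{\mathcal A, \nu}$, and similarly for $\mathcal X$. If the square commutes for every seed $G$ and every mutable direction $\nu$, then the family of torus maps $\{p_M^G\}$ is compatible with the gluing data, and hence descends to a single morphism $p_M \colon \mathcal A \to \mathcal X$. Regularity is automatic because on each chart $\mathbb T_{G,\mathcal A}$ the map $p_M^G$ is given by the Laurent monomials $\prod_\nu A_\nu^{\tilde B_{\mu,\nu}}$, which are regular functions on the torus. So the heart of the proof is the commutativity of the square.

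After taking pullbacks, commutativity of the square is the identity $(\mathcal M_{\mathcal A,\nu})^* \circ (p_M^{G'})^* = (p_M^G)^* \circ (\mathcal M_{\mathcal X,\nu})^*$, which it suffices to check on each coordinate function $X'_\mu$ of $\mathbb T_{G',\mathcal X}$. The easy case is $\mu = \nu$: since $\nu$ is mutable, the assumption on $M$ forces $M_{\nu,\lambda} = 0$ for all $\lambda$, and mutation of the exchange matrix flips the sign of the $\nu$-th row, so $\tilde B'_{\nu,\lambda} = -\tilde B_{\nu,\lambda}$. Combined with $A'_\lambda = A_\lambda$ for $\lambda \neq \nu$ and $\tilde B_{\nu,\nu} = 0$, this gives $(p_M^{G'})^*(X'_\nu) = \prod_\lambda A_\lambda^{-\tilde B_{\nu,\lambda}} = ((p_M^G)^*(X_\nu))^{-1}$, which matches $(p_M^G)^*(\mathcal M_{\mathcal X,\nu}^* X'_\nu)$ since $\mathcal X$-mutation sends $X_\nu \mapsto 1/X_\nu$.

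The main obstacle, and the genuinely combinatorial step, is the case $\mu \neq \nu$. Here one must verify that the monomial $\prod_\lambda (A'_\lambda)^{\tilde B'_{\mu,\lambda}}$, after substituting the $\mathcal A$-exchange relation $A_\nu A'_\nu = \prod_{\kappa} A_\kappa^{[B_{\kappa,\nu}]_+} + \prod_{\kappa} A_\kappa^{[-B_{\kappa,\nu}]_+}$ for $A'_\nu$, coincides with $(p_M^G)^*$ applied to $X_\mu(1+X_\nu^{\epsilon})^{\epsilon B_{\nu,\mu}}$, the $\mathcal X$-mutation of $X_\mu$ (with the appropriate sign $\epsilon \in \{\pm 1\}$). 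The input is the Fomin--Zelevinsky matrix mutation formula $B'_{\mu,\lambda} = B_{\mu,\lambda} + \tfrac12(|B_{\mu,\nu}|B_{\nu,\lambda} + B_{\mu,\nu}|B_{\nu,\lambda}|)$ together with the fact that $M_{\mu,\nu} = M_{\nu,\lambda} = 0$ since $\nu$ is mutable, so $\tilde B'_{\mu,\nu} = -\tilde B_{\mu,\nu}$ and the correction term to $\tilde B'_{\mu,\lambda}$ depends only on $B$ and not on $M$. Splitting the exchange binomial as $A_\nu A'_\nu = A_\nu A'_\nu \cdot \frac{\prod A_\kappa^{[B_{\kappa,\nu}]_+}}{A_\nu A'_\nu}(1 + \prod A_\kappa^{-B_{\kappa,\nu}})$ and comparing exponents of each $A_\lambda$ on both sides reduces the check to the standard separation-of-additions identity relating $\mathcal A$- and $\mathcal X$-mutations; this is a routine but careful sign-tracking computation, done for instance in the proof of \cite[Proposition~4.7]{HWilliams:KMcluster}. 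Once both cases are verified, the local square commutes and the global map $p_M$ is obtained by gluing.
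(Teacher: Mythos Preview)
The paper does not prove this proposition; it is quoted directly from \cite[Proposition~4.7]{HWilliams:KMcluster} and used as a black box in the proof of \cref{p:XLaurent}. So there is no ``paper's own proof'' to compare against. Your sketch is the standard argument one finds for this kind of $p$-map compatibility (as in Fock--Goncharov, Gekhtman--Shapiro--Vainshtein, or the cited reference): reduce to a single mutation, observe that since $\nu$ is mutable the row and column of $M$ indexed by $\nu$ vanish, and then verify the two cases $\mu=\nu$ and $\mu\neq\nu$ by direct computation with the matrix-mutation and exchange formulas.

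One small point: your displayed factorisation of the exchange binomial is written in a confusing, almost circular way. What you want is simply
\[
A'_\nu \;=\; A_\nu^{-1}\prod_{\kappa} A_\kappa^{[B_{\kappa,\nu}]_+}\Bigl(1+\prod_{\kappa} A_\kappa^{-B_{\kappa,\nu}}\Bigr),
\]
together with the observation that $(p_M^G)^*(X_\nu)=\prod_\kappa A_\kappa^{\tilde B_{\nu,\kappa}}=\prod_\kappa A_\kappa^{-B_{\kappa,\nu}}$ by skew-symmetry and the vanishing of the $\nu$-row of $M$. Substituting this into $(A'_\nu)^{\tilde B'_{\mu,\nu}}$ with $\tilde B'_{\mu,\nu}=-B_{\mu,\nu}$ and comparing with the $\mathcal X$-mutation formula for $X'_\mu$ is then the routine check you describe. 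With that correction the sketch is sound.
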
 

We are now in a position to prove~\cref{p:XLaurent}.
 
\begin
{proof} [{Proof of~\cref{p:XLaurent}}]

The map $\mathcal X^{\operatorname{net}}\hookrightarrow \Xcirc$ can be extended to a rational map $\mathcal X\to \Xcirc$ using mutation. By the combination of \cref{p:HWilliams} and \cref{p:lefttwist} we have the commutative diagram

 \[
\begin{tikzcd}
	\CA \arrow{r}{p_M} \arrow[hookrightarrow]{d} & \CX \arrow[dashed]{d} \\
\Xcirc \arrow{r}{\lefttwist} & \Xcirc \\
\end{tikzcd}.
\]

Here the left hand vertical map is the embedding of \cite{Scott} (which is equivalent to the assertion 
that Pl\"ucker coordinates 
are Laurent polynomials in the variables of any $\mathcal{A}$-cluster), while the right hand vertical map is so far only known to be rational. 
By \cref{p:lefttwist}, 
we have that on a cluster torus associated to a plabic graph $G$, the map $p_M^G$ is 
given by $\leftdel = [\tilde{B}_{\mu, \nu}]$, and 
by \cref{thm:twist} it 
 is invertible. Since mutation preserves the rank of a matrix
\cite[Lemma 3.2]{BFZ2}, the global map $p_M:\mathcal A\to \mathcal X$ is also invertible. Now the diagram implies that the vertical map on the right must be an embedding, just
like the map on the left.   This implies that the Pl\"ucker coordinates are regular functions on each of the $\mathcal{X}$-cluster tori, and thus given by Laurent polynomials.
\end{proof}

\section{The Newton-Okounkov  body $\NO_{G}(D)$}\label{sec:NO}

In this section we define the {\it Newton-Okounkov  body} $\NO_{G}(D)$
 associated to an ample divisor in $\mathbbX$ of the form $D=r_1 D_1+\cdots +r_n D_n$, see \cref{s:positroid},  along with a choice of transcendence basis $\TBG$ of $\C(\mathbbX)$, see Definition~\ref{d:networktorus} and Example~\ref{Ex:Transc}. The theory of Newton-Okounkov bodies was  developed  by Kaveh and Khovanskii, and Lazarsfeld and Mustata, see
\cite{KavehKhovanskii,KK2, LazarsfeldMustata},  building on Okounkov's original construction \cite{Ok96, Okounkov:symplectic,Ok03} which was inspired also by 
a formula for moment polytopes due to Brion~\cite{Brion:moment}. Our exposition below mainly follows \cite{KavehKhovanskii}. 
 A key property of a Newton-Okounkov  body associated to a divisor $D$ is that its Euclidean volume encodes the volume of $D$, i.e.\ the asymptotics of $\dim(H^0(\X,\mathcal O(rD)))$ as $r\to\infty$. 
In our setting we will see that the lattice points of $\NO_G(rD)$ count the dimension of the space of sections $H^0(\X, \mathcal O(rD))$  also for all finite $r$. 

Fix a reduced plabic graph $G$ or a labeled $\mathcal{X}$-seed $\Sigma^{\mathcal X}_G$ of type $\pi_{k,n}$.
To define the Newton-Okounkov  body $\NO_G(D)$  we first construct a valuation $\val_G$ on $\C(\mathbbX)$ from the transcendence basis $\TBG$.

\begin{defn}[The valuation $\val_G$] \label{de:val}
Given a general $\mathcal{X}$-seed  $\Sigma^{\mathcal X}_G$ of type $\pi_{k,n}$, 
we fix a total order $<$ on the parameters $x_{\mu} \in \TBG$. This order extends 
 to a term order on monomials in the parameters $\TBG$ 
which is lexicographic with respect to $<$. For example if $x_\mu<x_\nu$ then $x_{\mu}^{a_1} x_{\nu}^{a_2}<x_{\mu}^{b_1} x_{\nu}^{b_2}
$ if either $a_1<b_1$, or if $a_1=b_1$ and $a_2<b_2$.
We use the multidegree of the lowest degree summand to define a valuation
\begin{equation} \label{eq:valuation}
\val_{G}:\C(\mathbbX)\setminus\{0\}\to \Z^{{\mathcal P}_G}.
\end{equation}
Explicitly, let $f$ be  a polynomial in the  Pl\"ucker coordinates 
for $\mathbbX$. We use  \cref{thm:Talaska}, Definition~\ref{d:networktorus}, and \cref{p:XLaurent} 
to write $f$ uniquely as a Laurent polynomial in $\TBG$.
We then choose the  lexicographically minimal term $\prod_{\mu\in{\mathcal P}_G}x_\mu^{a_\mu}$ and define $\val_G(f)$ to be the associated exponent vector $(a_\mu)_\mu\in \Z^{{\mathcal P}_G}$.
In general for
$(f/g) \in \C(\mathbbX) \setminus\{0\}$ (here $f,g$ are polynomials
in the Pl\"ucker coordinates), the valuation is defined by
$\val_G(f/g) = \val_G(f) - \val_G(g)$.
Note however 
	that we will only  be applying $\val_G$ to functions whose $\mathcal X$-cluster expansions are 
	Laurent.  
\end{defn}

\begin{defn}[The Newton-Okounkov body $\NO_G(D)$]\label{def:NObody}
Let $D\subset \mathbbX$ be a divisor in the complement of $\openX$, that is we have $D=\sum r_iD_i$, compare with~\cref{s:positroid}. Denote by $L_{rD}$, the subspace of $\C(\X)$ given by
\[
L_{rD}:=H^0(\mathbbX,\mathcal O(rD)). 
\]
By abuse of notation we write $\val_G(L)$ for $\val_G(L \setminus \{0\})$. We define the {\it Newton-Okounkov  body} associated to $\val_G$ and the divisor $D$ by 
\begin{equation}\label{e:NOviaNOGr}
\NO_G(D)=
\overline{\operatorname{ConvexHull}
	\left(\bigcup_{r=1}^{\infty} \frac{1}{r} 
\val_G(L_{rD})\right)}.
\end{equation}
If we choose $D=D_{n-k}$,
we will refer to $\NO_G(D)$ simply as $\Delta_G$.
\end{defn}
\begin{defn}
For any subset $\mathcal S$ of $\R^{\mathcal P_G}$ we denote its subset of lattice points by $\Lattice(\mathcal S):=\mathcal S\cap\Z^{\mathcal P_G}$.
\end{defn}

\begin{rem}[Toy example]\label{e:polytope} 
Suppose $\PDelta \subset \R^m$ is a convex $m$-dimensional polytope. Associated to $\PDelta$ consider the set $\Lattice(r\PDelta)$ of lattice points in the dilation $r\PDelta$. Then we observe that
\begin{equation}
\PDelta=
\overline{\operatorname{ConvexHull}
\left(\bigcup_r \frac{1}{r} 
\Lattice(r\PDelta)\right)}.
\end{equation}
In particular if for a polytope $\PDelta\in\R^{\mathcal P_G}$ the lattice points $\Lattice(r\PDelta)$ coincide with  $\val_G(L_{rD})$ from \cref{def:NObody}, then it immediately follows that $\PDelta$ is the Newton-Okounkov  body $\NO_G(D)$. 
\end{rem}

\begin{rem}[The special case  of $D_{n-k}$] 
We will often choose our divisor $D$ in $\mathbbX$ to be
$D_{n-k}=
\{P_{\Max}=0\}$. 
We note that explicitly $H^0(\mathbbX,\mathcal O(rD_{n-k}))$ is the linear subspace of $\C(\mathbbX)$ described as follows
\begin{equation}\label{e:projnormal}
H^0(\mathbbX,\mathcal O(rD_{n-k})) =L_{r}:=\left<\frac{M}
{(P_{\Max})^r} \ \vert \  
M\in\mathcal{M}_{r}%
\right>,
\end{equation} 
where $\mathcal{M}_r$ is the set of all degree $r$ monomials in the 
Pl\"ucker coordinates. 
Recall that $H^0(\mathbbX,\mathcal O(rD_{n-k}))$  is naturally an irreducible representation of $GL_n(\C)$, namely it is isomorphic to $V_{r\omega_{n-k}}^*$.
	The identity \eqref{e:projnormal} says that $\X$ is {\it projectively normal} and follows from representation theory, see \cite[Section 2]{GrossWallach}. Namely, restriction of sections gives a nonzero equivariant map of $GL_n(\C)$-representations, $H^0(\mathbb P(\bigwedge^{n-k}\C^n),\mathcal O(r)) \to 
 H^0(\mathbbX,\mathcal O(rD_{n-k})) $, which must be surjective since its target is irreducible.
 
 For simplicity of notation we will usually write $\val_G(M)$ for $\val_G(M/P_\Max^r)$. 
Thus we write $\val_G(P_\lambda)$ instead of $\val_G(P_\lambda/P_\Max)$ and talk about the valuation of a Pl\"ucker coordinate.
\end{rem}

Starting from the divisor $D_{n-k}$ we introduce a set of lattice polytopes $\conv_G^r$.

\begin{defn}[The polytope $\conv_G^r$]
\label{d:NOrG} For each reduced plabic graph $G$ of type $\pi_{k,n}$ and related valuation $\val_G$ we define lattice polytopes $\conv_G^r$ in $\R^{\mathcal P_{G}}$ by
\[
\conv_G^r:=\operatorname{ConvexHull}\left(\val_G(L_r)\right),
\]
for $L_r$ as in \eqref{e:projnormal}. 
When $r=1$, we also write $\conv_G:=\conv_G^1$.
\end{defn}

The lattice polytope $\conv_G$ (resp. $\conv_G^r$) 
is what $\val_G$ associates to the divisor $D$ (resp. $rD$) directly, without taking account of the asymptotic behaviour of the powers of $\mathcal O(D)$. 
Since we will fix $D=D_{n-k}$ 
when considering the polytopes $\Conv_G^r$, we don't indicate the dependence on $D$
in the notation $\conv_G^r$.

\begin{remark}
Note that we used a total order $<$ on the parameters in order to 
define $\val_G$, and different choices give slightly differing valuation maps. However 
$\NO_G$ and the polytopes ${\conv}_G^r$, will turn out not to depend on our choice of total order, and that choice will not enter into our proofs. 
\end{remark}

\begin{rem}[Valuations associated to flags] \label{r:flagval} The valuations used in Okounkov's original construction come from flags of subvarieties $X\supset X_1\supset \cdots X_{N-1}\supset X_N=\{pt\}$, see also \cite[Section 1.1]{LazarsfeldMustata}. Our valuations $\val_G$ definitely do not all come from flags. 
For example in the case of the rectangles cluster, if the ordering on the $x_\mu$ is not compatible with inclusion of Young diagrams, then our valuation cannot come from a flag. 
In general, our definition can be interpreted as choosing, via a network chart, a birational isomorphism of $\mathbbX$ with $\C^N$, and then taking a standard flag of linear subspaces in $\C^N$.  
\end{rem}

We immediately point out some fundamental properties of the sets $\val_G(L_{r})$ defining our polytopes $\conv_G^r$. The first property is a version of the key lemma from \cite{Ok96}. It says, in the terminology of \cite{KavehKhovanskii} 
(see \cref{d:1dimleaves}),  
that the valuation $\val_G$ has one-dimensional leaves.

\begin{lem}[Version of {\cite[Lemma from Section 2.2]{Ok96}} ]
\label{l:okounkovlemma}
Consider $\C(\mathbbX)$ with the valuation $\val_G$ from Definition~\ref{de:val}.  For  any finite-dimensional linear subspace $L$ of $\C(\mathbbX)$,
 the cardinality of the image $\val_G(L)$ equals the dimension of $L$. In particular, the cardinality of the set $\val_G(L_r)$ equals
 the dimension of the vector space $L_r$ from \eqref{e:projnormal}, namely it is the dimension of the representation $V_{r\omega_{n-k}}$ of $GL_n(\C)$. 
\end{lem}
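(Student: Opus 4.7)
The plan is to establish the one-dimensional-leaves property for $\val_G$ and then read off both conclusions. The key algebraic fact is that the leading-Laurent-monomial valuation $\val_G$, taken lexicographically with respect to a total order on $\mathcal{P}_G$, satisfies the usual axioms
$\val_G(fg)=\val_G(f)+\val_G(g)$ and $\val_G(f+g)\ge \min(\val_G(f),\val_G(g))$
where the order on $\Z^{\mathcal{P}_G}$ is the lex order induced from the one on parameters. Granted this, for any $v\in \Z^{\mathcal P_G}$ the subset
\[
\mathcal F_{\ge v}:=\{f\in \C(\mathbb X)^*\mid \val_G(f)\ge v\}\cup\{0\}
\]
is a $\C$-linear subspace, and similarly for $\mathcal F_{>v}$.

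The central point I would verify next is that the quotient $\mathcal F_{\ge v}/\mathcal F_{>v}$ is at most one-dimensional. If $f_1,f_2\in \mathcal F_{\ge v}\setminus \mathcal F_{>v}$, then both have a nonzero coefficient on the Laurent monomial $\prod_\mu x_\mu^{v_\mu}$ after expanding in $\TB(G)$; call these coefficients $c_1,c_2\in \C^*$. The combination $f_1-(c_1/c_2)f_2$ then has vanishing coefficient on $\prod_\mu x_\mu^{v_\mu}$, so lies in $\mathcal F_{>v}$. This is the only place one uses that the target of $\val_G$ really comes from a leading monomial, and it is the step I expect to need the most care with bookkeeping (but no real obstacle, once one fixes the convention that $\val_G$ picks the lex-minimal monomial of the unique Laurent expansion guaranteed by \cref{p:XLaurent}).

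Given a finite-dimensional subspace $L\subset \C(\mathbb X)$, I would then consider the induced filtration $L_{\ge v}:=L\cap \mathcal F_{\ge v}$. Since the values $\val_G(L)$ are finite in number (they form a finite subset of $\Z^{\mathcal P_G}$ bounded below because $L$ is finite-dimensional), this filtration has only finitely many distinct steps $v_1<v_2<\dotsb <v_N$ with $v\in \val_G(L)$. The successive quotient $L_{\ge v_i}/L_{>v_i}$ is exactly one-dimensional when $v_i\in \val_G(L)$ (by the previous paragraph it is at most one-dimensional, and nonvanishing because $v_i$ is attained), and zero otherwise. Summing dimensions along the filtration gives $\dim L=|\val_G(L)|$, which is the first assertion.

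For the last sentence I would apply this to $L=L_r=H^0(\X,\mathcal O(rD_{n-k}))$ and identify $\dim L_r$ with $\dim V_{r\omega_{n-k}}$ via Borel--Weil, as already recalled in the introduction (the identification $L_D\cong V_{r\omega_{n-k}}$). Thus $|\val_G(L_r)|=\dim V_{r\omega_{n-k}}$, as claimed. The main obstacle, as noted, is just the coefficient-cancellation argument establishing one-dimensional leaves; everything else is formal.
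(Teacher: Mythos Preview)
Your proposal is correct and is essentially the same argument as the paper's: define the filtration $(L)_{\ge \mathbf a}\supset (L)_{>\mathbf a}$ indexed by $\mathbf a\in\val_G(L)$, and observe that taking the coefficient of the leading monomial gives an isomorphism of each successive quotient with $\C$. Your coefficient-cancellation step for one-dimensional leaves is exactly the content of the paper's phrase ``isomorphic to $\C$ by the isomorphism which takes the coefficient of the leading term''; the only cosmetic difference is that you phrase it first on all of $\C(\X)$ and then restrict to $L$, while the paper filters $L$ directly.
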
 

The proof uses the valuation and the total order on $\Z^{\mathcal P_G}$ to define in the natural way a filtration 
\[
L=(L)_{\ge \mathbf a_1}\supset (L)_{\ge \mathbf a_2}\supset 
(L)_{\ge \mathbf a_3}\supset\cdots \supset 
(L)_{\ge \mathbf a_m}\supset \{0\},
\]
of $L$ indexed by $\val_G(L)=\{\mathbf a_1,\dotsc, \mathbf a_m\}$, where $L_{\ge a}=\{f\in L\mid \val_G(f)\ge \mathbf a\}$ and similarly with $\ge$ replaced by $>$. The result follows by observing that successive quotients $(L)_{\ge \mathbf a}/(L)_{> \mathbf a}$ are isomorphic to $\C$ by the isomorphism which takes the coefficient of the leading term.

\begin{example}\label{ex:NO}
We now take $r=1$ and compute the polytope ${\conv}_G$ associated to 
Example~\ref{ex:A-example}.  Computing the valuation 
of each Pl\"ucker coordinate we get the result shown in Table~\ref{table:Dynkin}.
Therefore  ${\conv}_G$ is the convex hull of the set of points 
$\left\{(\right.0,0,0,0,0,0),
(1,0,0,0,0,0),
(1,1,0,0,0,0),
(1,1,1,0,0,0),$ 
$(1,0,0,1,0,0),\\
(1,1,0,1,0,0),
(1,1,1,1,0,0),
(2,1,0,1,1,0),
(2,1,1,1,1,0),
(2,2,1,1,1,1\left .)\right\}$.

It will follow from results in \cref{s:QGT} that in this example, 
$\conv_G = \Delta_G$.
\end{example}

\begin{center}
 \renewcommand{\arraystretch}{1.5}
    \begin{table}[h]
     \begin{tabular}{| c || p{.7cm} | p{.7cm} | p{.7cm} | p{.7cm} | p{.7cm} | p{.7cm} |}
            \hline
       Pl\"ucker & $\ydiagram{3,3}$ & $\ydiagram{2,2}$ & $\ydiagram{1,1}$ 
          & $\ydiagram{3}$ & $\ydiagram{2}$ & $\ydiagram{1}$ \\
                \hline
                \hline
                $P_{1,2}$ & $0$ & $0$ & $0$ & $0$ & $0$ & $0$ \\ \hline
                $P_{1,3}$ & $1$ & $0$ & $0$ & $0$ & $0$ & $0$ \\ \hline
                $P_{1,4}$ & $1$ & $1$ & $0$ & $0$ & $0$ & $0$ \\ \hline
                $P_{1,5}$ & $1$ & $1$ & $1$ & $0$ & $0$ & $0$ \\ \hline
                $P_{2,3}$ & $1$ & $0$ & $0$ & $1$ & $0$ & $0$ \\ \hline
                $P_{2,4}$ & $1$ & $1$ & $0$ & $1$ & $0$ & $0$ \\ \hline
                $P_{2,5}$ & $1$ & $1$ & $1$ & $1$ & $0$ & $0$ \\ \hline
                $P_{3,4}$ & $2$ & $1$ & $0$ & $1$ & $1$ & $0$ \\ \hline
                $P_{3,5}$ & $2$ & $1$ & $1$ & $1$ & $1$ & $0$ \\ \hline
                $P_{4,5}$ & $2$ & $2$ & $1$ & $1$ & $1$ & $1$ \\ \hline
                \end{tabular}
\vspace{0.2cm}
                \caption{The valuations $\val_G(P_J)$  
of the Pl\"ucker coordinates}
                \label{table:Dynkin}
        \end{table}
\end{center}
                \vspace{-.5cm}

\section{A non-integral example of $\Delta_G$ for $Gr_3(\C^6)$}\label{sec:Milena}

We say that two plabic graphs are \emph{equivalent modulo (M2) and (M3)}
if they can be related by any sequence of moves of the form (M2) and 
(M3) as defined in \cref{sec:moves}.  
For $Gr_3(\C^6)$, there are precisely $34$ equivalence classes 
of plabic graphs of type $\pi_{3,6}$ modulo (M2) and (M3).
Milena Hering pointed out to us an example of such a plabic graph
$G^1$ such that 
$\Delta_{G^1}$ is non-integral.  We then did a computer check with Polymake and found that among the $34$
equivalence classes, only two give rise to non-integral Newton-Okounkov polytopes:
the graph $G^1$ as well as the closely related graph $G^2$ shown in \cref{fig:Milena}.  The other $32$ equivalence classes give rise to 
integral Newton-Okounkov polytopes.
\begin{figure}[h]
\centering
\includegraphics[height=1.3in]{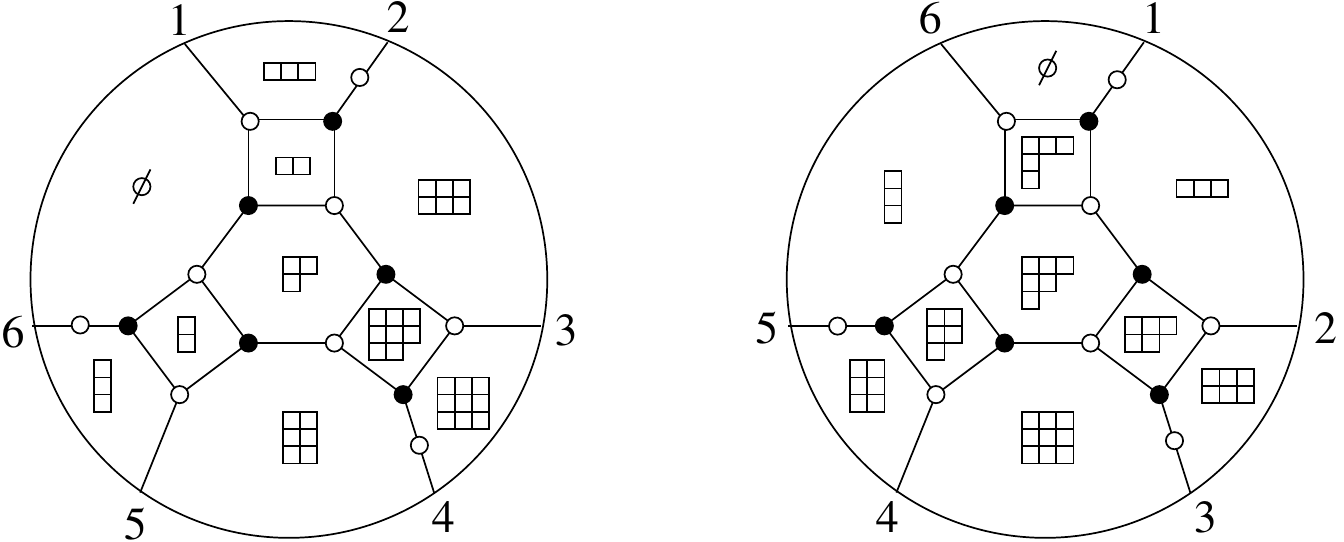}
\caption{The plabic graphs $G^1$ and $G^2$ such that 
$\Delta_{G^1}$ and $\Delta_{G^2}$ are not integral.}
\label{fig:Milena}
\end{figure}
Here  we computed the vertices of the Newton-Okounkov polytopes $\Delta_G$ by giving Polymake the inequality description
of $\Q_G$ (see Definition~\ref{l:B-modelPolytope}).  Then \cref{thm:main},  proved later in this paper,
says that  $\Delta_G = \Q_G$.

For example, the vertices of the polytope $\Delta_{G^1}$ are the valuations of the Pl\"ucker coordinates
together with one single non-integral vertex with coordinates as in \cref{nonintegral}.
 \renewcommand{\arraystretch}{2.3}
    \begin{table}[h]
     \begin{tabular}{|p{.7cm} | p{.7cm} | p{.7cm} | p{.7cm} | p{.7cm} | p{.7cm} | p{.7cm} | p{.7cm} | p{.7cm}|}
            \hline
         $\ydiagram{3,3,3}$ & $\ydiagram{3,3,2}$ & $\ydiagram{2,2,2}$ 
          & $\ydiagram{1,1,1}$ & $\ydiagram{3,3}$ & $\ydiagram{2,1}$ & $\ydiagram{1,1}$ & $\ydiagram{3}$ & $\ydiagram{2}$ \\
                \hline
                $\frac{3}{2}$ & $\frac{3}{2}$ & $1$ & $\frac{1}{2}$ & $1$ & $\frac{1}{2}$ & $\frac{1}{2}$ & $\frac{1}{2}$ & $\frac{1}{2}$\\ \hline
                \end{tabular}
	       \caption{}
	       \label{nonintegral}
        \end{table}

It is straightforward to check that this non-integral vertex represents half the valuation 
of the flow polynomial for the element 
$f= (P_{124} P_{356} - P_{123} P_{456})/P_{\Max}^2 \in L_2$.  
This element (and plabic graph) also appear in \cite[Section A.3]{MullerSpeyer},
where the authors observe that up to column rescaling, $f$ is the 
twist of the Pl\"ucker coordinate $P_{246}$.  (Their conventions for 
labeling faces of plabic graphs are slightly different from ours.)

For $Gr_3(\C^7)$, there are $259$ equivalence classes of plabic graphs of type
$\pi_{3,7}$ modulo (M2) and (M3).  Of the corresponding Newton-Okounkov polytopes, precisely
$216$ are integral and $43$ are non-integral \cite{Maplecode}.

\section{The superpotential and its associated polytopes} \label{s:ConvrQr}

\subsection{The superpotential $W$} \label{s:superpotential}
Following \cite{MarshRietsch}, we define the superpotential mirror dual to $\mathbbX$. We refer to \cite[Section~6]
{MarshRietsch} for more detail. Recall definitions from Sections~\ref{s:notation} and \ref{sec:cluster} relating to Pl\"ucker coordinates of $\checkX$.

\begin{definition}\label{def:superpotential}

Let $\mu_i^{\square}$ be the Young diagram associated to 
the $k$-element subset of horizontal steps
$J_i^+:=[i+1,i+k-1]\cup \{i+k+1\}$, where the index $i$ is always interpreted modulo $n$.  
Then for $i \ne n-k$,  
the Young diagram ${\mu}_i^\square$ %
turns out to be the unique diagram in $\Shkn$ obtained by adding a single  box to $\mu_i$.
And for $i=n-k$, the Young diagram  ${\mu}_{n-k}^\square$ associated to 
$J_{n-k}^+$ is the rectangular $(n-k-1)\x (k-1)$ Young diagram obtained from $\mu_{n-k}$ by removing a rim hook.

We define the \emph{superpotential} dual to the Grassmannian $\X$ to be the regular function $W:\opencheckX\x \C^* \to \C$ given by
\begin{equation}\label{e:Wq}
W= \sum_{i=1}^{n}q^{\delta_{i,n-k}}\frac{p_{{\mu}_i^\square}}{p_{\mu_i}},\end{equation}
where $q$ is the coordinate on the $\C^*$~factor and $\delta_{i,j}$ is the  Kronecker delta function. 
For $i=1,\dotsc, n$ we also define $W_i\in\C[\opencheckX]$ by
\begin{equation}\label{e:W_i}
W_i:= \frac{p_{\mu_i^\square}}{p_{\mu_i}}=\frac{p_{J_i^+}}{p_{J_i}},
\end{equation}
so that
	$W= q W_{n-k} + \sum_{i\neq n-k} W_i$.  
\end{definition}

\begin{example} For $k=3$ and $n=5$ we have $\mathbbX=Gr_2(\C^5)$ and $\checkX=Gr_3((\C^5)^*)$. The anticanonical divisor $\check D_{\ac}$ is given by 
\[
\check D_{\ac}= \{\p_{\ydiagram{3}}=0\}\cup\{\p_{\ydiagram{3,3}}=0\}\cup\{\p_{\ydiagram{2,2}}=0\} \cup \{\p_{\ydiagram{1,1}}=0\}\cup
\{\p_\emptyset=0\},
\]
compare with~\cref{s:positroid}, and
\[
W=\frac{p_{\ydiagram{3,1}}}{p_{\ydiagram{3}}}+q\frac{p_{\ydiagram{2}}}{p_{\ydiagram{3,3}}}+\frac{p_{\ydiagram{3,2}}}{p_{\ydiagram{2,2}}}+\frac{p_{\ydiagram{2,1}}}{p_{\ydiagram{1,1}}}+\frac{p_{\ydiagram{1}}}{p_\emptyset}.
\]
\end{example}

\begin{definition}[Universally positive]\label{d:universallypositive}
We say that a Laurent polynomial is \emph{positive} if all of its coefficients are in $\R_{>0}$. An element $h\in\C[\opencheckX]$ is called {\it universally positive}  (for the $\mathcal A$-cluster structure) if for every $\mathcal A$-cluster seed  $\check\Sigma_G^{\mathcal A}$ the expansion $\mathbf h^G$ of $h$ in $\PCG$ is a positive Laurent polynomial. Similarly $f\in\C[\opencheckX \times \C^*]$ is called universally positive if its expansion $\mathbf f^G$ in the variables $\PCG\cup\{q\}$ is given by a positive Laurent polynomial for every seed $\check\Sigma_G^{\mathcal A}$. 
\end{definition}

\begin{remark}\label{rem:cluster}
 Recall from Section~\ref{sec:cluster} the
$\mathcal A$-cluster algebra structure on
the homogeneous coordinate ring of the Grassmannian.  In the formula
 \eqref{e:W_i} for $W_i$, the numerator is a 
Pl\"ucker coordinate (and hence a cluster variable), 
and the denominator is a frozen variable.
Therefore by the positivity of the Laurent phenomenon \cite{LeeSchiffler, GHKK}, $W_i$ is an example of a universally positive element of $\C[\opencheckX]$. 
Similarly, the superpotential $W$ comes from the cluster algebra with $q$ adjoined and is universally positive in the extended sense. 
Proposition \ref{prop:MR} below gives the cluster expansion of $W$ 
in terms of 
the rectangles cluster. 
\end{remark}

\begin{prop}[{\cite[Proposition 6.10]{MarshRietsch}}]\label{prop:MR}
If we let $i \times j$ denote the Young diagram which is a rectangle 
with $i$ rows and $j$ columns, then on the subset of $\opencheckX$ where all $p_{i\x j}\ne 0$, the superpotential $W$ equals
\begin{equation}\label{eq:superpotential}
W = \frac{p_{1 \times 1}}{p_{\emptyset}} + \sum_{i=2}^{n-k} \sum_{j=1}^k
\frac{p_{i \times j} \ p_{(i-2) \times (j-1)}}{p_{(i-1)\times (j-1)} \
p_{(i-1)\times j}} + 
q \frac{p_{(n-k-1) \times (k-1)}}{p_{(n-k) \times k}} + 
\sum_{i=1}^{n-k} \sum_{j=2}^k \frac{ p_{i \times j} \ p_{(i-1)\times (j-2)}}
{p_{(i-1)\times (j-1)} \ p_{i \times (j-1)}}.
\end{equation}
Here of course if $i$ or $j$ equals $0$, then
$p_{i \times j} = p_{\emptyset}$. We may furthermore set  $p_{\emptyset}=1$ on $\opencheckX$. 
\end{prop}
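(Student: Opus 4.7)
The plan is to decompose the superpotential $W = \sum_{i=1}^n q^{\delta_{i,n-k}} W_i$ via \eqref{e:Wq}--\eqref{e:W_i} and match each $W_i$ with a group of summands on the right-hand side of \eqref{eq:superpotential}. Two summands are trivial to account for. Using the cyclic identification $\mu_n = \mu_0 = \emptyset$ and $\mu_n^\square = (1)$, the term $W_n$ gives $p_{1\times 1}/p_\emptyset$ directly, which is the first summand. Since $\mu_{n-k}$ is the full $(n-k)\times k$ rectangle and $\mu_{n-k}^\square$ is the $(n-k-1)\times(k-1)$ rectangle (both already in the cluster), $qW_{n-k}$ gives the $q$-summand directly.

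For $i \in \{1,\ldots,n-k-1\}$, $\mu_i = i\times k$ is a rectangle while $\mu_i^\square = (k^i, 1)$ is a near-rectangle (the rectangle with one extra box appended at position $(i+1, 1)$); the key identity to establish is
\[
\frac{p_{(k^i, 1)}}{p_{i \times k}} = \sum_{j=1}^{k} \frac{p_{(i+1)\times j}\, p_{(i-1)\times (j-1)}}{p_{i\times(j-1)}\, p_{i\times j}},
\]
which reproduces the row-$(i+1)$ slice of the first double sum of \eqref{eq:superpotential}. Analogously, for $i \in \{n-k+1,\ldots, n-1\}$ with $i = n-k+j'$, one has $\mu_i = (n-k)\times(k-j')$ and $\mu_i^\square$ equal to the rectangle with an extra box appended at the upper-right, and the parallel identity reproduces the column-$(k-j'+1)$ slice of the second double sum. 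The appearance of two structurally distinct double sums corresponds precisely to these two types of near-rectangle (box appended at the bottom-left vs.\ top-right of a rectangle).

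To prove the near-rectangle identity, I would proceed by induction on $i$. The base case $i = 1$, after clearing denominators, is a polynomial Pl\"ucker identity; in the $(n,k) = (5,3)$ running example from \cref{s:superpotential} it reduces via one simplification to the single three-term relation $p_{134}p_{245} = p_{124}p_{345} + p_{145}p_{234}$, and the general $i=1$ case is handled by iterated application of three-term Pl\"ucker relations. For the inductive step, a carefully chosen three-term Pl\"ucker relation involving the index sets of $p_{(k^i,1)}$, $p_{(k^{i-1},1)}$ and suitable rectangles expresses the product $p_{(k^{i-1},1)}\,p_{(k^i,1)}$ as a sum of two products of a rectangle with a (possibly more complicated) near-rectangle, and the inductive hypothesis for $i-1$ then allows the $k$-term RHS to telescope into the required form. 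The main obstacle is the combinatorial bookkeeping: identifying the correct Pl\"ucker relation at each stage, and tracking how cluster variables are redistributed across the denominators so that the telescoping produces exactly the stated formula. A more conceptual (though less self-contained) alternative is to extract the identity directly from the flow polynomial expansion of $p_{(k^i,1)}$ on the rectangles plabic graph $G^\text{rect}_{k,n}$ of \cref{sec:Grectangles} and transfer it to the $\mathcal{A}$-cluster side on $\checkX$ via the twist map (\cref{thm:twist}); this bypasses the direct Pl\"ucker manipulations but requires setting up the combinatorics of flows through the rectangles graph.
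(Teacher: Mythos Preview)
The paper does not give its own proof of this proposition; it is quoted verbatim from \cite[Proposition~6.10]{MarshRietsch} and used as input. There is therefore nothing in the present paper to compare your argument against.

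That said, your outline is sound and is essentially the strategy one finds in the cited reference: split $W$ into the summands $W_i = p_{\mu_i^\square}/p_{\mu_i}$, observe that the two ``rectangular'' cases $i=n$ and $i=n-k$ contribute single terms, and for the remaining $i$ expand the near-rectangle Pl\"ucker coordinate $p_{\mu_i^\square}$ in terms of rectangular Pl\"ucker coordinates via iterated three-term Pl\"ucker relations, producing a telescoping sum that matches one row (or column) of the double sums in \eqref{eq:superpotential}. Your identification of the relevant three-term relation in the $(n,k)=(5,3)$ base case is correct. The inductive step you sketch is the right idea, though in practice the cleanest way to organise it is not induction on $i$ but rather a direct telescoping in $j$: one shows that the partial sums $\sum_{j=1}^{m}$ of the right-hand side equal $p_{(k^i,1^{?})}$-type ratios that collapse step by step via a single three-term relation at each stage. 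The alternative route you mention through flows on $G^{\rect}_{k,n}$ is also viable and is closer in spirit to the general cluster-expansion machinery of \cite{MarshScott} used later in \cref{s:clusterexpansion}.
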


The Laurent polynomial \eqref{eq:superpotential} can be encoded in a diagram (shown in Figure \ref{fig:superpotential}
 for $k=3$ and $n=5$), see \cite[Section 6.3]{MarshRietsch}.
\begin{figure}[h]
\begin{center}
\begin{tikzpicture}
\draw (0,2) [fill] circle (.4mm);
\draw[->](0,0)--(1,0);
\draw[->](1,0)--(2,0);
\draw[->](2,0)--(3,0);
\draw[->](0,2)--(0,1);
\draw[->](0,1)--(0,0);
\draw[->](1,1)--(1,0);
\draw[->](0,1)--(1,1);
\draw[->](1,1)--(2,1);
\draw[->](2,1)--(2,0);
\node[below] at (3,0) {$q$};
\node [below] at (2,0) {$\frac{\pytt}{\pyz}$};
\node [below] at (1,0) {$\frac{\pyzz}{\pyo}$};
\node [below] at (0,0) {$\frac{\pyoo}{p_\emptyset}$};
\node [above] at (1,1) {$\frac{\pyz}{p_\emptyset}$};
\node [above] at (2,1) {$\frac{\pyt}{p_\emptyset}$};
\node [left] at (0,1) {$\frac{\pyo}{p_\emptyset}$};
\node [left] at (0,2) {$1$};
\end{tikzpicture}
\end{center}
\caption{The diagram defining the superpotential for $k=3$ and $n=5$.}
\label{fig:superpotential}
\end{figure}
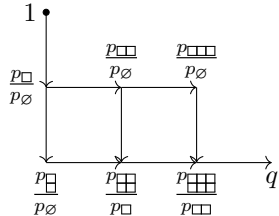
Namely each arrow in the diagram determines a Laurent monomial by dividing the expression at the head by the expression at the tail of the arrow. The Laurent polynomial obtained as the sum of all of these Laurent monomials gives the expression for $W$ in Proposition~\ref{p:XLaurent}.  
So in this example, we have 
\begin{equation}\label{super}
W = 
\pyo+
\frac{\pyoo}{\pyo}+
\frac{\pyzz }{\pyo \ \pyz}+
\frac{\pytt }{\pyz \ \pyt}+
\frac{\pyz}{\pyo} + 
\frac{\pyt}{\pyz}+
\frac{\pyzz }{\pyo \ \pyoo}+
\frac{\pytt \ \pyo}{\pyz \ \pyzz}+
q \frac{\pyz}{\pytt},
\end{equation}
where we have made use of the normalization of Pl\"ucker coordinates on $\opencheckX$ given by  
$p_\emptyset = 1$. 

\begin{rem}
The quiver underlying the diagram above was introduced by \cite{BC-FKvS} where it was encoding the EHX Laurent polynomial superpotential   \cite{EHX} associated to a Grassmannian (in the vein of Givental's quiver for the full flag variety \cite{Givental:fullflag}). It was related to the Peterson variety in \cite{RietschNagoya} before appearing in connection with the rectangles cluster in \cite{MarshRietsch}. 
\end{rem}

\subsection{Polytopes via tropicalisation}\label{sec:superpolytope}

In this section we define 
a polytope $\Q_G^r$ in terms of inequalities, which are obtained by 
restricting the superpotential to the cluster torus $\mathbb T^\vee_G$ and 
applying a tropicalisation procedure, see \cite{MacLaganSturmfels} and references therein.  %
We also define a polytope $\Q_G(r_1,\dots, r_n)$, which generalizes
$\Q_G^r$, and which will be discussed in \cref{s:generalD}.

\begin{definition}[naive Tropicalisation]\label{def:Tropicalisation}
To any Laurent polynomial $\mathbf h$ in variables $X_1,\dotsc, X_m$ with coefficients in $\R_{>0}$ we associate a piecewise linear map $\Trop(\mathbf{h}):\R^m \to \R$ 
 called the {\it tropicalisation} of $\mathbf h$ as follows. We set $\Trop(X_i)(y_1,\dotsc, y_m)=y_i$. If $\mathbf{h_1}$ and $\mathbf{h_2}$ 	are two positive Laurent polynomials, and $a_1,a_2\in\R_{>0}$, then we impose the condition that
\begin{equation}\label{eq:min-etc}
\Trop(a_1\mathbf{h_1}+a_2\mathbf{h_2}) = 
\min(\Trop(\mathbf{h_1}),\Trop(\mathbf{h_2})),\text{ and }
\Trop(\mathbf{h_1}\mathbf{h_2}) = 
\Trop(\mathbf{h_1}) + \Trop(\mathbf{h_2}).
\end{equation}
This defines $\Trop(\mathbf h)$ for all positive Laurent polynomials $\mathbf h$, by induction.
\end{definition}

 
\begin{rem} Informally, $\Trop(\mathbf h)$ is obtained by replacing multiplication by addition, and addition by $\min$. For example if $\mathbf h=X_1\inv X_3^2+5 X_2+X_1 X_2^{-3} X_3$ then $\Trop(\mathbf h)(y_1,y_2,y_3)=\min(2y_3-y_1,y_2, y_1-3y_2+y_3)$.
\end{rem}

Now let $G$ be a reduced plabic graph of type $\pi_{k,n}$ with associated set
of cluster variables $\PCG$, see 
\eqref{e:PCG}. 
Suppose $\mathbf{h}:\mathbb T^\vee_G\x \C^*\to\C$  is a positive Laurent polynomial in the variables $\PCG\cup\{q\}$ with coefficients in $\R_{>0}$. In this case the tropicalisation is a (piecewise linear) map
\[
\Trop(\mathbf{h}):\R^{\mathcal P_G}\x \R \to \R,
\] 
in variables that we denote $((v_\mu)_{\mu\in\mathcal P_G}, r)$.  Similarly, if $\mathbf h:\mathbb T^\vee_G\to\C$, then $\Trop(\mathbf{h}):\R^{\mathcal P_G} \to \R$. 
\begin{definition}\label{d:TropG}
Suppose $f\in \C[\opencheckX]$ is universally positive with  $\mathcal A$-cluster expansion  $\mathbf f^G$.
Then we define 
$\Trop_G(f)$ to be the tropicalisation $\Trop(\mathbf f^G):\R^{\mathcal P_G}\to \R$.  Similarly, if $f\in \C[\opencheckX\x\C^*_{q}]$ is universally positive, so that $\mathbf f^G$ is a positive Laurent polynomial in the variables $\PCG\cup\{q\}$, then we use the same notation, $\Trop_G(f)$, to mean the map $\Trop(\mathbf f^G):\R^{\mathcal P_G}\x \R \to \R$.
\end{definition}

By \cref{rem:cluster},  the superpotential $W$ is universally positive, so that 
$\Trop_G(W):
\R^{\mathcal P_G} \times \R \to\R$
	 is well-defined for any seed $\check\Sigma^{\mathcal A}_G$. We now use $\Trop_G(W)$ to define a polytope.

\begin{definition}\label{l:B-modelPolytope}
For $r\in\R$ we define the \emph{superpotential polytope}
\[
\Q_G^r=\{v\in \R^{\mathcal P_G}\mid \Trop_G(W)(v,r)\ge 0\}.
\]
When $r=1$, we will also write $\Q_G := \Q^1_G$.
\end{definition}

\begin{remark}\label{rem:dilation}
Note that the right hand side is a convex subset of
$\R^{\mathcal P_G}$ given by inequalities determined by the Laurent polynomial $W^G=W|_{\mathbb T^\vee_G\x\C^*}$. It will follow from
\cref{lem:integralGT}
and
\cref{c:PolytopeMutation}
that $\Q_G^r$ is in fact bounded and hence a convex polytope for $r\ge 0$.
In this case it also follows directly from the definitions that 
$\Q_G^r=r\Q_G $. Hence we will primarily restrict our attention to 
$\Q_G$. If $r<0$ we will have $\Q_G^r=\emptyset$ as follows from Proposition~\ref{p:empty}. 
\end{remark}

\begin{remark}\label{rem:rectanglesinequalities}
	In the case that $G = G_{k,n}^{\rect}$ from \cref{sec:Grectangles}, we can 
use the formula \eqref{eq:superpotential} for the superpotential
to obtain the following inequalities 
defining $\Q^r_{G_{k,n}^{\rect}} $:
\begin{align}
0 &\leq v_{1 \times 1} \label{eq1} \\
v_{(n-k) \times k} - v_{(n-k-1)\times (k-1)} &\leq r \label{eq2}\\
v_{(i-1)\times j} - v_{(i-2)\times (j-1)} &\leq 
v_{i \times j}  - v_{(i-1)\times (j-1)}
\ \text{ for }2 \leq i \leq n-k \text{ and }
1\leq j \leq k \label{eq3}\\
v_{i\times (j-1)}-v_{(i-1)\times (j-2)} &\leq 
v_{i\times j}  - v_{(i-1)\times (j-1)}
\ \text{ for } 1\leq i \leq n-k \text{ and }
2 \leq j \leq k \label{eq4}
\end{align}
\end{remark}

\begin{example}\label{ex:Q}
	Let $G=G_{3,5}^{\rect}$  be the plabic graph
from \cref{G25-partitions}.
The superpotential $W$ is written out in terms
	of  $\PCG \cup \{q\}$ in
\eqref{super}.  
We obtain the following inequalities which 
define the polytpe $\Q^\RG$.
\begin{align*}
0 &\leq v_{\ydiagram{1}} & &
0  \leq v_{\ydiagram{1,1}} - v_{\ydiagram{1}}\\
0 & \leq v_{\ydiagram{2,2}} - v_{\ydiagram{1}} - v_{\ydiagram{2}} & &
0  \leq v_{\ydiagram{3,3}} - v_{\ydiagram{2}} - v_{\ydiagram{3}}\\
0 & \leq v_{\ydiagram{2}} - v_{\ydiagram{1}}&&
0  \leq v_{\ydiagram{3}} - v_{\ydiagram{2}}\\
0 & \leq v_{\ydiagram{2,2}} - v_{\ydiagram{1}} - v_{\ydiagram{1,1}}&&
0  \leq v_{\ydiagram{3,3}} + v_{\ydiagram{1}} - v_{\ydiagram{2}} - v_{\ydiagram{2,2}}\\
0 & \leq r+ v_{\ydiagram{2}} - v_{\ydiagram{3,3}}& 
\end{align*}
One can check that in this case,
$\Q_G$ is precisely the polytope
$\conv_G$ from 
\cref{ex:NO}.
This is true  for any rectangles cluster, 
see \cref{prop:convQ},
	 but is false in general, see 
\cref{sec:Milena}.
It would be interesting to characterize when $\Q_G = \conv_G$.
\end{example}

We also have a natural generalisation of the superpotential polytope defined as follows.
Recall the summands $W_i\in\C[\opencheckX]$ 
of the superpotential from \eqref{e:W_i}.
Each $W_i$ is itself universally positive and gives rise
to a piecewise linear function $\Trop_G(W_i): \R^{\mathcal{P}_G} \to \R$ for any $\mathcal A$-cluster seed $\check\Sigma^{\mathcal A}_G$.

\begin{definition}\label{def:generalQ}
Choose $r_1,\dots,r_n \in \R$.  
	We define the \emph{generalized superpotential polytope} by 
	\begin{equation}
	\Q_G(r_1,\dots,r_n) = \bigcap_i \{v\in \R^{\mathcal{P}_G} \ \vert \ \Trop_G(W_i)(v) + r_i \geq 0\}.
	\end{equation}
In particular if $r_{n-k}=r$ and $r_i=0$ for $i \neq n-k$, then 
$\Q_G(r_1,\dots,r_n) = \Q_G^r$.
\end{definition}

\section {Tropicalisation, total positivity, and mutation}

\subsection{Total positivity and generalized Puiseux series}

The $\mathcal A$-cluster structure on the Grassmannian $\checkX$, which is a {\it positive atlas} in the terminology of \cite{FG1}, gives rise to a `tropicalized version' of $\checkX$. This, inspired by \cite{Lusztig3}, is defined in \cite{FG1}  as the analogue of the totally positive part with $\R_{>0}$ replaced by the tropical semifield $(\R,\min, +)$. We construct the tropicalisation of $\checkX$ and our polytopes in terms of total positivity  over generalized Puiseux series, extending the original construction of \cite{Lusztig3}. Our initial goal will be to describe how the polytopes $\Q_G(r_1,\dotsc, r_n)$ behave under mutation of $G$. 

\begin{definition}[Generalized Puiseux series]\label{d:genPuiseux}
Following~\cite{Markwig:Puiseux}, let $\mathbf K$ be the field of generalized 
Puiseux series in one variable with set of exponents taken from 
\[
\MonSeq=\{ A\subset \R \mid  \operatorname{Cardinality}(A\cap \R_{\le x})<\infty \text{ for arbitrarily large $x\in \R$} \}.
\]
Note that a set $A\in \operatorname{MonSeq}$ can be thought of as a strictly monotone increasing sequence of numbers which is either finite or countable tending to infinity. We write $(\alpha_m) \in \MonSeq$ if $(\alpha_m)_{m\in\Z_{\ge 0}}$ is such a strictly monotone increasing sequence, and we have
\begin{equation}\label{e:K}
\mathbf K=\left\{c(t)=\sum_{(\alpha_m) \in \MonSeq} c_{\alpha_m} t^{\alpha_m}\mid c_{\alpha_m}\in\C \right\}.
\end{equation}
Note that $\mathbf K$ is complete and algebraically closed, see \cite{Markwig:Puiseux}. 
 We denote by $\mathbf K_{>0}$ the subsemifield of $\mathbf K$ defined by
\begin{equation}\label{e:Kpos}
\mathbf K_{>0}=\left\{c(t)\in \mathbf K\mid c(t)=\sum_{(\alpha_m) \in \MonSeq} c_{\alpha_m} t^{\alpha_m} , \ c_{\alpha_0}\in \R_{>0} \right\}.
\end{equation}
We have an $\R$-valued valuation, 
$%
\ValK:\mathbf K\setminus\{0\} \to \R$,
given by
$\ValK\left(c(t)\right)=\alpha_0$
if $c(t)=\sum  
c_{\alpha_m} t^{\alpha_m}$ where the lowest order term is assumed to have non-zero coefficient, $c_{\alpha_0}\ne 0$.

We also use the notation $\mathbf L:=\R((t))$ for the field of real Laurent series in one variable. Note that $\mathbf L\subset \mathbf K$. We let $\mathbf L_{>0}=\LL\cap \mathbf K_{>0}$, and denote by  $\ValL$ the lowest-order-term valuation of $\mathbf L$.
\end{definition}

Lusztig \cite{Lusztig3} applied his theory of total positivity for an algebraic group $\mathcal G$ not just to defining a notion of $\R_{>0}$-valued points, `the totally positive part', inside $\mathcal G(\R)$, but also to introducing $\mathbf L_{>0}$-valued points $\mathcal G(\mathbf L)$. Moreover, he used this theory to describe his parameterization of the canonical basis, see \cite[Section~10]{Lusztig3}. In our setting, there is a notion of totally positive part  $\checkX(\mathbf L_{>0})$ in $\checkX(\mathbf L)$ which plays a similar role, and which we employ in this section to give an interpretation to the lattice points of the generalized superpotential polytopes. Moreover we give an analogous interpretation of all of the points of our polytopes by applying the same construction with $\LL_{>0}$ replaced by $\mathbf K_{>0}$.

\vskip .2cm

Recall that we have fixed $\p_{\emptyset}=1$ on $\opencheckX$. We make the following definition.

\begin{defn}[Positive parts of $\checkX$]\label{def:pos} Recall from 
\cref{def:posGrass}
that the 
totally positive part of the Grassmannian  $\checkX$ can be defined 
as the subset of the real Grassmannian where 
the Pl\"ucker coordinates $\p_\lambda$ are positive \cite{Postnikov}.
Now let $\mathbf F$ be an infinite field and $\mathbf F_{>0}$ a  subset in $\mathbf F\setminus \{0\}$ which is closed under addition, multiplication and inverse. For example $\mathbf F=\R$ with with the positive real numbers, or $\mathbf F= \LL,\mathbf K$ with $\mathbf F_{>0}$ as in \cref{d:genPuiseux}. We define
\[
\checkX(\mathbf F_{>0})=\opencheckX(\mathbf F_{>0}):=\{x \in\opencheckX(\mathbf F)\mid \p_\lambda(x)\in\mathbf F_{>0},\ \ \lambda\in \mathcal P_{k,n}\}.
\]   
Note that for any $x \in \checkX(\mathbf K_{>0})$, all of the Pl\"ucker coordinates 
$\p_{\lambda}(x)$ are automatically nonzero, and that we have inclusions $\checkX(\R_{>0})\subset \checkX(\mathbf L_{>0})\subset \checkX(\mathbf K_{>0})$.
\end{defn}

We record that we have the standard parameterizations of the totally positive part also in this situation.

\begin{lemma}\label{lem:clusterparam}
Suppose $\Phi^\vee_G$ is an $\mathcal{A}$-cluster chart (see
\eqref{eq:clusterchart}). Suppose $\mathbf F$  and  $\mathbf F_{>0}$ are as in \cref{def:pos}.
We can consider $\Phi^\vee_G$ over the field~$\mathbf F$. In this case we have that 
\begin{equation}\label{eq:param}
\checkX(\mathbf F_{>0})=\Phi^\vee_{G}((\mathbf F_{>0})^{\mathcal P_G}),
\end{equation}
and the map $\Phi^\vee_{G}:(\mathbf F_{>0})^{\mathcal P_G}\to \checkX(\mathbf F_{>0})$ is a bijection. 
\end{lemma}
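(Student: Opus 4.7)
The plan is to prove the two set inclusions in \eqref{eq:param} separately and then read off bijectivity from the structure of the map.

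For the forward containment $\Phi^\vee_{G}((\mathbf F_{>0})^{\mathcal P_G})\subset \checkX(\mathbf F_{>0})$, I would take a tuple $(t_\mu)_{\mu\in\mathcal P_G}\in(\mathbf F_{>0})^{\mathcal P_G}$ and check that every Pl\"ucker coordinate $p_\lambda$ of the resulting point lies in $\mathbf F_{>0}$. The key input is the positivity of the Laurent phenomenon applied to the Grassmannian cluster algebra of Scott: any Pl\"ucker coordinate $p_\lambda$, when expressed in the $\mathcal{A}$-cluster associated to $G$, is a Laurent polynomial in $\PCG$ with coefficients in $\Z_{\geq 0}$ (cf. \cref{rem:cluster}). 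Since $\mathbf F_{>0}$ is closed under addition, multiplication, and inverses, evaluating such a positive Laurent polynomial on positive inputs yields an element of $\mathbf F_{>0}$. Substituting $p_\mu = t_\mu$ for $\mu \in \mathcal{P}_G$ and $p_\emptyset = 1$ thus gives $p_\lambda(\Phi^\vee_G((t_\mu))) \in \mathbf F_{>0}$ for every $\lambda \in \mathcal P_{k,n}$.

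For the reverse containment, suppose $x \in \checkX(\mathbf F_{>0})$. Then in particular $p_\mu(x) \in \mathbf F_{>0}$ for every $\mu \in \mathcal P_G$, so the tuple $s := (p_\mu(x))_{\mu\in\mathcal P_G}$ lies in $(\mathbf F_{>0})^{\mathcal P_G}$. I need to verify that $\Phi^\vee_G(s) = x$. By construction (see \eqref{eq:clusterchart}), $\Phi^\vee_G(s)$ is the unique point of $\opencheckX(\mathbf F)$ whose $\mathcal A$-cluster coordinates equal $s$; but $x$ itself has these same cluster coordinates, so it suffices to know that a point of $\opencheckX$ is determined over $\mathbf F$ by the values of the Pl\"ucker coordinates in $\widetilde{\mathcal P}_G$ (with the normalisation $p_\emptyset=1$). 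This follows from the fact, established in Scott's work, that $\Phi^\vee_G$ is an open embedding onto the cluster torus $\mathbb T^\vee_G$ defined in \cref{rem2:convention}, and $x\in \mathbb T^\vee_G(\mathbf F)$ since all the $p_\mu(x)$ are nonzero.

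Finally, bijectivity of $\Phi^\vee_G:(\mathbf F_{>0})^{\mathcal P_G}\to \checkX(\mathbf F_{>0})$ is immediate: surjectivity is the reverse containment, and injectivity follows from the identity $p_\nu(\Phi^\vee_G((t_\mu)_\mu)) = t_\nu$ for $\nu \in \mathcal P_G$, which provides an explicit left inverse (namely, reading off the Pl\"ucker coordinates indexed by $\mathcal P_G$).

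I do not expect any serious obstacle here; the only nontrivial ingredient is the positivity of the Laurent phenomenon for the Grassmannian cluster algebra, which is already invoked in \cref{rem:cluster} and is known in this setting. Everything else is formal manipulation with the cluster chart and the hypothesis that $\mathbf F_{>0}$ is a subsemifield closed under inversion.
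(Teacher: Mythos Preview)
Your proof is correct and follows essentially the same approach as the paper's. The paper's proof is a one-sentence sketch invoking that every cluster variable (in particular every Pl\"ucker coordinate) is a subtraction-free rational function in any fixed cluster, which is exactly the mechanism you spell out in your forward containment; your reverse containment and bijectivity arguments simply make explicit what the paper leaves implicit.
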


\begin{proof}
This follows in the usual way from the cluster algebra structure on the Grassmannian \cite{Scott}, 
by virtue of which 
each cluster variable can be written as a subtraction-free rational function 
in any cluster.  So in particular, if the elements of one cluster have values in $\mathbf F_{>0}$, then
so do all cluster variables. 
\end{proof}

\begin{remark}\label{r:genparam}
The right hand side of the equation \eqref{eq:param} is independent of $G$, by positivity of mutation. Note that the notion of the $\mathbf F_{>0}$-valued points extends to a general $\mathcal A$-cluster variety if we take \eqref{eq:param} as the definition in place of \cref{def:pos}.
\end{remark}

\subsection{Tropicalisation of a positive Laurent polynomial}

We record the following straightforward lemma which interprets the tropicalisation $\Trop(\mathbf h)$ of a  positive Laurent polynomial  $\mathbf h$, see \cref{d:universallypositive} and \cref{def:Tropicalisation}, in terms of the semifield $\mathbf K_{>0}$ and the valuation $\ValK$. 
See \cite[Proof of Proposition 9.4]{Lusztig3} and \cite[Proposition 2.5]{SpeyerWilliams} for
related statements.

\begin{lem}\label{l:valandTrop}
Let $\mathbf h\in\C[X_1^{\pm 1},\dotsc, X_m^{\pm 1}]$ be a {\it positive} Laurent polynomial. We may evaluate $\mathbf h$ on $(k_i)_{i=1}^m\in(\mathbf K_{>0})^{m}$. On the other hand, associated to each $k_i$ we have $y_i:=\ValK(k_i)$, so that $(y_i)_{i=1}^m\in \R^m$. Then 
\[
\Trop(\mathbf h)(y_1,\dotsc, y_m)=\ValK (\mathbf h(k_1,\dotsc, k_m)).
\]
In particular, $\ValK (\mathbf h(k_1,\dotsc, k_m))$ depends only on the valuations $y_i$ of the $k_i$. 
\end{lem}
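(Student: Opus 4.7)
The plan is to induct on the construction of $\mathbf h$ as a positive Laurent polynomial, exactly mirroring the inductive definition of $\Trop$ in \cref{def:Tropicalisation}. The two things to verify are the base case of monomials, and compatibility with the two operations in \eqref{eq:min-etc}: addition of positive Laurent polynomials (corresponding to $\min$) and multiplication (corresponding to $+$).

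First I would treat the base case: if $\mathbf h = c \, X_1^{a_1}\cdots X_m^{a_m}$ is a single monomial with $c \in \R_{>0}$ and $a_i \in \Z$, then $\mathbf h(k_1,\dotsc,k_m) = c\, k_1^{a_1}\cdots k_m^{a_m}$, which lies in $\mathbf K_{>0}$ because $\mathbf K_{>0}$ is closed under multiplication and inversion. By the standard properties of the valuation $\ValK$ on a field (multiplicativity, and $\ValK(c) = 0$ for $c \in \R_{>0}$), we get $\ValK(\mathbf h(k_1,\dotsc,k_m)) = \sum_i a_i y_i = \Trop(\mathbf h)(y_1,\dotsc,y_m)$.

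Next I would handle the inductive step. For a product $\mathbf h = \mathbf{h_1}\mathbf{h_2}$ of two positive Laurent polynomials, the values $\mathbf h_j(k_1,\dotsc,k_m)$ lie in $\mathbf K_{>0}$ by the closure properties noted above, and multiplicativity of $\ValK$ together with the inductive hypothesis gives the claim, matching $\Trop(\mathbf h_1\mathbf h_2) = \Trop(\mathbf h_1) + \Trop(\mathbf h_2)$. For a positive linear combination $\mathbf h = a_1 \mathbf{h_1} + a_2 \mathbf{h_2}$ with $a_1,a_2 \in \R_{>0}$, the key point -- and the step to highlight -- is that for any $f,g \in \mathbf K_{>0}$, the equality
\[
\ValK(f+g) = \min(\ValK(f),\ValK(g))
\]
holds with no exception: the leading coefficient of $f+g$ is either a strictly positive real (when $\ValK(f)\ne\ValK(g)$, one of them alone contributes; when they are equal, the sum of two positive reals is again positive) so there is no possibility of cancellation at the lowest-order term. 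Combined with $\ValK(a_j \mathbf h_j(\mathbf k)) = \ValK(\mathbf h_j(\mathbf k))$ (since $a_j \in \R_{>0}$ has valuation $0$) and the inductive hypothesis, this matches $\Trop(a_1\mathbf h_1 + a_2\mathbf h_2) = \min(\Trop(\mathbf h_1),\Trop(\mathbf h_2))$.

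The only conceptual obstacle is precisely the no-cancellation observation in the sum step; everything else is formal manipulation of valuations. Positivity of $\mathbf K_{>0}$ is exactly the hypothesis needed to prevent leading-term cancellation, and it is what forces the valuation to equal the minimum rather than merely being bounded below by it. Since $\Trop(\mathbf h)$ is defined by induction using only these two operations starting from positive-coefficient monomials, the induction covers every positive Laurent polynomial, completing the proof. The final assertion -- that $\ValK(\mathbf h(k_1,\dotsc,k_m))$ depends only on the $y_i$ -- is then immediate from the formula, since the right-hand side $\Trop(\mathbf h)(y_1,\dotsc,y_m)$ depends only on the $y_i$.
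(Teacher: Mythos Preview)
Your proof is correct and follows essentially the same approach as the paper: induction on the structure of positive Laurent polynomials, with multiplicativity of $\ValK$ handling products and the key no-cancellation observation in $\mathbf K_{>0}$ handling sums. The paper's version is terser (taking $X_i$ as the base case rather than general monomials) but the argument is the same.
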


\begin{proof}
If $\mathbf h=X_i$ then both sides agree and equal to $x_i$. Clearly any product $\mathbf h=\mathbf h_1\mathbf h_2$ gives a $\mathbf K$-valuation equal to $\ValK (\mathbf h_1(k_1,\dotsc, k_m))+\ValK (\mathbf h_2(k_1,\dotsc, k_m))$. Now let $\mathbf h=\mathbf h_1+\mathbf h_2$. Because all of the coefficients of $\mathbf h_1,\mathbf h_2$ are positive and the leading terms of the $k_i$ also have positive coefficients, there can be no cancellations when working out the valuation of the sum $(\mathbf h_1+\mathbf h_2)(k_1,\dotsc, k_m)$. This implies that the latter valuation is given by $\min(\ValK(\mathbf h_1(k_1,\dotsc, k_m)),\ValK(\mathbf h_2(k_1,\dotsc, k_m)))$. Thus the right hand side has the same properties as define the left hand side, see \cref{def:Tropicalisation}.
\end{proof}

\subsection{Tropicalisation of $\checkX$ and zones} 

We introduce a (positive) tropical version of our cluster variety $\checkX$ via an equivalence relation on 
elements 
of $\checkX(\mathbf K_{>0})$, 
analogous to Lusztig's construction of  `zones' in $U^+(\LL_{>0})$  \cite{Lusztig3}.  
This is also very close to the notion 
of positive tropical variety from \cite[Section 2]{SpeyerWilliams}.

\begin{defn}[{Zones and tropical points}] {\label{d:Rzones}} 
Let us define an equivalence relation on 
$\checkX(\mathbf K_{>0})$ by
\[
x\sim x' \quad :\iff \quad \ValK(\p_\lambda(x))=\ValK(\p_\lambda(x'))\ \text{ for all $\lambda\in \mathcal P_{k,n}$.}
\]
In other words, they are equivalent if the exponent vectors of the leading terms of all the Pl\"ucker coordinates agree.
	We write $[x]$ for the equivalence class of $x \in\checkX(\mathbf K_{>0})$ and
let 
$\RZones:=\checkX(\mathbf K_{>0})/\sim$
denote the set of equivalence classes, also called {\it tropical points} %
of $\checkX$. If a tropical point has a representative $x\in\checkX(\LL_{>0})$ then we call it a {\it zone} inspired by the terminology of Lusztig. The zones are precisely those tropical points $[x]$ for which all $\ValK(p_\lambda(x))$ lie in $\Z$.
\end{defn}

\begin{lem}\label{l:LTrop}
For any seed $\check\Sigma^\mathcal A_G$ the following map is well-defined and gives a bijection,
\begin{equation}\label{eq:varphi}
\pi_G:\RZones \to \R^{\mathcal{P}_G}, \qquad [x]\mapsto
(\ValK(\varphi_\mu(x)))_{\mu},
\end{equation}
where the $\varphi_\mu$ run over the set of cluster variables $\PCG$, and the indexing set of cluster variables is denoted $\mathcal P_G$.
\end{lem}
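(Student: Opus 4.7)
The plan is to handle well-definedness, injectivity, and surjectivity in turn, using the positivity of the $\mathcal{A}$-cluster structure on $\checkX$ together with \cref{l:valandTrop} and \cref{lem:clusterparam}.

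For well-definedness, I would argue that each $\varphi_\mu \in \PCG$ is an $\mathcal{A}$-cluster variable (for a seed not necessarily coming from a plabic graph), and invoke the Laurent phenomenon together with positivity of the Laurent expansion (proved by Lee--Schiffler and Gross--Hacking--Keel--Kontsevich, see \cref{rem:cluster}) to express $\varphi_\mu$ as a positive Laurent polynomial in the Pl\"ucker coordinates $\{\p_\lambda/\p_\emptyset : \lambda \in \widetilde{\mathcal{P}}_{G_0}\}$ of any plabic-graph seed $G_0$. Since $\p_\emptyset$ is normalized to $1$, \cref{l:valandTrop} then gives $\ValK(\varphi_\mu(x)) = \Trop(\mathbf{f}_\mu)(\ValK(\p_\lambda(x))_\lambda)$ for an explicit positive Laurent polynomial $\mathbf{f}_\mu$. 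Hence if $x \sim x'$, then $\ValK(\varphi_\mu(x)) = \ValK(\varphi_\mu(x'))$ for every $\mu$, so $\pi_G$ is well-defined.

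For injectivity, I would run the same argument in reverse: every Pl\"ucker coordinate $\p_\lambda$ (with $\lambda\in \mathcal{P}_{k,n}$) is itself a positive Laurent polynomial in $\PCG$ (again by the Laurent phenomenon and positivity, applied to the seed $\check\Sigma^{\mathcal{A}}_G$), so if $\pi_G([x])=\pi_G([x'])$ then another application of \cref{l:valandTrop} forces $\ValK(\p_\lambda(x)) = \ValK(\p_\lambda(x'))$ for every $\lambda$, i.e.\ $x \sim x'$.

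For surjectivity, I would use that, by \cref{lem:clusterparam} applied to the field $\mathbf{K}$ with positive part $\mathbf{K}_{>0}$, the cluster chart extends to a bijection
\[
\Phi^{\vee}_G : (\mathbf{K}_{>0})^{\mathcal{P}_G} \overset{\sim}{\longrightarrow} \checkX(\mathbf{K}_{>0}),
\]
satisfying $\varphi_\nu\bigl(\Phi^{\vee}_G((k_\mu)_\mu)\bigr) = k_\nu$ for all $\nu \in \mathcal{P}_G$. Given any target vector $v=(v_\mu)_\mu \in \R^{\mathcal{P}_G}$, I would set $k_\mu := t^{v_\mu} \in \mathbf{K}_{>0}$ and take $x := \Phi^{\vee}_G((k_\mu)_\mu)$; then $\ValK(\varphi_\mu(x)) = \ValK(t^{v_\mu}) = v_\mu$, so $\pi_G([x])=v$.

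There is no real obstacle here, since the combined input of the Laurent phenomenon with positivity (for the conceptual content) and the totally positive parameterization over $\mathbf{K}$ (for the surjectivity) does all the work; the only point that requires a touch of care is ensuring that one may freely pass between cluster variables of a plabic-graph seed and those of an arbitrary mutation-equivalent $\mathcal{A}$-seed $G$, which is precisely guaranteed by the positivity of the Laurent expansions in both directions.
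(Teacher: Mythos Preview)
Your proof is correct. It differs in organization from the paper's argument, which proves \cref{l:LTrop} and \cref{l:piGmutation} simultaneously: the paper defines an auxiliary equivalence relation $\sim_G$ on $\checkX(\mathbf K_{>0})$ using only the valuations of the cluster variables in $\PCG$, shows via the (manifestly positive) single-step mutation formula and \cref{l:valandTrop} that $\sim_G$ is independent of $G$, and then identifies $\sim_G$ with $\sim$ by observing that every Pl\"ucker coordinate appears in \emph{some} plabic-graph seed. Your route is more direct for \cref{l:LTrop} taken in isolation: you bypass the mutation diagram entirely and instead invoke the full positivity of the Laurent phenomenon to pass in one step between an arbitrary cluster variable and a Pl\"ucker cluster. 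The trade-off is that the paper's argument uses only the elementary positivity of a single $\mathcal A$-mutation (no appeal to \cite{LeeSchiffler} or \cite{GHKK} is needed here) and delivers the commutative triangle of \cref{l:piGmutation} for free, whereas your argument, while shorter, rests on a deeper input and would still leave \cref{l:piGmutation} to be proved separately.
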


\begin{defn}[Tropicalized $\mathcal A$-cluster mutation]\label{d:PsiGG'}
Suppose $\check\Sigma^{\mathcal A}_G$ and $\check\Sigma^{\mathcal A}_{G'}$ are general $\mathcal{A}$-cluster seeds of type $\pi_{k,n}$ which are related by a single mutation at a vertex $\nu_i$. Let the cluster variables for $\check\Sigma^{\mathcal A}_G$ be  indexed by  
$\mathcal P_G=\{\nu_1,\dots, \nu_N\}$. Recall that $\PC(G')=\PCG\cup\{\varphi_{\nu_i'}\}\setminus \{\varphi_{\nu_i}\}$, and the $\mathcal A$-cluster mutation $\Mut^{\mathcal A}_{\nu_i}$ gives a positive Laurent polynomial expansion of the new variable $\varphi_{\nu_i'}$ in terms of $\PCG$, see~\eqref{e:AclusterMut}. We tropicalise this change of coordinates between $\PCG$ and $\PC(G')$ and denote the resulting piecewise linear map 
 by  $\Psi_{G,G'}$. 
  Explicitly, 
 $\Psi_{G,G'}:\R^{\mathcal P_G}\to \R^{\mathcal P_{G'}}$ takes $(v_{\nu_1},v_{\nu_2}, \dots,v_{\nu_N})$ to
$(v_{\nu_1}, \dots,v_{\nu_{i-1}}, v_{\nu_i'},v_{\nu_{i+1}},\dotsc, v_{\nu_N})$, where 
\begin{equation}\label{e:tropmut}
v_{\nu'_i} = \min(\sum_{\nu_j \to \nu_i} v_{\nu_j}, \sum_{\nu_i \to \nu_j} v_{\nu_j})\, - \, v_{\nu_i}, 
\end{equation}
and the sums are over arrows in the quiver $Q(G)$ pointing towards $\nu_i$ or away from $\nu_i$, respectively. We call $\Psi_{G,G'}$ a \emph {tropicalized $\mathcal A$-cluster mutation}.
\end{defn}

\begin{rem}
Note that if $G$  and $G'$ are plabic graphs related by the square move (M1) -- we can suppose we are doing the square move at $\nu_1$ in Figure~\ref{labeled-square} -- then $\Psi_{G,G'}$ is simply given by
\begin{equation*}  \label{e:tropmutG}
v_{\nu'_1} = \min(v_{\nu_2}+v_{\nu_4}, v_{\nu_3}+v_{\nu_5}) - v_{\nu_1}.
\end{equation*}
\end{rem}

\begin{lem}\label{l:piGmutation}
Suppose $G$ and $G'$ index arbitrary $\mathcal{A}$-seeds of type $\pi_{k,n}$ which are related by a single mutation at vertex $\nu_1$, where 
the cluster variables are indexed by 
$(\nu_1,\dots, \nu_N)$. Then we  have a commutative diagram 
\begin{equation}\label{e:triangle}
	\begin{tikzcd}
		&\RZones	\arrow{dl}[swap]{\pi_G} \arrow{dr}{\pi_{G'}} & \\
		\R^{\mathcal{P}_G} \arrow{rr}{\Psi_{G,G'}} &&
 \R^{\mathcal{P}_{G'}}
	\end{tikzcd}
\end{equation}
where the map along the bottom is the \emph {tropicalized $\mathcal A$-cluster mutation} $\Psi_{G,G'}$ from \cref{d:PsiGG'}.
\end{lem}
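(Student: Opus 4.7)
The plan is to reduce the claim to a direct application of \cref{l:valandTrop}. Since $G$ and $G'$ differ by a single mutation at $\nu_i$, the cluster charts share all variables except that $\varphi_{\nu_i}$ is replaced by $\varphi_{\nu_i'}$. In particular, for every $\mu \ne \nu_i$, the $\mu$-coordinates of $\pi_G([x])$ and $\pi_{G'}([x])$ agree, and $\Psi_{G,G'}$ acts as the identity on these coordinates by construction. So the only thing to verify is that the $\nu_i'$-coordinate of $\pi_{G'}([x])$ equals the image of $\pi_G([x])$ under the piecewise linear formula \eqref{e:tropmut}.

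First I would recall the exchange relation \eqref{e:AclusterMut}, which expresses $\varphi_{\nu_i'}$ as a positive Laurent polynomial $\mathbf h$ in the variables of $\PCG$, namely
\[
\varphi_{\nu_i'} \;=\; \varphi_{\nu_i}^{-1}\Bigl(\prod_{\nu_j\to \nu_i}\varphi_{\nu_j}^{b_{\nu_j,\nu_i}} + \prod_{\nu_i\to\nu_j}\varphi_{\nu_j}^{-b_{\nu_j,\nu_i}}\Bigr).
\]
Pick any representative $x \in \checkX(\mathbf K_{>0})$ of the class $[x]$; then each $\varphi_\mu(x)\in \mathbf K_{>0}$ by \cref{lem:clusterparam}. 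Applying \cref{l:valandTrop} to $\mathbf h$ with the positive Puiseux inputs $k_\mu = \varphi_\mu(x)$ and their valuations $y_\mu = \ValK(\varphi_\mu(x))$, we obtain
\[
\ValK\bigl(\varphi_{\nu_i'}(x)\bigr)\;=\;\Trop(\mathbf h)\bigl((y_\mu)_\mu\bigr)\;=\;\min\Bigl(\sum_{\nu_j\to\nu_i}y_{\nu_j},\;\sum_{\nu_i\to\nu_j}y_{\nu_j}\Bigr)-y_{\nu_i},
\]
where the second equality uses the definition \cref{def:Tropicalisation} of tropicalisation and the fact that $\mathbf h$ is a sum of two Laurent monomials. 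This is exactly the formula \eqref{e:tropmut} defining the $\nu_i'$-component of $\Psi_{G,G'}\bigl(\pi_G([x])\bigr)$, so the triangle commutes.

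The only subtlety is well-definedness on equivalence classes: one must check that the output does not depend on the choice of representative $x$ of $[x]$. But this is exactly the last sentence of \cref{l:valandTrop}, which states that the valuation of a positive Laurent polynomial depends only on the valuations of its inputs; combined with the fact (already used in \cref{l:LTrop}) that every cluster variable $\varphi_\mu \in \PC(G')$ is a positive Laurent polynomial in $\PCG$, this ensures that $\pi_{G'}$ and $\Psi_{G,G'}\circ\pi_G$ agree as maps out of $\RZones$. No further obstacle arises; indeed the lemma is essentially the statement that the tropicalisation procedure of \cref{def:Tropicalisation} is compatible with the valuation $\ValK$ under the positive atlas structure coming from the cluster algebra.
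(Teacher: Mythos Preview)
Your proof is correct and follows essentially the same approach as the paper: both reduce the commutativity of the triangle to an application of \cref{l:valandTrop} to the positive Laurent exchange relation \eqref{e:AclusterMut}. The only difference is organizational: the paper proves \cref{l:LTrop} and \cref{l:piGmutation} simultaneously, first establishing the commutative triangle at the level of $\checkX(\mathbf K_{>0})$ (before passing to equivalence classes) and then using this to show that the equivalence relations $\sim_G$ all coincide with $\sim$, whereas you take \cref{l:LTrop} as given and verify commutativity directly on $\RZones$.
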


\begin{proof}[Proof of Lemmas~\ref{l:LTrop} and \ref{l:piGmutation}]
Recall that the cluster chart $\Phi_G^{\vee}$ 
from \cref{lem:clusterparam}
 gives a bijective parameterization
of $\checkX(\mathbf K_{>0})$ where the inverse 
$(\Phi_G^{\vee})^{-1}:\checkX(\mathbf K_{>0})\longrightarrow (\mathbf K_{>0})^{\mathcal P_G}$ is precisely the map $x\mapsto(\varphi_\mu(x))_\mu$. We have the following composition of surjective maps
\[ 
\operatorname{Comp}_G:\checkX(\mathbf K_{>0}) \overset{(\Phi_G^{\vee})^{-1}}\longrightarrow (\mathbf K_{>0})^{\mathcal P_G}
\overset{{\ValK}}\longrightarrow {\R}^{\mathcal P_G}.
\]

We define an equivalence relation $\sim_G$  by letting $
x\sim_Gx'$ if and only if $ 
\operatorname{Comp}_G(x)=
\operatorname{Comp}_G(x')$.
Clearly with this definition, $\operatorname{Comp}_G$ descends to a bijection $[\operatorname{Comp}_G]:\checkX(\mathbf K_{>0})/\sim_G \ \to \, \R^{\mathcal P_G}$. To prove \cref{l:LTrop} it suffices to show that the equivalence relation $\sim_G$ is independent of $G$ and recovers the original equivalence relation $\sim$ from \cref{d:Rzones}.  Then $[\operatorname{Comp}_G]=\pi_G$ and we are done. 

If $G$ and $G'$ are related by a single mutation, see \cref{Aseed}, 
then we have a commutative diagram 
\begin{equation}
\label{e:triangle0}
	\begin{tikzcd}
		&\checkX(\mathbf K_{>0})	\arrow{dl}[swap]{\operatorname{Comp}_G} \arrow{dr}{\operatorname{Comp}_{G'}} & \\
		\R^{\mathcal{P}_G} \arrow{rr}{\Psi_{G,G'}} &&
 \R^{\mathcal{P}_{G'}}
	\end{tikzcd}
\end{equation}
	where $\Psi_{G,G'}$ is the tropicalized $\mathcal{A}$-cluster mutation. This follows by an application of \cref{l:valandTrop}.
Since $\Psi_{G,G'}$ is a bijection (with inverse $\Psi_{G',G}$) it follows   that $\operatorname{Comp}_G(x)=\operatorname{Comp}_G(x')$ if and only if $\operatorname{Comp}_{G'}(x)=\operatorname{Comp}_{G'}(x')$. Thus $\sim_G$ and $\sim_{G'}$ are the same equivalence relation. Therefore the equivalence relation $\sim_G$ is independent of $G$. If $G$ is a plabic graph indexing a Pl\"ucker cluster then $x\sim x'$ implies that $x\sim_G x'$. On the other hand if $x\sim_G x'$ then also $x\sim_{G'}x'$ for any other $G'$. Therefore it follows that $\ValK(p_\lambda(x))=\ValK(p_\lambda(x'))$ for all Pl\"ucker coordinates $p_\lambda$, since every Pl\"ucker coordinate appears in {\it some} seed $\check\Sigma^{\mathcal A}_{G'}$. As a consequence $x\sim_G x'$ implies $x\sim x'$ and Lemma~\ref{l:LTrop} is proved. 

Since all of the equivalence relations $\sim_G$ are equal to $\sim$, we can factor all of the vertical maps $\operatorname{Comp}_G$ through $\sim$ and then \eqref{e:triangle0} turns into the commutative diagram of bijections which is precisely the one given in Lemma~\ref{l:piGmutation}.
\end{proof}
\begin{rem} \label{r:TropGen}
The main observation of the above proof was that
 if we consider an arbitrary $\mathcal{A}$-seed $\check\Sigma^{\mathcal A}_G$ of type $\pi_{k,n}$, then
for $x, x'\in\checkX(\mathbf K_{>0})$ we have
\begin{equation}\label{e:equivG}
x\sim x' \quad \iff\quad  \ValK(\varphi_\mu(x))=\ValK(\varphi_\mu(x'))\text{ for all cluster variables
$\varphi_\mu$ of $\check\Sigma_G^{\mathcal A}$.
}
\end{equation}
This says that equivalence of points in $\checkX(\mathbf K_{>0})$ can be checked using a single, arbitrarily chosen seed, and gives an alternative definition for the equivalence relation $\sim$.  With this alternative definition \eqref{e:equivG} of $\sim$, the definition of `tropical points' and `zones' as equivalence classes generalises to an arbitrary $\mathcal A$-cluster algebra, compare~\cref{r:genparam}. 
\end{rem}

\begin{rem} \label{r:integral}
	We note that the inverse of the tropicalized $\mathcal{A}$-cluster mutation $\Psi_{G,G'}$ is always just given by $\Psi_{G',G}$. Since both maps $\Psi_{G,G'}$ and $\Psi_{G',G}$ map integral points to integral points we have that $\Psi_{G,G'}$ restricts to a bijection $\Z^{\mathcal P_G}\to \Z^{\mathcal P_{G'}}$ and the entire diagram \eqref{e:triangle} restricts to give  
the commutative diagram of bijections,
\begin{equation}\label{e:triangleZ} 
	\begin{tikzcd}
		&\Zones	\arrow{dl}[swap]{\pi_G} \arrow{dr}{\pi_{G'}} & \\
		\Z^{\mathcal{P}_G} \arrow{rr}{\Psi_{G,G'}} &&
 \Z^{\mathcal{P}_{G'}}
	\end{tikzcd}
\end{equation}

\end{rem}

\subsection{Mutation of polytopes}
\label{sec:mutateQ}

In this section we give an interpretation of the superpotential polytopes $\Q_G^r$ from 
\cref{l:B-modelPolytope} and their generalisations~$\Q_G(r_1,\dotsc,r_n)$
from \cref{def:generalQ} in terms of $\Trop(\checkX)$.
We use this interpretation to relate the polytopes coming from different 
$\mathcal{A}$-clusters $G$.

\begin{defn}\label{d:posset}
Suppose $h\in\C[\opencheckX]$ has the property that it is {\it universally positive} for the $\mathcal A$-cluster algebra structure of $\C[\opencheckX]$, as in \cref{d:universallypositive}. 
Let $m\in\R$. In this case we define inside $\Trop(\checkX)$ the set
\[
\PosSet_{(m)}(h):=\{[x]\in\Trop(\checkX)\mid \ValK(h(x))+m\ge 0\}.
\]
For a given choice of seed $\check\Sigma_G^{\mathcal A}$ we also associate to $h$ the subset of $\R^{\mathcal P_G}$,
\[
\PosSet^G_{(m)}(h):=\{v\in\R^{\mathcal P_G}\mid \Trop(\mathbf h^G)(v)+m\ge 0\}.
\] 
\end{defn}

\begin{rem} Note that for $m=0$ the set 
$\PosSet^G_{(0)}(h)$ is a (possibly trivial) polyhedral cone described as intersection of half-spaces. Introducing the $m\in \R$ amounts to shifting the half-spaces. 
\end{rem}

\begin{lem}\label{l:PosSetTrop} Given any seed $\check\Sigma_G^{\mathcal A}$, a universally positive $h\in\C[\opencheckX]$, and any $m\in\R$,
the bijection $\pi_G:\Trop(\checkX)\to\R^{\mathcal P_G}$ from \cref{l:LTrop} restricts to give a bijection,
\[\pi_G:\PosSet_{(m)}(h)\longrightarrow 
\PosSet^G_{(m)}(h),\]
 between the sets from Definition~\ref{d:posset}, which we again denote $\pi_G$ by abuse of notation. We have the following commutative diagram of bijections
\begin{equation}\label{e:triangle2}
	\begin{tikzcd}
		&\PosSet_{(m)}(h)	\arrow{dl}[swap]{\pi_G} \arrow{dr}{\pi_{G'}} & \\
		\PosSet^G_{(m)}(h) \arrow{rr}{\Psi_{G,G'}} &&
		\PosSet^{G'}_{(m)}(h)
	\end{tikzcd}
\end{equation}
	where the map $\Psi_{G,G'}$ is the restriction of the \emph {tropicalized $\mathcal{A}$-cluster mutation} from \cref{l:piGmutation}. 
\end{lem}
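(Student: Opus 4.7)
The plan is to deduce this lemma as a direct corollary of \cref{l:valandTrop} combined with \cref{l:LTrop} and \cref{l:piGmutation}. The essential point is that tropicalisation of a positive Laurent polynomial computes the $\mathbf{K}$-valuation of its values on positive points, so the inequality cutting out $\PosSet_{(m)}(h)$ on the $\Trop(\checkX)$ side corresponds exactly to the inequality cutting out $\PosSet^G_{(m)}(h)$ on the $\R^{\mathcal{P}_G}$ side.

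First I would unwind the definitions. Since $h$ is universally positive, the cluster expansion $\mathbf{h}^G$ of $h$ in the variables $\PCG = \{\varphi_\mu\}_{\mu \in \mathcal{P}_G}$ is a positive Laurent polynomial. For any $x \in \checkX(\mathbf{K}_{>0})$ we have $h(x) = \mathbf{h}^G(\varphi_{\mu_1}(x), \ldots, \varphi_{\mu_N}(x))$, where each $\varphi_{\mu_j}(x) \in \mathbf{K}_{>0}$ by \cref{lem:clusterparam}. Setting $y_j = \ValK(\varphi_{\mu_j}(x))$, so that $\pi_G([x]) = (y_j)_j$, \cref{l:valandTrop} immediately gives
\[
\ValK(h(x)) \;=\; \Trop(\mathbf{h}^G)(\pi_G([x])) \;=\; \Trop_G(h)(\pi_G([x])).
\]
In particular $\ValK(h(x))$ depends only on $[x]$, so the condition $\ValK(h(x)) + m \ge 0$ defining $\PosSet_{(m)}(h)$ is well posed and is equivalent to $\Trop_G(h)(\pi_G([x])) + m \ge 0$, which is the defining condition for $\pi_G([x]) \in \PosSet^G_{(m)}(h)$.

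Therefore the bijection $\pi_G$ of \cref{l:LTrop} sends $\PosSet_{(m)}(h)$ into $\PosSet^G_{(m)}(h)$; the same argument applied to the preimage shows the restriction is onto, giving the claimed bijection. For the commutative triangle \eqref{e:triangle2}, I would note that the diagram \eqref{e:triangle} of \cref{l:piGmutation} is a diagram of bijections between the ambient sets, and the computation above shows that the subsets $\PosSet^G_{(m)}(h)$ and $\PosSet^{G'}_{(m)}(h)$ are precisely the images of $\PosSet_{(m)}(h)$ under $\pi_G$ and $\pi_{G'}$, respectively. Restricting \eqref{e:triangle} to these subsets then yields \eqref{e:triangle2} automatically. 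There is no real obstacle here — the content of the lemma is already packaged inside \cref{l:valandTrop} and \cref{l:piGmutation}; the only point requiring care is checking that one may freely evaluate $h$ on $\checkX(\mathbf{K}_{>0})$ using any seed's cluster variables, which is ensured by universal positivity.
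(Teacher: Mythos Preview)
Your proposal is correct and takes essentially the same approach as the paper: apply \cref{l:valandTrop} (together with universal positivity of $h$) to see that $\PosSet^G_{(m)}(h)$ is exactly the image of $\PosSet_{(m)}(h)$ under $\pi_G$, then restrict the commutative triangle of \cref{l:piGmutation}. The paper's proof is simply a terser version of what you wrote.
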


\begin{proof} The set $\PosSet^G_{(m)}(h)$ in $\R^{\mathcal P_G}$ is indeed the image of $\PosSet_{(m)}(h)$ under the bijection $\pi_G$ from  \cref{l:LTrop}. This follows, since  $h$ is universally positive, from \cref{l:valandTrop}.
The rest of the lemma is immediate from \cref{l:piGmutation}. 
\end{proof}

Recall that the summands $W_i$ of the
superpotential  
are universally positive by \cref{rem:cluster}.

\begin{cor}\label{c:PolytopeMutation}
Let $r_1,\dotsc, r_n\in\R$ and choose $\check\Sigma^{\mathcal A}_G$ a general seed. The subset of $\RZones$ defined by
\[\Q(r_1,\dotsc,r_n):=\bigcap_{i=1}^n\PosSet_{(r_i)}(W_i)\] 
 is in bijection with the generalized superpotential polytope 
$\Q_G(r_1,\dotsc,r_n)=\bigcap_{i}\PosSet^G_{(r_i)}(W_i)$, by the restriction of the map $\pi_G$ from \cref{l:LTrop}. 
	Moreover if  $\check\Sigma^{\mathcal A}_{G'}$ is related to $\check\Sigma^{\mathcal A}_G$ by a cluster mutation $\Mut^{\mathcal A}_{\nu}$, then we have that the \emph {tropicalized $\mathcal{A}$-cluster mutation} $\Psi_{G,G'}$ restricts to a bijection
\[
\Psi_{G,G'}: \Q_G(r_1,\dotsc,r_n)\to \Q_{G'}(r_1,\dotsc,r_n).
\]
\end{cor}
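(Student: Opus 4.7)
The plan is to derive this corollary as an immediate consequence of \cref{l:PosSetTrop} applied to each summand $W_i$ of the superpotential, followed by intersecting the resulting bijections.

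First I would invoke \cref{rem:cluster}, which records that each $W_i = p_{\mu_i^\square}/p_{\mu_i}$ is universally positive for the $\mathcal{A}$-cluster structure on $\opencheckX$, being a quotient of a Pl\"ucker coordinate (a cluster variable) by a frozen variable, so by the positive Laurent phenomenon it has a positive Laurent expansion $\mathbf{W}_i^G$ in any seed. Thus the hypotheses of \cref{l:PosSetTrop} are satisfied for $h = W_i$ with $m = r_i$, which gives for each $i$ a bijection
\[
\pi_G : \PosSet_{(r_i)}(W_i) \xrightarrow{\sim} \PosSet^G_{(r_i)}(W_i),
\]
together with the commutative triangle \eqref{e:triangle2} relating $\PosSet^G_{(r_i)}(W_i)$, $\PosSet^{G'}_{(r_i)}(W_i)$, and $\PosSet_{(r_i)}(W_i)$ via $\Psi_{G,G'}$.

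Next I would observe that, by \cref{l:LTrop}, $\pi_G : \RZones \to \R^{\mathcal P_G}$ is a global bijection, so it restricts to a bijection on any subset and its image; in particular it restricts to a bijection on finite intersections of the $\PosSet_{(r_i)}(W_i)$. Since $\pi_G$ sends each $\PosSet_{(r_i)}(W_i)$ onto $\PosSet^G_{(r_i)}(W_i)$, intersecting over $i = 1, \dots, n$ yields
\[
\pi_G : \Q(r_1, \dots, r_n) = \bigcap_{i=1}^n \PosSet_{(r_i)}(W_i) \xrightarrow{\sim} \bigcap_{i=1}^n \PosSet^G_{(r_i)}(W_i) = \Q_G(r_1, \dots, r_n),
\]
which is the first assertion. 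Note that this also incidentally identifies $\Q_G(r_1,\ldots,r_n)$ with a seed-independent object, matching the intent of the notation $\Q(r_1,\ldots,r_n)$.

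Finally, for the mutation statement, I would note that the commutative diagram \eqref{e:triangle2} applied to each $W_i$ shows that $\Psi_{G,G'}$ sends $\PosSet^G_{(r_i)}(W_i)$ bijectively onto $\PosSet^{G'}_{(r_i)}(W_i)$. Intersecting over $i$, and using that $\Psi_{G,G'}$ is a bijection $\R^{\mathcal P_G} \to \R^{\mathcal P_{G'}}$ with inverse $\Psi_{G',G}$ (and hence respects intersections), we conclude
\[
\Psi_{G,G'} : \Q_G(r_1,\dots,r_n) \xrightarrow{\sim} \Q_{G'}(r_1,\dots,r_n).
\]
This step is essentially bookkeeping; there is no substantive obstacle, since the work has already been done in \cref{l:LTrop}, \cref{l:piGmutation}, and \cref{l:PosSetTrop}. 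The only point worth flagging is that $\mathcal{A}$-cluster mutation of seeds of type $\pi_{k,n}$ stays within the class of seeds of that type, so the piecewise linear map $\Psi_{G,G'}$ and the corresponding superpotential polytope $\Q_{G'}(r_1,\dots,r_n)$ are indeed defined.
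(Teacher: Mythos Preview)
Your proposal is correct and matches the paper's approach exactly: the paper's proof simply notes that the identification $\Q_G(r_1,\dotsc,r_n)=\bigcap_{i}\PosSet^G_{(r_i)}(W_i)$ is a restatement of \cref{def:generalQ} and then declares the result immediate from \cref{l:PosSetTrop}. You have merely unpacked that ``immediate'' by applying \cref{l:PosSetTrop} to each $W_i$ and intersecting, which is precisely the intended argument.
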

\begin{proof}
The equality $\Q_G(r_1,\dotsc,r_n)=\bigcap_{i}\PosSet^G_{(r_i)}(W_i)$ is just an equivalent restatement of Definition~\ref{def:generalQ}. The corollary is immediate from Lemma~\ref{l:PosSetTrop}. 
\end{proof}

\begin{corollary}\label{c:intmoves}
The number of lattice points 
of  
$\Q_{G}(r_1,\dotsc, r_n)$ is independent of $G$. 
\end{corollary}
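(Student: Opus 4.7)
The plan is to combine the two bijections already available: the one between the polytopes in different seeds (\cref{c:PolytopeMutation}) and the one on the underlying lattices (\cref{r:integral}). More precisely, suppose $\check\Sigma^{\mathcal A}_G$ and $\check\Sigma^{\mathcal A}_{G'}$ are general $\mathcal A$-cluster seeds of type $\pi_{k,n}$ related by a single mutation $\Mut^{\mathcal A}_{\nu}$. By \cref{c:PolytopeMutation}, the tropicalized $\mathcal A$-cluster mutation $\Psi_{G,G'}$ restricts to a bijection $\Q_G(r_1,\dotsc,r_n)\to \Q_{G'}(r_1,\dotsc,r_n)$. By \cref{r:integral}, the same map $\Psi_{G,G'}:\R^{\mathcal P_G}\to \R^{\mathcal P_{G'}}$ also restricts to a bijection of lattices $\Z^{\mathcal P_G}\to \Z^{\mathcal P_{G'}}$ (this is immediate from the piecewise-linear formula \eqref{e:tropmut}, which has integer coefficients, together with the fact that its inverse $\Psi_{G',G}$ is of the same form). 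Intersecting these two restrictions gives a bijection
\[
\Psi_{G,G'}:\Q_G(r_1,\dotsc,r_n)\cap \Z^{\mathcal P_G}\ \overset{\sim}{\longrightarrow}\ \Q_{G'}(r_1,\dotsc,r_n)\cap \Z^{\mathcal P_{G'}},
\]
so the two sides have the same cardinality.

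For general $G$ and $G'$ of type $\pi_{k,n}$, the fact that their $\mathcal A$-cluster seeds are mutation-equivalent (see \cref{rem:Acluster}) means that there exists a finite sequence of mutations connecting them; composing the bijections above along this sequence produces a bijection between the lattice points of $\Q_G(r_1,\dotsc,r_n)$ and those of $\Q_{G'}(r_1,\dotsc,r_n)$, proving the claim.

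There is essentially no obstacle here: both the polytope bijection and the lattice bijection have already been set up in the preceding lemmas, and the only thing to observe is that they are realised by the same piecewise linear map $\Psi_{G,G'}$, so they restrict simultaneously.
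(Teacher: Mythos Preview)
Your argument is correct and is essentially identical to the paper's own proof: the paper also invokes \cref{r:integral} and \cref{c:PolytopeMutation} to see that $\Psi_{G,G'}$ restricts to a bijection on lattice points of the polytopes, and then uses connectedness of the mutation graph to conclude.
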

\begin{proof}
	By \cref{r:integral} and \cref{c:PolytopeMutation}, if $G'$ indexes a seed which is obtained by mutation from $G$, then the corresponding tropicalized $\mathcal{A}$-cluster mutation $\Psi_{G,G'}$ restricts to a bijection from the lattice points of $\Gamma_G(r_1,\dotsc, r_n)$ to the lattice points of $\Gamma_{G'}(r_1,\dotsc, r_n)$.
Since all seeds %
are connected by mutation, 
it follows that the number of lattice points of $\Q_G(r_1,\dotsc, r_n)$ is independent of the choice of seed. 
\end{proof}

\begin{remark}
	We note that there are also mutations of Laurent polynomials studied in \cite{ACGK}.  However,
	these are of a different type from the cluster algebra mutations we study here, although, as in our setting, there is an associated notion of mutation of polytopes (dual to Newton polytopes) by piecewise linear maps. 
\end{remark}

\section{Combinatorics of perfect matchings}\label{s:matchings}

We now return to $\X$ and study the flow polynomials which express the 
Pl\"ucker coordinates $P_\lambda$ in terms of the network chart associated to a plabic graph.
Namely, in this section we use perfect matchings to show that for any Pl\"ucker coordinate, 
the corresponding flow polynomial coming from a plabic graph always has a strongly minimal and a strongly maximal term, see \cref{def:minimal}.  

Let $G$ be a bipartite plabic graph with boundary vertices
labeled $1,2,\dots, n$.
We assume that each boundary vertex is adjacent to one white vertex and no
other vertices.  
A \emph{(perfect) matching} of $G$ is a collection of edges of $G$ which cover
each internal vertex exactly once.  For a matching $M$,
we let $\partial M \subset [n]$ denote the subset of the boundary vertices
covered by $M$.  
Given $G$, we say that $J$ is \emph{matchable} if there is at least
one matching $M$ of $G$ with boundary $\partial M=J$.

There is a partial order on matchings, which makes
the set of matchings with a fixed boundary into a \emph{distributive
lattice}, i.e. a partially ordered set in which every two elements have a unique supremum
(the join)
and a unique infimum (the meet), in which the operations of join and meet distribute
over each other.
Let $M$ be a matching of $G$ and let $\mu$ label an internal face of $G$
such that $M$ contains exactly half the edges in the boundary of $\mu$
(the most possible).  The \emph{flip} (or \emph{swivel}%
) of $M$ at $\mu$
is the matching $M'$, which contains the other half of the edges
in the boundary of $\mu$ and is otherwise the same as $M$.
Note that $M'$ uses the same boundary edges as $M$ does.
We say that the flip of $M$ at $\mu$ is a \emph{flip up} 
from $M$ to $M'$, and we write $M \lessdot M'$, if, 
when we orient the edges in the boundary of $\mu$ clockwise, the 
matched edges in $M$ go from white to black.  Otherwise we say 
that the flip is a \emph{flip down} from $M$ to $M'$, and we write
$M' \lessdot M$.  See the leftmost column of 
\cref{flip}.   We let $\leq$ denote the partial order on matchings
generated by the cover relation $\lessdot$.

\begin{figure}[h]
\includegraphics[height=1.5in]{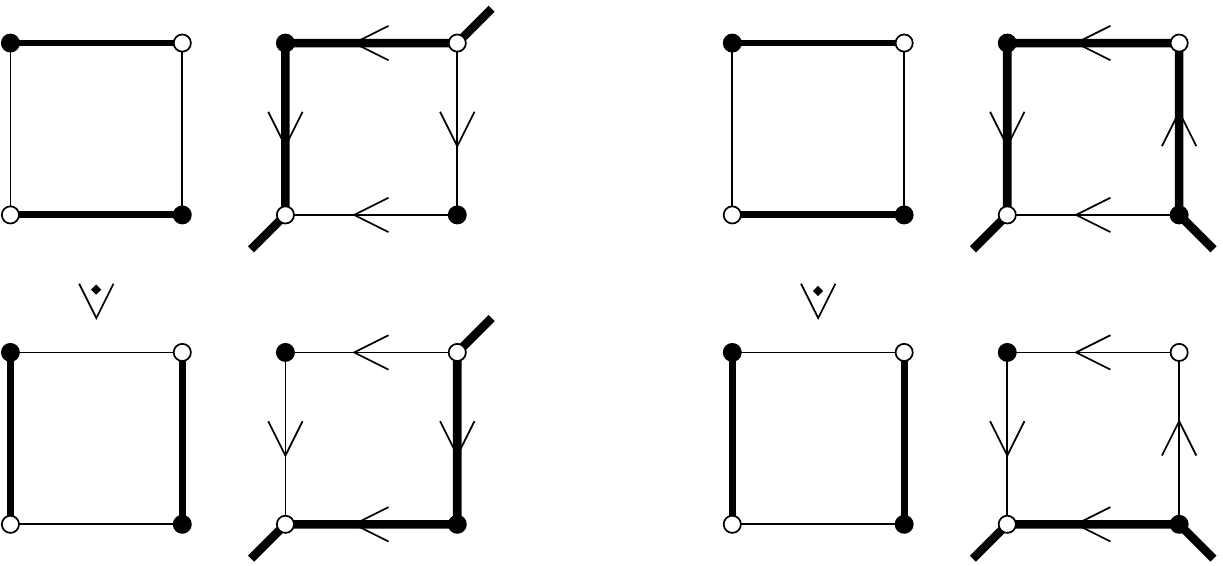}
\caption{Flipping up a face, and some examples of the effect on 
corresponding flows. }
\label{flip}
\end{figure}

The following result appears as \cite[Theorem B.1]{MullerSpeyer}
and \cite[Corollaries B.3 and B.4]{MullerSpeyer},
and is deduced from \cite[Theorem 2]{Propp}.
\begin{theorem}[{\cite[Theorem B.1]{MullerSpeyer} and \cite[Theorem 2]{Propp}}]
\label{cor:minmax}
Let $G$ be a reduced bipartite plabic graph, and let $J$
be a matchable subset of $[n]$.  Then the partial order $\leq$
makes the set of matchings on $G$ with boundary $J$ into a finite
distributive lattice, which we call $\Match^G_J$ (or $\Match^G_{\lambda}$, if $\lambda \subseteq (n-k) \times k$
is the partition corresponding to $J$).

In particular, the set $\Match^G_J$ of matchings of $G$ with boundary $J$ has a unique minimal
element $M_J^{min}$ and a unique maximal element $M_J^{max}$, 
assuming the set is nonempty.
Moreover, any two elements of $\Match^G_J$ are connected
by a sequence of flips.
\end{theorem}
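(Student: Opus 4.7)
The plan is to establish the lattice structure via the classical height-function technique. First I would observe that for any two matchings $M, M' \in \Match^G_J$, the symmetric difference $M \triangle M'$ consists entirely of cycles in the interior of $G$: since each boundary vertex of $G$ is incident to a unique edge (to a white vertex), whether that edge is used in a matching is determined by whether the boundary vertex is in $J$, so boundary edges cancel in $M \triangle M'$; what remains is an edge subset that is balanced at every internal vertex, hence a disjoint union of even cycles alternating between edges of $M$ and $M'$.

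Next, I would fix a reference matching $M_0 \in \Match^G_J$ and a reference face $\mu_0$, and attach to every $M \in \Match^G_J$ a height function $h_M \colon \mathrm{Faces}(G) \to \Z$ by setting $h_M(\mu_0) = 0$ and defining $h_M(\mu)$ to be the signed count (using the natural orientation coming from bipartiteness, ``white-to-black edges of $M$ minus white-to-black edges of $M_0$'') of cycles of $M \triangle M_0$ enclosing $\mu$. The key properties to verify are that $M \mapsto h_M$ is injective, since one recovers $M$ from $h_M$ by reading which cycles separate which faces and swapping those cycles in $M_0$, and that a flip-up at a face $\mu$ (as in \cref{flip}) is precisely the elementary operation $h_M \mapsto h_M + \mathbf{1}_\mu$. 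This identifies the poset $(\Match^G_J,\le)$ with a subset of $\Z^{\mathrm{Faces}(G)}$ under componentwise order, and it already implies that the Hasse diagram is connected via flips whenever the image is ``interval-like''.

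The main obstacle is showing that the image of $h$ is closed under componentwise $\min$ and $\max$, which upgrades the comparison to a (necessarily distributive) sublattice of $\Z^{\mathrm{Faces}(G)}$. The construction of the join $M_1 \vee M_2$ is direct: decompose $M_1 \triangle M_2$ into its cycles, and for each cycle choose the ``higher'' resolution, namely the one whose interior faces are where the matching to be kept has larger height. One then checks that the resulting edge set is a perfect matching of $G$ with boundary $J$, and that its height function equals $\max(h_{M_1}, h_{M_2})$; the meet is analogous. This is the heart of Propp's argument \cite[Theorem 2]{Propp}, and its translation to the plabic setting is carried out in \cite[Appendix B]{MullerSpeyer}; reducedness and bipartiteness of $G$ ensure that the cycle resolutions above never conflict at a shared vertex.

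Granted the distributive lattice structure, $\Match^G_J$ is bounded in $\Z^{\mathrm{Faces}(G)}$ (any two matchings differ by only finitely many cycles, each of bounded ``winding''), so it is finite and possesses unique extrema $M_J^{\min}$ and $M_J^{\max}$. For the final assertion about connectivity via flips: given any $M \neq M_J^{\min}$, the height function $h_M - h_{M_J^{\min}}$ is nonzero and nonnegative, so some face $\mu$ admits a flip-down from $M$; iterating reaches $M_J^{\min}$, and any two matchings are then joined through $M_J^{\min}$, completing the proof.
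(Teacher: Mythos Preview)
The paper does not give its own proof of this statement: it is quoted as a known result, attributed to \cite[Theorem~B.1 and Corollaries~B.3, B.4]{MullerSpeyer}, which in turn deduce it from \cite[Theorem~2]{Propp}. So there is nothing in the paper to compare your argument against beyond the fact that you are sketching the very approach of those cited references.

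Your outline is essentially the standard height-function argument of Propp, adapted to the plabic setting as in Muller--Speyer's appendix, and the overall shape is correct. A couple of small points where your sketch is looser than the cited proofs: your definition of $h_M(\mu)$ as a ``signed count of cycles enclosing $\mu$'' is workable but is usually phrased more locally (the height difference across an edge is $\pm 1$ according to whether the edge lies in $M$ or $M_0$ and its bicoloring), which makes the verification that a flip-up increments exactly one coordinate immediate. Also, the clause ``reducedness and bipartiteness ensure that the cycle resolutions never conflict'' is where the actual work lies; in the references this is handled by checking a local condition on height differences across edges rather than by reasoning about cycles globally, and reducedness is not in fact needed for the lattice statement itself (Muller--Speyer state it for arbitrary bipartite plabic graphs). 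These are refinements rather than gaps.
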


\begin{definition}\label{GJ}
Given $G$ and $J$ as in \cref{cor:minmax},
we let $G(J)$ denote the subgraph of $G$ consisting of the (closure of the) faces involved
in a flip connecting elements of $\Match^G_J$.  
And if $\lambda\in \mathcal{P}_{k,n}$ is the partition with vertical steps 
$J(\lambda)$, then we also use $G(\lambda)$ to denote $G(J(\lambda))$.
\end{definition}

Note that the elements of $\Match^G_J$ can be identified with the perfect matchings of 
$G(J)$.
Our next goal is to relate matchings of $G$ to flows in a perfect orientation
of $G$.  The following lemma is easy to check;
see \cref{flow-matching}.

\begin{lemma}\label{lem:flowmatch}
Let $\mathcal{O}$ be a perfect orientation of a plabic graph $G$,
with source set $I_{\mathcal{O}}$.  Let $J$ be a set of boundary 
vertices with $|J|=|I_{\mathcal{O}}|$.  There is a bijection 
between flows $F$ from $I_{\mathcal{O}}$ to $J$, and matchings of 
$G$ with boundary $J$.  In particular, if $G$ has type $\pi_{k,n}$, then 
$|J| = n-k$.
The matching $M(F)$ associated to flow $F$
is defined by 
\begin{align*}
M(F)  = 
&\{e \ \vert \ e\notin F \text{ and }e \text{ is directed towards 
its incident white vertex in }\mathcal{O} \} \cup \\
&\{e \ \vert \ e\in F \text{ and }e \text{ is directed  away from
its incident  white vertex in }\mathcal{O} \}.
\end{align*}
We write $F(M)$ for the flow corresponding to the matching $M$.
\end{lemma}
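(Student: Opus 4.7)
The plan is to verify that the assignment $F \mapsto M(F)$ produces a perfect matching with boundary $J$, and then to exhibit an explicit inverse.

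First I would check that $M(F)$ covers each internal vertex exactly once, by a local analysis. At an internal white vertex $v$, the perfect orientation gives a unique incoming edge $e_v$ (directed towards $v$, i.e.\ towards white) and several outgoing edges (directed away from $v$, i.e.\ away from white). If $v$ lies on no path of $F$, then no incident edge is in $F$ and the only edge satisfying the first clause in the definition of $M(F)$ is $e_v$; if $v$ lies on a path of $F$, then $v$ is traversed via $e_v$ and one outgoing edge $e'$, so $e_v$ is directed towards white and in $F$ (contributing to neither clause), while $e'$ is directed away from white and in $F$ (contributing to the second clause and hence to $M(F)$). The analogous check at an internal black vertex is symmetric. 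For the boundary, each $i \in [n]$ has a unique incident edge $e_i$; a quick case analysis on whether $i \in I_{\mathcal{O}}$ and whether $i \in J$ shows that $e_i \in M(F)$ precisely when $i \in J$, giving $\partial M(F) = J$.

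Next I would construct the inverse by setting
\[
F(M) := \{e \text{ directed towards white} \mid e \notin M\} \cup \{e \text{ directed away from white} \mid e \in M\}.
\]
The same local analysis shows that at each internal vertex $F(M)$ uses either zero or two incident edges, one incoming and one outgoing in $\mathcal{O}$, and at each boundary vertex $F(M)$ uses exactly the incident edge expected from the source/sink status of that vertex relative to $I_{\mathcal{O}}$ and $J$. Thus $F(M)$ decomposes into vertex-disjoint oriented paths from $I_{\mathcal{O}}\setminus J$ to $J\setminus I_{\mathcal{O}}$, possibly together with oriented cycles.

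The one subtle point -- the main obstacle, in the sense of requiring an additional input -- is ruling out cycles. Since we are working with an acyclic perfect orientation (see \cref{PSW-lemma} and \cref{acyclic}), any purported cycle in $F(M)$ would be an oriented cycle in $\mathcal{O}$, contradicting acyclicity. Hence $F(M)$ is a genuine flow from $I_{\mathcal{O}}$ to $J$. The identities $F(M(F)) = F$ and $M(F(M)) = M$ then follow immediately from the edge-by-edge definitions, since the two clauses in each construction together partition the edges of $G$ according to orientation towards or away from their white endpoint. This establishes the bijection.
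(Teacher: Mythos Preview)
Your proof is correct and fills in precisely the details the paper omits: the paper merely states that the lemma ``is easy to check'' and points to a figure, so there is no approach to compare against beyond the direct local verification you carry out. Your observation that acyclicity of $\mathcal{O}$ is needed to rule out cycles in $F(M)$ is exactly right and is consistent with the paper's standing convention (\cref{acyclic}); as stated, the lemma tacitly relies on that convention, since for a perfect orientation with a directed cycle the edge set $F(M)$ could contain that cycle and would then fail to be a flow in the paper's sense (a collection of vertex-disjoint paths).
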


\begin{figure}[h]
\includegraphics[height=1.2in]{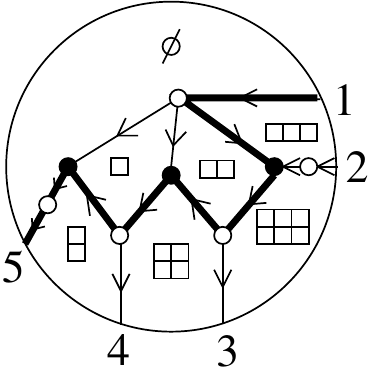} \hspace{.5cm}
\includegraphics[height=1.2in]{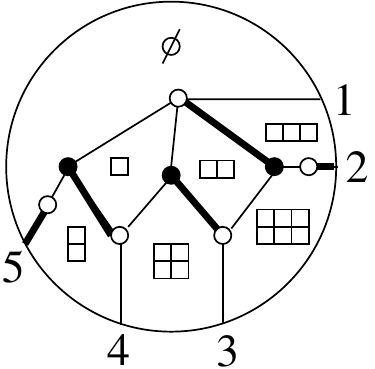}
\caption{A flow $F$ used in the flow polynomial
$P_{25}^G$ and the corresponding matching $M(F)$.  Here $F$
is the minimal flow for $P_{25}^G$ and $M(F)$ is the minimal
matching with boundary $\{2,5\}$.}
\label{flow-matching}
\end{figure}

We now use \cref{cor:minmax} to show that flow polynomials have 
strongly minimal and maximal terms. Recall the notations from Section~\ref{sec:poschart}.

\begin{corollary}
\label{prop:strongminimal}
Let $G$, $\mathcal{O}$, and $J$ be as in \cref{lem:flowmatch}.
The flow polynomial $P_J^G  = \sum \wt(F)$ has a strongly minimal
term $\Pmin_J^{G}$ such that $\Pmin_J^{G}$ divides $\wt(F)$ for 
all flows $F$ 
from $I_{\mathcal{O}}$ to $J$.  
And it has a strongly 
maximal term 
which is divisible by $\wt(F)$ %
for each flow $F$ 
from $I_{\mathcal{O}}$ to $J$.  
If $\lambda$ is the partition corresponding 
to $J$, we  also write $\Pmin_{\lambda}^{G}$ %
instead of $\Pmin_J^{G}$.
\end{corollary}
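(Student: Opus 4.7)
The plan is to use the distributive lattice structure on $\Match^G_J$ given by Theorem~\ref{cor:minmax}, transfer it via the bijection of Lemma~\ref{lem:flowmatch} to the set of flows from $I_\O$ to $J$, and then track how each covering relation (a single flip) changes the weight of the corresponding flow by exactly one factor $x_\mu$.

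First I would establish the following local weight-change formula. If $M, M' \in \Match^G_J$ differ by a single flip at an internal face labeled $\mu$, with $M \lessdot M'$, then
\[
\wt(F(M')) \;=\; x_\mu \cdot \wt(F(M)),
\]
where $F(M)$ and $F(M')$ are the corresponding flows under Lemma~\ref{lem:flowmatch}. This is a purely local verification in a neighbourhood of $\mu$: by the definition of ``flip up'' (matched edges of $M$ point white-to-black in the clockwise orientation of $\partial \mu$) together with the translation rule between matchings and flows in Lemma~\ref{lem:flowmatch}, the portions of the paths of $F(M')$ near $\mu$ are obtained from those of $F(M)$ by rerouting them around $\mu$ so that $\mu$ switches from being \emph{to the right} of the flow-paths to being \emph{to the left}; outside of a neighborhood of $\mu$ the two flows coincide. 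Hence the set of face labels to the left of the flow grows by the single label $\mu$, which multiplies $\wt$ by $x_\mu$. The figure in the paper accompanying the definition of flips illustrates this.

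Second, because any matching $M \in \Match^G_J$ is connected to $M_J^{\min}$ by a saturated chain of flips up (Theorem~\ref{cor:minmax}), iterating the formula gives
\[
\wt(F(M)) \;=\; \wt(F(M_J^{\min})) \cdot \prod_\mu x_\mu^{a_\mu(M)}
\]
for some nonnegative integers $a_\mu(M)$, independent of the chosen chain by distributivity of the lattice. Setting $\Pmin_J^G := \wt(F(M_J^{\min}))$, this shows $\Pmin_J^G$ is strongly minimal in $P_J^G$ and divides $\wt(F)$ for every flow $F$ from $I_\O$ to $J$. A symmetric argument starting from $M_J^{\max}$ and going down via flips down yields the strongly maximal term $\wt(F(M_J^{\max}))$, which is divisible by $\wt(F)$ for every such flow.

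The main obstacle is the local weight-change formula in the first step: it requires careful unpacking of the orientation conventions (acyclic perfect orientation with source set $\{1,\dots,n-k\}$ as in Remark~\ref{acyclic}) together with the white/black colour convention in the definition of a flip up, to confirm that the label $\mu$ is \emph{added} (rather than removed) in passing from $F(M)$ to $F(M')$. Once this is verified in the handful of local pictures at a square face, the rest of the argument is formal and relies only on the lattice structure already given by Theorem~\ref{cor:minmax}.
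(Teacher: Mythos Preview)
Your proposal is correct and follows essentially the same argument as the paper: the key step in both is the local computation that a flip up at face $\mu$ multiplies the flow weight by $x_\mu$, after which the lattice structure of \cref{cor:minmax} immediately gives the strongly minimal and maximal terms as the weights of $F(M_J^{\min})$ and $F(M_J^{\max})$. The paper's proof is terser (it simply points to the figure for the local case analysis), but the logic is identical.
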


\begin{proof}
A simple case by case analysis shows that if matching $M'$ is obtained
from $M(G)$ by flipping face $\mu$ up, i.e. $M(F) \lessdot M'$,
then the flow $F':=F(M')$ satisfies
$\wt(F') = \wt(F) x_{\mu}$.  See Figure \ref{flip}.
The result now follows from 
\cref{cor:minmax},  where the strongly minimal and maximal 
terms of $P_J^G$ are the weights of the flows  
$F(M_J^{min})$ and 
$F(M_J^{max})$, respectively.
\end{proof}

\section{Mutation of Pl\"ucker coordinate valuations for $\X$}
\label{s:ConvMutation}

In this section we will again restrict our attention to plabic graphs
(as opposed to $\mathcal{X}$-clusters), and will use the combinatorics of flow
polynomials to describe explicitly how valuations
of Pl\"ucker coordinates of $\X$ behave under mutation.
This will be an important tool in 
proving \cref{t:ValuationsFormula},
 which describes all lattice points of $\Delta_G$, when $G$ is a reduced plabic graph 
of type $\pi_{k,n}$.

\begin{theorem}\label{thm1:tropcluster}
Suppose that $G$ and $G'$ are reduced plabic graphs of type $\pi_{k,n}$, 
which are related by a single move.
If $G$ and $G'$ are related by one of the moves (M2) or (M3), then $\mathcal P_G=\mathcal P_{G'}$ and the
polytopes $\conv_{G}(D)\subset \R^{\mathcal P_{G}}$ and $\conv_{G'}(D)\subset \R^{\mathcal P_{G'}}$ are identical.  
If $G$ and $G'$ are related by the square move (M1), then for any Pl\" ucker coordinate $P_K$ of $\mathbbX$, 
\[
\val_{G'}\left({P_K}\right)=\Psi_{G,G'}\left(\val_G\left({P_K}\right)\right),
\]
	for $\Psi_{G,G'}:\R^{\mathcal P_G}\to \R^{\mathcal P_{G'}}$ the \emph{tropicalized $\mathcal{A}$-cluster mutation} from 
\eqref{e:tropmutG}, and where we have written $\val_G(P_K)$ for $\val_G(P_K/P_\Max)$.
\end{theorem}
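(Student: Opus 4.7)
\bigskip

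The proof will split according to which move relates $G$ and $G'$.

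\textbf{Moves (M2) and (M3).}  These local moves by definition leave the set of faces of $G$ unchanged, so $\mathcal P_G=\mathcal P_{G'}$ and the network parameters are indexed by the same set. I would first invoke \cref{PSW-lemma} to choose compatible acyclic perfect orientations with source set $\{1,\dotsc,n-k\}$ on $G$ and $G'$. With these choices, directed paths from boundary vertex $i$ to $j$ in $G$ biject with such paths in $G'$ (the local contraction/insertion of a degree-$2$ vertex neither creates nor destroys any face label, and the parts of the path outside the local region are unchanged), and the bijection preserves the multiset of face labels that lie to the left of each path. Hence the flow polynomials $P_J^G$ and $P_J^{G'}$ coincide as Laurent polynomials in the same variables $\{x_\mu\}$ for every $J$. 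Consequently $\val_G=\val_{G'}$ on every Pl\"ucker coordinate (and in fact on all of $\C(\X)$), and $\Conv_G(D)=\Conv_{G'}(D)$.

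\textbf{Move (M1).}  Under the square move at the face $\nu_1$, we have $\mathcal P_{G'}=(\mathcal P_G\setminus\{\nu_1\})\cup\{\nu'_1\}$ while all other face labels are common to $G$ and $G'$.  Writing $\val_G(P_K)=(a_\mu)_{\mu\in\mathcal P_G}$, the required identity $\val_{G'}(P_K)=\Psi_{G,G'}(\val_G(P_K))$ amounts to two statements: $(\star)$ for every $\mu\ne\nu_1$ the coordinate $a_\mu$ is preserved, and $(\star\star)$ the new $\nu'_1$-coordinate satisfies $a'_{\nu'_1}=\min\bigl(\sum_{\mu\to\nu_1}a_\mu,\ \sum_{\nu_1\to\mu}a_\mu\bigr)-a_{\nu_1}$.

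The main input for (M1) is \cref{prop:strongminimal}: $\val_G(P_K)$ is the exponent vector of the strongly minimal monomial $\Pmin_K^G$ of the flow polynomial, coming from the unique minimum element of the distributive lattice $\Match^G_{J(K)}$ of matchings of $G(J(K))$ with boundary $J(K)$.  The plan is then to combine this with the $\mathcal X$-cluster mutation formula of \cref{lem:mutXface}, which identifies $P_K^{G'}(\{x'_\mu\})$ with $\Mut_{\mathcal X,\nu_1}(P_K^G(\{x_\mu\}))$.  Substituting the inverse of \eqref{e:XclusterMut} into a single monomial $\prod_\mu x_\mu^{a_\mu}$ produces
\[
 (x'_{\nu'_1})^{-a_{\nu_1}+\sum_{\nu_1\to\mu}a_\mu}\prod_{\mu\ne\nu_1}(x'_\mu)^{a_\mu}\cdot(1+x'_{\nu'_1})^{\sum_{\mu\to\nu_1}a_\mu-\sum_{\nu_1\to\mu}a_\mu},
\]
whose lowest-order behaviour in $x'_{\nu'_1}$ is governed exactly by the sign of the exponent $\sum_{\mu\to\nu_1}a_\mu-\sum_{\nu_1\to\mu}a_\mu$; tropicalising yields precisely the formula for $a'_{\nu'_1}$ in $(\star\star)$.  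One then has to promote this monomial-level calculation to the full flow polynomial by showing that the strongly minimal monomial of $P_K^{G'}$ is obtained from the strongly minimal monomial of $P_K^G$ under the substitution, without being displaced by contributions coming from non-minimal terms of $P_K^G$.  This is the main obstacle and is most cleanly handled on the matching side: there is a canonical bijection $\Match^G_{J(K)}\to\Match^{G'}_{J(K)}$ that changes only the portion of a matching inside $G(J(K))$ adjacent to the square at $\nu_1$, and by the case analysis of how flips at $\nu_1$ interact with flips at the neighbouring faces $\nu_2,\dotsc,\nu_5$, the minimum matching of $G'$ is identified and its weight computed to match $(\star)$ and $(\star\star)$.

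\textbf{Cleaner reformulation.}  An alternative route, which bypasses the combinatorial case analysis, is to interpret the assignment $P_K\mapsto\val_G(P_K)$ via the cluster duality already developed in the excerpt: construct (for each $\lambda\in\mathcal P_{k,n}$) an element $\check x_\lambda\in\checkX(\mathbf K_{>0})$ whose $\mathcal A$-cluster tropical coordinates in every seed $\check\Sigma^{\mathcal A}_G$ reproduce $\val_G(P_\lambda)$; the required naturality under mutation is then an immediate consequence of \cref{l:piGmutation}.  Either route ultimately reduces the result to a single piecewise linear identity --- the tropicalisation of the three-term Pl\"ucker relation --- combined with the strong minimality of \cref{prop:strongminimal}.
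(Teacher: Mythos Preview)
Your treatment of (M2) and (M3) matches the paper's.

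For (M1), your Route 1 is headed in the right direction, but the detour through the $\mathcal X$-mutation substitution \eqref{e:XclusterMut} does not actually buy anything. As you yourself note, after substituting into a single monomial you get a Laurent polynomial in the new variables, and the minimal term of $P_K^{G'}$ need not come from the minimal monomial of $P_K^G$; contributions from non-minimal monomials can and do interfere. Resolving this forces you back to the matching/flow picture and a local case analysis---which is exactly what the paper does, but \emph{directly}, without passing through the algebraic substitution. Concretely, the paper classifies the six possible local configurations of the strongly minimal flow $F_{\min}$ around the square face $\nu_1$ (Figure~\ref{check-oriented-square}), excludes one further configuration by showing it cannot be minimal, proves that the locally transformed flow $F'_{\min}$ is again strongly minimal in $G'$ via a coordinate comparison in each of the nine relevant $(I,J)$-pairings, and finally checks the tropical three-term relation in each of the six cases. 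Your outline gestures at this (``case analysis of how flips at $\nu_1$ interact with flips at the neighbouring faces'') but does not carry it out; the paper's direct flow analysis is both the cleanest and the necessary route.

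Your Route 2 is circular in the logical structure of the paper. The element $x_\lambda(t)\in\checkX(\mathbf K_{>0})$ is constructed in \cref{t:matrix}, but the fact that its $\mathcal A$-cluster tropical coordinates in a \emph{general} seed $G$ coincide with $\val_G(P_\lambda)$ is established only in the proof of \cref{t:ValuationsFormula}, which explicitly invokes \cref{thm1:tropcluster} to propagate from the rectangles seed. So you cannot assume the existence of such an element with the required property in every seed without first proving the present theorem.
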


Explicitly, suppose we obtain $G$ from $G'$ by a square move at the 
face labeled by $\nu_1$ in Figure~\ref{labeled-square}.  Then any vertex $(V_{\nu_1},V_{\nu_2}, \dots,V_{\nu_N})$ of $\conv_G$, where the $\nu_i$ are the ordered elements of $\mathcal P_{G}$, without
loss of generality  starting from $\nu_1$,
 transforms to a  vertex of $\conv_{G'}$ by the following piecewise-linear  transformation 
$\Psi_{G,G'}$,
\begin{equation}\label{e:PsiV}
\Psi_{G,G'}:(V_{\nu_1},V_{\nu_2}, \dots,V_{\nu_N})\mapsto
(V_{\nu'_1},V_{\nu_2}, \dots,V_{\nu_N}), \text{ where }
\end{equation}
\begin{equation*}
V_{\nu'_1} = \min(V_{\nu_2}+V_{\nu_4}, V_{\nu_3}+V_{\nu_5}) - V_{\nu_1}.
\end{equation*}

\begin{rem}{\label{r:Pluckerprods}} 
We note that 
a statement analogous to 
\cref{thm1:tropcluster}
fails already for the products $P_K P_J$, because while 
\[
\val_G\left(P_K P_J\right)=\val_G\left(P_K\right)+\val_G\left(P_J\right),
\] 
and $\psi_{G,G'}\left(\val_G\left(P_I\right)\right)=\val_{G'}\left(P_I\right)$ for $I=J, K$, by Theorem~\ref{thm1:tropcluster}, we potentially have 
\[
\Psi_{G,G'}\left(\val_G\left(P_K P_J\right)\right)=\Psi_{G,G'}\left(\val_G\left(P_K\right)+\val_G\left(P_J\right)\right )\not=
\val_{G'}\left(P_K\right)+\val_{G'}\left(P_J\right)=\val_{G'}\left(P_K P_J\right),
\] 
	since the tropicalized $\mathcal{A}$-cluster mutations are not linear. 
\end{rem}

\begin{rem}\label{r:polmut} 
Note that while 
 $\Psi_{G,G'}$ sends the lattice points of 
 $\conv_G$
to the lattice points of $\conv_{G'}$, it does not in general 
 send the whole polytope $\conv_G$
	to the polytope $\conv_{G'}$.  Namely, since $\Psi_{G,G'}$ is only piecewise linear, 
	it could map a (non-integral) point of $\conv_G$
	 to a (non-integral) point which does not lie in $\conv_{G'}$.
However,  
the Newton-Okounkov body $\Delta_G$, which can be larger than $\conv_G$
	(recall \cref{sec:Milena}), is in fact sent to $\Delta_{G'}$ by $\Psi_{G,G'}$,  by  
\cref{thm:main} and 
\cref{c:PolytopeMutation}.
	This property
	 is highly nontrivial in light of \cref{r:Pluckerprods}.
\end{rem}

\begin{proof}[Proof of Theorem~\ref{thm1:tropcluster}]
By \cref{PSW-lemma} and \cref{acyclic}, we have an acyclic perfect orientation $\O$ of $G$ whose set of boundary
sources is $\{1,2,\dots,n-k\}$.   Therefore if we apply Theorem \ref{thm:Talaska},
our expression for the Pl\"ucker coordinate 
$P_{\Max}$ is $1$.  Moreover, we have expressions for the 
other Pl\"ucker coordinates $P_K = P_K^G$ as flow polynomials, which 
are sums over
pairwise-disjoint collections of self-avoiding walks in $\O$.  The weight of each walk is the product 
of parameters $x_{\mu}$, where $\mu$ ranges over all face labels to the left of a walk.

It is easy to see that 
the flow polynomials $P_K^G$ and $P_K^{G'}$  are equal if $G$ and $G'$ differ by one of the moves (M2) or (M3):
in either case, there is an obvious bijection between perfect orientations of both graphs involved
in the move, and this bijection is weight-preserving.

Now suppose that $G$ and $G'$ differ by a square move.  By Lemma \ref{PSW-lemma}, it suffices to compare
perfect orientations $\O$ and $\O'$ of $G$ and $G'$ which differ as in Figure \ref{oriented-square-move}. 
Without loss of generality, $G$ and $G'$ are at the left and right, respectively, of Figure \ref{oriented-square-move}.
(We should also consider the case that $G$ is at the right and $G'$ is at the left, but the proof in this 
case is analogous.)
Recall that by \cref{prop:strongminimal}, each flow polynomial $P_K$ has a strongly minimal flow
(see \cref{def:minimal}) $F_{min}$, and hence $\val_G(P_K) = \wt(F_{min})$.
The main step of the proof is to 
prove the following claim about 
how strongly minimal flows  change under an oriented  square move.
\vspace{.2cm}

{\bf Claim.} 
{\it Let $G$ and $\mathcal O$ be as above, 
let $K$ be an $(n-k)$-element subset of $\{1,\dotsc,n\}$, and  
let $F_{\min}$ be the strongly minimal flow from $\{1,\dotsc, n-k\}$ to $K$. 
\begin{enumerate}
\item 
Assuming the orientations in $\mathcal O$ locally around the face $\nu_1$ are as shown in the left-hand side of Figure~\ref{oriented-square-move}, then the restriction of $F_{\min}$ 
to the neighborhood of face $\nu_1$ is as in the left-hand
side of one of the six pictures in Figure~\ref{check-oriented-square}, say picture $I$, where 
$I\in \{\mathbf{A}, \mathbf{B}, \mathbf{C}, \mathbf{D}, \mathbf{E}, \mathbf{F}\}$.
\item If we let $F'_{\min}$ denote the flow obtained from $F_{\min}$ by the 
local transformation indicated in picture $I$, then $F'_{\min}$ is strongly minimal.
\end{enumerate}}

\begin{figure}[h]
\centering
\includegraphics[height=1.8in]{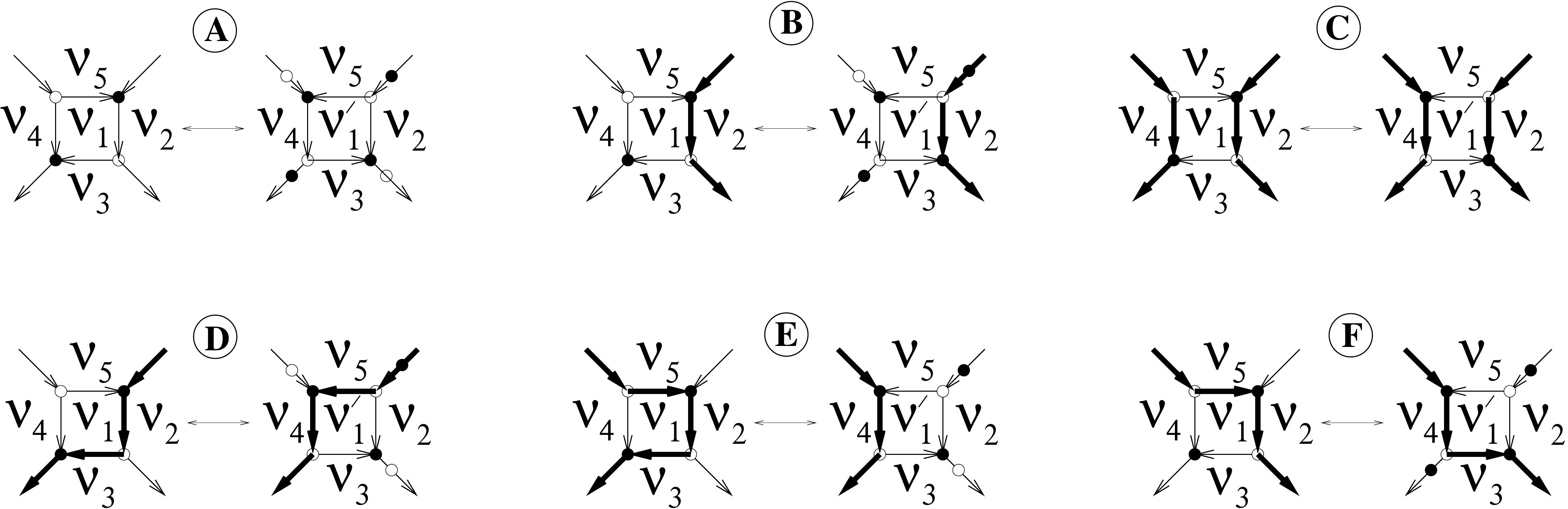}
\caption{How minimal flows change in the neighborhood of face $\nu_1$ 
as we do an oriented square move.  The perfect orientations $\O$
and $\O'$ 
for $G$ and $G'$ are shown at the left and right of each pair, respectively.
Note that in the top row, the flows do not change, but in the bottom row they do.
Also note that the picture at the top left indicates the case that the flow is not incident to 
face $\nu_1$.}
\label{check-oriented-square}
\end{figure}

Let us check (1). In theory, 
the restriction of $F_{\min}$ to the neighborhood of face $\nu_1$ could be 
as in the left-hand
side of any of the six pictures from Figure~\ref{check-oriented-square}, \emph{or} it could be 
as in Figure \ref{fig:excluded}.
However, if a flow locally looks like Figure \ref{fig:excluded}, then it cannot be minimal -- 
the single path shown in Figure \ref{fig:excluded}  could be deformed  to go around the other side of 
the face labeled $\nu_1$, and that would result in a smaller weight.  
More specifically, the weight of a flow which locally looks like Figure~\ref{fig:excluded}, when restricted
to coordinates $(\nu_1,\nu_2, \nu_3,\nu_4, \nu_5)$, has valuation 
$(i+1,i+1,i+1,i,i+1)$, whereas the weight of its deformed version has valuation
$(i,i+1,i+1,i,i+1)$, for some nonnegative integer $i$. 
This proves the first statement of the claim.

\begin{figure}[h]
\centering
\includegraphics[height=.7in]{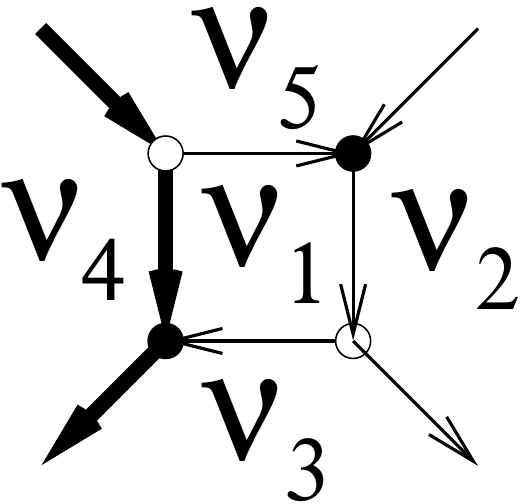}
\caption{A path whose weight is not minimal.}
\label{fig:excluded}
\end{figure}

Now let us write $\wt(F_{\min})  = \prod_{\mu\in \mathcal P_G} x_\mu^{a_\mu}$, 
so that $(a_{\mu})_{\mu\in\mathcal P_G}=\val_G(P_I)$.   Suppose that the restriction of $F_{\min}$ to the 
neighborhood of face $\nu_1$ looks as in picture $I$ of Figure~\ref{check-oriented-square}. 
Let $F'_{\min}$ be the flow in $G'$ obtained from $F_{\min}$ by the local transformation indicated in picture $I$, and 
write $\wt(F'_{\min})  = \prod_{\mu\in \mathcal P_{G'}} x^{a'_\mu}$.  (Clearly $F'_{\min}$ is indeed a flow
in $G'$.)
We need to show that $F'_{\min}$ is strongly minimal.

Let $F'$ be some arbitrary flow in $G'$, and write 
$(b'_{\mu})_{\mu\in \mathcal{P}_G'} = \val_{G'}(\wt(F'))$.
We need to show that $a'_{\mu} \leq b'_{\mu}$ for all $\mu \in \mathcal{P}_{G'}$.
We can assume that the restriction of $F'$ to the neighborhood of 
face $\nu'_1$ looks as in the right hand side of one of the six pictures in Figure 
\ref{check-oriented-square}, say picture $J$.
A priori there is one more case (obtained from the right hand side of picture $\BBB$ by 
deforming the single path to go around $\nu'_1$), but since this increases $b'_{\nu'_1}$,
we don't need to consider it.
Now let $F$ be the flow in $G$ obtained from $F'$ by the local transformation indicated in picture $J$, 
and write $(b_{\mu})_{\mu\in \mathcal{P}_G} = \val_{G}(\wt(F))$.

We already know, by our assumption on $G$, that $b_\mu\ge a_\mu$ for all $\mu\in\mathcal P_G$. 
 Moreover it is clear from Figure~\ref{check-oriented-square} that  
 \begin{equation}\label{e:transformations}
a'_\mu=\begin{cases}a_\mu & \text{ if }\mu\ne \nu'_1\\ a_{\nu_1}\ \text{ or }\ a_{\nu_1}+ 1 & \text{ if } \mu=\nu'_1,\end{cases} \quad\text{and}\quad
b'_\mu=\begin{cases}b_\mu & \text{ if }\mu\ne \nu'_1\\ b_{\nu_1}\ \text{ or }\ b_{\nu_1}+ 1 & \text{ if } \mu=\nu'_1.\end{cases}
\end{equation}
More specifically, $a'_{\nu_1} = a_{\nu_1}+1$ (respectively, $b'_{\nu_1} = b_{\nu_1}+1$) precisely when picture $I$
(respectively, picture $J$)
is one of the cases $\DDD, \EEE, \FFF$
from Figure~\ref{check-oriented-square}.

From the cases above, it follows that $b'_\mu\ge a'_\mu$ for all $\mu\ne \nu'_1$ and $\mu\in \mathcal P_{G'}$. 
We need to check only that $b'_{\nu'_1}\ge a'_{\nu'_1}$.
Since $b_{\nu_1} \geq a_{\nu_1}$, the only way to get $b'_{\nu'_1} < a'_{\nu'_1}$ is if 
$a'_{\nu_1} = a_{\nu_1}+1$ and $b'_{\nu_1} = b_{\nu_1} = a_{\nu_1}$. In particular then 
$I \in \{\DDD, \EEE, \FFF\}$ and $J \in \{\AAA, \BBB, \CCC\}$.
So we need to show that each of these nine cases is impossible when $b_{\mu}\ge a_\mu$ and $b_{\nu_1}=a_{\nu_1}$.

Let us set $i = a_{\nu_1} = b_{\nu_1}.$
If $I = \DDD$, then
the vector $(a_{\nu_1}, a_{\nu_2}, a_{\nu_3}, a_{\nu_4}, a_{\nu_5})$ has the form 
$(i, i+1, i+1, i, i)$.
If $I = \EEE$, 
the vector $(a_{\nu_1}, a_{\nu_2}, a_{\nu_3}, a_{\nu_4}, a_{\nu_5})$  has the form 
$(i, i+1, i+1, i, i+1)$. 
And if 
 $I = \FFF$,
the vector $(a_{\nu_1}, a_{\nu_2}, a_{\nu_3}, a_{\nu_4}, a_{\nu_5})$  has the form 
$(i, i+1, i, i, i+1)$. 

Meanwhile, 
if $J = \AAA$, 
then 
$(b_{\nu_1}, b_{\nu_2}, b_{\nu_3}, b_{\nu_4}, b_{\nu_5}) = (i, i, i, i, i)$.  
If $J = \BBB$, then
$(b_{\nu_1}, b_{\nu_2}, b_{\nu_3}, b_{\nu_4}, b_{\nu_5}) = (i, i+1, i, i, i)$.  
And if $J = \CCC$, 
then  
$(b_{\nu_1}, b_{\nu_2}, b_{\nu_3}, b_{\nu_4}, b_{\nu_5}) = (i, i+1, i, i-1, i)$.  

In all nine cases, we see that we get a contradiction to the fact that 
$a_{\mu} \leq b_{\mu}$ for all $\mu$. To be precise, by looking at cases $\AAA$, $\BBB$ and $\CCC$ we see that always $b_{\nu_3}=b_{\nu_5}=i$, while for $a_\mu$ we always have either $a_{\nu_3}=i+1$ or $a_{\nu_5}=i+1$, looking at $\DDD$, $\EEE$ and $\FFF$.
This completes the proof of the claim.

Now it remains to check that the tropicalized $\mathcal{A}$-cluster relation
\eqref{e:PsiV} is satisfied for each of the six cases shown in Figure \ref{check-oriented-square}.
For example, in the top-middle pair shown in Figure \ref{check-oriented-square}, we have
$a_{\nu_1} = a_{\nu_3} = a_{\nu_4} = a_{\nu_5} = a'_{\nu'_1} = i$, and $a_{\nu_2} = i+1$.
Clearly we have $a_{\nu_1}+a'_{\nu'_1} = \min(a_{\nu_2}+a_{\nu_4}, a_{\nu_3}+a_{\nu_5}).$  
In the top-right pair, we have 
$a_{\nu_2}=i+2$, $a_{\nu_1} = a_{\nu_3} = a_{\nu_5} = i+1$, $a_{\nu_4} = i$, and $a'_{\nu'_1} = i+1$,
which again satisfy \eqref{e:PsiV}.  The other three cases can be similarly checked.
This completes the proof of Theorem~\ref{thm1:tropcluster}.
\end{proof}

\section{Pl\"ucker coordinate valuations in terms of the rectangles network chart}

In this section we work with a very special choice of plabic graph, namely the plabic graph
$G=G^{\rect}_{k,n}$ from Section~\ref{sec:Grectangles}, and we 
 provide an explicit formula
 in \cref{p:rect-val}
for the valuations 
 $\val_G(P_\lambda)$ of the Pl\"ucker coordinates.
In order to describe the  $\val_G(P_\lambda)$ we introduce the notion of 
 \emph{GT tableau}.
The name GT tableau comes from  the connection with Gelfand-Tsetlin polytopes, see 
\cref{lem:integralGT} and 
\cref{lem:GT}.

\begin{figure}[h]
\centering
\includegraphics[height=1in]{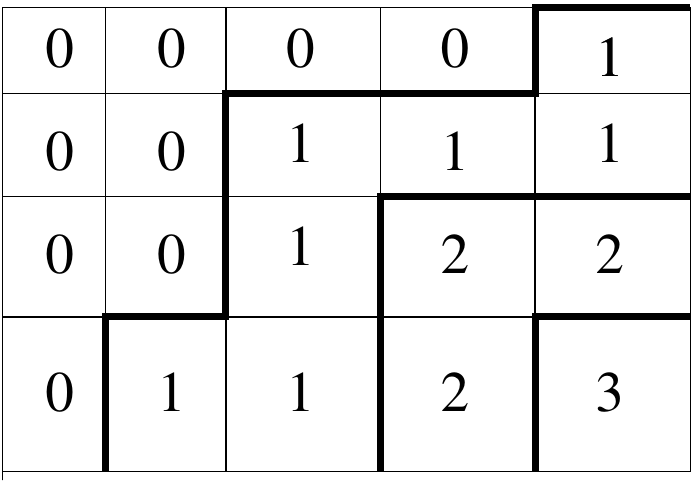} \hspace{2cm}
\includegraphics[height=1in]{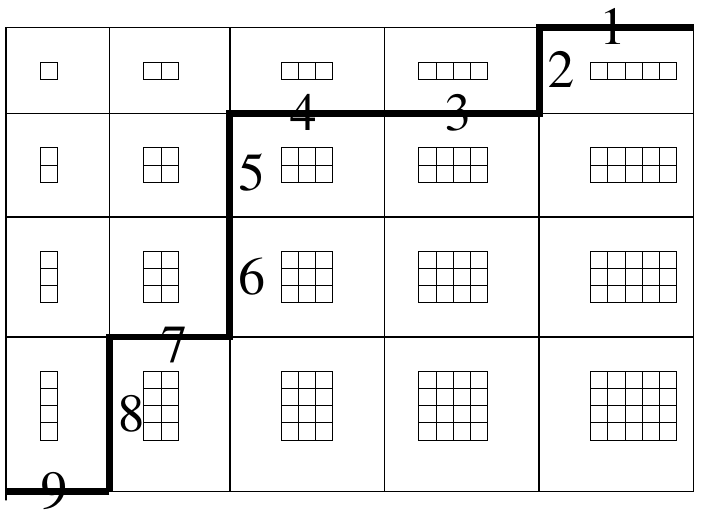}
\caption{At the left: an example of a GT tableau $\{V_{i\x j}\}$.  At the right: the labeling of the entries of the grid by rectangles,
  along with an associated lattice path determining the $J$ such that   
$\val_G(P_J)=(V_{i\x j})$  (in this case $J=\{2,5,6,8\}$).} 
\label{fig:tableauminflow}
\end{figure}
\begin{definition}\label{GTtableau}
We define a \emph{GT tableau} to be a rectangular array of integers $\{V_{i \times j}\}$ in a grid
(where $i \times j$ ranges over the nonempty rectangles
contained in $\Shkn$), which satisfy the following properties:
\begin{enumerate}
\item Entries in the top row and leftmost column are at most $1$.\label{enum:1}
\item $V_{i \times j} \leq V_{(i-1) \times (j-1)} + 1$.\label{enum:2}
\item $V_{1 \times 1} \geq 0$. \label{enum:3}
\item Entries weakly increase from left to right 
in the rows, and from top to bottom in the columns. \label{enum:4} 
\item If $V_{i \times j} > 0$, then $V_{(i+1) \times (j+1)} = V_{i \times j}+1$. \label{enum:5}
\end{enumerate}
See the left hand side of Figure \ref{fig:tableauminflow} for an example.
\end{definition}

Note that the plabic graph $G_{k,n}^{\rect}$ has 
a simple perfect orientation $\O^{\rect}$, 
which is shown in Figure~\ref{rectangles-oriented}.  The source set is 
$\{1,2,\dots,n-k\}$.

\begin{lemma}\label{lem:vertices}
Let $G = G_{k,n}^{\rect}$, and 
 choose the perfect orientation $\O^{\rect}$ of $G_{k,n}^{\rect}$ 
from Figure \ref{rectangles-oriented}.  
Each valuation
$\val_G(P_J)=(V_{i\x j})$ for $J\in{{[n]}\choose n-k}$ determines a GT tableau $\{V_{i\x j}\}$. Conversely, each GT tableau $\{V_{i\x j}\}$ arises from
the valuation of a uniquely determined P\"ucker coordinate $P_J$.
For the GT tableau $\{0\}$ the corresponding Pl\"ucker coordinate is
 $P_{\max}$.  
For any other GT tableau the 
Pl\"ucker coordinate $P_J$ is found by considering the lattice path starting at the upper right hand corner of the diagram which separates the zero and nonzero entries of the GT tableau,
and then reading off $J$ from the vertical labels of this 
lattice path, as illustrated in Figure~\ref{fig:tableauminflow}.
In particular, for $J \neq J'$,
$\val_G(P_J)$ 
and $\val_G(P_{J'})$  are distinct.
\end{lemma}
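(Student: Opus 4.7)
The plan is to use Corollary \ref{prop:strongminimal} together with the bijection of Lemma \ref{lem:flowmatch} to reduce the computation of $\val_G(P_J)$ to combinatorics of matchings. Specifically, $\val_G(P_J)$ is the exponent vector of the weight $\wt(F^{\min}_J)$ of the flow associated to the unique minimal matching $M^{\min}_J$ of $G = G^{\rect}_{k,n}$ with boundary $J$; by Theorem \ref{cor:minmax}, this matching is characterized as the one admitting no flip-down.

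First I will identify $M^{\min}_J$ explicitly using the grid structure of the rectangles plabic graph. Writing $\lambda \in \Shkn$ for the partition corresponding to $J$, the minimal matching can be described by routing it to ``hug'' the staircase boundary of $\lambda$ inside the rectangular grid of faces. For each internal face labeled $i \times j$, I then compute the exponent $V_{i \times j}$ of $x_{i \times j}$ in $\wt(F^{\min}_J)$. I expect to verify, via a local case analysis on how paths of $F^{\min}_J$ traverse each face, the closed-form identity
\begin{equation*}
V_{i \times j} = \maxdiag\bigl((i\times j) \setminus \lambda\bigr),
\end{equation*}
namely the maximal length of a slope $-1$ antidiagonal in the skew shape $(i\times j)\setminus\lambda$.

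Next I will verify that the integers $\{V_{i \times j}\}$ so obtained satisfy the five axioms of Definition \ref{GTtableau}. Axioms (1) and (3) are immediate because in a $1\times j$, $i\times 1$, or $1\times 1$ rectangle every slope $-1$ diagonal contains at most one box. Axiom (4) is the monotonicity of $\maxdiag$ under enlargement of the ambient rectangle. Axioms (2) and (5) both come from the fact that the rectangle $i\times j$ differs from $(i-1)\times(j-1)$ by a single row, a single column, and the new corner cell $(i,j)$: a slope $-1$ diagonal can be extended by at most one new box when passing from $(i-1)\times(j-1)$ to $i\times j$, which gives (2); and when $V_{i\times j}>0$ the witnessing diagonal can always be extended by exactly the new corner cell of $(i+1)\times(j+1)$ (which lies outside $\lambda$), which gives (5). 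Each axiom is thus a short local check on the skew shape.

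For the converse direction, axiom (5) implies that the set of grid positions with $V_{i\times j}>0$ is closed under moving southeast; combined with axioms (1) and (4), the boundary between the zero and positive entries is forced to be a monotone lattice path from the upper-right to the lower-left corner of the $(n-k)\times k$ grid, whose vertical labels (as in Figure \ref{fig:tableauminflow}) recover the $(n-k)$-element subset $J$, while the all-zero tableau corresponds to $\lambda = \Max$ and $P_J = P_{\max}$. Injectivity of $J \mapsto \val_G(P_J)$ is then immediate from this explicit inversion procedure. The main obstacle will be establishing the maxdiag formula for $V_{i\times j}$ in step one; I would approach this either by exhibiting $M^{\min}_J$ directly and certifying minimality by checking that no face admits a flip-down in the sense of Theorem \ref{cor:minmax}, or by an inductive peeling argument on $|\lambda|$ in which adding or removing a single box of $\lambda$ corresponds to a single flip in the matching lattice and perturbs exactly the predicted entries of the tableau.
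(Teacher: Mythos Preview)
Your approach is correct and amounts to proving this lemma together with Proposition~\ref{p:rect-val} (the $\maxdiag$ formula), then deducing the GT-tableau axioms from properties of $\maxdiag$. The paper takes a more direct route: having identified the strongly minimal flow as the collection of paths that greedily hug the southeast border of the grid, it observes that axioms (1)--(4) hold for the exponent vector of \emph{any} flow in this acyclic grid orientation (the grid structure forces row/column monotonicity and the diagonal bound), while minimality is precisely axiom~(5); for the converse it partitions an arbitrary GT tableau into level-sets by value and reads off a minimal flow from the separating lattice paths, without ever naming $\maxdiag$. Your route does more work but also yields more; the paper's is cleaner for this lemma in isolation and defers the $\maxdiag$ identity to a separate proposition. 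Two small tightenings for your version: in the argument for axiom~(5) the witnessing diagonal need not end at $(i,j)$, so ``extend by the corner cell $(i{+}1,j{+}1)$'' should become ``extend by the adjacent new cell in row $i{+}1$ or column $j{+}1$'' (a maximal-length diagonal in $(i\times j)\setminus\lambda$ must terminate on the boundary $a=i$ or $b=j$, since for a Young diagram $(a,b)\notin\lambda$ implies $(a{+}1,b{+}1)\notin\lambda$); and for the converse you should record that a GT tableau is uniquely determined by its zero/positive boundary, since along each antidiagonal axiom~(2) forces the first positive entry to equal~$1$ and axiom~(5) forces the subsequent entries to increment by one---this is what makes your lattice-path reading a genuine two-sided inverse and establishes surjectivity onto all GT tableaux.
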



\begin{figure}[h]
\centering
\includegraphics[height=2in]{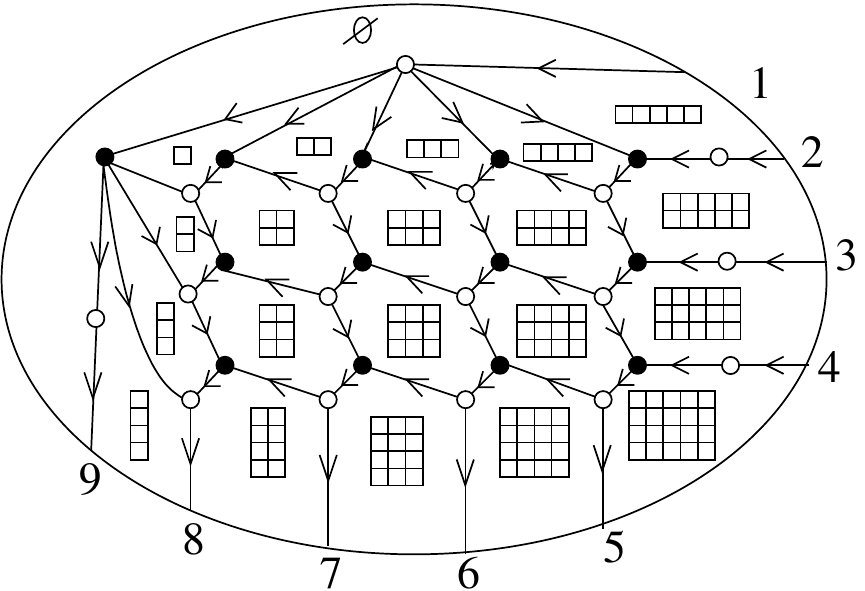}
\caption{A perfect orientation $\O^{\rect}$ of the reduced plabic graph 
$G_{5,9}^{\rect}$.  Note that the source set 
$I_{\O^{\rect}} = \{1,2,3,4\}$.  There is an obvious generalization
of $\O^{\rect}$ to any $G_{k,n}^{\rect}$, which has source set
$\{1,2,\dots,n-k\}$.}
\label{rectangles-oriented}
\end{figure}

\begin{proof}
Since the source set is $I_{\O^{\rect}} = \{1,2,\dots,n-k\}$,
and the perfect orientation is acyclic, it follows that 
the flow polynomial 
$P_{\Max}^G$ equals $1$.  Choose an arbitrary 
total order on the parameters $x_{\mu} \in \TBG$.

Recall that each flow polynomial $P_J^G$ (which can be identified with a Pl\"ucker coordinate) 
is a sum over flows
from $I_{\O^{\rect}} = \{1,2,\dots, n-k\}$ to $J$.  Since 
$\O^{\rect}$ is acyclic, each flow is just a collection 
of pairwise vertex-disjoint walks from $\{1,2,\dots,n-k\} \setminus J$
to $J \setminus \{1,2,\dots, n-k\}$
in $\O^{\rect}$.  Note that if we write 
$\{1,2,\dots, n-k\} \setminus J = 
\{i_1 > i_2 > \dots > i_{\ell}\}$ and write 
$J \setminus \{1,2,\dots,n-k\} =
\{j_1 < j_2 < \dots < j_{\ell}\}$, then any such flow 
must consist of $\ell$ paths which connect $i_1$ to $j_1$, $i_2$ to $j_2$,
\dots, and $i_{\ell}$ to $j_{\ell}$.
For example, 
in Figure \ref{rectangles-oriented},
any flow used to compute
$P_{\{2,5,6,8\}}$ must consist of three paths which connect
$4$ to $5$,
$3$ to $6$, and 
$1$ to $8$.

Recall that the weight $\wt(q)$ of a path $q$ is the product of the parameters
$x_{\mu}$ where $\mu$ ranges over all face labels to the left 
of the path.  Because of how the faces of $G_{k,n}^{\rect}$ are
arranged in a grid, we can define a partial order on the set of 
all paths from 
a given boundary source $i$ to a given boundary sink $j$, 
with $q_1 \leq q_2$ if and only if $\wt(q_1) \leq \wt(q_2)$.  
In particular, among such paths, there is a unique \emph{minimal} path,
which ``hugs" the southeast border of $G_{k,n}^{\rect}$.

It is now clear that 
	the strongly minimal flow $F_J$
	(whose existence is asserted by \cref{prop:strongminimal})
from $\{1,2,\dots,n-k\} \setminus J$
to $J \setminus \{1,2,\dots, n-k\}$ is obtained
by:
\begin{itemize}
\item choosing the minimal path $q_1$ in $\O^{\rect}$ from 
$i_1$ to $j_1$;
\item choosing the minimal path $q_2$ in $\O^{\rect}$ from 
$i_2$ to $j_2$ which is vertex-disjoint from $q_1$;
\item \dots 
\item choosing the minimal path $q_{\ell}$ in $\O^{\rect}$ from
$i_{\ell}$ to $j_{\ell}$ which is vertex-disjoint from 
$q_{\ell-1}$.
\end{itemize} 
For example, when $J=\{2,5,6,8\}$, the 
strongly minimal flow $F_J$ associated to $J$ is shown at the left of Figure \ref{fig:minflow}.
At the right of Figure \ref{fig:minflow} we have re-drawn the plabic graph to emphasize
the grid structure; this makes the structure of a strongly minimal flow even more transparent.
\begin{figure}[h]
\centering
\includegraphics[height=2in]{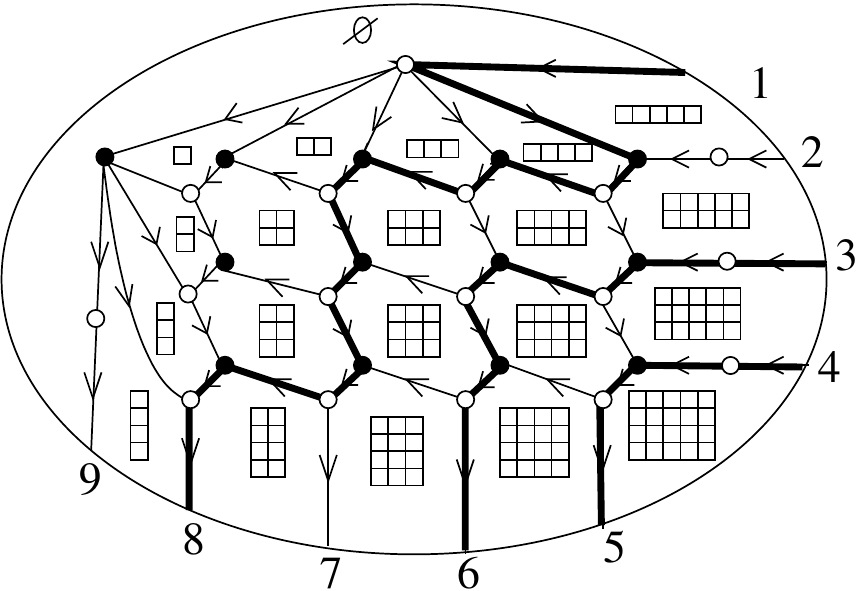} \hspace{.2cm} 
\includegraphics[height=1.8in]{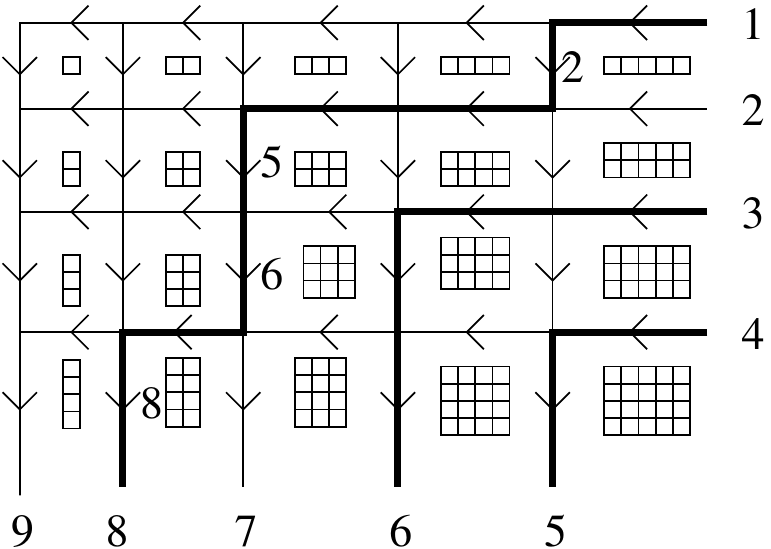}
\caption{The strongly minimal flow associated to 
$J=\{2,5,6,8\}$.  The associated GT tableau encoding the valuation is shown in Figure
\ref{fig:tableauminflow}.}
\label{fig:minflow}
\end{figure}

	Suppose that $\wt(F_J)= \prod x_{i \times j}^{V_{i \times j}}$  and consider the rectangular array $\{V_{i \times j}\}$ 
	thereby associated to $F_J$.
This rectangular array $\{V_{i \times j}\}$ encodes $\val_G(P_J)=(V_{i \times j})$ since $F_J$ is the minimal flow. 
Moreover, since $F_J$ is a flow, $\{V_{i \times j}\}$  satisfies \eqref{enum:1}, \eqref{enum:2}, \eqref{enum:3}, and \eqref{enum:4}
in \cref{GTtableau},
and since it is  minimal, it satisfies \eqref{enum:5}.  In other words, $\{V_{i \times j}\}$ is a GT tableau.
In the other direction, if one starts with a GT tableau, one may partition the set of boxes into regions based on the value of their 
entries $V_{i\times j}$.  The lattice paths separating these regions give rise to a collection of non-intersecting paths which comprise
a minimal flow, 
see the left hand side of \cref{fig:tableauminflow}.

Moreover, if one labels the steps of the northwest-most lattice path in $F_J$ by natural numbers starting from the label of its source  (equal to $1$ in the shown example),
then there is a correspondence between the labels of the vertical steps and the destination set of the flow (namely, $J$),
see the right hand side of \cref{fig:minflow}.  
In particular, the vertical step labeled $j$ can be connected to the edge of the grid incident
to $j\in J$ by a line of slope $-1$. Thus also $J$ is  determined by the valuation $\val_G(P_J)$.
\end{proof}

\begin{definition}\label{def:maxdiag}
Given two partitions $\lambda$ and $\mu$ in $\mathcal{P}_{k,n}$,
we let $\mu \setminus \lambda$ denote the corresponding 
	\emph{skew diagram}, i.e. the set of boxes
remaining if we justify both $\mu$ and $\lambda$ at the top-left
of a $(n-k) \times k$ rectangle, then remove from $\mu$ any boxes
that are in $\lambda$.
We let $\maxdiag (\mu \setminus \lambda)$ denote the maximum
number of boxes in $\mu \setminus \lambda$ that lie along
any diagonal (with slope $-1$) of the rectangle.
\end{definition}

\begin{proposition}\label{p:rect-val}
Let $G = G_{k,n}^{\rect}$ be the plabic graph 
defined in 
\cref{sec:Grectangles}
(see \cref{G-rectangles}).  Then 
\begin{equation*}
\val_G(P_{\lambda})_{i \times j} = \maxdiag (i \times j \setminus \lambda).
\end{equation*}
\end{proposition}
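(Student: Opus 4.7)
The plan is to deduce the proposition from Lemma \ref{lem:vertices}, which establishes a bijection between subsets $J\in\binom{[n]}{n-k}$ and GT tableaux, by showing that the array $\tilde V_{i\times j} := \maxdiag(i\times j\setminus\lambda)$ is itself a GT tableau whose associated lattice path recovers $J(\lambda)$.

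First I would verify that $\{\tilde V_{i\times j}\}$ satisfies the five axioms of Definition \ref{GTtableau}. Axioms \eqref{enum:1}, \eqref{enum:3}, \eqref{enum:4} are immediate: an anti-diagonal in a $1\times j$ or $i\times 1$ strip has length at most $1$, $\maxdiag \ge 0$, and enlarging the ambient rectangle can only enlarge the maximum anti-diagonal. For axiom \eqref{enum:2} I would take a maximum anti-diagonal $D\subset i\times j\setminus\lambda$ and shift it one step down if it does not already meet row $i$, or one step right if it does not meet column $j$; the shifted anti-diagonal loses at most one box and lies in $(i-1)\times(j-1)$, and since $\lambda$ is a partition, shifting south or east of a box outside $\lambda$ stays outside $\lambda$. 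Axiom \eqref{enum:5} is the most delicate: if $\tilde V_{i\times j}>0$ one must extend a maximum anti-diagonal of $i\times j\setminus\lambda$ by one box at the top-right endpoint $(a_0-1,b_0+1)$ or at the bottom-left endpoint $(a_0+L,b_0-L)$ so that it lies in $(i+1)\times (j+1)\setminus\lambda$; a case analysis based on whether $\lambda_{a_0-1}=\lambda_{a_0}$ or $\lambda_{a_0+L}<\lambda_{a_0+L-1}$ shows that at least one of the two extensions succeeds because $\lambda$ is a partition.

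Next I would identify the partition associated to $\{\tilde V_{i\times j}\}$ by the construction of Lemma \ref{lem:vertices}. The key observation is that $\tilde V_{i\times j}=0$ if and only if $i\times j\subseteq \lambda$: any single box of $i\times j\setminus \lambda$ already forms a length-one anti-diagonal. Therefore the lattice path in Lemma \ref{lem:vertices} that separates the zero region from the nonzero region is precisely the SE boundary of $\lambda$ inside the $(n-k)\times k$ rectangle, and reading off the labels of its vertical steps yields exactly $J(\lambda)$.

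Since Lemma \ref{lem:vertices} asserts that $\val_G(P_\lambda)$ is the unique GT tableau whose associated subset is $J(\lambda)$, and since $\{\tilde V_{i\times j}\}$ is such a GT tableau by the two previous paragraphs, we conclude $\val_G(P_\lambda)_{i\times j}=\tilde V_{i\times j}=\maxdiag(i\times j\setminus\lambda)$. The principal obstacle is the verification of axiom \eqref{enum:5}: the endpoint-extension argument is genuinely case-dependent, and must exploit the shape of $\lambda$ at the specific rows $a_0-1, a_0, a_0+L-1, a_0+L$ that meet the maximum anti-diagonal.
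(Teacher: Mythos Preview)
Your overall strategy is sound and genuinely different from the paper's: rather than computing the minimal flow directly, you want to show that the array $\tilde V_{i\times j}=\maxdiag(i\times j\setminus\lambda)$ is a GT tableau whose associated lattice path is the boundary of $\lambda$, and then invoke the bijection of Lemma~\ref{lem:vertices}. The identification of the zero locus of $\tilde V$ with $\lambda$ is correct and clean.

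However, there is a genuine error in the execution. In Definition~\ref{def:maxdiag} (and as confirmed by the worked example in the introduction), $\maxdiag$ counts boxes along \emph{slope $-1$ diagonals}, i.e.\ sets of boxes with $b-a$ constant (the NW--SE direction). Your endpoint formulas $(a_0-1,b_0+1)$ and $(a_0+L,b_0-L)$ show that you are instead working with anti-diagonals ($a+b$ constant, the NE--SW direction). With that interpretation the array $\tilde V$ is \emph{not} a GT tableau: for $k=3$, $n=6$, $\lambda=(2,1)$ one finds that the boxes $(1,3),(2,2),(3,1)$ form an anti-diagonal of length $3$ in $3\times 3\setminus\lambda$, while $2\times 2\setminus\lambda=\{(2,2)\}$ has only length $1$; so axiom~(2) fails ($3\not\le 1+1$), and axiom~(5) fails as well ($\tilde V_{2\times 2}=1$ but $\tilde V_{3\times 3}=3\ne 2$). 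No case analysis can repair this.

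Once you switch to the correct diagonals $D=\{(a_0+s,\,b_0+s):0\le s\le L-1\}$, the ``principal obstacle'' disappears entirely. For axiom~(2) simply delete the SE-most box $(a_0+L-1,b_0+L-1)$: the remaining $L-1$ boxes lie in $(i-1)\times(j-1)$ and are still outside $\lambda$. For axiom~(5) append the box $(a_0+L,b_0+L)$: it lies in $(i+1)\times(j+1)$, and since $(a_0+L-1,b_0+L-1)\notin\lambda$ and $\lambda$ is a partition, moving one step south and one step east stays outside $\lambda$. Both verifications are one line; no endpoint case analysis is needed. With this correction your argument goes through and gives an alternative proof.

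For comparison, the paper's own proof is more direct: it reads $\val_G(P_\lambda)_{i\times j}$ off the strongly minimal flow as the number of paths passing northwest of the box labelled $i\times j$, and then observes geometrically (Remarks~\ref{rem:ij} and~\ref{rem:toppath}) that those paths are in bijection with the boxes on a longest slope~$-1$ diagonal of $i\times j\setminus\lambda$. Your route trades this geometric picture for an axiomatic check plus the uniqueness statement in Lemma~\ref{lem:vertices}; both are short once the diagonal convention is right.
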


Before proving \cref{p:rect-val}, we make several simple 
observations about the relationships between 
the faces of $G_{k,n}^{\rect}$, partitions, and strongly minimal flows.

\begin{remark}\label{rem:ij}
Consider an $(n-k)$ by $k$ rectangle $R$, with boxes labeled by rectangular Young diagrams
as in the right of \cref{fig:minflow} (for $k=5$ and $n=9$).  Then if we place an $i$ by $j$ rectangle
justified to the northwest of $R$, the region in its southeast corner will be 
labeled by the Young diagram $i \times j$.
\end{remark}

\begin{remark}\label{rem:toppath}
Let $I \mapsto \lambda(I)$
denote the bijection
from Section \ref{Young}
between $(n-k)$-element subsets of $[n]$
and elements of $\mathcal{P}_{k,n}$.  
Then the topmost path in the strongly minimal flow for $P_I$ cuts out 
the southeast border of $\lambda(I)$. For example, the right hand side of \cref{fig:minflow} shows the strongly minimal flow
for $P_{\{2,5,6,8\}}$.  Note that the topmost path in the flow
cuts out the partition $(4,2,2,1)$, which is the partition associated
to $\{2,5,6,8\}$.
This observation is already implicit in the proof of \cref{lem:vertices}.  Namely 
if one starts by labeling the vertical steps of the partition
cut out by the topmost path in the strongly minimal flow (as is done
in \cref{fig:minflow}) and then propagates each label southeast
as far as possible, each label will end up on an edge incident
to some destination $i\in I$ for the flow $P_I$.
\end{remark}

\begin{proof}[Proof of \cref{p:rect-val}]
To compute $\val_G(P_{\lambda})$ we use the strongly minimal flow
for $P_{\lambda}$ in $G = G_{k,n}^{\rect}$, which by 
\cref{rem:toppath} cuts out the partition $\lambda$,
see \cref{fig:ValLemma}.
To compute the $i \times j$ component in $\val_G(P_{\lambda})$,
we need to compute the number of paths of the flow that are
above the box $b$ which is labeled by the  partition $i \times j$.
By \cref{rem:ij}, this box is the southeast-most box 
in the $i$ by $ j$ rectangle indicated in \cref{fig:ValLemma}.
The boxes of a maximal diagonal are in bijection with these paths of the flow above $b$,
	so the number of paths we are trying to 
compute is precisely $\maxdiag(i \times j \setminus \lambda)$.
\begin{figure}[h]
\centering
\includegraphics[height=1.5in]{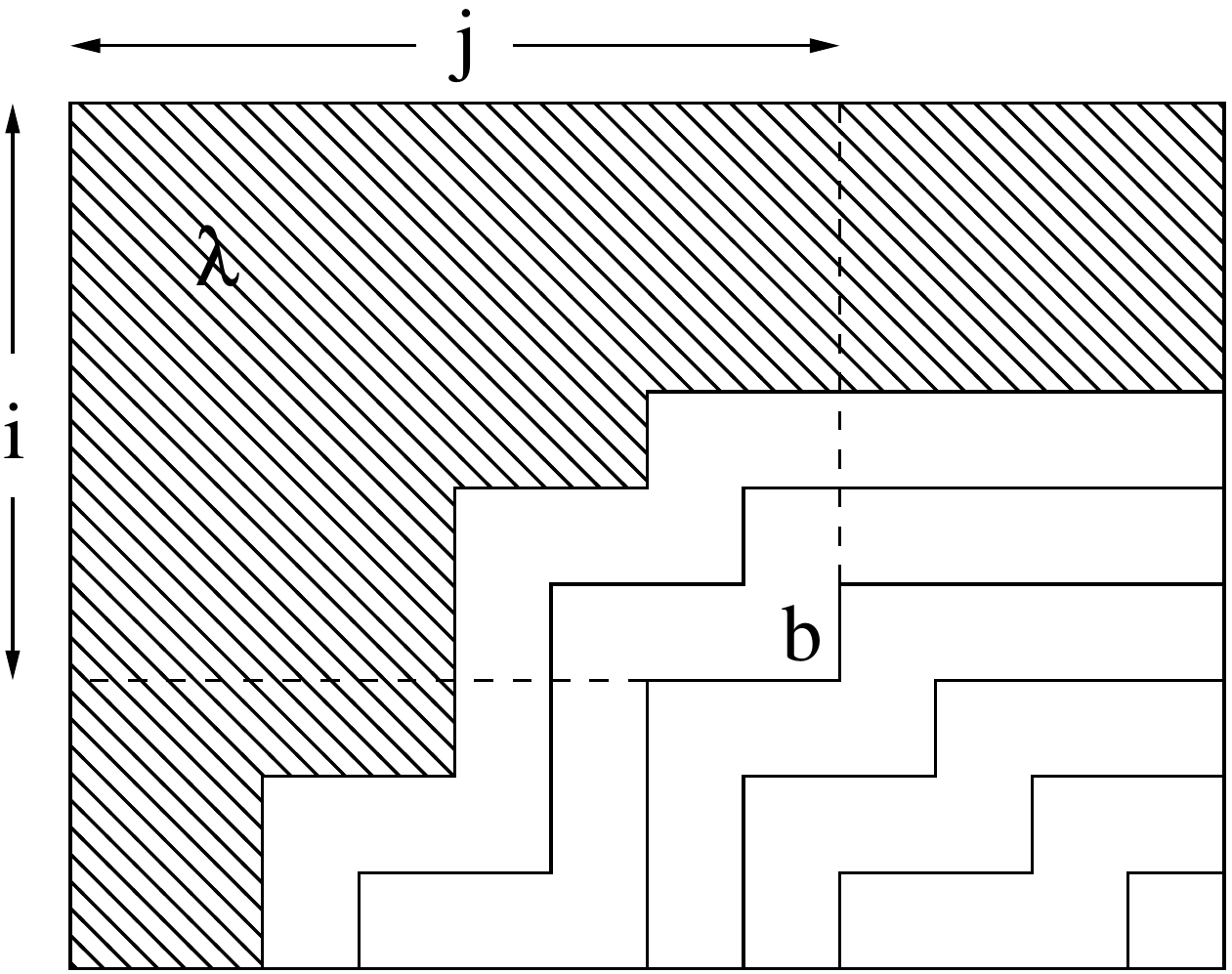}
\caption{}
\label{fig:ValLemma}
\end{figure}
\end{proof}

\section{A Young diagram formula for Pl\"ucker coordinate valuations}
\label{s:Pluckervals}

In this section we prove the general \cref{t:ValuationsFormula},
which gives an explicit formula 
for all leading terms of
 flow polynomials $P_{\lambda}^G$, that is, the valuations 
$\val_G(P_{\lambda})$, when $G$ is a reduced plabic graph of type $\pi_{k,n}$. We then use \cref{t:ValuationsFormula} to give explicit formulas for 
Pl\"ucker coordinates corresponding to frozen variables, see \cref{s:frozen}.
Comparing with a result of 
Fulton and Woodward \cite{FW} (which was refined in the Grassmannian
setting by \cite{PostnikovDuke}) we find that the right-hand side of 
our formula has an interpretation in terms of the 
quantum multiplication in the quantum cohomology of the Grassmannian.

\subsection{Valuations of Pl\"ucker coordinates}

\begin{theorem}\label{t:ValuationsFormula}
Let $G$ be any reduced plabic graph of type $\pi_{k,n}$ and $\lambda\in\mathcal P_{k,n}$.
For any partition $\mu\in \mathcal{P}_G$, 
\begin{equation*}
\val_G(P_{\lambda})_{\mu} = \maxdiag (\mu \setminus \lambda),
\end{equation*}
where 
$\maxdiag (\mu \setminus \lambda)$ is as in \cref{def:maxdiag}. 
\end{theorem}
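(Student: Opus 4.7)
The plan is to prove the formula by induction on the number of square moves $(M1)$ transforming the rectangles plabic graph $G_{k,n}^{\rect}$ into $G$. By \cref{rem:moves}, every reduced plabic graph of type $\pi_{k,n}$ is move-equivalent to $G_{k,n}^{\rect}$, and by \cref{thm1:tropcluster} the moves $(M2), (M3)$ preserve both the face-label set $\mathcal P_G$ and the valuations $\val_G$, so only the square move case requires argument. The base case $G = G_{k,n}^{\rect}$ is precisely \cref{p:rect-val}, which establishes the formula for every rectangular $\mu \in \mathcal P_{G_{k,n}^{\rect}}$.

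For the inductive step, assume the formula holds for $G$ and suppose $G'$ is obtained from $G$ by a square move at a face $\nu_1 \in \mathcal P_G$ with surrounding face labels $\nu_2, \nu_3, \nu_4, \nu_5$ as in \cref{labeled-square}, producing a new face label $\nu_1' \in \mathcal P_{G'}$. Since $\mathcal P_{G'} = (\mathcal P_G \setminus \{\nu_1\}) \cup \{\nu_1'\}$ and the coordinates of $\val_{G'}(P_\lambda)$ indexed by $\mathcal P_G \cap \mathcal P_{G'}$ agree with those of $\val_G(P_\lambda)$, only the $\nu_1'$-coordinate needs to be verified. By \cref{thm1:tropcluster} combined with the inductive hypothesis,
\[
\val_{G'}(P_\lambda)_{\nu_1'} \;=\; \min\!\bigl(\maxdiag(\nu_2\setminus\lambda)+\maxdiag(\nu_4\setminus\lambda),\;\maxdiag(\nu_3\setminus\lambda)+\maxdiag(\nu_5\setminus\lambda)\bigr)\;-\;\maxdiag(\nu_1\setminus\lambda),
\]
so the inductive step reduces to the purely combinatorial identity
\[
\maxdiag(\nu_1\setminus\lambda)+\maxdiag(\nu_1'\setminus\lambda) \;=\; \min\!\bigl(\maxdiag(\nu_2\setminus\lambda)+\maxdiag(\nu_4\setminus\lambda),\;\maxdiag(\nu_3\setminus\lambda)+\maxdiag(\nu_5\setminus\lambda)\bigr)
\]
for every $\lambda \in \mathcal P_{k,n}$ and every sextuple of Young diagrams arising as the labels at and around a square face in a plabic graph of type $\pi_{k,n}$.

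The main obstacle is this combinatorial identity. Via the south-step bijection of \cref{Young}, the six diagrams correspond to six $(n-k)$-subsets of the form $T\cup S$, where $T$ ranges over the six $2$-subsets of $\{a,b,c,d\}\subseteq [n]$ for some $a<b<c<d$ and $S\subseteq [n]\setminus\{a,b,c,d\}$ is a fixed $(n-k-2)$-subset, and $(\nu_1,\nu_1')$ is the ``crossing'' pair $(\{a,c\}\cup S,\{b,d\}\cup S)$. Writing $f_\nu(d)$ for the number of boxes of $\nu$ on the slope-$(-1)$ diagonal $d$, one has $\maxdiag(\nu\setminus\lambda)=\max_{d}\max(0,f_\nu(d)-f_\lambda(d))$ and the identities $f_{\nu_1}(d)+f_{\nu_1'}(d)=f_{\nu_2}(d)+f_{\nu_4}(d)=f_{\nu_3}(d)+f_{\nu_5}(d)$ hold diagonal-by-diagonal; the combinatorial identity then follows from a case analysis locating the winning diagonals on each side of the $\min$ relative to the positions of $\lambda$, $S$, and the four indices $a,b,c,d$. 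A more conceptual alternative, suggested in the introduction, is to construct an explicit element $x_\lambda\in\checkX(\R_{>0}((t)))$ whose Pl\"ucker $t$-valuations realize $\ValL(p_\mu(x_\lambda))=\maxdiag(\mu\setminus\lambda)$ directly; by \cref{l:LTrop} this automatically yields a consistent tropical point of $\checkX$, and verifying the formula at the single rectangles seed via \cref{p:rect-val} transports it to all seeds, sidestepping the case analysis.
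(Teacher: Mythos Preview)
Your reduction is sound: the base case is \cref{p:rect-val}, and \cref{thm1:tropcluster} exactly says that under a square move the vector $\val_G(P_\lambda)$ transforms by $\Psi_{G,G'}$, so the inductive step does boil down to the identity
\[
\maxdiag(\nu_1\setminus\lambda)+\maxdiag(\nu_1'\setminus\lambda)=\min\!\bigl(\maxdiag(\nu_2\setminus\lambda)+\maxdiag(\nu_4\setminus\lambda),\ \maxdiag(\nu_3\setminus\lambda)+\maxdiag(\nu_5\setminus\lambda)\bigr).
\]
However, your proposed direct proof of this identity is where the argument is incomplete. Even granting the diagonal-by-diagonal additivity $f_{\nu_1}(d)+f_{\nu_1'}(d)=f_{\nu_2}(d)+f_{\nu_4}(d)=f_{\nu_3}(d)+f_{\nu_5}(d)$ (which is plausible but which you do not prove), the passage to the displayed identity is not formal: $\maxdiag(\nu\setminus\lambda)=\max_d\max(0,f_\nu(d)-f_\lambda(d))$ involves a maximum over $d$, and the two summands on each side can achieve their maxima on different diagonals, so the three pairwise sums need not be equal, only ordered in the specific way asserted. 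The ``case analysis locating the winning diagonals'' that you invoke is precisely the content that is missing, and it is not clear that it is short.

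The paper takes exactly the route you list as the ``more conceptual alternative'': it constructs an explicit point $x_\lambda(t)\in\opencheckX(\mathbf K_{>0})$ (\cref{t:matrix}) with $\ValK(p_\mu(x_\lambda(t)))=\maxdiag(\mu\setminus\lambda)$ for \emph{every} $\mu\in\mathcal P_{k,n}$. This single construction does all the work at once. Since the $p_\mu$ are Pl\"ucker coordinates of an honest point of $\checkX(\mathbf K_{>0})$, their valuations automatically form a tropical point of $\checkX$, so the combinatorial identity above holds for free via \cref{l:piGmutation}; and agreement with $\val_G(P_\lambda)$ is then checked only at the rectangles seed using \cref{p:rect-val}, after which both sides mutate identically (\cref{thm1:tropcluster} on one side, \cref{l:piGmutation} on the other). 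In short, the paper sidesteps the case analysis by letting the positivity of $x_\lambda(t)$ certify the tropical Pl\"ucker relations rather than proving them combinatorially.
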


\begin{remark}
By \cite{FW}, $\maxdiag (\mu \setminus \lambda)$ is equal to the 
smallest degree $d$ such that $q^d$ appears in the quantum product of two Schubert
classes $\sigma_{\mu}\star \sigma_{\lambda^c}
$ in the quantum cohomology ring  $QH^*(Gr_k(\C^n))$, when this 
product is expanded in the Schubert basis.
See also \cite{Yong} and \cite{PostnikovDuke}. Here $\sigma_{\lambda^c}$ is the Poincar\'e dual Schubert class to $\sigma_\lambda$, compare \cref{rem:toppathdual}. 
\end{remark}

Note that \cref{p:rect-val} is precisely 
 \cref{t:ValuationsFormula} in the special case of the rectangles cluster. We prove the theorem in general by explicitly constructing an element in $\Trop(\checkX)$, %
 which we think of as associated to $P_\lambda$ by mirror symmetry. 

\begin{theorem}\label{t:matrix}
Fix $\lambda\in\mathcal P_{k,n}$. There exists an element
$x_{\lambda}(t) \in \opencheckX(\mathbf K_{>0})$ such that 
for any partition $\mu$,
\begin{equation*}
\ValK(p_{\mu}(x_{\lambda}(t))  = \maxdiag (\mu \setminus \lambda).
\end{equation*}
\end{theorem}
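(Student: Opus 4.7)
The plan is to construct $x_\lambda(t)$ inside a single, well-chosen cluster torus of $\opencheckX$ and then leverage positivity to handle all Pl\"ucker coordinates at once. Specifically, I would work with the rectangles cluster chart $\Phi^\vee_{G,\mathcal A}:(\mathbf K^*)^{\mathcal P_G}\to \opencheckX$ for $G = G_{k,n}^{\rect}$, and define
\[
x_\lambda(t) := \Phi^\vee_{G,\mathcal A}\!\left(\bigl(t^{\maxdiag(i\times j\setminus\lambda)}\bigr)_{i\times j\in \mathcal P_G}\right).
\]
Since each $t^{\maxdiag(i\times j\setminus\lambda)}\in \mathbf K_{>0}$, \cref{lem:clusterparam} guarantees that $x_\lambda(t)\in \opencheckX(\mathbf K_{>0})$, and by construction the rectangular Pl\"ucker coordinates evaluate to $p_{i\times j}(x_\lambda(t)) = t^{\maxdiag(i\times j\setminus\lambda)}$, so $\ValK(p_{i\times j}(x_\lambda(t))) = \maxdiag(i\times j\setminus\lambda)$, which is the desired formula in this special case.

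For an arbitrary partition $\mu\in \mathcal P_{k,n}$, the Pl\"ucker coordinate $p_\mu$ restricts to the cluster torus $\mathbb T^\vee_G$ as a positive Laurent polynomial $\mathbf p_\mu^G$ in the rectangular cluster variables (by Scott's theorem \cite{Scott} and positivity of the Laurent phenomenon). Applying \cref{l:valandTrop} to this positive Laurent polynomial evaluated on the positive Puiseux series above gives
\[
\ValK\!\bigl(p_\mu(x_\lambda(t))\bigr) \;=\; \Trop_G(p_\mu)\bigl(v_\lambda\bigr),
\]
where $v_\lambda := (\maxdiag(i\times j\setminus\lambda))_{i\times j}\in \Z^{\mathcal P_G}$. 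Thus the theorem reduces to the purely tropical identity
\[
\Trop_G(p_\mu)(v_\lambda) \;=\; \maxdiag(\mu\setminus\lambda).
\]

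To prove this identity I would argue by induction on the minimum number of mutations (square moves at mutable faces of reduced plabic graphs) needed to reach a cluster containing $p_\mu$ as a cluster variable, starting from the rectangles cluster. The base case is $\mu = i\times j$, which is immediate. For the inductive step, whenever $p_\mu$ appears via a square-move exchange $p_{\mu}\,p_{\mu'} = p_{\alpha_1}p_{\beta_1}+p_{\alpha_2}p_{\beta_2}$ among Pl\"ucker coordinates, the tropicalization combined with \cref{c:PolytopeMutation} and the computation of $\Psi_{G,G'}$ shows that $\Trop_G(p_\mu)(v_\lambda)$ satisfies
\[
\Trop_G(p_\mu)(v_\lambda) + \Trop_G(p_{\mu'})(v_\lambda) \;=\; \min\bigl(\Trop_G(p_{\alpha_1})(v_\lambda)+\Trop_G(p_{\beta_1})(v_\lambda),\;\Trop_G(p_{\alpha_2})(v_\lambda)+\Trop_G(p_{\beta_2})(v_\lambda)\bigr).
\]
Using the inductive hypothesis for $p_{\mu'},p_{\alpha_i},p_{\beta_i}$, this reduces to a purely combinatorial statement about the $\maxdiag$ function on skew shapes, namely
\[
\maxdiag(\mu\setminus\lambda) + \maxdiag(\mu'\setminus\lambda) = \min\bigl(\maxdiag(\alpha_1\setminus\lambda)+\maxdiag(\beta_1\setminus\lambda),\;\maxdiag(\alpha_2\setminus\lambda)+\maxdiag(\beta_2\setminus\lambda)\bigr),
\]
which can be checked directly from the geometry of slope $-1$ diagonals, and which also matches the tropicalization of the corresponding quantum Pl\"ucker relation in $QH^*(\X)$ via the Fulton--Woodward interpretation of $\maxdiag$.

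The main obstacle is precisely this last combinatorial identity: one needs to verify uniformly over all short Pl\"ucker exchange relations that $\maxdiag$ satisfies the tropicalized three-term relation with equality (not merely inequality). A robust way to handle this is to use the ``strongly minimal flow'' picture from \cref{prop:strongminimal} and \cref{s:matchings} on the $\X$-side as a dictionary: the identity \cref{p:rect-val} already establishes the tropical values $v_\lambda$ as valuations of Pl\"ucker coordinates $P_\lambda$ under $\val_G$ for the rectangles network chart, and the mutation compatibility of \cref{thm1:tropcluster} (for $\val_G$) mirrors exactly the tropical mutation rule for $\Trop_G(p_\mu)$ on the $\checkX$-side. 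This parallel --- essentially a manifestation of Fock--Goncharov cluster duality between the $\mathcal X$-cluster structure on $\openX$ and the $\mathcal A$-cluster structure on $\opencheckX$ --- lets one bootstrap the $\maxdiag$ identity from its already-established rectangles-case validity on $\X$ to all clusters on $\checkX$ without further combinatorial case-checking.
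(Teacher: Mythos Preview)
Your construction of $x_\lambda(t)$ via the rectangles $\mathcal A$-cluster chart and the reduction to the identity $\Trop_G(p_\mu)(v_\lambda)=\maxdiag(\mu\setminus\lambda)$ are both fine, but the final step has a genuine gap. Your ``robust'' option --- the bootstrap from the $\X$-side --- is circular. Combining \cref{l:piGmutation} on the $\checkX$-side with \cref{p:rect-val} and \cref{thm1:tropcluster} on the $\X$-side does establish
\[
\ValK\bigl(p_\mu(x_\lambda(t))\bigr)\;=\;\val_{G'}(P_\lambda)_\mu
\]
for any plabic $G'$ with $\mu\in\mathcal P_{G'}$, since both sides arise from the same vector $v_\lambda$ under the same composite of maps $\Psi_{G^{\rect},G'}$. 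But this does not evaluate either side as $\maxdiag(\mu\setminus\lambda)$. That evaluation is precisely \cref{t:ValuationsFormula}, which in the paper is proved \emph{using} \cref{t:matrix}; so invoking it here closes a loop. Your alternative --- a direct combinatorial check that $\maxdiag(\cdot\setminus\lambda)$ satisfies every tropicalized three-term Pl\"ucker relation --- would indeed close the gap, but you only assert it and do not carry it out; it is a nontrivial lemma in its own right, not an immediate consequence of the diagonal geometry.

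By contrast, the paper avoids this induction entirely. It builds $x_\lambda(t)$ via the \emph{network} ($\mathcal X$-cluster) parameterization of $\opencheckX$ for the $k\times(n-k)$ grid: after placing the rotated transpose of $\lambda$ in the southeast corner, the boxes just northwest of the inner and outer corners of its boundary receive parameters $t$ and $t^{-1}$ respectively, and all other boxes receive $1$. A direct flow argument then shows that the minimal power of $t$ appearing in $p_\mu(x_\lambda(t))$ equals the number of paths in the strongly minimal flow for $p_\mu$ that enclose at least one $t^{\pm 1}$-box, and one reads off that this number is $\maxdiag(\mu\setminus\lambda)$ --- no mutation, no inductive identity for $\maxdiag$ required.
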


\begin{definition} Consider $\lambda\in\mathcal P_{k,n}$, that is a Young diagram fitting into a $(n-k)\x k$ rectangle. We {\it transpose} $\lambda$, which means we reflect $\lambda$ along the $-1$ diagonal, to obtain a Young diagram
  which fits into a $k\x (n-k)$ rectangle. 
	
	In order to define an element 
$x_{\lambda}(t)
 \in \opencheckX(\mathbf K_{>0})$, 
we use the network parameterization - on the $\checkX$ side - associated to the 
grid like the one shown at the left of Figure \ref{fig:matrixval}.  All edges
are directed left and down, but there are now $k$ rows and $n-k$ columns in the grid.  We make specific
choices for network parameters labeling the regions, as follows. We transpose $\lambda$ to fit into the $k\x (n-k)$ grid and rotate it,  placing it in the southeast corner of the grid. Note that the interior of $\lambda$ is now southeast of its boundary path.  
Then the boxes immediately northwest of inner and outer corners
of the boundary of $\lambda$ are filled with $t$ and $t^{-1}$, respectively. 
All other boxes receive the parameter $1$.  
This gives rise
to an element 
${x}_{\lambda}(t)
 \in \opencheckX(\mathbf K_{>0})$, whose Pl\"ucker coordinates 
are computed as sums over flows, as in \cref{thm:Talaska}.
\end{definition}

\begin{rem} The element $x_{\lambda}(t)$ determines a zone or integral point $[x_{\lambda}(t)]$ in $\Trop(\checkX)$, see \cref{d:Rzones}. %
 Indeed $x_{\lambda}(t)$ is constructed to lie in $\checkX(\LL_{>0})$.
\end{rem}

\begin{remark}\label{rem:toppathdual}
Given a partition  $\mu \in \mathcal{P}_{k,n}$,
let $\mu^c$ denote the Young diagram which is the complement of $\mu$ in the 
$(n-k)$ by $ k$ rectangle rotated by $180^\circ$.
For $\checkX$ with its analogous associated network, and  any $J\subset {[n]\choose k}$ interpreted as set of west steps of a partition $\mu=\mu(J)$, we have the following version of \cref{rem:toppath}. The
topmost path in the strongly minimal flow for $p_J$ cuts out 
the southeast border of the transpose of $\mu(J)^c$. See the right hand side of Figure~\ref{fig:matrixval} for an example.
\end{remark}

\begin{figure}[h]
\centering
\includegraphics[height=1.8in]{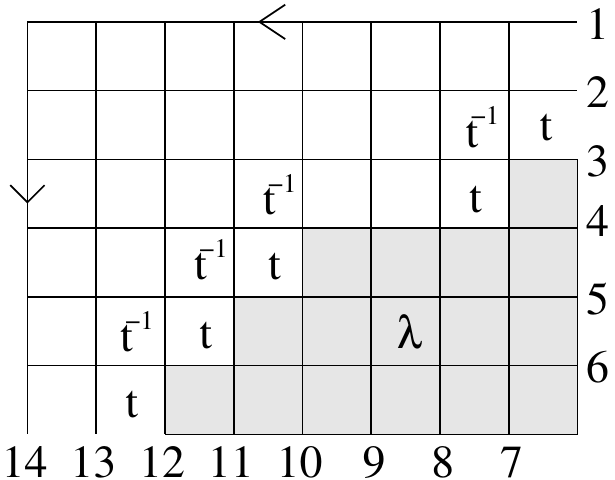} \hspace{1cm}
\includegraphics[height=1.8in]{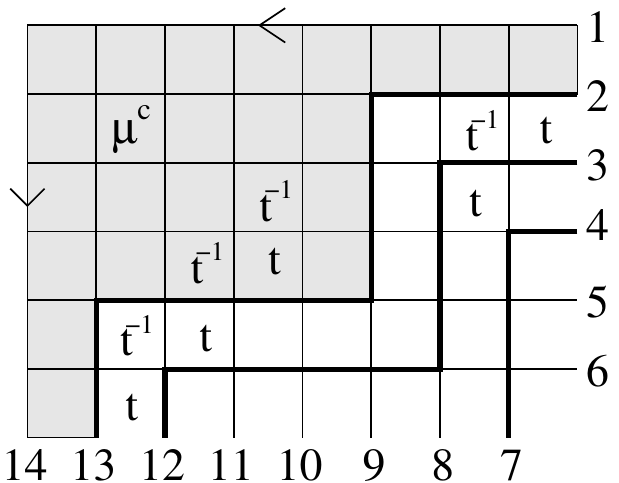}
\caption{The picture on the left 
shows the rotation of the transpose of the partition $\lambda$ and the network coordinates used to define $x_{\lambda}(t)$, in the case that 
$k=6$, $n=14$, and $\lambda = (4,3,3,3,2,1)$.  The picture on the  
right  shows the strongly minimal flow for 
$p_{\{1,5,6,7,12,13\}}({x}_{\lambda}(t))=
p_{\mu}({x}_{\lambda}(t))$, where 
$\mu = (5,5,5,2,2,2,2)$ can be read off as the transpose of the rotation of the partition made up of the white boxes. Note that $\{1,5,6,7,12,13\}$ are the west steps of $\mu$, which appear as south steps of the boundary lattice path in the picture.   The shaded region labeled $\mu^c$ is actually the transpose of the partition we call $\mu^c=(6,4,4,4,4,1,1,1)$. }
\label{fig:matrixval}
\end{figure}

\begin{example}
The right hand side of Figure \ref{fig:matrixval}
shows the strongly minimal flow for 
$p_{\{1,5,6,7,12,13\}}({x}_{\lambda}(t))=
p_{\mu}({x}_{\lambda}(t))$, where 
$\lambda = (4,3,3,3,2,1)$ and $\mu = (5,5,5,2,2,2,2)$. See \cref{rem:toppathdual}.
The flow has weight $t^3$, because the 
path from $2$ to $13$ has weight $t^2$, and the path from $3$ to 
	$12$ has weight $t$, and all other paths have weight $1$. (Recall that the weight of 
	a path is the product of the contents of all boxes to the left of it, and the weight of 
	a flow is the product of the weights of its paths.)
Meanwhile, the right hand side of Figure \ref{fig:matrixval2}
shows another flow for 
$p_{\mu}({x}_{\lambda}(t))$, which has weight $t^2$.  
This corresponds to the fact that $\maxdiag(\mu \setminus \lambda) = 2$. Note that the 
lowest order term of $t$ in 
$p_{\mu}({x}_{\lambda}(t))$  
	is not realized by the strongly minimal flow.
\end{example}

\begin{proof}[Proof of \cref{t:matrix}]
The strongly minimal flow $F^0$ contributing to 
$p_{\mu}({x}_{\lambda}(t))$ is the flow shown in 
\cref{fig:matrixval}, whose topmost path coincides with the 
southeast border of (the transpose of) $\mu^c$.  So the (reflected and rotated) partition 
$\mu$ consists of the boxes which are southeast of the 
topmost path of the flow.  
All other flows 
contributing to $p_{\mu}({x}_{\lambda}(t))$  
have the same 
starting and ending
points as $F^0$ but now the paths are arbitrary pairwise non-intersecting
paths consisting
of west and south steps.

Let us call a path in the network \emph{rectangular} if 
it consists of a series of west steps followed by south steps.
Note that by construction, the weight of any path in the network 
associated to ${x}_{\lambda}(t)$ will be $t^{\ell}$ for some 
$\ell \geq 0$.  Note that if a given path $p$ from $i$ to $j$
encloses a box with $t$ or $t^{-1}$, then any path $p'$ from $i$ to 
$j$ which is weakly above $p$ 
will have weight $t^{\ell}$ for $\ell \geq 1$.  Moreover,
the rectangular path from $i$ to $j$
will have
weight precisely $t$, because if one looks at the boxes northwest of the 
	inner and outer corners of the boundary of $\lambda$, 
	the rectangular path will always contain one more ``inner'' than ``outer'' box. 

\begin{figure}[h]
\centering
\includegraphics[height=1.8in]{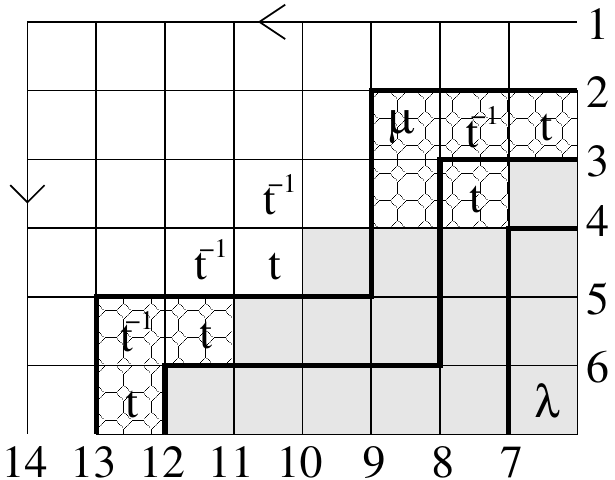} \hspace{1cm}
\includegraphics[height=1.8in]{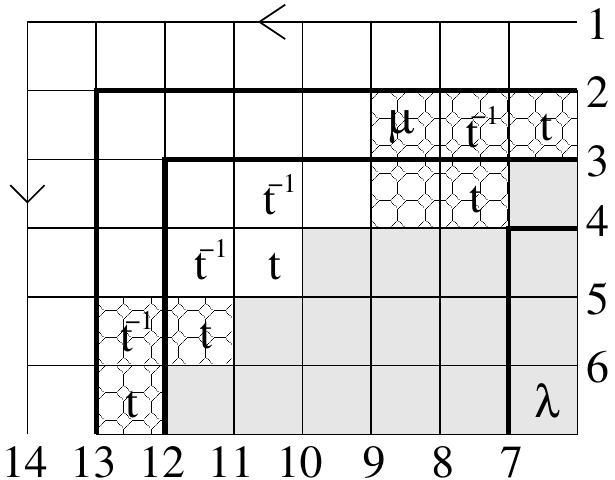}
\caption{At the left we have the network parameterization used to 
define ${x}_{\lambda}(t)$, together with the strongly minimal
flow $F^0$ associated to $p_{\mu}$.
The flow $F$ at the right is obtained from $F^0$ by replacing 
the paths from 
$2$ to $13$ 
and from 
$3$ to $12$ 
by the corresponding
rectangular paths.
}
\label{fig:matrixval2}
\end{figure}

Note that $\ValK(p_{\mu}({x}_{\lambda}(t)))=\ValK(\wt(F))$, 
where $F$ is the flow associated to $p_{\mu}$ whose weight is $t^{\ell}$ 
for $\ell$ as small as possible.
By the observations of the previous paragraph, we can construct
the desired flow $F$ from the strongly minimal flow $F^0$ by
replacing each path from $i$ to $j$ whose weight is \emph{not} $1$
by the rectangular path from $i$ to $j$, see \cref{fig:matrixval2}.
Then $\wt(F)=t^{\ell}$, where 
$\ell$ is the number of paths $p$ in  
$F^0$ such that $\wt(p) \neq 1$.  But the paths in $F^0$
with weight not equal to 
$1$ are precisely the paths which 
enclose at least one box
with $t$ or $t^{-1}$.  So 
$\ValK(p_{\mu}({x}_{\lambda}(t))=\ell$, where
$\ell$ is the number of paths in $F^0$ which 
enclose at least one box
with $t$ or $t^{-1}$.   It is not hard to see that this number
is equal to 
 $\maxdiag (\mu \setminus \lambda).$
\end{proof}

\begin{proof}[Proof of \cref{t:ValuationsFormula}
]
We want to show that 
for any reduced plabic graph $G$ and any $\lambda$ and $\mu$, 
\begin{equation}\label{claim}
\val_G(P_{\lambda})_{\mu} 
= \maxdiag (\mu \setminus \lambda).
\end{equation}
By \cref{p:rect-val}, we know that \eqref{claim} is true
when 
$G = G_{k,n}^{\rect}$ and $\mu$ is a rectangle.
Combining this with \cref{t:matrix}, we obtain that 
if $G = G_{k,n}^{\rect}$, then
\begin{equation}\label{claim2}
(\val_G(P_{\lambda})_{\mu})_{\mu\in \mathcal{P}_G} = 
(\ValK(p_{\mu}(x_{\lambda}(t)))_{\mu \in \mathcal{P}_G}.
\end{equation} 

But now if we apply a move to $G$, obtaining another plabic graph 
$G'$, then 
	\cref{l:piGmutation}
	implies that 
the right-hand side of \eqref{claim2}
transforms via the map $\Psi_{G,G'}$, while
\cref{thm1:tropcluster} implies that 
the left-hand side of \eqref{claim2}
transforms via the map $\Psi_{G,G'}$.
Therefore \eqref{claim2} holds for all plabic graphs $G$ and
all partitions $\mu\in\mathcal P_G$. \cref{t:ValuationsFormula} now follows from \eqref{claim2} and \cref{t:matrix}.
\end{proof}

\subsection{Flow polynomials
for frozen Pl\"ucker coordinates}\label{s:frozen}

In this section we describe the Pl\"ucker coordinates 
$P_{\mu_{i}}$ %
corresponding to the frozen vertices of our quivers.  %

\begin{definition}\label{def:balanced}
Let $Q = (Q_0,Q_1)$ be an arbitrary quiver with no loops or $2$-cycles,
where $Q_0$ denotes the set of vertices of $Q$ and $Q_1$ the set of arrows.  
Given  $v\in\Z^{Q_0}$ and mutable vertex $\nu$, we define the quantity
\begin{equation}\label{e:excess}
	\overset{\circ}v_\nu := \sum_{\mu\to \nu} v_{\mu} - 
	\sum_{v\leftarrow \mu'} v_{\mu'},
\end{equation}
where the summands correspond to arrows in $Q$ 
to and from the vertex $\nu$, respectively. 
We say that $\nu$ is \emph{balanced} with respect to the pair $(v,Q)$
if $\overset{\circ}v_\nu = 0$.
\end{definition}

\cref{lem:monomial} below follows 
directly from the $\mathcal X$-cluster mutation formula 
\eqref{e:XclusterMut}.
 
\begin{lemma}\label{lem:monomial}
Let $Q=(Q_0,Q_1)$ be a quiver as above, with $v\in \Z^{Q_0}$
	and corresponding monomial $x^v$ in $\mathcal{X}$-cluster variables. 
	Then the $\mathcal X$-mutation
$\Mut_{\nu}^{\mathcal{X}}(x^v)$ 
	(recall 
	\cref{Xseed})
	at the vertex $\nu$
 is a monomial if and only if $\overset{\circ}v_\nu=0$. Moreover if it is a monomial then 
	its new exponent vector $v'$ is given by 
the (linear) formula 
\begin{equation}\label{e:balancedMutation}
v'_\eta=
\begin{cases} 
(\sum_{\mu \to \nu} v_\mu) - v_\nu, & \eta=\nu,\\
v_\eta, & \eta \ne \nu.
\end{cases}
\end{equation}
	Note that since $\overset{\circ}v_\nu=0$, this is an instance of 
	 tropicalized $\mathcal{A}$-cluster mutation, \emph{cf} \eqref{e:tropmut}.
\end{lemma}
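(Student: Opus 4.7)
The plan is to substitute the inverses of the $\mathcal{X}$-cluster mutation formulas \eqref{e:XclusterMut} directly into $x^v = \prod_\eta x_\eta^{v_\eta}$ and collect terms. First I would invert \eqref{e:XclusterMut}; using the identity $1 + (x'_\nu)^{-1} = (1+x'_\nu)/x'_\nu$ one obtains $x_\nu = (x'_\nu)^{-1}$, and for $\mu \ne \nu$
\[
x_\mu = x'_\mu \cdot \frac{(x'_\nu)^{b_{\nu\mu}}}{(1+x'_\nu)^{b_{\nu\mu}}} \text{ if } b_{\nu\mu}>0, \qquad x_\mu = x'_\mu (1+x'_\nu)^{b_{\mu\nu}} \text{ if } b_{\mu\nu}>0,
\]
while $x_\mu = x'_\mu$ if $\mu$ is not adjacent to $\nu$.

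Substituting these expressions into $x^v$ and grouping factors, every appearance of $(1+x'_\nu)$ combines into a single power $(1+x'_\nu)^e$ with exponent
\[
e \;=\; \sum_\mu b_{\mu\nu} v_\mu \;-\; \sum_\mu b_{\nu\mu} v_\mu \;=\; \sum_{\mu \to \nu} v_\mu \;-\; \sum_{\nu \to \mu} v_\mu \;=\; \overset{\circ}v_\nu,
\]
where the arrow-indexed sums count arrows of $Q$ with multiplicity, as in \cref{def:balanced}. The leftover factors clearly form a Laurent monomial in the $x'_\eta$, so $\Mut_\nu^{\mathcal{X}}(x^v)$ itself is a Laurent monomial precisely when the factor $(1+x'_\nu)^{\overset{\circ}v_\nu}$ is trivial. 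Since $x'_\nu$ is algebraically independent of the other $x'_\eta$, this happens if and only if $\overset{\circ}v_\nu = 0$, proving the first assertion.

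Assuming now that $\overset{\circ}v_\nu = 0$, reading off the exponents of the remaining Laurent monomial yields $v'_\eta = v_\eta$ for $\eta \ne \nu$, while the exponent of $x'_\nu$ is
\[
v'_\nu \;=\; -v_\nu + \sum_\mu b_{\nu\mu} v_\mu \;=\; -v_\nu + \sum_{\nu \to \mu} v_\mu \;=\; \Big(\sum_{\mu \to \nu} v_\mu\Big) - v_\nu,
\]
where the final equality uses the balance condition. This matches \eqref{e:balancedMutation}. The argument is essentially bookkeeping with no real obstacle; the only subtlety worth flagging is that the `only if' direction relies on the algebraic independence of $x'_\nu$ from the remaining $x'_\eta$ to preclude a spurious cancellation of $(1+x'_\nu)^e$ against the monomial part.
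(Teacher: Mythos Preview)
Your proof is correct and is precisely the direct verification the paper has in mind; the paper itself simply asserts that the lemma ``follows directly from the $\mathcal X$-cluster mutation formula \eqref{e:XclusterMut}'' without writing out the bookkeeping you have supplied.
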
\qed

\begin{prop}\label{p:PmuiMonomial}
Let $G$ be an arbitrary $\mathcal X$-cluster seed of type $\pi_{k,n}$, 
with quiver $Q=Q(G)$ and set of vertices $\mathcal P_G$.  Choose
	$j\in\{0,1,\dotsc,n-1\}$. Recall the definition of $\mu_j$ from \cref{Young}. 
We have the following:
\begin{enumerate}
\item
$P_{\mu_j}$ is a Laurent monomial when written in terms of the $\mathcal X$-seed $G$,  i.e. 
 $P_{\mu_j}=x^{v}$ for some $v \in \Z^{\mathcal{P}_G}$.
\item
$\overset{\circ}v_\nu=0$ for all mutable vertices $\nu$ in $\mathcal P_G$.
\item 
If $G'$ is obtained from $G$ by mutation at a mutable vertex $\nu$, then 
when $P_{\mu_j}$ is written (as a Laurent monomial) in terms of the $\mathcal X$-seed $G'$, its
new exponent vector $v'$ is obtained from $v$ 
by 
\eqref{e:balancedMutation}.
\end{enumerate}
\end{prop}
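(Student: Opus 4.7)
The proof proceeds by induction on the length of a mutation sequence from a base seed. For the \emph{base case}, take $G_0 := G_{k,n}^{\rect}$. Since $\mu_j$ labels a boundary face of $G_0$, the set of matchings of $G_0$ with boundary equal to the subset of $[n]$ corresponding to the south steps of $\mu_j$ consists of a single element; equivalently, via the flow--matching bijection (\cref{lem:flowmatch}) and \cref{thm:Talaska}, $P_{\mu_j}$ is a single Laurent monomial $x^{v^{(j)}}$ in the network parameters of $G_0$. This proves (1) for $G_0$. Since every mutable face of $G_0$ is a 4-gon (visible in \cref{G-rectangles}), we can perform a square move at any mutable vertex $\nu$ of $Q(G_0)$; by \cref{lem:mutXface} the resulting plabic graph again expresses $P_{\mu_j}$ as a monomial in its network parameters. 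Invoking \cref{lem:monomial}, the fact that $\Mut_{\nu}^{\mathcal X}$ sends a monomial to another monomial forces $v^{(j)}$ to be balanced at $\nu$, proving (2); statement (3) for this mutation step is then immediate from \cref{lem:monomial}.

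For the \emph{inductive step}, let $G$ be an arbitrary $\mathcal X$-cluster seed of type $\pi_{k,n}$, obtained from $G_0$ by a sequence of mutations, and suppose $P_{\mu_j} = x^v$ with $v$ balanced at every mutable vertex of $Q(G)$. Mutate at any mutable vertex $\nu$ to obtain $G' = \Mut_\nu^{\mathcal X}(G)$. Balanced-ness of $v$ at $\nu$ together with \cref{lem:monomial} implies that $P_{\mu_j}$ remains a monomial $(x')^{v'}$ in $G'$, with $v'$ given by the balanced formula \eqref{e:balancedMutation}, which (as noted after that lemma) is an instance of tropicalized $\mathcal A$-mutation. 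This establishes (1) in $G'$ and (3) for the step from $G$ to $G'$.

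It remains to check \emph{preservation of balanced-ness}: that $v'$ is balanced at every mutable vertex $\eta$ of $Q(G') = \Mut_\nu(Q(G))$. For $\eta = \nu$, using $b'_{\mu\nu} = -b_{\mu\nu}$ and $v'_\mu = v_\mu$ for $\mu \ne \nu$, this reduces instantly to balanced-ness of $v$ at $\nu$ in $Q(G)$. For $\eta \ne \nu$, a sign-careful computation using the Fomin--Zelevinsky mutation formula
\[
b'_{\mu\eta} \;=\; b_{\mu\eta} + \tfrac{1}{2}\bigl(|b_{\mu\nu}|\,b_{\nu\eta} + b_{\mu\nu}\,|b_{\nu\eta}|\bigr) \qquad (\mu,\eta\ne\nu)
\]
together with $v'_\nu = -v_\nu + \sum_{\mu:\, b_{\mu\nu}>0} b_{\mu\nu}\, v_\mu$ yields, after cancellations,
\[
\sum_{\mu} b'_{\mu\eta}\, v'_\mu \;=\; \sum_{\mu} b_{\mu\eta}\, v_\mu \;+\; \tfrac{|b_{\nu\eta}| - b_{\nu\eta}}{2} \sum_{\mu} b_{\mu\nu}\, v_\mu,
\]
which vanishes by balanced-ness of $v$ at both $\eta$ and $\nu$. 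The main obstacle is precisely this final bilinear identity; while elementary, it is the step where the full strength of the inductive hypothesis (balanced-ness at \emph{every} mutable vertex, not just at $\nu$) is essential, completing the induction and thus the proof.
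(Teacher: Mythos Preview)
Your inductive step---the bilinear identity showing that balancedness is preserved under quiver mutation---is correct, and in fact reproves the general cluster-algebraic fact the paper records as Akhtar's result (\cref{p:Akhtarcor}). The paper, however, takes a shorter route that avoids induction entirely: since $P_{\mu_j}/P_{\Max}$ is regular and nowhere-vanishing on $\openX$ by the very definition of $\openX$, and every $\mathcal X$-cluster torus embeds into $\openX$ by \cref{p:XLaurent}, its restriction to any such torus is a unit in the Laurent polynomial ring, hence a single monomial. This establishes (1) for all seeds $G$ at once; (2) and (3) then follow immediately from \cref{lem:monomial} applied to the pair $G$, $\Mut_\nu(G)$.

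Your base case for (2), on the other hand, has a genuine gap. To conclude via \cref{lem:monomial} that $v^{(j)}$ is balanced at a mutable $\nu$ of $Q(G_0)$, you must already know that the $\mathcal X$-mutation of $x^{v^{(j)}}$ at $\nu$ is again a monomial. You assert this ``by \cref{lem:mutXface}'', but that lemma only says the flow polynomial in the mutated plabic graph $G_0'$ equals the $\mathcal X$-mutation of the flow polynomial in $G_0$; it says nothing about the result being a single term. What you actually need is an independent reason that $P_{\mu_j}^{G_0'}$ is itself a monomial (for instance, uniqueness of the relevant matching in $G_0'$), and the phrase ``since $\mu_j$ labels a boundary face'' is not, on its own, a proof of matching uniqueness---nor do you argue it for $G_0'$. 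The quickest repair is precisely the paper's nonvanishing-on-$\openX$ observation; alternatively one could verify balancedness for $G^{\rect}_{k,n}$ by directly computing each exponent vector $v^{(j)}$ against the explicit rectangles quiver, but that case analysis is not carried out in your write-up.
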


\begin{proof}
By Proposition~\ref{p:XLaurent}, any $\mathcal X$-torus embeds into $\openX$. Since $P_{\mui}/P_\Max$ is regular on $\openX$ it expands as a Laurent polynomial in terms of $\mathcal X$-cluster coordinates $\Network(G)$. Since $P_{\mui}/P_\Max$ is nonvanishing by definition of $\openX$ it follows that it must be given by a single Laurent monomial. 
Properties (2) and (3) follow from 
\cref{lem:monomial}.
\end{proof}

\begin{rem}
While we used the embedding of $\mathcal X$ into $\Xcirc$ to give a quick proof that the frozen variables are Laurent monomials in any $\mathcal X$-torus, the same follows from a general result which we learned from Akhtar, which holds in any $\mathcal X$-cluster algebra constructed out of a quiver with no loops or $2$-cycles. Namely \cref{p:Akhtarcor} is a reformulation of \cite[Proposition~4.8]{Akhtar:PolygonalQuivers} \label{p:Akhtar}.

\begin{prop}\label{p:Akhtarcor} If $x^v$ is a monomial on an $\mathcal X$-cluster torus such that $v$ is balanced, i.e. $\overset{\circ}v_\mu=0$ for all mutable vertices $\mu$, then $x^v$ stays monomial with balanced exponent vector under any sequence of $\mathcal X$-mutations. 
\end{prop}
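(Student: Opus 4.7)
The plan is to induct on the length of the mutation sequence. Since $\mathcal{X}$-mutation is an involution, it suffices to check that a single mutation at a mutable vertex preserves the property ``$v$ is balanced at every mutable vertex.'' Fix a seed with quiver $Q$ and exchange matrix $B=(b_{\mu\lambda})$, where $b_{\mu\lambda}$ equals the number of arrows $\mu\to\lambda$ minus the number of arrows $\lambda\to\mu$, and assume $v\in\Z^{Q_0}$ is balanced at every mutable vertex; note that with this convention $\overset{\circ}v_\nu=\sum_\mu b_{\mu\nu}v_\mu$.

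Let $\nu$ be any mutable vertex and write $Q'$, $B'=\Mut_\nu(B)$ for the mutated data. Since $\overset{\circ}v_\nu=0$, Lemma~\ref{lem:monomial} immediately gives $\Mut^{\mathcal{X}}_\nu(x^v)=x^{v'}$ with
\[
v'_\nu \;=\; \sum_{\mu}[b_{\mu\nu}]_+\,v_\mu \;-\; v_\nu, \qquad v'_\eta = v_\eta \ \text{ for } \eta\ne\nu,
\]
so what needs to be checked is that $\overset{\circ}{v'}_\eta=\sum_\mu b'_{\mu\eta} v'_\mu$ vanishes at every mutable $\eta$ of $Q'$.

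The case $\eta=\nu$ is immediate from the matrix mutation rule $b'_{\mu\nu}=-b_{\mu\nu}$: one gets $\overset{\circ}{v'}_\nu = -\sum_\mu b_{\mu\nu}v_\mu = -\overset{\circ}v_\nu = 0$ (the $\mu=\nu$ term is trivial as $b_{\nu\nu}=0$). For a mutable $\eta\ne\nu$, substitute the standard formulas $b'_{\nu\eta}=-b_{\nu\eta}$ and $b'_{\mu\eta}=b_{\mu\eta}+\tfrac{1}{2}\bigl(|b_{\mu\nu}|b_{\nu\eta}+b_{\mu\nu}|b_{\nu\eta}|\bigr)$ for $\mu\ne\nu$. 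The ``unmutated'' piece $\sum_\mu b_{\mu\eta}v_\mu$ is $\overset{\circ}v_\eta=0$ by hypothesis, and after that cancellation one is left with
\[
\overset{\circ}{v'}_\eta \;=\; \sum_{\mu\ne\nu}\tfrac{|b_{\mu\nu}|b_{\nu\eta}+b_{\mu\nu}|b_{\nu\eta}|}{2}\,v_\mu \;-\; b_{\nu\eta}\sum_{\mu}[b_{\mu\nu}]_+\,v_\mu.
\]
A short case analysis on the signs of $b_{\mu\nu}$ and $b_{\nu\eta}$ shows that the coefficient of each $v_\mu$ in this expression is $-b_{\nu\eta}b_{\mu\nu}$ when $b_{\nu\eta}<0$ and is $0$ otherwise; in the nontrivial case the total sum therefore collapses to $-b_{\nu\eta}\sum_\mu b_{\mu\nu}v_\mu = -b_{\nu\eta}\overset{\circ}v_\nu = 0$, again using the balance of $v$ at $\nu$. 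So $\overset{\circ}{v'}_\eta=0$ for every mutable $\eta$, completing the inductive step.

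There is no genuine obstacle; the only thing to be careful with is the sign bookkeeping in the last display, and the fact that the balance condition is used twice in the step at $\eta\ne\nu$: once at $\eta$ to kill the unmutated sum, and once at $\nu$ to collapse the remaining expression after the sign-case simplification.
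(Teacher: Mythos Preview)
Your proof is correct. The paper itself does not give a proof of this proposition; it simply records it as a reformulation of \cite[Proposition~4.8]{Akhtar:PolygonalQuivers}. Your argument supplies exactly the direct verification one would expect: use Lemma~\ref{lem:monomial} to see that a single $\mathcal{X}$-mutation at $\nu$ turns $x^v$ into a monomial $x^{v'}$, then compute $\overset{\circ}{v'}_\eta=\sum_\mu b'_{\mu\eta}v'_\mu$ from the matrix mutation formulas and show it vanishes for every mutable $\eta$. The bookkeeping is right; in particular, the $b_{\nu\eta}v_\nu$ term arising from $b'_{\nu\eta}v'_\nu$ is what completes the ``unmutated'' piece $\sum_\mu b_{\mu\eta}v_\mu=\overset{\circ}v_\eta$, and the residual sum reduces via $[-x]_+-[x]_+=-x$ to $-b_{\nu\eta}\overset{\circ}v_\nu=0$. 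One small point worth making explicit: since the balance condition is only imposed at mutable $\eta$, the paper's convention that quiver mutation does not add arrows between two frozen vertices never interferes with the standard formula for $b'_{\mu\eta}$ in your computation.
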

\end{rem}

\section{The proof that $\Delta_G = \Q_G$}\label{sec:mutationDelta}

In this section we mostly 
work in the setting of arbitrary $\mathcal{X}$- and $\mathcal{A}$-seeds of type $\pi_{k,n}$.
Recall that associated to any reduced plabic graph $G$ of type $\pi_{k,n}$, we have both an 
$\mathcal{X}$-seed $(Q(G), \widetilde{\Network}(G))$ which determines a torus in $\Xcirc$, and an $\mathcal{A}$-seed $(Q(G),\wPC(G))$ which determines a torus in $\opencheckX$.
And more generally, for any quiver mutation equivalent to $Q(G)$, we have an associated 
$\mathcal{X}$-seed and $\mathcal{A}$-seed and associated tori, which we continue to index by a letter $G$.
Our main result is that for any choice of $G$,
the Newton-Okounkov body $\Delta_G$ (which is defined in terms of the $\mathcal{X}$-seed 
associated to $G$)
is equal to the superpotential polytope 
$\Q_G$ (which is defined in terms of the $\mathcal{A}$-seed associated to $G$). 
Our proof starts by verifying this fact for $G = G_{k,n}^{\rect}$, proving along the way 
 that in this case, $\Q_G$ is isomorphic to a Gelfand-Tsetlin polytope
(via a unimodular transformation). From this we deduce various properties of $\Q_G$ including that $\Q_G=\Delta_G$ in the case where $\Q_G$ is a lattice polytope. We then use the 
\emph{theta function basis} of 
Gross, Hacking, Keel, and Kontsevich \cite{GHKK}, as well as \cref{c:PolytopeMutation}, 
which describes how the polytopes $\Q_G^r$ mutate, to deduce that $\Q_G=\Delta_G$ in general and complete the proof.

\subsection{
The rectangles cluster, Gelfand-Tsetlin polytopes, and the integral case}{\label{s:QGT}}

In 1950 Gelfand and Tsetlin \cite{GT} introduced integral polytopes $GT_\omega$ associated to arbitrary dominant weights $\omega$ of $GL_n$, such that the lattice points of $GT_\omega$ parameterize a basis of the representation $V_\omega$ (the Gelfand-Tsetlin basis) and such that $GT_{r\omega}=rGT_{\omega}$. If $\omega=\omega_{n-k}$ this construction gives a polytope with $n\choose k$ lattice points, such that the number of lattice points in its $r$-th dilation agrees with the dimension of the irreducible representation $V_{r\omega_{n-k}}$.
The representation $V_{r\omega_{n-k}}$ is isomorphic to the degree $r$ component of the homogeneous
coordinate ring of $\X$ by a special case of the Borel-Weil Theorem, and is furthermore isomorphic to the subspace $L_r\subset \C(\X)$ 
from \eqref{e:projnormal}. Thus this number of lattice points also computes the dimension of the latter two vector spaces.
We start by explaining how the polytope
$\Q_{G_{k,n}^{\rect}}^r$ is isomorphic to a Gelfand-Tsetlin polytope $GT_{r\omega_{n-k}}$ via a unimodular 
transformation.

\begin{definition}[Gelfand-Tsetlin polytope]\label{d:GT}
Let $GT_{r\omega_{n-k}} \subset \R^{\mathcal P_{G^{\rect}_{k,n}}}$ denote the polytope defined by 
\begin{align}
	 &0 \leq  f_{1 \times 1}  \label{Eq3}\\
	&f_{(n-k)\times k}   \leq  r  \label{Eq4}\\
	&0  \leq  f_{i \times j} - f_{(i-1)\times j}  \label{Eq1}\\
	&0  \leq  f_{i \times j} - f_{i \times (j-1)}, \label{Eq2}
\end{align}
where 
the defining variables $f_{i \times j}$ range over all nonempty
rectangles $i \times j$
contained in a 
$(n-k) \times k$ rectangle.  This polytope 
is called the \emph{Gelfand-Tsetlin polytope} for the highest weight $r\omega_{n-k}$.
\end{definition}

One often expresses Gelfand-Tsetlin polytopes in terms of 
\emph{Gelfand-Tsetlin patterns}, triangular arrays of real numbers
whose top row is fixed and whose rows interlace.  Clearly  
$GT_{r\omega_{n-k}}$ is the set of all such Gelfand-Tsetlin patterns with 
top row $(0^k, r^{n-k})$.  See Figure \ref{fig:GT} for the example
with $k=3$ and $n=5$.
\begin{figure}[h]
\centering
\includegraphics[height=1.3in]{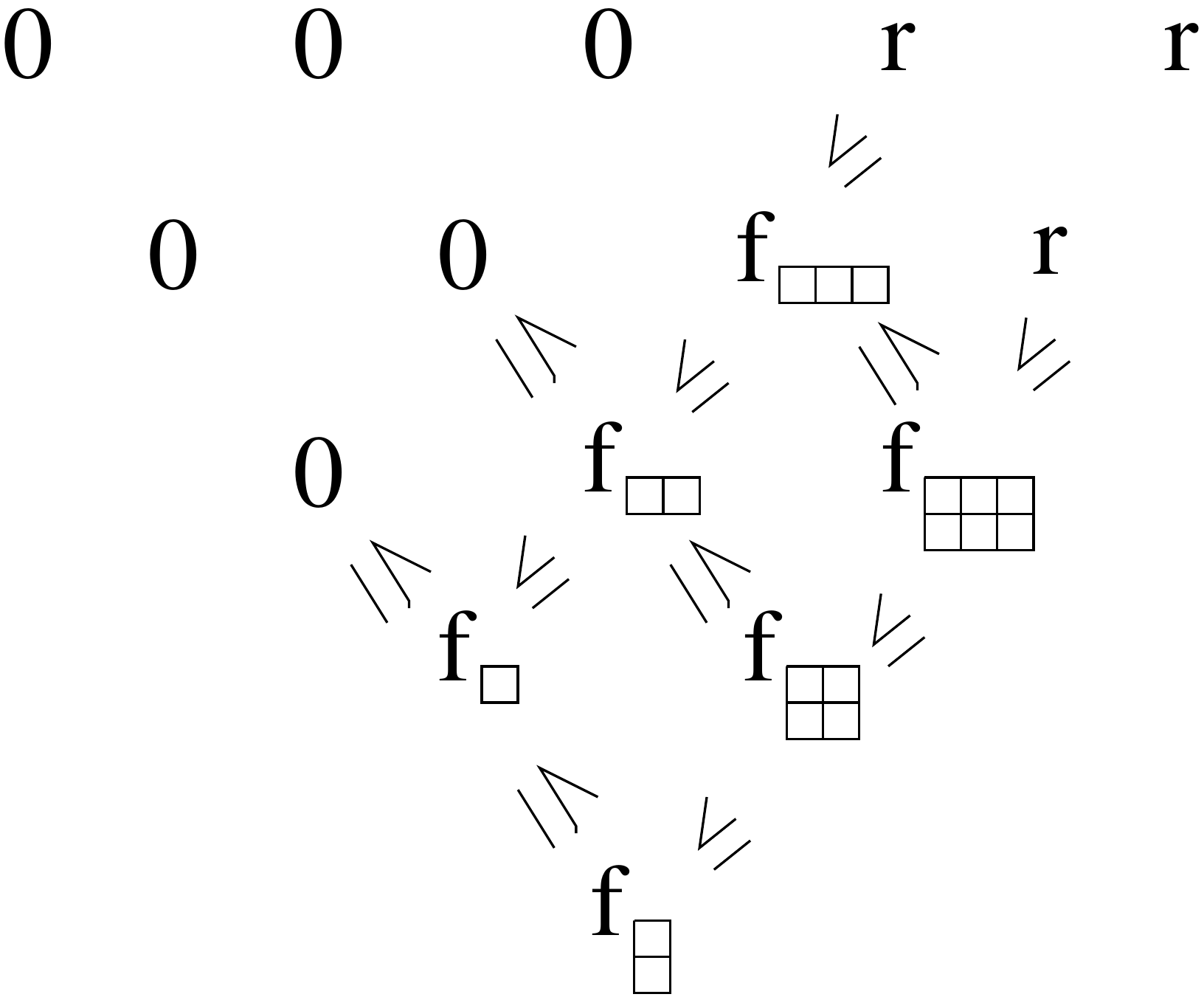}
\caption{Gelfand-Tsetlin patterns for $GT_{r\omega_{n-k}}$ with $k=3$ and $n=5$.  The convex hull
of all such patterns is the polytope $GT_{r\omega_{n-k}}$.}
\label{fig:GT}
\end{figure}
When $r=1$ the polytope $GT_{\omega_{n-k}}$ has integer vertices, one for each Young diagram in $\mathcal P_{k,n}$.

The following lemma explicitly describes 
an isomorphism between the polytope $\Q^r_{G_{k,n}^\rect} $ 
and the Gelfand-Tsetlin polytope $GT_{r\omega_{n-k}}$. 
If one compares Figures \ref{fig:superpotential} and \ref{fig:GT}, the isomorphism becomes quite
transparent. An analogous transformation comes up in \cite[Section 5.1]{AlexeevBrion}. 

\begin{lemma}\label{lem:integralGT}
The map $F:\R^{\mathcal P_{G^{\rect}_{k,n}}}\to \R^{\mathcal P_{G^{\rect}_{k,n}}}$ defined by
\[
(v_{i\times j})\mapsto (f_{i \times j}) = ( v_{i \times j} - v_{(i-1)\times (j-1)})
\]
is a unimodular linear transformation, with inverse given by 
$v_{i \times j} = f_{i \times j} + f_{(i-1) \times (j-1)} + 
f_{(i-2) \times (j-2)} + \dots.$  Moreover, 
$F(\Q_{G_{k,n}^{\rect}}^r)=GT_{r\omega_{n-k}}$.  %
Therefore the polytope $\Q_{G_{k,n}^{\rect}}^r$ is isomorphic to the Gelfand-Tsetlin polytope
$GT_{r\omega_{n-k}}$ by a unimodular linear transformation, and in particular has integer vertices. 
\end{lemma}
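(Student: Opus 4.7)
The plan is to verify the statement by two essentially independent direct checks: first, that the linear map $F$ is unimodular with the claimed inverse, and second, that under $F$ the defining inequalities of $\Q^r_{G^{\rect}_{k,n}}$ transform into the defining inequalities of $GT_{r\omega_{n-k}}$.

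First I would order the rectangles $i\times j \in \mathcal P_{G^{\rect}_{k,n}}$ by any total order refining the partial order given by inclusion (for instance, by sum $i+j$ with ties broken lexicographically). In this basis the matrix of $F$ is lower triangular with $1$'s on the diagonal, since $f_{i\times j}=v_{i\times j}-v_{(i-1)\times(j-1)}$ expresses a new variable in terms of itself and a strictly smaller one (with the convention $v_{i\times j}=0$ whenever $i=0$ or $j=0$, corresponding to the normalisation $p_\emptyset=1$ and the absence of $x_\emptyset$). Hence $\det F=1$, so $F$ is unimodular. The proposed inverse is then the telescoping formula $v_{i\times j}=\sum_{s\ge 0} f_{(i-s)\times(j-s)}$ (terms with $i-s\le 0$ or $j-s\le 0$ are dropped), which one checks directly gives $v_{i\times j}-v_{(i-1)\times(j-1)}=f_{i\times j}$.

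Next I would substitute $f_{i\times j}=v_{i\times j}-v_{(i-1)\times(j-1)}$ into the four families of inequalities \eqref{eq1}--\eqref{eq4} of \cref{rem:rectanglesinequalities} that cut out $\Q^r_{G^{\rect}_{k,n}}$. Inequality \eqref{eq1} becomes $0\le f_{1\times 1}$, which is \eqref{Eq3}. Inequality \eqref{eq2} becomes $f_{(n-k)\times k}\le r$, which is \eqref{Eq4}. For \eqref{eq3}, the inequality $v_{(i-1)\times j}-v_{(i-2)\times(j-1)}\le v_{i\times j}-v_{(i-1)\times(j-1)}$ is literally $f_{(i-1)\times j}\le f_{i\times j}$, which is \eqref{Eq1}. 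Similarly \eqref{eq4} transforms into $f_{i\times(j-1)}\le f_{i\times j}$, matching \eqref{Eq2}. Thus the two sets of inequalities are exchanged bijectively by $F$ and $F^{-1}$, giving $F(\Q^r_{G^{\rect}_{k,n}})=GT_{r\omega_{n-k}}$.

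Finally I would record the consequences: since $F$ is a unimodular isomorphism of lattices in $\R^{\mathcal P_{G^{\rect}_{k,n}}}$, and the Gelfand--Tsetlin polytope $GT_{r\omega_{n-k}}$ has integer vertices (this is classical, visible from the interlacing description via Gelfand--Tsetlin patterns with top row $(0^k,r^{n-k})$ in \cref{fig:GT}), the polytope $\Q^r_{G^{\rect}_{k,n}}$ has integer vertices as well. There is really no hard step here; the only small thing to be careful about is the boundary convention $v_{i\times 0}=v_{0\times j}=0$, which must be applied consistently both in writing down the matrix of $F$ and in interpreting the inequalities when $i$ or $j$ equals $1$.
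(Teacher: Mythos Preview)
Your proposal is correct and follows essentially the same approach as the paper: the paper's proof simply observes that rewriting the inequalities \eqref{eq1}--\eqref{eq4} in terms of the $f$-variables yields \eqref{Eq1}--\eqref{Eq4}, and you have spelled this out explicitly, together with the (equally routine) unimodularity check via a triangular ordering and the telescoping inverse. The only difference is level of detail; the content is the same.
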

\begin{proof}
If we rewrite the inequalities \eqref{eq1} through \eqref{eq4}
defining $\Q^r_{G_{k,n}^{\rect}}$
in terms of $f$-variables, we obtain the system
of inequalities given by \eqref{Eq1}, \eqref{Eq2}, \eqref{Eq3}, and \eqref{Eq4}
which define 
the Gelfand-Tsetlin polytope $GT_{r\omega_{n-k}}$.
\end{proof}

\begin{defn}[Integer decomposition property] \label{d:IntClosed} A polytope $P$ is said to 
have the \emph{integer decomposition property} (IDP), or be \emph{integrally closed}, if every lattice point in 
the $r$th dilation $rP$ of $P$ is a sum of $r$ lattice points in $P$, that is, $\Lattice(rP)=r\Lattice(P)$.
\end{defn}

\begin{lemma}\label{lem:intclosed}
The polytopes  $GT_{\omega_{n-k}}$ 
and $\Q_{G_{k,n}^{\rect}} $ have the 
integer decomposition property.
\end{lemma}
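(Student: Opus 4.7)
The plan is to reduce the claim to a single statement about Gelfand--Tsetlin polytopes and then give a combinatorial decomposition argument via semistandard Young tableaux. By \cref{lem:integralGT} we have a unimodular linear isomorphism $F: \R^{\mathcal{P}_{G^{\rect}_{k,n}}} \to \R^{\mathcal{P}_{G^{\rect}_{k,n}}}$ carrying $\Q^r_{G^{\rect}_{k,n}}$ to $GT_{r\omega_{n-k}}$. Since $F$ is unimodular it restricts to a bijection $\Z^{\mathcal{P}_{G}} \to \Z^{\mathcal{P}_{G}}$, and since it is linear it commutes with dilations and with addition. Therefore the integer decomposition property passes between $\Q_{G^{\rect}_{k,n}}$ and $GT_{\omega_{n-k}}$, and it is enough to prove IDP for the latter.

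The second step is to identify lattice points of $r\cdot GT_{\omega_{n-k}} = GT_{r\omega_{n-k}}$ with semistandard Young tableaux of rectangular shape $(r^{n-k})$ and entries in $[n]$, via the classical bijection in which the $i$-th row of the GT pattern records the shape of the sub-tableau on entries $\leq i$. Under this bijection, lattice points of $GT_{\omega_{n-k}}$ itself correspond to SSYT of single-column shape $(1^{n-k})$, i.e.\ strictly increasing sequences of length $n-k$ drawn from $[n]$ (equivalently, Young diagrams in $\mathcal{P}_{k,n}$). The plan is then: given an SSYT $T$ of shape $(r^{n-k})$, slice it into its $r$ columns $T_1,\dots,T_r$, each a lattice point of $GT_{\omega_{n-k}}$, and check that the GT pattern of $T$ is the coordinate-wise sum of the GT patterns of $T_1,\dots,T_r$.

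The verification of that last identity is the one step that requires a small calculation, and is the main (though routine) obstacle. Concretely, one fixes $i \in [n]$ and row index $j$, and observes that the $j$-th row length of the shape of entries $\leq i$ in $T$ equals the number of columns $c$ such that the $j$-th entry of $T_c$ is $\leq i$ (this uses column-strictness and row-weak-increase of SSYTs, which force that this set of columns is a prefix $\{1,\dots,m\}$). On the other hand, the $j$-th row length of the shape of entries $\leq i$ in the single column $T_c$ is exactly the indicator that the $j$-th entry of $T_c$ is $\leq i$; summing over $c$ gives the same count. Thus the column decomposition of $T$ gives the desired expression of an arbitrary lattice point of $GT_{r\omega_{n-k}}$ as a sum of $r$ lattice points of $GT_{\omega_{n-k}}$, completing the proof.
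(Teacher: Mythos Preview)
Your argument is correct and follows the same two-step structure as the paper: transfer the problem to $GT_{\omega_{n-k}}$ via the unimodular map of \cref{lem:integralGT}, then establish IDP for the Gelfand--Tsetlin polytope. The paper simply records the latter as ``well-known'' (mentioning an inductive argument on integral GT patterns as one route), whereas you supply a concrete proof via the classical bijection with semistandard tableaux of rectangular shape and column slicing; your additivity check is sound, and since the coordinates $f_{i\times j}$ of \cref{d:GT} are particular entries of the traditional triangular GT pattern, additivity in your sense immediately gives additivity in the paper's coordinates.
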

\begin{proof}
This is well-known for  $GT_{\omega_{n-k}}$ 
and can also 
be proved explicitly by an inductive argument on 
integral Gelfand-Tsetlin patterns.
The result for 
$\Q_{G_{k,n}^{\rect}} $  now follows from
\cref{lem:integralGT}.
\end{proof}

\begin{lemma}\label{lem:GT}
Any GT tableau $T$
	(see \cref{GTtableau}),
	 viewed as an element of $\R^{\mathcal{P}_{G}}$ for $G = G_{k,n}^{\rect}$, is a lattice point of 
$\Q_G$.
\end{lemma}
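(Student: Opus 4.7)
The plan is to transport the problem to the Gelfand--Tsetlin polytope via the unimodular change of variables $F$ from \cref{lem:integralGT}, which identifies $\Q_G$ with $GT_{\omega_{n-k}}$ in the case $G = G_{k,n}^{\rect}$. Given a GT tableau $T = (V_{i\times j})$, I would consider its image $F(T) = (f_{i\times j})$ where $f_{i\times j} = V_{i\times j} - V_{(i-1)\times(j-1)}$, using the boundary convention that $V_{0\times j} = V_{i\times 0} = 0$, and check that it satisfies the inequalities \eqref{Eq3}--\eqref{Eq2} defining $GT_{\omega_{n-k}}$ with $r=1$. Since $T$ has integer entries and $F$ is unimodular with integer inverse, the resulting point of $\Q_G$ is automatically a lattice point.

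The two endpoint inequalities $f_{1\times 1} \geq 0$ and $f_{(n-k)\times k} \leq 1$ are read off directly: the former is condition (3) of \cref{GTtableau}, and the latter is condition (2) applied to the bottom-right cell (with the boundary case handled by condition (1)). The main work is verifying the monotonicity inequalities $f_{(i-1)\times j} \leq f_{i\times j}$ and $f_{i\times(j-1)} \leq f_{i\times j}$, the second of which is symmetric to the first.

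A convenient preliminary observation is that each $f_{i\times j}$ lies in $\{0, 1\}$. The upper bound is immediate from condition (2), while the lower bound follows from the fact that all $V_{i\times j} \geq 0$ (by (3) and (4)) together with the chain $V_{(i-1)\times(j-1)} \leq V_{i\times(j-1)} \leq V_{i\times j}$ given by (4). With this in hand, the inequality $f_{(i-1)\times j} \leq f_{i\times j}$ is automatic when $f_{(i-1)\times j} = 0$, so the only nontrivial case is $f_{(i-1)\times j} = 1$, where I must show $f_{i\times j} = 1$. If $V_{(i-1)\times(j-1)} > 0$, condition (5) applies at the cell $(i-1, j-1)$ to give $V_{i\times j} = V_{(i-1)\times(j-1)} + 1$, hence $f_{i\times j} = 1$. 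Otherwise $V_{(i-1)\times(j-1)} = 0$, in which case monotonicity (4) combined with $V_{(i-1)\times j} \geq 1$ (since $f_{(i-1)\times j} = 1$ and $V_{(i-2)\times(j-1)} \geq 0$) forces $V_{i\times j} \geq 1$, while condition (2) forces $V_{i\times j} \leq 1$, giving $f_{i\times j} = 1$ as desired.

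The main obstacle is ensuring that the ``diagonal jump by one'' condition (5) interacts correctly with the row/column monotonicity (4) in the degenerate case where $V_{(i-1)\times(j-1)} = 0$ but $V_{(i-1)\times j}$ is already positive; once this case is handled, the remainder is a routine translation of tableau conditions into polytope inequalities, and no results outside \cref{lem:integralGT} and \cref{GTtableau} are needed.
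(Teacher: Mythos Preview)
Your proof is correct and follows the same route as the paper: apply the unimodular map $F$ of \cref{lem:integralGT} and check that the image lands in $GT_{\omega_{n-k}}$. The paper's version is terser---it simply asserts that $F(T)$ is an $(n-k)\times k$ array of $0$'s and $1$'s with weakly increasing rows and columns---whereas you carry out the case analysis (in particular the split on whether $V_{(i-1)\times(j-1)}>0$) that justifies this assertion.
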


\begin{proof} 
If we apply the map $F$ from 
 Lemma \ref{lem:integralGT} to $T$,
it gets
transformed into an
$(n-k) \times k$ array of $0$'s and $1$'s with rows and columns weakly
increasing.  For example, Figure \ref{fig:GT-tableau} shows
both the tableau from Figure \ref{fig:tableauminflow} (in ``$v$-variables")
and also its image under the map $F$
(in ``$f$-variables").  
Therefore $F(T)$ 
is  an integral Gelfand-Tsetlin pattern in $GT_{\omega_{n-k}}$, see 
Figure \ref{fig:GT}, and hence  
by \cref{lem:integralGT},
$T \in \Q _G$.
This shows that the integer point $T$ lies in 
$\Q_G$.
\end{proof} 

\begin{figure}[h]
\centering
\includegraphics[height=1in]{TableauMinFlow} \hspace{1cm}
\includegraphics[height=1in]{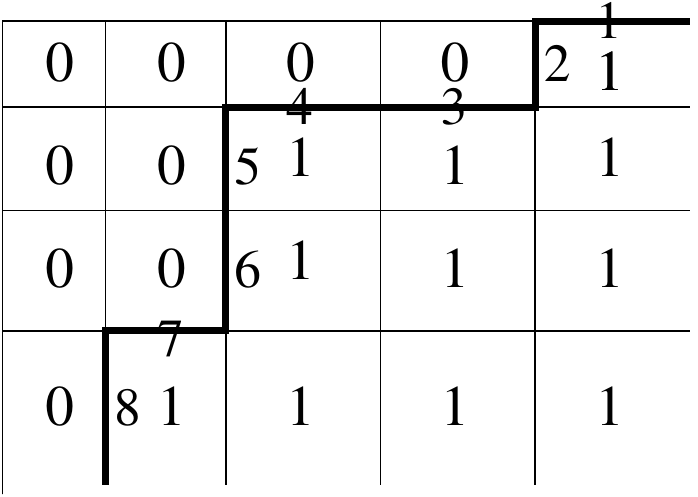}
\caption{The ``tableau", or exponent vector associated to the strongly minimal flow from Figure
	\ref{fig:minflow} and its associated Gelfand-Tsetlin pattern.}
\label{fig:GT-tableau}
\end{figure}

\begin{proposition}\label{prop:convQ}
When $G = G^{\rect}_{{k,n}}$, the polytopes
$\conv_G$ and $\Q_G$ coincide, and the lattice points of 
$\conv_G = \Q_G$ are precisely the ${n \choose k}$ points
$\val_G(P_{\lambda})$ for $\lambda \in \mathcal{P}_{k,n}$.
\end{proposition}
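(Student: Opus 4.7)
The plan is to establish the two inclusions $\conv_G \subseteq \Q_G$ and $\Q_G \subseteq \conv_G$ separately, by identifying the lattice points of both polytopes with the $\binom{n}{k}$ GT tableaux, and then using that $\Q_G$ is an integral polytope to pass from lattice points back to the polytope itself.

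First I would show that $\val_G(L_1)$ consists of exactly the $\binom{n}{k}$ GT tableaux, which coincide with the valuations $\val_G(P_\lambda/P_{\max})$ for $\lambda \in \mathcal{P}_{k,n}$. The set $\{P_\lambda/P_{\max} : \lambda \in \mathcal{P}_{k,n}\}$ is a basis for $L_1$ by \eqref{e:projnormal}, so $\dim L_1 = \binom{n}{k}$; hence by the one-dimensional-leaves property of \cref{l:okounkovlemma}, the set $\val_G(L_1)$ also has cardinality $\binom{n}{k}$. Meanwhile \cref{lem:vertices} shows that $\lambda \mapsto \val_G(P_\lambda/P_{\max})$ is an injection from $\mathcal{P}_{k,n}$ into $\val_G(L_1)$ whose image is precisely the collection of GT tableaux. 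Cardinalities force these two descriptions of $\val_G(L_1)$ to agree. Since each GT tableau lies in $\Q_G$ by \cref{lem:GT}, the convex hull $\conv_G$ of these valuations is contained in $\Q_G$.

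For the reverse inclusion I would invoke \cref{lem:integralGT}: $\Q_G$ is unimodularly equivalent to the Gelfand-Tsetlin polytope $GT_{\omega_{n-k}}$, and in particular is an integral polytope with $|\Lattice(\Q_G)| = |\Lattice(GT_{\omega_{n-k}})|$. Integral Gelfand-Tsetlin patterns with top row $(0^k,1^{n-k})$ are forced by interlacing to have all entries in $\{0,1\}$, and each such pattern is determined by the sequence of numbers of ones appearing in its rows; this yields an explicit bijection between $\Lattice(GT_{\omega_{n-k}})$ and $\mathcal{P}_{k,n}$, giving $|\Lattice(\Q_G)| = \binom{n}{k}$. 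The $\binom{n}{k}$ GT tableaux $\{\val_G(P_\lambda/P_{\max})\}$ identified in the previous paragraph must therefore exhaust $\Lattice(\Q_G)$. Since $\Q_G$ is integral it equals the convex hull of its lattice points, so $\Q_G = \operatorname{ConvexHull}(\{\val_G(P_\lambda/P_{\max})\}) = \conv_G$, and the claim about the $\binom{n}{k}$ lattice points falls out as a byproduct.

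I do not anticipate a serious obstacle here: every ingredient has been prepared in earlier sections, and the argument reduces to combining a cardinality equality (\cref{l:okounkovlemma}), the identification of valuations of Pl\"ucker coordinates with GT tableaux (\cref{lem:vertices} and \cref{lem:GT}), and the unimodular equivalence $\Q_G \simeq GT_{\omega_{n-k}}$ (\cref{lem:integralGT}). The only small verification needed is the count $|\Lattice(GT_{\omega_{n-k}})| = \binom{n}{k}$, which is immediate from the interlacing inspection sketched above (equivalently, from the classical Weyl character formula $\dim V_{\omega_{n-k}} = \binom{n}{k}$).
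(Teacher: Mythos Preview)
Your proof is correct and follows essentially the same route as the paper's: use \cref{lem:vertices} and \cref{lem:GT} to get $\conv_G \subseteq \Q_G$, then use \cref{lem:integralGT} (integrality of $\Q_G$ and the lattice-point count $\binom{n}{k}$) to force equality. Your invocation of \cref{l:okounkovlemma} to pin down $\val_G(L_1)$ exactly is a slight elaboration beyond what the paper writes, but it is harmless and arguably makes the step ``$\conv_G$ is the convex hull of the Pl\"ucker valuations'' more transparent.
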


\begin{proof}
We write $G$ for $G^{\rect}_{k,n}$. 
By definition, $\conv_G$ is the convex hull 
of  $\{\val_G(P_\lambda)\mid \lambda\in\mathcal P_{k,n}\}$.
By Lemma \ref{lem:vertices}, we have that for 
$\lambda \neq \lambda'$,
$\val_G(P_\lambda)$ 
and $\val_G(P_{\lambda'})$ are distinct, so 
$\conv_G$ contains at least ${n \choose k}$ lattice points.

We next show that $\conv_G \subseteq \Q_G$.
By \cref{lem:vertices}, each point
$\val_G(P_\lambda)$ can be encoded by a GT tableau $T$. By \cref{lem:GT}, 
$T$ is a lattice point of 
$\Q _G$, and hence 
$\conv_G \subseteq \Q_G$.

By  \cref{lem:integralGT}, 
the polytope $\Q^\RG$ is an integral polytope with precisely
$\dim V_{r \omega_{n-k}}$ lattice points. 
In particular, 
 $\Q _G$ is  integral  with 
 precisely  ${n \choose k}$ lattice points.  It follows that 
$\conv_G = \Q_G$, and the lattice points of 
$\conv_G = \Q_G$ are precisely the ${n \choose k}$ points
$\val_G(P_{\lambda})$ for $\lambda \in \mathcal{P}_{k,n}$.
\end{proof}

\begin{prop}\label{p:latticeQr} 
For an arbitrary seed, indexed by $G$, the number of lattice points of the superpotential polytope $\Gamma_G^r$ coincides with the dimension $\dim V_{r\omega_{n-k}}$. 
\end{prop}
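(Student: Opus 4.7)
The plan is to reduce the statement to the special case of the rectangles seed $G = G_{k,n}^{\rect}$ and then invoke the mutation-invariance of the lattice point count established in \cref{c:intmoves}. Observe first that by choosing $r_{n-k} = r$ and $r_i = 0$ for all $i \neq n-k$ in \cref{def:generalQ}, the generalized superpotential polytope $\Q_G(r_1,\dots,r_n)$ specializes precisely to $\Q_G^r$. Therefore \cref{c:intmoves} applies and guarantees that the cardinality $|\Lattice(\Q_G^r)|$ is independent of the choice of seed $G$, since any two seeds of type $\pi_{k,n}$ are connected by a sequence of mutations and the tropicalized $\mathcal{A}$-cluster mutations $\Psi_{G,G'}$ restrict to bijections between the lattice points of the respective polytopes.

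Given this invariance, the computation of $|\Lattice(\Q_G^r)|$ can be carried out in the single most convenient seed, namely the rectangles seed. By \cref{lem:integralGT}, the unimodular linear transformation $F$ sends $\Q_{G_{k,n}^{\rect}}^r$ bijectively onto the Gelfand--Tsetlin polytope $GT_{r\omega_{n-k}}$. Since $F$ is unimodular it restricts to a bijection between lattice points, so $|\Lattice(\Q_{G_{k,n}^{\rect}}^r)| = |\Lattice(GT_{r\omega_{n-k}})|$. It is classical (going back to Gelfand--Tsetlin \cite{GT}) that the lattice points of $GT_{r\omega_{n-k}}$ are in bijection with the Gelfand--Tsetlin basis of the irreducible $GL_n$-representation $V_{r\omega_{n-k}}$, and therefore $|\Lattice(GT_{r\omega_{n-k}})| = \dim V_{r\omega_{n-k}}$.

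Combining the two steps yields, for an arbitrary seed $G$ of type $\pi_{k,n}$,
\[
|\Lattice(\Q_G^r)| \;=\; |\Lattice(\Q_{G_{k,n}^{\rect}}^r)| \;=\; |\Lattice(GT_{r\omega_{n-k}})| \;=\; \dim V_{r\omega_{n-k}},
\]
as desired. No step here is really an obstacle: the mutation-invariance is already in place from \cref{c:intmoves}, and the rectangles case is handled by the explicit unimodular identification with the Gelfand--Tsetlin polytope together with the classical dimension formula. The only subtlety worth noting is that one must confirm the choice $(r_1,\dots,r_n) = (0,\dots,0,r,0,\dots,0)$ indeed recovers $\Q_G^r$ in \cref{def:generalQ}, which is immediate from the definition of the individual summands $W_i$ of the superpotential and the fact that the parameter $q$ multiplies exactly the term $W_{n-k}$.
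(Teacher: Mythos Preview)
Your proposal is correct and follows essentially the same approach as the paper: establish the count in the rectangles seed via the unimodular identification with the Gelfand--Tsetlin polytope (\cref{lem:integralGT}), then invoke \cref{c:intmoves} for independence of the seed. The paper's proof is just a two-sentence version of exactly this argument.
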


\begin{proof}
If $G=G^{\rect}_{k,n}$ then the statement follows from the analogous property of the Gelfand-Tsetlin polytope, because of \cref{lem:integralGT}.
By \cref{c:intmoves}, this cardinality is independent of $G$.  
\end{proof}

\begin{cor}\label{c:GammaVol}
For an arbitrary seed $\check\Sigma^{\mathcal A}_G$, the volume of the superpotential polytope $\Gamma_G$ is given by
\begin{equation}\label{e:volume}
\prod_{1 \leq i \leq k} \frac{(k-i)!}{(n-i)!}
\end{equation}
\end{cor}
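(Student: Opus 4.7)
The plan is to extract the volume of the superpotential polytope $\Gamma_G$ from its Ehrhart asymptotics, using the two crucial inputs the paper has already established: (i) the dilation identity $\Gamma_G^r = r\Gamma_G$ for $r \in \Z_{\ge 0}$ (Remark~\ref{rem:dilation}), which legitimately identifies $\Gamma_G^r$ as a dilation of the polytope whose volume we want, and (ii) the lattice-point count $|\Lattice(\Gamma_G^r)| = \dim V_{r\omega_{n-k}}$ from Proposition~\ref{p:latticeQr}. These two facts together turn the volume question into an asymptotic question about a well-known dimension, independent of the seed $G$.

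Concretely, I would first argue that $\Gamma_G$ is a rational convex polytope of full dimension $N = k(n-k)$. Rationality and boundedness are part of Remark~\ref{rem:dilation}. Full-dimensionality is checked for a single seed: for $G = G^{\rect}_{k,n}$ it follows from Lemma~\ref{lem:integralGT}, which exhibits $\Gamma_{G^{\rect}_{k,n}}$ as the image of the $N$-dimensional Gelfand–Tsetlin polytope $GT_{\omega_{n-k}}$ under a unimodular linear transformation. For an arbitrary seed one then invokes Corollary~\ref{c:PolytopeMutation}: the tropicalised $\mathcal A$-cluster mutations $\Psi_{G,G'}$ provide piecewise-linear bijections between $\Gamma_G$ and $\Gamma_{G'}$, so dimension is preserved along any mutation path.

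Given full dimensionality and rationality, standard Ehrhart theory for rational polytopes yields
\begin{equation*}
|\Lattice(r\Gamma_G)| = \Vol(\Gamma_G)\, r^N + O(r^{N-1}) \qquad (r\to\infty).
\end{equation*}
Combining with $\Gamma_G^r = r\Gamma_G$ and Proposition~\ref{p:latticeQr} gives
\begin{equation*}
\Vol(\Gamma_G) = \lim_{r\to\infty}\frac{\dim V_{r\omega_{n-k}}}{r^N}.
\end{equation*}

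Finally, I would apply the Weyl dimension formula in the form
\begin{equation*}
\dim V_{r\omega_{n-k}} = \prod_{i=1}^{n-k}\prod_{j=1}^{k} \frac{r + i + j - 1}{i + j - 1},
\end{equation*}
so the leading coefficient in $r$ equals $\prod_{i,j}(i+j-1)^{-1}$. Fixing $j$ and simplifying $\prod_{i=1}^{n-k}(i+j-1) = (n-k+j-1)!/(j-1)!$ turns this into $\prod_{j=1}^k (j-1)!/(n-k+j-1)!$; the reindexing $j \mapsto k+1-j$ then produces $\prod_{i=1}^k (k-i)!/(n-i)!$, which is exactly \eqref{e:volume}. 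The only genuine content of the proof is the assembly of these ingredients; no step is expected to pose a real obstacle, since the heavy lifting (boundedness, rationality, the lattice-count identity, and the mutation-invariance enabling the reduction to $G^{\rect}_{k,n}$) has already been done.
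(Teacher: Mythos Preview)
Your proposal is correct and follows essentially the same approach as the paper: both extract the volume from the Ehrhart asymptotics of $\Gamma_G$, using \cref{p:latticeQr} to identify the lattice-point count with $\dim V_{r\omega_{n-k}}$, and then read off the leading coefficient of this dimension as a polynomial in $r$. The only differences are cosmetic: the paper phrases things via the Hilbert polynomial of $\X$ and cites \cite{GrossWallach} for its leading coefficient, whereas you compute that coefficient directly from the Weyl dimension formula; and you take extra care to justify full-dimensionality of $\Gamma_G$ (via \cref{lem:integralGT} and \cref{c:PolytopeMutation}), a point the paper's proof leaves implicit.
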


\begin{proof} 
The Hilbert polynomial $h_\X(r)$ of the Grassmannian $\X$
 in its Pl\"ucker embedding satisfies
$h_\X(r) = \dim V_{r \omega_{n-k}}$ for $r >> 0$.
And moreover the leading coefficient of $h_\X(r)$ is 
$\prod_{1 \leq i \leq k} \frac{(k-i)!}{(n-i)!}$ \cite{GrossWallach}.
But \cref{p:latticeQr} implies that 
$\dim V_{r \omega_{n-k}}$ equals the number of lattice points in the $r$-th dilation
of $\Gamma_G$, which implies that the Ehrhart polynomial of $\Gamma_G$
	(the polynomial whose value at $r$ is the number of lattice points
	in the dilated polytope $r \Gamma_G$)
equals $h_{\X}(r)$.  Since the leading coefficient of the Ehrhart polynomial equals the volume
of the corresponding polytope,  the corollary follows.
\end{proof}

\begin{cor}\label{c:volumecompare} For arbitrary $G$, the superpotential polytope $\Gamma_G$ and the Newton-Okounkov  body $\Delta_G$ have the same volume.  
\end{cor}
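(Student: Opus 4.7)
The plan is to identify both $\Vol(\Gamma_G)$ and $\Vol(\Delta_G)$ with the same invariant of $\X$, namely the leading coefficient of $r^{-N} h_\X(r)$, where $h_\X(r) = \dim V_{r\omega_{n-k}}$ is the Hilbert polynomial of the Grassmannian in its Pl\"ucker embedding and $N = k(n-k)$. The first identification, $\Vol(\Gamma_G) = \prod_{i=1}^{k}(k-i)!/(n-i)!$, is precisely the content of \cref{c:GammaVol}, and (as noted in the proof there) this value is the leading coefficient of $r^{-N}h_\X(r)$ by \cite{GrossWallach}. Hence it suffices to identify $\Vol(\Delta_G)$ with the same leading coefficient.

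For this step I would invoke the fundamental volume formula for Newton-Okounkov bodies from \cite{KavehKhovanskii, LazarsfeldMustata}: whenever a $\Z^N$-valued valuation has one-dimensional leaves, the associated Newton-Okounkov body of a graded linear series $\{L_{rD}\}$ satisfies
\[
\Vol(\Delta_G) \;=\; \lim_{r\to\infty}\frac{|\val_G(L_{r})|}{r^N}.
\]
The one-dimensional leaves hypothesis is exactly what \cref{l:okounkovlemma} establishes for $\val_G$, and that same lemma identifies $|\val_G(L_r)|$ with $\dim L_r = \dim V_{r\omega_{n-k}} = h_\X(r)$. Consequently the right-hand side is the leading coefficient of $r^{-N}h_\X(r)$, which agrees with $\Vol(\Gamma_G)$ as computed in \cref{c:GammaVol}. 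Putting the two identifications together yields $\Vol(\Delta_G) = \Vol(\Gamma_G)$.

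The only real thing to verify is that the hypotheses of the general volume formula are in force. Since $D_{n-k}$ is ample, $\bigoplus_r L_r$ is the homogeneous coordinate ring of $\X$, which is finitely generated, and the one-dimensional leaves property is supplied by \cref{l:okounkovlemma}; this also ensures that $\Delta_G$ has finite volume. Since the argument makes no reference to whether $G$ corresponds to a plabic graph or to a more general $\mathcal{X}$-cluster seed, the conclusion holds for arbitrary $G$ of type $\pi_{k,n}$. The main conceptual point --- and the only non-routine ingredient --- is therefore the match between the asymptotic Hilbert function of the coordinate ring (which governs $\Vol(\Delta_G)$) and the exact lattice point count of $r\Gamma_G$ furnished by \cref{p:latticeQr} (which governs $\Vol(\Gamma_G)$).
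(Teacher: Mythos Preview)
Your proposal is correct and follows essentially the same approach as the paper: both arguments identify $\Vol(\Gamma_G)$ and $\Vol(\Delta_G)$ with the asymptotic growth rate of $\dim V_{r\omega_{n-k}}$, using \cref{p:latticeQr} (via \cref{c:GammaVol}) on the $\Gamma_G$ side and the general Newton--Okounkov volume formula together with the one-dimensional leaves property from \cref{l:okounkovlemma} on the $\Delta_G$ side. The only cosmetic difference is that the paper computes $\Vol(\Gamma_G)$ directly as a limit from \cref{p:latticeQr} rather than routing through the explicit value in \cref{c:GammaVol}.
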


\begin{proof}
Using \cref{p:latticeQr}, it follows that the volume of $\Q_G$ equals
\[\Vol(\Q_G)=\lim_{r\to\infty}\frac{\dim V_{r{\omega_{n-k}}}}{r^{\dim(\X)}}.
\]
Meanwhile, it is a fundamental property of  Newton-Okounkov  bodies 
	(associated to valuations with one-dimensional leaves, 
see \cref{d:1dimleaves} and 
	\cref{l:okounkovlemma})
	that their volume encodes the asymptotic dimension of the space of sections $H^0(\X,\mathcal O(rD))$ as $r\to \infty$. Explicitly we have by \cite[Proposition~2.1]{LazarsfeldMustata} that 
\[
\Vol(\Delta_G)=\limsup_{r\to\infty}\frac{\dim H^0(\X,\mathcal O(rD))}{r^{\dim(\X)}}.
\]
Since $ H^0(\X,\mathcal O(rD))$ is isomorphic to the representation $V_{r\omega_{n-k}}$, the result 
follows.
\end{proof}

\begin{cor}\label{c:nchoosek}
Suppose $G$ is a reduced plabic graph of type $\pi_{k,n}$. The $n\choose k$ lattice points in $\Gamma_G$ are precisely the valuations $\val_G(P_{\lambda})$ of Pl\"ucker coordinates.
\end{cor}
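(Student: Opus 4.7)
The plan is to propagate the statement from the rectangles plabic graph $G_{k,n}^{\rect}$, where it is already established by \cref{prop:convQ}, to an arbitrary reduced plabic graph $G$ of type $\pi_{k,n}$. By \cref{rem:moves}, any two reduced plabic graphs of type $\pi_{k,n}$ are related by a finite sequence of moves (M1)--(M3), so it suffices to verify that the conclusion is preserved under a single such move.

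The moves (M2) and (M3) leave $\mathcal{P}_G$ unchanged and, by \cref{thm1:tropcluster}, affect neither the valuations $\val_G(P_\lambda)$ nor the polytope $\Gamma_G$, so nothing is required in this case. For a square move (M1) relating $G$ to $G'$, the key point is that both sides of the desired equality transform by the same piecewise-linear map. On the $\mathcal{X}$-side, \cref{thm1:tropcluster} gives $\val_{G'}(P_\lambda) = \Psi_{G,G'}(\val_G(P_\lambda))$ for every Pl\"ucker coordinate. On the $\mathcal{A}$-side, \cref{c:PolytopeMutation} shows that $\Psi_{G,G'}$ restricts to a bijection $\Gamma_G \to \Gamma_{G'}$, and by \cref{r:integral} this restricts further to a bijection $\Lattice(\Gamma_G) \to \Lattice(\Gamma_{G'})$ on lattice points.

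Combining these two compatible transformation laws, if $\Lattice(\Gamma_G) = \{\val_G(P_\lambda) : \lambda \in \mathcal{P}_{k,n}\}$, then applying $\Psi_{G,G'}$ yields $\Lattice(\Gamma_{G'}) = \{\val_{G'}(P_\lambda) : \lambda \in \mathcal{P}_{k,n}\}$. The valuations remain pairwise distinct because $\Psi_{G,G'}$ is a bijection, and by \cref{p:latticeQr} the total number of lattice points of $\Gamma_{G'}$ is $\dim V_{\omega_{n-k}} = \binom{n}{k}$, so this set of valuations accounts for all lattice points. Iterating along any sequence of moves from $G_{k,n}^{\rect}$ to the given $G$ then gives the result.

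There is no real obstacle here: all the substantive work has already been done in \cref{thm1:tropcluster} (the behaviour of $\val_G(P_\lambda)$ under a square move, analyzed via strongly minimal flows) and in \cref{c:PolytopeMutation} (the matching behaviour of $\Gamma_G$ under tropicalized $\mathcal{A}$-cluster mutation). The corollary only needs to harvest these two compatible transformation laws, seeded at the rectangles cluster where both sides were previously identified with a Gelfand-Tsetlin polytope via a unimodular transformation.
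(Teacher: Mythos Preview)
Your proof is correct and follows essentially the same approach as the paper: seed at the rectangles cluster via \cref{prop:convQ}, then propagate through moves using \cref{thm1:tropcluster} for the valuations and the tropicalized $\mathcal{A}$-cluster mutation on lattice points of $\Gamma_G$. The only cosmetic difference is that the paper cites \cref{c:intmoves} directly for the lattice-point bijection, whereas you unpack it into \cref{c:PolytopeMutation} and \cref{r:integral}, and you add the explicit count via \cref{p:latticeQr}; these are the same ingredients.
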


\begin{proof}
	If $G$ is the rectangles plabic graph this is the contents of \cref{prop:convQ}. If we mutate the plabic graph $G$ to another plabic graph $G'$ by a square move, then the  tropicalized $\mathcal{A}$-cluster mutation transforms $\val_G(P_{\lambda})$ to $\val_{G'}(P_{\lambda})$ by \cref{thm1:tropcluster}. On the other hand the tropicalized $\mathcal{A}$-cluster mutation gives a bijection between the lattice points of $\Q_G$ and $\Q_{G'}$ by %
 \cref{c:intmoves}.
\end{proof}

\begin{rem}
Again when $G$ is a reduced plabic graph of type $\pi_{k,n}$, one may use results of \cite{PSW} to 
prove
that the polytope $\conv_G$ has $n \choose k$ lattice points
$\{\val_G(P_J) \ \vert \ J\in {[n] \choose n-k} \}$; 
moreover, each of those 
lattice points is a vertex.
To see this, recall that 
in \cite{PSW}, the authors studied the \emph{matching polytope} 
associated to a reduced
plabic graph $G$, which 
is defined by taking the convex hull of \emph{all} exponent vectors in the 
flow polynomials $P_J^G$ from \eqref{eq:Plucker},
where $J$ runs over elements in ${[n] \choose n-k}$.  
It was shown there that 
every such exponent vector gives rise to a distinct vertex of the matching polytope.
Since $\conv_G$ is defined as the convex hull of a subset of the exponent vectors
used to define the matching polytope, it follows that the elements of 
$\{\val_G(P_J) \ \vert \ J\in {[n] \choose n-k} \}$ are vertices of 
$\conv_G$, and are all distinct.  
\end{rem}

\begin{theorem}\label{t:intcase} 
Suppose $G$ is a reduced plabic graph of type $\pi_{k,n}$ for which $\Gamma_G$ is a lattice polytope. Then the Newton-Okounkov  body 
$\NO_{G}$ is equal to  
 $\Q_{G}$, %
 and these polytopes furthermore coincide with $\conv_G$.
\end{theorem}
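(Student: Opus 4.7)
The plan is to combine three ingredients that have already been prepared in the preceding sections: the identification of the lattice points of $\Q_G$ with Pl\"ucker valuations (Corollary \ref{c:nchoosek}), the one-dimensional-leaves property of $\val_G$ (Lemma \ref{l:okounkovlemma}), and the equality of volumes (Corollary \ref{c:volumecompare}). First I would observe that the set $\{\val_G(P_\lambda/P_\Max)\mid \lambda\in\mathcal P_{k,n}\}$ consists of exactly $\binom{n}{k}$ distinct lattice points (distinctness follows from Corollary \ref{c:nchoosek}, since these coincide with the full set of lattice points of $\Q_G$). Since the Pl\"ucker coordinates span $L_1=L_{D_{n-k}}$, which has dimension $\binom{n}{k}$, and $\val_G$ has one-dimensional leaves, Lemma \ref{l:okounkovlemma} forces
\[
\val_G(L_1)=\{\val_G(P_\lambda/P_\Max)\mid \lambda\in\mathcal P_{k,n}\}.
\]
Taking convex hulls gives $\conv_G=\conv\{\val_G(P_\lambda/P_\Max)\}$.

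Under the hypothesis that $\Q_G$ is a lattice polytope, its vertices are among its lattice points, so by Corollary \ref{c:nchoosek}, every vertex of $\Q_G$ is of the form $\val_G(P_\lambda/P_\Max)$. Therefore
\[
\Q_G=\conv(\text{vertices of }\Q_G)\subseteq \conv\{\val_G(P_\lambda/P_\Max)\}=\conv_G.
\]
For the reverse inclusion, the lattice points $\val_G(P_\lambda/P_\Max)$ already lie in $\Q_G$ by Corollary \ref{c:nchoosek}, so $\conv_G\subseteq \Q_G$. Thus $\conv_G=\Q_G$.

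It remains to identify this common polytope with $\NO_G$. By the very definition of the Newton-Okounkov body (\cref{def:NObody}), we have $\conv_G\subseteq \NO_G$, and hence $\Q_G\subseteq \NO_G$. Both $\Q_G$ and $\NO_G$ are closed convex subsets of $\R^{\mathcal P_G}$, and by Corollary \ref{c:volumecompare} they have the same Euclidean volume. Any closed convex set strictly containing another closed convex set of the same volume would have to include an open ball disjoint from the smaller set, which is impossible when the volumes coincide. This forces $\NO_G=\Q_G=\conv_G$, completing the argument.

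The only part requiring any care is the short convex-geometry step at the end, but this is standard; the substantive content of the theorem is concentrated in the already-established Corollary \ref{c:nchoosek} (which depends on the mutation analysis of \cref{s:ConvMutation}) and Corollary \ref{c:volumecompare} (which depends on the lattice-point count of Proposition \ref{p:latticeQr} via the rectangles cluster and the Gelfand-Tsetlin identification). The present argument avoids any appeal to the theta basis of \cite{GHKK} because the integrality hypothesis on $\Q_G$ allows us to pin down $\conv_G$ exactly using Pl\"ucker coordinates alone.
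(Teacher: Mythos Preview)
Your proof is correct and follows essentially the same approach as the paper's: both obtain $\Q_G=\conv_G$ from \cref{c:nchoosek} (you split this into two containments, while the paper states it in one line), then use $\conv_G\subseteq\Delta_G$ and the volume equality of \cref{c:volumecompare} to conclude. Your explicit use of \cref{l:okounkovlemma} to show $\val_G(L_1)=\{\val_G(P_\lambda/P_{\Max})\}$ makes precise a step the paper leaves implicit, but there is no substantive difference.
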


\begin{proof} 
If $\Gamma_G$ is a lattice polytope, then it is the convex hull of its lattice points. By \cref{c:nchoosek} this implies $\Gamma_G=\Conv_G$. On the other hand we have $\Delta_G \supseteq \Conv_G$, by definition. So we get $\Delta_G\supseteq\Gamma_G$. But by \cref{c:volumecompare} we know that $\Gamma_G$ and $\Delta_G$ both have the same volume, and given any inclusion $A\supseteq B$ of convex bodies where $A$ and $B$ have the same volume it follows that $A=B$.
\end{proof}

\subsection{The theta function basis}\label{s:theta}

Recall that cluster $\mathcal{A}$- and $\mathcal{X}$-varieties are constructed by gluing
together ``seed tori" via birational maps known as cluster transformations; 
cluster varieties
were introduced by Fock and Goncharov in \cite{FG} and are a more geometric point of view
on the cluster algebras of Fomin and Zelevinsky \cite{ca1}.  
The cluster $\mathcal{A}$-variety
is the geometric counterpart of a cluster algebra, 
while the cluster $\mathcal{X}$-variety
corresponds to the $y$-seeds of Fomin and Zelevinsky \cite[Definition 2.9]{ca4}.  
In this section 
we will assume that the reader has some familiarity with \cite{GHK} and \cite{GHKK};
 in particular we will use the notation for cluster varieties from  \cite[Section 2]{GHK}.

Note that the network charts for $\mathbbX^{\circ}$ in \cref{sec:poschart}
and their further $\mathcal X$-mutations 
give $\mathbbX^{\circ}$ roughly the structure of a 
cluster $\mathcal{X}$-variety,\footnote{Technically a cluster $\mathcal{X}$-variety is defined to be the 
union of the cluster charts, which $\mathbbX^{\circ}$ agrees with up to codimension $2$; since we 
are concerned only with coordinate rings, this difference is inconsequential.} see Section~\ref{s:twist}.
Similarly, the cluster charts for 
$\checkX^{\circ}$ in 
\cref{sec:cluster}  give
$\checkX^{\circ}$ the structure of a  cluster $\mathcal{A}$-variety.
See \cite{Postnikov}, \cite{Scott}, and \cite[Section 1.1]{MullerSpeyer} for more details.

\begin{theorem}
There is a theta function basis $\mathcal{B}(\mathbbX^\circ)$ for the coordinate ring
$\C[\widehat{\mathbbX}^\circ]$ of the affine cone over the cluster $\mathcal{X}$-variety
 $\mathbbX^{\circ}$,  
 which restricts to a theta function basis 
$\mathcal{B}(\mathbbX)$ 
for the homogeneous coordinate ring
$\C[\widehat{\mathbbX}]$ of the Grassmannian.
	And  $\mathcal{B} (\mathbbX)$
	restricts to a basis 
 $\mathcal{B}_r$ 
of 
the degree $r$ component of the homogeneous coordinate ring, for every $r\in\Z_{\ge 0}$.
\end{theorem}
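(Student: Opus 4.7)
The plan is to deduce this result by applying the main existence theorem of Gross--Hacking--Keel--Kontsevich \cite{GHKK} to our setting. In the GHKK framework a cluster variety admits a theta function basis for its upper cluster algebra once the ``full Fock--Goncharov conjecture'' holds; under Langlands duality the theta functions for the $\mathcal{X}$-cluster variety $\mathbbX^{\circ}$ are indexed by integer tropical points of the Langlands dual $\mathcal{A}$-cluster variety $\checkX^{\circ}$, i.e.\ by the set $\Zones$ of \cref{d:Rzones}. The crucial combinatorial input is a compatible system of bounded ``positive polytopes" in $\RZones$ that control finite-dimensional subspaces of functions, and the superpotential polytopes $\Q_G(r_1,\dots,r_n)$ of \cref{def:generalQ} supply exactly this: by \cref{c:PolytopeMutation} they transform compatibly under tropicalized $\mathcal{A}$-cluster mutation, and by \cref{p:latticeQr} the lattice point count $\#\Lattice(\Q_G^r)=\dim V_{r\omega_{n-k}}$ is finite in every degree and independent of $G$. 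Inserting this into GHKK yields a theta function basis $\mathcal{B}(\mathbbX^{\circ})=\{\vartheta_{[x]}\ |\ [x]\in\Zones\}$ of $\C[\widehat{\mathbbX}^{\circ}]$ such that, in any $\mathcal{X}$-seed $G$, each $\vartheta_{[x]}$ is a Laurent polynomial pointed at $\pi_G([x])$ in the sense of \cref{th:pointed}.

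The second step is to cut down from $\widehat{\mathbbX}^{\circ}$ to $\widehat{\mathbbX}$. A regular function on $\widehat{\mathbbX}^{\circ}$ extends to a section of $\mathcal{O}(rD_{n-k})$ on $\widehat{\mathbbX}$ iff its order of vanishing along each boundary divisor $D_i$ (for $i\neq n-k$) is non-negative and its order of pole along $D_{n-k}$ is at most $r$. For a theta function $\vartheta_{[x]}$ the order of vanishing along $D_i$ is computed tropically, and the key mirror-symmetric identification is that the $i$-th summand $W_i$ of the superpotential (see \cref{def:superpotential}) controls precisely this order: via the twist isomorphism of \cref{thm:twist} the ratios $p_{\mu_i^{\square}}/p_{\mu_i}$ on $\opencheckX$ pull back (up to a multiple of a frozen variable) to data measuring vanishing on $D_i$, so $\val_{D_i}(\vartheta_{[x]})=\Trop_G(W_i)(\pi_G([x]))$. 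Hence $\vartheta_{[x]}$ lies in $\C[\widehat{\mathbbX}]$ iff $[x]\in\bigcap_{i}\PosSet_{(0)}(W_i)$, and the subset of theta functions satisfying this condition forms $\mathcal{B}(\mathbbX)$.

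To conclude with the grading, observe that the $\R_{>0}$-action on $\widehat{\mathbbX}$ rescaling Pl\"ucker coordinates induces the grading $\C[\widehat{\mathbbX}]=\bigoplus_{r\geq 0} L_r$ with $L_r=H^0(\mathbbX,\mathcal{O}(rD_{n-k}))$, and on theta functions this grading reads off as the order of pole along $D_{n-k}$. Applying the criterion of the previous paragraph with $r_{n-k}=r$ and $r_i=0$ otherwise, one sees that $\vartheta_{[x]}\in L_r$ iff $\pi_G([x])\in\Lattice(\Q_G^r)$. By \cref{p:latticeQr} this set of lattice points has cardinality $\dim L_r$, and because theta functions are pointed (\cref{th:pointed}), Laurent polynomials with distinct minimal exponents are linearly independent on a fixed cluster torus. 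Thus $\{\vartheta_{[x]}\ |\ \pi_G([x])\in\Lattice(\Q_G^r)\}$ is a linearly independent set of the right cardinality and forms the basis $\mathcal{B}_r$ of $L_r$.

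The main obstacle is the first step: carefully verifying that the superpotential polytope data together with \cref{c:PolytopeMutation} suffices to invoke GHKK's existence theorem — in particular, checking the full Fock--Goncharov hypothesis and the equality of the ``middle" and upper cluster algebras for the Grassmannian $\mathcal{X}$-variety, keeping straight the Langlands duality between the $\mathcal{X}$-structure on $\mathbbX^{\circ}$ and the $\mathcal{A}$-structure on $\checkX^{\circ}$ that indexes the theta functions.
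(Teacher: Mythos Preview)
Your proposal has a genuine gap and is also far more elaborate than necessary.

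The paper's argument is short and purely by citation: the full Fock--Goncharov conjecture for $\mathbbX^{\circ}$ is verified via the \emph{maximal green sequence} constructed by Marsh and Scott \cite[Section 11]{MarshScott}, which by \cite{GHKK} immediately gives the theta basis $\mathcal{B}(\mathbbX^{\circ})$ for $\C[\widehat{\mathbbX}^{\circ}]$. The passage to the partial compactification $\widehat{\mathbbX}$ is handled by \cite[Section 9]{GHKK} on compactifications coming from frozen variables, and the inclusion $\mathcal{B}(\mathbbX)\subset\mathcal{B}(\mathbbX^{\circ})$ follows from \cite[Proposition 9.4 and Corollary 9.17]{GHKK}. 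Compatibility with the grading is immediate from compatibility of the theta basis with the one-dimensional scaling torus action. No superpotential polytopes, no twist map, no lattice-point counting is needed here.

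Your approach instead tries to feed the superpotential polytope data (\cref{c:PolytopeMutation}, \cref{p:latticeQr}) into GHKK as a substitute for the full Fock--Goncharov hypothesis, and then to identify orders of vanishing along the $D_i$ with $\Trop_G(W_i)$ via the twist. Both steps are problematic. First, the polytope compatibility under mutation does not by itself establish the full Fock--Goncharov conjecture or the equality of middle and upper cluster algebras; you correctly flag this as ``the main obstacle'' but do not resolve it, and the standard resolution is precisely the maximal green sequence you omit. Second, and more seriously, your claimed identity $\val_{D_i}(\vartheta_{[x]})=\Trop_G(W_i)(\pi_G([x]))$ is essentially the mirror-symmetric content of the paper's main theorem $\Delta_G=\Q_G$, which is proved \emph{later} and \emph{uses} the theta basis whose existence you are trying to establish. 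The twist map of \cref{thm:twist} relates $\mathcal{A}$- and $\mathcal{X}$-tori on the same Grassmannian; it does not by itself yield the vanishing-order formula you assert. So your second step is either circular or requires an independent argument you have not supplied.
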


\begin{rem}
We note that the degree $r$ component of the homogeneous coordinate ring above is naturally isomorphic to $L_r$ by the map which sends a degree $r$ polynomial $P$ in Pl\"ucker coordinates to $P/{P_\Max^r}\in L_r$. We will use this isomorphism to identify  $\C[\widehat\X]_r$ with $L_r$ when convenient.
 \end{rem}
\begin{proof}
Gross-Hacking-Keel-Kontsevich \cite[Theorem 0.3]{GHKK} showed that canonical bases of global regular 
``theta" functions exist for a formal version of cluster varieties, and in many cases
(when ``the full Fock-Goncharov conjecture holds"), these extend to bases for regular functions
on the actual cluster varieties.  They pointed out that the full Fock-Goncharov conjecture holds if 
there is a maximal green sequence for the cluster variety; in the case of $\openX$, a maximal green sequence
was found by Marsh and Scott, see \cite[Section 11]{MarshScott}.
Therefore we indeed have a theta function basis $\mathcal{B}(\mathbbX^\circ)$ for 
the coordinate ring $\C[\widehat{\mathbbX}^\circ]$ of the affine cone over the cluster $\mathcal{X}$-variety
$\openX$.

Note that there is also a theta function basis
$\mathcal{B}(\mathbbX)$ 
for the  coordinate ring 
$\C[\widehat{\mathbbX}]$ of the affine cone over the Grassmannian; see \cite[Section 9]{GHKK}
for a discussion of how \cite[Theorem 0.3]{GHKK}  extends to partial compactifications of cluster
varieties coming from frozen variables.  Moreover we claim that 
$\mathcal{B}(\mathbbX) \subset 
 \mathcal{B}(\mathbbX^\circ)$.  
This follows from \cite[Proposition 9.4 and Corollary 9.17]{GHKK}.

	Finally $\mathcal{B} (\mathbbX)$
restricts to a basis of $L_r$ because it is compatible with the 
one-dimensional 
torus action (which is overall scaling in the 
Pl\"ucker embedding). 
\end{proof}

We now prove \cref{th:pointed}, which says 
that for a cluster $\mathcal{X}$-variety, and an arbitrary choice of 
$\mathcal{X}$-chart, each
 theta basis element $\theta$ is  pointed with respect to the
 $\mathcal{X}$-chart.  In other words, $\theta$ can be written
 as a Laurent monomial multiplied by a polynomial with constant term 
$1$ (cf.\ \cref{def:minimal}) in the variables of the $\mathcal{X}$-chart.
\cref{th:pointed} 
follows from the machinery of \cite{GHKK}, and we are grateful to  
Man-Wai (Mandy) Cheung, Sean Keel, 
and Mark Gross for their useful explanations on this topic.

Note that \cref{th:pointed} confirms a conjecture of Fock and Goncharov, 
see \cite[Conjecture 4.1, part 1]{FG}, and also \cite[page 41]{GoncharovShen2}.

\begin{theorem}\label{th:pointed}
Fix a cluster $\mathcal{X}$-variety and an arbitrary $\mathcal{X}$-chart.  Then
every element of the theta function basis can be written as a 
	pointed Laurent polynomial in the variables of the 
	$\mathcal{X}$-chart.
  Moreover the exponents of the leading terms are all distinct.
\end{theorem}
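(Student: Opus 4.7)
The plan is to invoke the broken-line/scattering-diagram construction of the theta basis from \cite{GHKK}. Fix a seed $\mathfrak{s}$ with cluster $\mathcal{X}$-coordinates $\{x_\mu\}_{\mu \in I}$, and let $\vartheta_q$ be the theta basis element indexed by the integer tropical point $q$ of the Fock--Goncharov dual (under the chamber identification attached to $\mathfrak{s}$, this $q$ lives in $\mathbb{Z}^I$). By the definition in \cite{GHKK}, $\vartheta_q$ is the finite sum $\sum_\gamma c(\gamma)\, z^{\operatorname{end}(\gamma)}$ over broken lines $\gamma$ with asymptotic direction $q$ ending at a generic basepoint in the $\mathfrak{s}$-chamber of the consistent scattering diagram. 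Finiteness (i.e.\ that $\vartheta_q$ is a Laurent polynomial rather than a formal series) is precisely the Fock--Goncharov conjecture for this variety, which holds because $\openX$ admits a maximal green sequence by Marsh--Scott.

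First I would isolate the unbent broken line as the distinguished contributor: this trivial $\gamma_0$ carries monomial $z^q$ and enters $\vartheta_q$ with coefficient $1$. This is the candidate strongly minimal term. I would then establish pointedness by showing that every other broken line $\gamma$ contributes a monomial $z^{v(\gamma)}$ with $v(\gamma) - q$ a nonzero nonnegative integer combination of the attached monomials of the walls crossed by $\gamma$. Concretely, each bend at a wall replaces the outgoing monomial by multiplying it with a monomial of the attached wall function; by positivity of the scattering diagram \cite[Theorem 1.13]{GHKK} these wall-function monomials all have exponent vectors lying in the strictly convex cone of ``positive'' $c$-vectors (or their $\mathcal{X}$-analogues, the $g$-vectors on the dual side) normal to the walls bounding the $\mathfrak{s}$-chamber. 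Under the identification of this cone with the positive orthant in $\mathbb{Z}^I$ given by the $\mathfrak{s}$-chamber, this forces $v(\gamma)_\mu \geq q_\mu$ for every $\mu$, with strict inequality in at least one coordinate whenever $\gamma$ has at least one bend. Hence $z^q$ is strongly minimal in $\vartheta_q$ with coefficient $1$, i.e.\ $\vartheta_q$ is pointed.

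Distinctness of leading exponents is then automatic: by construction $q \mapsto$ (leading exponent of $\vartheta_q$) is the identity on $\mathbb{Z}^I$ under the $\mathfrak{s}$-chamber identification, hence injective on the indexing set of the theta basis. Finally, the statement must hold for every $\mathcal{X}$-chart and not only the initial one; this follows because the GHKK construction is intrinsic to the cluster $\mathcal{X}$-variety and can be carried out with respect to any seed by mutating the scattering diagram and relocating the basepoint into the corresponding chamber (equivalently, \cite[Theorem 4.10]{GHKK} implies the theta basis transforms under mutation by the prescribed rational maps while remaining indexed by the same tropical set, and the unbent-broken-line argument above applies verbatim in the new chart).

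The main obstacle will be ensuring that the sign conventions on both sides line up: the GHKK broken-line construction is most naturally phrased with $\vartheta_q$ having a \emph{highest}-order monomial $z^q$ (relative to some dominance order coming from the scattering diagram), whereas \cref{def:minimal} in this paper is phrased in terms of a coordinatewise \emph{minimum}. Verifying that the chamber identification sends the GHKK cone of bend-contributions into the nonnegative orthant of $\mathbb{Z}^{\mathcal{P}_G}$ (as opposed to the nonpositive orthant) is the key conventional check; once fixed, the positivity of the scattering diagram delivers the required coordinatewise inequality $v(\gamma) \geq q$ and the theorem follows.
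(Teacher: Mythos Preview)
Your approach is essentially the paper's: both argue via the broken-line description of $\theta_n$, identify the unbent broken line as contributing the coefficient-$1$ monomial $\mathbf{x}^n$, and then show every bend adds to the exponent a vector in the positive orthant $N^+$, so that $\mathbf{x}^n$ is strongly minimal. The distinctness of leading exponents follows in both cases because the leading exponent of $\theta_n$ is $n$ itself.

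The one substantive difference is in how the key claim ``wall-function exponents lie in $N^+$'' is justified. You invoke \cite[Theorem~1.13]{GHKK} and speak of $c$-vectors/$g$-vectors, but Theorem~1.13 is about positivity of \emph{coefficients}, not about the direction of exponent vectors; the latter is what you need. You correctly flag this as the ``main obstacle'' but do not resolve it. The paper's proof handles precisely this point by explaining how $\mathfrak{D}^{\mathcal X}$ is obtained from $\mathfrak{D}^{\mathcal A_{\prin}}$: by \cite[Construction~2.11]{GHKK} the walls of $\mathfrak{D}^{\mathcal A_{\prin}}$ are $(n,0)^\perp$ for $n\in N^+$, and by \cite[Construction~7.11]{GHKK} the attached functions are series in $\mathbf{a}^{p^*(n)}\mathbf{x}^n$ with $n\in N^+$; then (per \cite[footnote~2, p.~72]{GHKK}) one intersects with $w^{-1}(0)$ and drops the $\mathbf{a}$-variables, leaving wall functions that are series in $\mathbf{x}^n$ for $n\in N^+$. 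That reduction to $\mathcal A_{\prin}$ is the missing ingredient in your sketch; once you supply it, your argument and the paper's coincide.
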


\begin{proof}
Elements of the theta function basis for $\mathcal{X}$
are constructed using a consistent \emph{scattering diagram}
$\mathfrak{D^{\mathcal{X}}}$ associated to the seed.  In keeping with \cite{GHKK} we denote by $N$ the character group of the $\mathcal X$-cluster torus of our chosen $\mathcal X$-cluster seed, which we embed as a lattice in $N_\R=N\otimes \R$. The $n\in N$ are interpreted as exponents of monomial functions on the $\mathcal X$-cluster torus. The $\mathcal X$-cluster variables define a basis of $N$ and therefore $N_\R$. Its $\Z_{\ge 0}$ span, denoted $N^+$, is the set of lattice points in the associated positive orthant.  
 
The theta functions $\theta_n$ 
are indexed by lattice points
$n \in N$, see \cite[Definition 7.12]{GHKK},
and 
$$\theta_n = \sum_{\gamma} \Mono(\gamma),$$ where the sum is over all broken lines with initial 
exponent $n$.  
There is a monomial attached to each domain of linearity of a
broken line, which is inductively computed based on which walls of the scattering
diagram have been crossed; 
 $\Mono(\gamma)$ is the monomial attached to the last domain of linearity.
From the construction it is clear that if every function attached to each 
wall of $\mathfrak{D^{\mathcal{X}}}$ is positive, i.e.\ if it is a power series in 
$\mathbf{x}^n$ for $n$ in the positive orthant $N^+$, then the element
$\theta_n$ will be pointed with leading term $\mathbf{x}^n$, 
and the exponent vectors of leading terms of the $\theta_n$'s will in particular all be distinct.

In \cite{GHKK}, the authors explain how to construct the scattering diagram for 
$\mathcal{X}$ from that for $\mathcal{A}_{\prin}$, which maps to  
 $\mathcal{X}$.  By 
\cite[Construction 2.11]{GHKK}, the walls of 
$\mathfrak{D}^{\mathcal{A}_{\prin}}$ 
have the form 
$(n,0)^{\perp}$ for $n\in N^+$.
And by \cite[Construction 7.11]{GHKK}, 
the functions on walls of 
$\mathfrak{D}^{\mathcal{A}_{\prin}}$ 
are series in $\mathbf{z}^{(p^*(n),n)} = \mathbf{a}^{p^*(n)} \mathbf{x}^n$ for $n\in N^+$.
As noted in \cite[footnote 2, page 72]{GHKK}, one can then obtain the scattering diagram
$\mathfrak{D}^{\mathcal{X}}$ from $\mathfrak{D}^{\mathcal{A}_{\prin}}$
by intersecting each wall with $w^{-1}(0)$, where 
$w$ is the weight map from tropical points of $\mathcal{A}_{\prin}$ to $\Hom(N,\Z)$ \cite[page 71]{GHKK},
and replacing the series 
in $\mathbf{z}^{(p^*(n),n)} = \mathbf{a}^{p^*(n)} \mathbf{x}^n$ by
the corresponding series in $\mathbf{x}^n$. 
Therefore each function attached to a wall
 of $\mathfrak{D^{\mathcal{X}}}$ is a power series in 
$\mathbf{x}^n$ for $n\in N^+$.
\end{proof}

\begin{lemma}\label{lem:base}
When $G=G^{\rect}_{k,n}$, 
	for each lattice point 
$d\in \Q_G^r$, there is an element $\theta_d\in \mathcal{B}_r$ such that
 $\val_G(\theta_d) = d$.
\end{lemma}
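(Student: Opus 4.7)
The plan is to assemble three facts proved earlier in the paper: (i) by Theorem~\ref{th:pointed}, every element $\theta \in \mathcal{B}_r$ is a pointed Laurent polynomial in the $\mathcal{X}$-cluster coordinates of $G$, and the leading exponents are pairwise distinct; (ii) by Theorem~\ref{t:intcase} (which applies since $\Q_{G^{\rect}_{k,n}}$ is a lattice polytope by Lemma~\ref{lem:integralGT}), the Newton-Okounkov body satisfies $\Delta_G = \Q_G$ in the rectangles case; and (iii) by Proposition~\ref{p:latticeQr}, the cardinality of $\Lattice(\Q_G^r)$ equals $\dim V_{r\omega_{n-k}} = \dim L_r = |\mathcal{B}_r|$. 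The argument is then an injection-plus-counting argument.

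First, Theorem~\ref{th:pointed} tells us that each $\theta \in \mathcal{B}_r$, when written in the $\mathcal{X}$-cluster chart $G$, has a unique strongly minimal term (with coefficient $1$); by Definition~\ref{de:val}, its exponent is exactly $\val_G(\theta)$. The ``distinct leading exponents'' part of Theorem~\ref{th:pointed} immediately gives injectivity of the map $\val_G : \mathcal{B}_r \to \Z^{\mathcal{P}_G}$.

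Next, I would show that this map lands in $\Lattice(\Q_G^r)$. Since $\theta \in L_r$, the definition of the Newton-Okounkov body yields $\tfrac{1}{r}\val_G(\theta) \in \Delta_G$, hence $\val_G(\theta) \in r\Delta_G$. Applying Theorem~\ref{t:intcase} together with the identity $\Q_G^r = r\Q_G$ (Remark~\ref{rem:dilation}), we obtain $\val_G(\theta) \in \Q_G^r$, and since $\val_G(\theta)$ is integral, $\val_G(\theta) \in \Lattice(\Q_G^r)$.

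Finally, counting finishes the argument: the injection $\val_G : \mathcal{B}_r \hookrightarrow \Lattice(\Q_G^r)$ is between two finite sets of the same cardinality $\dim L_r$ (using Proposition~\ref{p:latticeQr} on the target), so it is a bijection. Hence for every lattice point $d \in \Q_G^r$ there is a (unique) $\theta_d \in \mathcal{B}_r$ with $\val_G(\theta_d) = d$. There is no serious obstacle here: all the hard work has already been done in Theorems~\ref{th:pointed} and~\ref{t:intcase} and in Proposition~\ref{p:latticeQr}; the lemma is a clean assembly of these results in the rectangles case.
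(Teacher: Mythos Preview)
Your proof is correct and follows essentially the same approach as the paper's own proof: both use Theorem~\ref{t:intcase} (via Lemma~\ref{lem:integralGT}) to get $\Delta_G=\Q_G$ and hence $\val_G(L_r)\subset r\Q_G=\Q_G^r$, invoke Theorem~\ref{th:pointed} for distinctness of the valuations on $\mathcal{B}_r$, and then finish with the cardinality match from Proposition~\ref{p:latticeQr} (together with Lemma~\ref{l:okounkovlemma}). The only cosmetic difference is that the paper first establishes $\val_G(L_r)=\Lattice(\Q_G^r)$ and then notes that $\val_G(\mathcal{B}_r)=\val_G(L_r)$, whereas you directly produce the injection $\val_G:\mathcal{B}_r\hookrightarrow\Lattice(\Q_G^r)$ and count; these are the same argument in slightly different order.
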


\begin{proof}
By \cref{lem:intclosed}, the polytope $\Q_G$ has the integer decomposition property in the rectangles cluster case. Furthermore by \cref{t:intcase}, %
we have $\Delta_G=\Conv_G=\Q_G$, and hence  $\val_G(L_r)\subset r\Q_G$.   Therefore the lattice points in $\Q_G^r=r\Q_G$ are precisely the elements in $\val_G(L_r)$, since 
by \cref{p:latticeQr} and \cref{l:okounkovlemma}, both sets have the same cardinality.
Since the elements of $\mathcal{B}_r$ are a basis of $L_r$, and have distinct valuations by \ref{th:pointed},
it follows that for each lattice point $d\in \Q_G^r$, there is an 
element $\theta_d$ of $\mathcal{B}_r$, which when expressed in terms of 
the variables $\Network(G)$ of the 
$\mathcal{X}$-seed $G$, is pointed with 
leading term $x^d$.
\end{proof}

\begin{lemma}\label{lem:commute}
If $G$ and $G'$ index two $\mathcal{X}$-seeds which are connected by a single mutation,
then we have a commutative diagram 
	
	\begin{equation} %
	\begin{tikzcd}
		&\mathcal{B}_r	\arrow{dl}[swap]{\val_G} \arrow{dr}{\val_{G'}} & \\
		\val_G(L_r) \arrow{rr}{\Psi_{G,G'}} &&
		\val_{G'}(L_r)
	\end{tikzcd}
\end{equation} 
where $\Psi_{G,G'}$ is a bijection, 
	the \emph{tropicalized $\mathcal{A}$-cluster mutation} from \cref{l:LTrop}. 
\end{lemma}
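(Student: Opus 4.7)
The plan is to identify each theta basis element $\theta \in \mathcal{B}_r$ with a canonical tropical integer point $p(\theta) \in \Trop(\checkX)$ on the Fock--Goncharov dual cluster $\mathcal{A}$-variety $\checkX^\circ$, in such a way that the leading exponent of $\theta$ in the $\mathcal{X}$-chart for $G$ equals the coordinate vector $\pi_G(p(\theta))$. Once this identification is in place, commutativity of the diagram is immediate from the commutative triangle \eqref{e:triangle} of \cref{l:piGmutation} applied to $\checkX^\circ$.

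More concretely, the proof of \cref{th:pointed} shows that in each seed $G$, the function $\theta$ has a leading monomial $\mathbf{x}^n$ for a uniquely determined $n \in N_G$, where $N_G$ denotes the character lattice of the $\mathcal{X}$-torus for $G$. This lattice is naturally identified with $\Z^{\mathcal{P}_G}$ via the basis of $\mathcal{X}$-cluster variables, and so $n = \val_G(\theta)$. The content of the Fock--Goncharov duality proved by Gross--Hacking--Keel--Kontsevich (see \cite[Theorem~0.3 and \S 7]{GHKK}), applied to the $\mathcal{X}$-cluster structure on $\mathbbX^\circ$ together with the dual $\mathcal{A}$-cluster structure on $\checkX^\circ$, is that these seed-dependent labels $n = n_G$ reassemble, as $G$ varies, into a single underlying tropical integer point $p(\theta) \in \Trop(\checkX)(\Z)$ satisfying $\pi_G(p(\theta)) = \val_G(\theta)$ for every seed $G$.

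Granting this identification, for each $\theta \in \mathcal{B}_r$ we immediately compute
\[
\val_{G'}(\theta) \;=\; \pi_{G'}(p(\theta)) \;=\; \Psi_{G,G'}\bigl(\pi_G(p(\theta))\bigr) \;=\; \Psi_{G,G'}\bigl(\val_G(\theta)\bigr),
\]
which yields the required commutativity. For the bijectivity of the bottom arrow: by the distinctness-of-leading-exponents part of \cref{th:pointed}, the map $\val_G\colon \mathcal{B}_r \to \Z^{\mathcal{P}_G}$ is injective, so $|\val_G(\mathcal{B}_r)| = \dim L_r$; combined with $|\val_G(L_r)| = \dim L_r$ from \cref{l:okounkovlemma}, this forces $\val_G(L_r) = \val_G(\mathcal{B}_r)$, and likewise at $G'$. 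The commutativity then implies that $\Psi_{G,G'}$ carries $\val_G(L_r)$ onto $\val_{G'}(L_r)$, and this restriction is bijective since $\Psi_{G,G'}$ is invertible on $\Z^{\mathcal{P}_G}$ with piecewise-linear inverse $\Psi_{G',G}$ (see \cref{r:integral}).

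The principal obstacle is establishing the identification $\val_G(\theta) = \pi_G(p(\theta))$: one must verify that as $G$ mutates, the labels $n \in N_G$ attached to a fixed theta function transform via the tropicalized \emph{$\mathcal{A}$-cluster} mutation $\Psi_{G,G'}$ on the dual variety, rather than via the naive tropicalization of the $\mathcal{X}$-mutation on $N_G$ itself. This cross-compatibility between the $\mathcal{X}$- and $\mathcal{A}$-sides is the essence of the Fock--Goncharov cluster duality statement (\cite[Conjecture~4.1]{FG}), and must be carefully extracted from the scattering diagram / broken line construction of $\theta_n$ recalled in the proof of \cref{th:pointed}.
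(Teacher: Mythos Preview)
Your proposal is correct and takes essentially the same approach as the paper: both argue that the theta functions on the $\mathcal{X}$-variety are parametrized by tropical integer points of the dual $\mathcal{A}$-variety, so that the leading exponent of a fixed $\theta$ in chart $G$ is the coordinate $\pi_G$ of a single underlying tropical point, and commutativity then follows from the triangle \eqref{e:triangle}. The paper's proof is terser, simply citing \cite[(0.2)]{GHKK}, \cite[Conjecture~1.11]{GHK}, and \cite[(12.4)--(12.5)]{FG1} for the parametrization and its mutation rule, and deducing bijectivity of $\Psi_{G,G'}$ directly from the fact that the two diagonal maps $\val_G,\val_{G'}$ are bijections onto their images; your more explicit argument via \cref{l:okounkovlemma} and \cref{r:integral} reaches the same conclusion.
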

\begin{proof}
Since $\mathcal{B}_r$ is a basis of $L_r$ and the elements have distinct leading terms,
the maps $\val_G$ and $\val_{G'}$ are bijections.  The fact that 
the diagram is commutative follows from the fact that 
the elements of $\mathcal{B}(\openX)$ are parameterized by 
the tropical points of the $\mathcal{A}$-variety (see 
\cite[(0.2)]{GHKK} and \cite[Conjecture 1.11]{GHK}
for this parameterization, as well as
 \cite[(12.4) and (12.5)]{FG1} for the mutation rule for tropical points
of the $\mathcal{A}$-variety).
Since the diagonal maps are bijections, $\Psi_{G,G'}$ is a bijection; see also Remark~\ref{r:integral}.
\end{proof}
Note that \cref{lem:commute} would not hold if we replaced $\mathcal{B}_r$ by 
e.g. the standard monomials basis of $L_r$.
Working with 
$\Psi_{G,G'}$ is a bit delicate, since the map is only  piecewise linear 
(see \cref{r:polmut}).

We now prove \cref{thm:main}, the second  main result of this paper.

\begin{theorem}\label{thm:main}
Let $G$ be any reduced plabic graph of type $\pi_{k,n}$, or more generally,
any $\mathcal{X}$-seed $G$ of type $\pi_{k,n}$.
Then the Newton-Okounkov  body $\Delta_G$ coincides with the superpotential polytope $\Q_G$. Moreover, the Newton-Okounkov  body is a rational polytope.
\end{theorem}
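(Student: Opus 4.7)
The plan is to bootstrap from the rectangles case (where $\Delta_G = \Q_G$ is already established in Theorem~\ref{t:intcase}) to an arbitrary $\mathcal{X}$-seed via mutation. The bridge is the theta function basis $\mathcal{B}_r$ of $L_r$: by Theorem~\ref{th:pointed}, the valuations $\val_G(\theta)$ for $\theta \in \mathcal{B}_r$ are all distinct in any $\mathcal{X}$-chart, so $|\val_G(\mathcal{B}_r)| = |\mathcal{B}_r| = \dim L_r$; combined with \cref{l:okounkovlemma} this gives $\val_G(\mathcal{B}_r) = \val_G(L_r)$. Thus understanding $\val_G(L_r)$ reduces to understanding what happens to theta valuations.

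First I would prove, by induction on the number of mutations away from $G_{k,n}^{\rect}$, that $\val_G(L_r)$ equals the set of lattice points $\Lattice(\Q_G^r)$ for every $\mathcal{X}$-seed $G$ of type $\pi_{k,n}$. The base case $G=G_{k,n}^{\rect}$ is precisely \cref{lem:base}. For the inductive step, suppose the claim holds for $G$ and $G'$ is obtained from $G$ by a single mutation at $\nu$. By \cref{lem:commute}, for every $\theta \in \mathcal{B}_r$ we have $\val_{G'}(\theta) = \Psi_{G,G'}(\val_G(\theta))$, so $\val_{G'}(\mathcal{B}_r) = \Psi_{G,G'}(\val_G(\mathcal{B}_r)) = \Psi_{G,G'}(\Lattice(\Q_G^r))$. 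But by \cref{c:PolytopeMutation} (and \cref{r:integral}), $\Psi_{G,G'}$ restricts to a bijection $\Lattice(\Q_G^r) \to \Lattice(\Q_{G'}^r)$, so $\val_{G'}(\mathcal{B}_r) = \Lattice(\Q_{G'}^r)$, and again by pointedness plus dimension count $\val_{G'}(L_r) = \val_{G'}(\mathcal{B}_r) = \Lattice(\Q_{G'}^r)$. Since all $\mathcal{X}$-seeds of type $\pi_{k,n}$ are connected by mutation, the claim holds for every $G$.

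From this, $\frac{1}{r}\val_G(L_r) \subseteq \Q_G^r/r = \Q_G$ for every $r$, so by the definition of the Newton–Okounkov body $\Delta_G \subseteq \Q_G$. To get equality I would invoke \cref{c:volumecompare}, which gives $\Vol(\Delta_G) = \Vol(\Q_G)$: any inclusion of convex bodies of equal (positive) volume is an equality, so $\Delta_G = \Q_G$. Rationality follows because the defining inequalities of $\Q_G$ come from tropicalising the Laurent expansion of the superpotential, whose monomials have integer exponents; boundedness of $\Q_G$ is inherited from the rectangles case via the continuous bijection $\Psi_{G_{k,n}^{\rect},G}$, so $\Q_G$ is a rational polytope, and hence so is $\Delta_G$.

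The main obstacle is conceptual rather than technical: the mutation map $\Psi_{G,G'}$ is only piecewise linear, so one cannot hope to transport the whole body $\Delta_G$ directly to $\Delta_{G'}$ by $\Psi_{G,G'}$, and one genuinely needs a basis of $L_r$ whose valuations track correctly under mutation at the level of \emph{individual} lattice points; this is exactly what the theta basis provides, and is the reason we invoke the deep machinery of \cite{GHKK} (via \cref{th:pointed} and \cref{lem:commute}) rather than, say, a standard monomial basis for which the analogue of \cref{lem:commute} would fail (compare \cref{r:Pluckerprods}). Once this is in hand, the volume argument of \cref{c:volumecompare} is what upgrades the inclusion $\Delta_G \subseteq \Q_G$ to an equality without any need to analyse non-lattice points directly.
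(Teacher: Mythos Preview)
Your proposal is correct and follows essentially the same approach as the paper: establish $\val_G(L_r)=\Lattice(\Q_G^r)$ for all seeds by starting from the rectangles seed (\cref{lem:base}) and propagating via the theta basis (\cref{lem:commute}) together with the mutation of lattice points of $\Q_G^r$ (\cref{c:intmoves}). The only cosmetic difference is the final step: the paper concludes directly via the toy example \cref{e:polytope}, writing $\Q_G=\overline{\operatorname{ConvexHull}\bigl(\bigcup_r \tfrac1r\Lattice(\Q_G^r)\bigr)}=\overline{\operatorname{ConvexHull}\bigl(\bigcup_r \tfrac1r\val_G(L_r)\bigr)}=\Delta_G$, whereas you use the inclusion-plus-volume argument of \cref{c:volumecompare}; both are valid.
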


\begin{proof}
When $G = G^{\rect}_{{k,n}}$, we have
from 
\cref{lem:base}
that 
$\val_G(L_r) = \Lattice(\Q_G^r)$.
By \cref{lem:commute}, 
$$\Psi_{G,G'}: 
\val_G(L_r ) \to 
\val_{G'}(L_r )$$ is a bijection, 
and by the proof of
\cref{c:intmoves},
\[\Psi_{G,G'}: \Lattice(\Q_G^r) \to \Lattice(\Q_{G'}^r)
\] is a bijection.
Therefore, 
 using the fact that all $\mathcal{X}$-seeds are connected by mutation,
it follows that 
$\val_G(L_r) = \Lattice(\Q_G^r)$ for any $\mathcal{X}$-seed $G$ of type %
$\pi_{k,n}$.  
Now since $\Q_G^r = r \Q_G$
(see 
\cref{rem:dilation}),
 we have 
\begin{equation}
\Q_G = 
\overline{\operatorname{ConvexHull}
\left(\bigcup_r\frac{1}{r} 
\Lattice(\Q_G^r) \right)} = 
\overline{\operatorname{ConvexHull}
\left(\bigcup_r \frac{1}{r} 
\val_G(L_r)\right)} = 
\Delta_G
\end{equation}
for any $G$ of type $\pi_{k,n}$, where the first equality is as in \cref{e:polytope}.
\end{proof}

In the  plabic graph case
we summarise our results as follows.

\begin{cor}\label{l:injection}
Let $G$ be a reduced plabic graph of type $\pi_{k,n}$.
Then $\Delta_G$ equals to $\Q_G$, and has precisely 
$n \choose k$ lattice points. These are the valuations of Pl\"ucker coordinates
$P_{\lambda}$ for $\lambda \in \mathcal{P}_{n,k}$, and  they
can be computed explicitly using the formula
\begin{equation*}
\val_G(P_{\lambda})_{\mu} = \maxdiag (\mu \setminus \lambda), 
\end{equation*}
where 
$\maxdiag (\mu \setminus \lambda)$ is given in \cref{def:maxdiag}. Here 
the $\mu$'s run through $\mathcal P_G$. \qed
\end{cor}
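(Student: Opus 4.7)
The plan is to assemble this corollary as a direct synthesis of the results already established in the paper. Essentially all the work has been done; the statement just packages the main theorems together in the plabic graph case.

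First, I would note that since $G$ is a reduced plabic graph of type $\pi_{k,n}$, it in particular determines an $\mathcal{X}$-seed of type $\pi_{k,n}$, so Theorem \ref{thm:main} applies and gives the equality $\Delta_G = \Q_G$. Second, to count lattice points, I would invoke Proposition \ref{p:latticeQr}, which asserts that the number of lattice points of $\Q_G^r$ equals $\dim V_{r\omega_{n-k}}$ for any seed; specializing to $r=1$ and using the well-known identity $\dim V_{\omega_{n-k}} = \binom{n}{k}$ yields that $\Q_G = \Q_G^1$ has exactly $\binom{n}{k}$ lattice points.

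Third, to identify these lattice points explicitly, I would cite Corollary \ref{c:nchoosek}, which states that in the plabic graph setting the $\binom{n}{k}$ lattice points of $\Q_G$ are precisely the valuations $\val_G(P_\lambda)$ as $\lambda$ ranges over $\mathcal{P}_{k,n}$. Finally, the explicit combinatorial formula $\val_G(P_\lambda)_\mu = \maxdiag(\mu \setminus \lambda)$ is exactly the content of Theorem \ref{t:ValuationsFormula}, applied to each lattice point.

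There is no real obstacle here, since the corollary is purely a restatement: the hard work already lies upstream in Theorem \ref{thm:main} (which relied on the theta basis machinery and the mutation argument via Corollary \ref{c:PolytopeMutation}), and in Theorem \ref{t:ValuationsFormula} (which was proved via the explicit tropical point $x_\lambda(t) \in \opencheckX(\mathbf{K}_{>0})$ constructed in Theorem \ref{t:matrix}, bootstrapped from the rectangles case via the commutative diagram of tropicalized $\mathcal{A}$-cluster mutations). The only thing to be mildly careful about is to make sure the identifications are consistent: the valuations $\val_G(P_\lambda)$ must be taken as $\val_G(P_\lambda / P_{\max})$ as per the convention established earlier, but with that convention in place, all four assertions of the corollary follow immediately by stringing together the cited results.
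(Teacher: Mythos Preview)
Your proposal is correct and follows essentially the same route as the paper, which simply says the corollary ``is a combination of \cref{t:ValuationsFormula}, \cref{c:nchoosek}, and \cref{thm:main}.'' Your additional citation of Proposition~\ref{p:latticeQr} for the lattice point count is harmless but redundant, since Corollary~\ref{c:nchoosek} already asserts that there are exactly $\binom{n}{k}$ lattice points and identifies them as the Pl\"ucker valuations.
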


This corollary is a combination of 
\cref{t:ValuationsFormula},
\cref{c:nchoosek}, 
and 
\cref{thm:main}.
In \cref{s:clusterexpansion} we will give an explicit description of $\Q_G$ in terms of the plabic graph.

\section{Khovanskii bases and toric degenerations}\label{degeneration}

Under certain conditions the Newton-Okounkov  body construction can be used to obtain Khovanskii or SAGBI bases \cite{KavehManon16} and toric degenerations, see 
for example 
\cite{Kaveh0}, \cite{Kaveh} and 
\cite{Anderson}.  We will briefly review this connection as it applies in our setting. 

\subsection{Khovanskii bases}
\begin{definition}[{following \cite[Definition 1]{KavehManon16}}] \label{def:SAGBI} Suppose $R$ is a finitely generated $\C$-algebra with Krull dimension $d$ and discrete valuation $\val:R\setminus\{0\}\to \Z^d$ where we view $\Z^d$ as a group with a total ordering such that $v<v'$ implies $v+u<v'+u$. The {\it value semigroup} $S=S(R,\val)$ of $\val$ is by definition the subsemigroup of $\Z^d$ which is the image of $\val$. For each $v\in S$ define the subspaces
\[
R_{\ge v}:=\{f\in R\mid \val(f)\ge v\}\cup\{0\},\qquad 
R_{> v}:=\{f\in R\mid \val(f)>v\}\cup\{0\},
\]
and define the associated graded algebra  $\gr_{\val}(R)=\bigoplus_{v\in S}  R_{\ge v}/R_{>v}$,  graded over the semigroup $S$.
For each nonzero $f$ in $R$ there is an element $\bar f$ in  $\gr_{\val}(R)$, which lies in $R_{\ge v}/R_{>v}$ for $v=\val(f)$, and which is represented by $f$. 
A (finite) set $\mathcal B\subset R$ is called a (finite) {\it Khovanskii basis} for $(R,\val)$ if the image of $\mathcal B$ in the associated graded $\gr_{\val}(R)$ forms a set of algebra generators.
\end{definition}

The example we have in mind for $R$ is the homogeneous coordinate ring of $\X$ in {\it some} projective embedding. The valuation will be an extension of  $\val_G$ which also incorporates the grading. 

\begin{definition}[$1$-dimensional leaves]\label{d:1dimleaves} A valuation $\val$ as in Definition~\ref{def:SAGBI} is said to have \emph{$1$-dimensional leaves} if the graded components of $\gr_{\val}(R)$ are at most $1$-dimensional. 
  \end{definition}

\begin{rem}\label{r:leaves} We will always assume that  the valuation $\val$ has $1$-dimensional leaves. In this case we have that $\mathcal B\subset R$ is a Khovanskii basis if the set $\val(\mathcal B)$  of valuations generates the semigroup $S$. This definition generalises the concept of a SAGBI basis, see also  {\cite[Definition 5.24]{KavehKhovanskii08}},
as well as \cite[Remark~4.9]{Hering}. The terminology SAGBI stands for Subalgebra Analogue of 
Gr\"obner Basis for Ideals and originates from
 the case where $R$ is a subalgebra of a polynomial ring. Note that a finite Khovanskii basis for $R$ exists if and only if $S$ is a finitely generated semigroup. The well-known 
	\emph{subduction algorithm} (see \cite[Algorithm~2.11]{KavehManon16})
	allows one to represent every element of $R$ as a polynomial
	in elements of a  Khovanskii basis.
\end{rem}

\subsection{Toric degenerations}\label{s:Y} Let  $Y$ be an $m$-dimensional, irreducible projective variety, 
with a valuation $\val:\C(Y)\setminus\{0\}\to\Z^m$ with one-dimensional leaves. Fix  an ample divisor $D$ on $Y$. We associate to $(Y,D)$ the graded algebra 
\begin{equation}\label{e:R}
R=\bigoplus_{j=0}^\infty R^{(j)}
=\bigoplus_{j=0}^\infty
t^jH^0(Y,\mathcal O(jD)) \subset \C(Y)[t].
\end{equation}
We define an extended valuation $\overline{\val}$ on $R$, with value semigroup 
$\overline S \subseteq \Z\x \Z^m$ %
by setting
\begin{eqnarray}\label{e:extendedval}
\overline{\val}: R\setminus\{0\}&\to &\Z\x \Z^m,\\
\sum t^j f^{(j)} &\mapsto & \left(j_0,\val (f^{(j_0)})\right),
\end{eqnarray}
where $j_0=\max\{j\mid f^{(j)}\ne 0\}$. 
Note that the projection to its first component 
gives 
$\overline S$  a $\Z_{\ge 0}$-grading.

Following \cite{KavehManon16}, we choose an order for $\Z\x\Z^m$ (and hence $\overline S$) 
using  a combination of the 
reverse order on $\Z$ and the 
standard lexicographical order on $\Z^m$. 
Namely $(r,v)<(r',v')$ if either $r>r'$ or $r=r'$ and $v<v'$. This order
makes  $\overline S$  a {\it maximum well ordered} poset, meaning that any subset of $\overline S$ 
has a maximal element. This property is needed for the subduction algorithm to terminate. 
See 
\cite[Example~3.10]{KavehManon16}.

We focus on the `large enough' case where $D$ is very ample and $Y$ is projectively normal in the projective embedding $Y\hookrightarrow\mathbb P^d$ associated to $D$; therefore $R$ is generated by $R^{(1)}$. Choose a
$g\in H^0(Y,\mathcal O(1))$ 
such that $D$ is the divisor of zeros of $g$. In this case the homogeneous coordinate ring $\C[\widehat Y]$ of the affine cone $\widehat Y$ over $Y$ is  isomorphic to $R$ via the map which sends $f\in\C[\widehat Y]_j$ to $t^j f/g^j\in R$, compare \cite[II, Exercise 5.14]{Hartshorne}.

More general versions of the following result can be found in  \cite[Theorem 1]{Anderson},
\cite[Section 7]{Kaveh}, and \cite{Tei03}. We follow mostly \cite{Anderson}, though  our conventions regarding the ordering $<$ are reversed. 
	
	\begin{prop}\label{p:degeneration}
	Let $Y, D, R,\overline{\val}$ and $\overline S$ be as above, %
	where $\overline\val$ has one-dimensional leaves, $D$ is very ample and $Y$ is projectively normal in the associated projective embedding $Y\hookrightarrow \mathbb P^d$. 
	Suppose that $\overline S$ is generated by its degree $1$ part $\overline S^{(1)}$. Let $C$ denote the cone spanned by $\overline S^{(1)}$, and  $\Delta$ the polytope in $\R^m$ such that $\{1\}\x\Delta$ is the intersection of $C$ with $\{1\}\x\R^m$. Assume $\Delta$ has the 
integer decomposition property.
	
Then there exists a flat family $\mathcal Y\to\mathbb A^1$ embedded in $\mathbb P^d\x\mathbb A^1\to\mathbb A^1$, such that the fiber over $0$ is a normal, projective toric variety $Y_0$, while the other fibers are isomorphic to $Y$. Moreover,  $\Delta$ is the moment polytope of $Y_0$ for its embedding into $\mathbb P^d$, and this embedding is projectively normal.
	\end{prop}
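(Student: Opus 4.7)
The plan is to follow the standard Rees-algebra / deformation-to-the-value-semigroup construction for toric degenerations, as in the approach of \cite{Anderson}, \cite{Kaveh} and \cite{Tei03}; the hypotheses ``one-dimensional leaves, IDP, and $\overline S$ generated in degree $1$'' tame each step so that the output is a projectively normal flat family.

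First, I would use $\overline{\val}$ to define a decreasing filtration $F^{\ge w}R \supset F^{>w}R$ indexed by $w\in\overline S$ (equivalently, by the part of $\Z\x\Z^m$ with the well order of \cref{s:Y}). Because $\overline\val$ has one-dimensional leaves, the associated graded
\[
\gr_{\overline\val}(R)=\bigoplus_{w\in\overline S}F^{\ge w}R/F^{>w}R
\]
has $1$-dimensional components indexed by $\overline S$ and is canonically identified with the semigroup algebra $\C[\overline S]$. Next, I would assemble the filtration into a Rees-type $\C[t]$-subalgebra $\widetilde R\subset R[t,t^{-1}]$: each $f\in R^{(j)}$ with $\overline\val(f)=(j,v)$ is placed in $\widetilde R$ accompanied by an appropriate power of $t$ recording its filtration level, with the original $\Z_{\ge 0}$-grading (given by the first coordinate of $\overline S$) preserved. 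Standard arguments give that $\widetilde R$ is torsion-free, hence flat, over $\C[t]$, with $\widetilde R[t^{-1}]\cong R[t,t^{-1}]$ and $\widetilde R/t\widetilde R\cong \gr_{\overline\val}(R)=\C[\overline S]$. I would then set
\[
\mathcal Y := \Proj_{\C[t]}(\widetilde R) \ \subset\  \mathbb P^d\x\mathbb A^1,
\]
using the inherited $\Z_{\ge 0}$-grading; by construction the generic fibers are isomorphic to $\Proj R\cong Y$ and the special fiber is $Y_0:=\Proj\C[\overline S]$.

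Second, I would use the two remaining hypotheses to identify $Y_0$ with the projectively normal toric variety associated to $\Delta$. Since $\overline S$ is generated by $\overline S^{(1)}$ and $\overline S^{(1)}=\Lattice(\{1\}\x\Delta)$, the algebra $\C[\overline S]$ is generated in degree $1$, so $Y_0\hookrightarrow \mathbb P^d$ via the same ambient projective space as $Y$. The IDP assumption on $\Delta$ gives $\overline S^{(r)}=r\cdot\overline S^{(1)}=\Lattice(\{r\}\x r\Delta)$ for every $r\ge 0$, which together with the fact that $C$ is spanned by $\overline S^{(1)}$ shows that $\overline S=C\cap(\Z\x\Z^m)$ is saturated. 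Hence $\C[\overline S]$ is the saturated semigroup algebra of the lattice polytope $\Delta$, so $Y_0$ is a normal projective toric variety whose embedding into $\mathbb P^d$ is projectively normal and has moment polytope $\Delta$, as required.

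The main obstacle will be step two: producing a Rees algebra when the value semigroup lives inside the multi-graded $\Z\x\Z^m$ with the non-standard well order (reverse on the $\Z$-coordinate, lex on $\Z^m$). One must verify that on each fixed graded component $R^{(j)}$ the filtration is bounded below, so that $\widetilde R$ is locally a finitely generated $\C[t]$-module, and that taking $\Proj$ with respect to the first-coordinate grading commutes with the base change $t\mapsto 0$. The rest of the argument — flatness from torsion-freeness, and the passage from IDP plus degree-one generation to projective normality of $Y_0$ — is then a direct application of the standard dictionary between normal lattice polytopes and projectively normal toric varieties.
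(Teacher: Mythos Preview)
Your outline is correct and follows essentially the same route as the paper: construct a flat one-parameter family whose special fiber has coordinate ring $\gr_{\overline\val}(R)\cong\C[\overline S]$, then use degree-$1$ generation and IDP to identify $Y_0$ with the projectively normal toric variety of $\Delta$.

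The one place where the paper is more explicit is exactly the obstacle you flag at the end. Rather than building a Rees algebra directly from the $\Z\times\Z^m$-valued filtration, the paper (following \cite{Anderson}) presents $R$ as a quotient $A\to R$ of a polynomial ring $A=\C[x_\ell:\ell\in\mathcal L]$ by sending $x_\ell\mapsto\phi_{(1,\ell)}$, finds a Gr\"obner basis $g_1,\dots,g_m$ of the kernel with respect to the $\overline S$-grading, and then chooses a \emph{linear} projection $\pi:\Z\times\Z^m\to\Z$ so that the single-variable deformation
\[
\widetilde g_i \;=\; s^{-\pi(r_i,v_i)}\, g_i\bigl((s^{\pi(1,\ell)}x_\ell)_{\ell}\bigr)
\]
defines a flat $\C[s]$-algebra $\mathcal R=A[s]/(\widetilde g_1,\dots,\widetilde g_m)$ with $\mathcal R/s\mathcal R\cong\C[\overline S]$ and $\mathcal R[s^{-1}]\cong R\otimes\C[s,s^{-1}]$. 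This projection $\pi$ is precisely the device that converts the multi-index filtration into an honest $\Z$-filtration, resolving the issue you anticipated; your ``appropriate power of $t$'' would in practice have to be $t^{\pi(j,v)}$ for some such $\pi$. The two presentations (abstract Rees algebra versus explicit Gr\"obner deformation) produce the same family, and your treatment of the special fiber via IDP and saturation matches the paper's.
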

	
\begin{rem}
In contrast with the more general theorem \cite[Theorem 1]{Anderson}, 
we have added the %
assumptions that the semigroup $\overline S$  is generated in degree 1,
  and the polytope $\Delta$ has the integer decomposition property. (These will be true in our application in \cref{s:X}.) If these assumptions are removed, 
then 
$\mathbb P^d$ may need to be replaced by weighted projective space, and the limit toric variety $Y_0$ may not be normal. 
\end{rem}	
	
	\begin{proof} We sketch 
	the construction of the toric degeneration, mostly following \cite{Anderson}. The assumption on $\overline S$ implies that there exists a finite Khovanskii basis 
$\{\phi_{(1,\ell)}\mid \ell\in \mathcal L\}$ of $R$, where $\mathcal L$ denotes
the lattice points of $\Delta$ and 
$\overline\val(\phi_{(1,\ell)}) = (1,\ell)$.
Note that $|\mathcal L|=d+1$ and this Khovanskii basis is a vector space basis of $R^{(1)}$. The degeneration is obtained by applying (relative) $\Proj$ to a graded $\C[s]$-algebra $\mathcal R$ which is constructed from $R$ as follows. 

Consider the polynomial ring $A=\C[x_\ell; \ell\in \mathcal L]$, with the usual $\Z$-grading, as well as an extension of this grading to an $\overline S$-grading via $\deg(x_\ell):=(1,\ell)$.  The maps $ h: A\to R$ and $\overline h: A\to \gr_{\overline\val}R$ defined by
\begin{equation*}
h(x_\ell):=\phi_{(1,\ell)},\qquad \qquad \overline h(x_\ell):=\overline\phi_{(1,\ell)}
\end{equation*}
are homomorphisms of $\Z$-graded algebras and $\overline S$-graded algebras, respectively. 
Then the maximum well-ordered property of the ordering on $\overline S$ implies that the kernel of $h$ has a Gr\"obner basis $g_1,\dotsc, g_m$ whose $\overline S$-initial terms $\overline g_1,\dotsc, \overline g_m$ generate the kernel of $\overline h$. Moreover we can choose the $g_i$ to be homogeneous (say of degree $r_i$) and $\overline g_i$ homogeneous (say of degree $(r_i,v_i)$). Then one can find a linear projection $\pi:\Z\x\Z^m\to\Z$ (see \cite{Anderson}), such that the elements $\widetilde g_i\in A[s]$ defined by
\[
\widetilde g_i:=s^{-\pi(r_i,v_i)}\, g_i((s^{\pi(1,\ell)}x_\ell)_{\ell\in\mathcal L})
\]
are of the form $\overline g_i + s A_{>(r_i, v_i)}$. Moreover $\mathcal R:=
 A[s]/(\widetilde g_1,\dotsc, \widetilde g_m)$ is a flat $\C[s]$-algebra with $\mathcal R/s\mathcal R\cong A/(\overline g_1,\dotsc, \overline g_m)\cong\gr_{\overline\val}(R)$ and $\mathcal R[s\inv]\cong R\otimes\C[s,s\inv]$.
We therefore obtain a family $\mathcal Y$ of projective varieties over $\mathbb A^1$ such that the fiber over $0$ equals the projective toric variety with homogeneous coordinate ring $\gr_{\overline\val}(R)$, and all other fibers are isomorphic to $Y$. 
If we order the set  $\mathcal L$, so $\mathcal L=\{\ell_1,\dotsc, \ell_{d+1}\}$, then the description of $\mathcal R$ gives rise to the embedding of $\mathcal Y$ into $\mathbb P^d\x \mathbb A^1$.  Note that since $\overline\val$ has $1$-dimensional leaves, $\gr_{\overline\val}(R)\cong\C[\overline S]$.  
 Thus the zero fiber $Y_0$ in $\mathbb P^d$ has homogeneous coordinate ring $\C[\overline S]$.  From its degree $1$ part we see that the moment polytope of $Y_0$ is $\Delta$. And since $\Delta$ 
has the integer decomposition property, it follows directly that $Y_{0}$ is projectively normal, and in particular also normal. %
	\end{proof}

\subsection{Applications to the Grassmannian}\label{s:X} Now we consider $Y=\X$. We choose an $\mathcal{X}$-cluster seed $\Sigma^{\mathcal X}_G$ 
of type $\pi_{k,n}$ and the valuation $\val_G:\C(\X)\setminus\{0\}\to \Z^{\mathcal P_G}$ with one-dimensional leaves  (compare  \cref{l:okounkovlemma}). Recall
 that $L_r=H^0(\X,\mathcal O(rD_{n-k}))$, and 
that by \cref{thm:main},
 $\Delta_G :=\overline{\operatorname{ConvexHull}\left(\bigcup_r \frac{1}{r} \val_G(L_r)\right)}
$ is a rational polytope.

Let $R:=\bigoplus_j t^j L_j$ and consider the extended valuation $\overline \val_G:R\setminus\{0\}\to \Z\x\Z^{\mathcal P_G}$ as in \eqref{e:extendedval}. 
Note that $R$ is isomorphic to the homogeneous coordinate ring of $\X$ by  \eqref{e:projnormal}.  The valuation $\overline\val_G$ is again a valuation with $1$-dimensional leaves and we have the following result about $R$ in our setting.   

\begin{lem}\label{l:cone}
 Given  $(R,\overline{\val}_G$) as above, we define the  value semigroup 
\begin{equation}\label{e:SG}
\overline S_{G}:=\{(r,v)\mid r\in\Z_{\ge 0}, v\in\val_G(L_r)\}
\subseteq 
\Z\x\Z^{\mathcal P_G}.
\end{equation}
	Consider the  cone $\Cone(G)$
	in $\R\x\R^{\mathcal P_G}$ defined as 
the $\R_{\geq 0}$-span of
	vectors in $\{(1,w)\mid \text{ $w$ is a vertex of $\Delta_G$}\}$. 
Then $\overline S_{G}$ equals the semigroup $\Cone(G)\cap(\Z\x\Z^{\mathcal P_G})$
consisting 	 of lattice points of $\Cone(G)$. In particular the semigroup $\overline S_G$ is finitely generated, and hence we have a finite Khovanskii basis of $R$.
\end{lem}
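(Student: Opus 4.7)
The plan is to deduce the set equality $\overline S_G = \Cone(G) \cap (\Z \times \Z^{\mathcal P_G})$ from the identity $\val_G(L_r) = \Lattice(\Q_G^r)$, which was established during the proof of \cref{thm:main} for arbitrary $\mathcal X$-seeds $G$ of type $\pi_{k,n}$. Combined with $\Delta_G = \Q_G$ (also from \cref{thm:main}) and the tautology $\Q_G^r = r\Q_G$, this gives $\val_G(L_r) = \Lattice(r\Delta_G)$, which is exactly what relates the value semigroup to lattice points of the cone. Finite generation will then follow from Gordan's lemma applied to the rational polyhedral cone $\Cone(G)$, and the existence of a finite Khovanskii basis is immediate from \cref{r:leaves}.

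For the forward inclusion $\overline S_G \subseteq \Cone(G) \cap (\Z \times \Z^{\mathcal P_G})$, I would take $(r,v) \in \overline S_G$. By definition, $(r,v) \in \Z_{\geq 0} \times \Z^{\mathcal P_G}$ and $v \in \val_G(L_r)$. For $r > 0$, the identity above gives $v \in \Lattice(r\Delta_G)$, so $v/r \in \Delta_G$; writing $v/r$ as a convex combination of vertices of $\Delta_G$ then expresses $(1, v/r)$ as a convex combination of the spanning rays of $\Cone(G)$, and rescaling by $r$ shows $(r,v) \in \Cone(G)$. For $r = 0$, we have $v = \val_G(c) = 0$ for $c \in L_0 = \C$, which lies in $\Cone(G)$ trivially.

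For the reverse inclusion, I would take a lattice point $(r,v) \in \Cone(G)$. The definition of $\Cone(G)$ forces $r \geq 0$, and when $r > 0$ the slice $\{(1,w) : w \in \Delta_G\}$ identifies $(r,v) \in \Cone(G)$ with $v \in r\Delta_G$. Since $v$ is a lattice point, $v \in \Lattice(r\Delta_G) = \val_G(L_r)$, hence $(r,v) \in \overline S_G$. The $r=0$ case again reduces to $v=0$.

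The finite generation of $\overline S_G$ then follows because $\Delta_G$ is a rational polytope (by \cref{thm:main}), so $\Cone(G)$ is a rational polyhedral cone, and by Gordan's lemma its intersection with the lattice is a finitely generated semigroup. Finally, since $\overline{\val}_G$ has one-dimensional leaves, \cref{r:leaves} tells us that any lift $\{f_i\} \subset R$ of a finite generating set of $\overline S_G$ is a finite Khovanskii basis for $(R, \overline{\val}_G)$. The only substantive input is the identity $\val_G(L_r) = \Lattice(\Q_G^r)$ from \cref{thm:main}; once that is in hand, the argument is essentially bookkeeping between the polytope $\Delta_G$, the cone over it, and the value semigroup.
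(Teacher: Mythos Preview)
Your proof is correct and follows essentially the same route as the paper's: both directions of the set equality rest on the identity $\val_G(L_r)=\Lattice(r\Q_G)$ together with $\Q_G=\Delta_G$ from \cref{thm:main}, and finite generation is deduced from Gordan's lemma applied to the rational cone $\Cone(G)$. The paper's argument is slightly terser (it observes the forward inclusion directly from the definition of $\Delta_G$ rather than via the lattice-point identity), but the substance is the same.
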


\begin{proof}
Clearly $\overline S_G\subseteq \Cone(G)$, as follows from the construction of $\Delta_G$. The lemma says that conversely every lattice point in $\Cone(G)$ lies in $\overline S_G$, i.e. is of the form $(r,\val_G(f))$ for some $f\in L_r$. Equivalently if we fix $r$ it says that the lattice points of $r\NO_G$ agree with the image $\val_G(L_r)$ of the valuation map. But in the proof of \cref{thm:main} we saw that $\val_G(L_r)=\Lattice(r\Q_G)$ and $\Q_G=\Delta_G$. Thus we have shown that $\overline S_G$ is the semigroup of lattice points of $\Cone(G)$. Finally, $\Cone(G)$ is a rational convex cone by \cref{thm:main}. Therefore 
	by Gordon's lemma \cite[Section 1.2, Proposition 1]{Fulton} the semigroup of its lattice points (and therefore the semigroup $\overline S_G$) is finitely generated.
This completes the proof. 
\end{proof}

It is well-known (see e.g. \cite{IDP}) that 
for \emph{any} rational polytope $\Delta\subset \R^m$, there is an
$r\in \Z_{>0}$ such that $r\Delta$ has the integer decomposition property
(\cref{d:IntClosed}); this is an easy consequence of Gordan's lemma.
Therefore we can make the following definition.

\begin{defn}\label{c:RG}
Let $\RG$ denote the minimal positive integer such that 
the dilated polytope
$\RG\NO_G$ has the integer decomposition property.  
And let $\X_{\RG}\subset \mathbb P(\Sym^{\RG}({\bigwedge^k\C^n}))$ be the image of $\X$ after composing the Pl\"ucker embedding with the Veronese map of degree $\RG$. 
In other words $\X_{\RG}$ is the projective variety  obtained via the embedding of $\X$ associated to the ample divisor $\RG D_{n-k}$. 
\end{defn}
We let 
 $\C[\widehat\X_{\RG}]$ 
denote the homogeneous coordinate ring of $\X_{\RG}$.

\begin{defn}\label{d:RG} Associated to $\X_{\RG}$ we have
\[
R_{\RG}=\bigoplus_{j=0}^\infty
t^jH^0(Y,\mathcal O(j\RG D_{n-k})) \subset \C(\X)[t],
\]
with its extended valuation $\overline{\val}_{G,\RG}$
and the value semigroup 
\[
\overline S_{G,\RG}:=\{(r,v)\mid r\in\Z_{\ge 0}, v\in \val_G(H^0(\X,\mathcal O(r\,\RG D_{n-k})))\}.
\]
The semigroup $\overline S_{G,\RG}$ is also obtained by applying the map $(r,v)\to (\frac{1} {\RG} r,v) $ to $\overline S_G\cap (\RG\Z)\x\Z^{\mathcal P_G}$.  

Note that $\X_{\RG}$ is still projectively normal, and $R_{\RG}$ is isomorphic to 
 $\C[\widehat\X_{\RG}]$. The associated Newton-Okounkov  body is $\Delta_G(\RG D_{n-k})=\RG\Delta_G$.
\end{defn}

\begin{lem}\label{l:RG-version}
The semigroup
$\overline S_{G,\RG}$ %
is generated by the finite set  $\overline S_{G,\RG}^{(1)}=\{(1,v)\mid v\in\Lattice(\RG\Delta_G)\}$. 
In particular, for each lattice point $v\in \RG\Delta_G$ 
 we may choose an element $\phi_v\in L_{\RG}\setminus \{0\}$ such that $\val_G(\phi_v)=v$.  
Then the corresponding set $\{\phi_{(1,v)}:=t\phi_v\mid v\in\Lattice(\RG\Delta_G)\}$ is a finite Khovanskii basis of $R_{\RG}$, which lies in the $j=1$ graded component.
\end{lem}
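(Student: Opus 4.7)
The plan is to reduce the statement directly to the integer decomposition property of $\RG\Delta_G$, which holds by the very definition of $\RG$, together with the explicit identification of the value semigroup $\overline S_G$ furnished by \cref{l:cone}.

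First I would identify $\overline S_{G,\RG}$ explicitly. By \cref{d:RG}, an element of $\overline S_{G,\RG}$ is a pair $(r,v)$ with $r\in\Z_{\ge 0}$ and $v\in \val_G(L_{r\RG})$. The proof of \cref{l:cone} (which in turn uses \cref{thm:main}) shows that $\val_G(L_{r\RG})=\Lattice(r\RG\,\NO_G)=\Lattice(r(\RG\NO_G))$. Therefore
\[
\overline S_{G,\RG}=\{(r,v)\mid r\in\Z_{\ge 0},\ v\in \Lattice(r(\RG\NO_G))\},
\]
and in particular the degree-$1$ piece is $\overline S_{G,\RG}^{(1)}=\{(1,v)\mid v\in\Lattice(\RG\NO_G)\}$, which is finite because $\RG\NO_G$ is a bounded rational polytope.

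Next I would invoke the IDP of $\RG\NO_G$, which holds by definition of $\RG$ in \cref{c:RG}. Given $(r,v)\in\overline S_{G,\RG}$ with $r\ge 1$, the IDP writes $v=v_1+\dotsb+v_r$ with each $v_i\in\Lattice(\RG\NO_G)$. Hence $(r,v)=\sum_{i=1}^r (1,v_i)$ is a sum of elements of $\overline S_{G,\RG}^{(1)}$, proving that $\overline S_{G,\RG}^{(1)}$ generates $\overline S_{G,\RG}$ as a semigroup.

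Finally, for the Khovanskii basis claim, I would appeal to \cref{r:leaves}: since the extended valuation $\overline\val_{G,\RG}$ has one-dimensional leaves (being the natural extension of $\val_G$, which has one-dimensional leaves by \cref{l:okounkovlemma}), a subset $\mathcal B\subset R_{\RG}$ is a Khovanskii basis provided that the set of valuations $\overline\val_{G,\RG}(\mathcal B)$ generates the semigroup $\overline S_{G,\RG}$. Choosing for each $v\in\Lattice(\RG\NO_G)$ a representative $\phi_v\in L_{\RG}\setminus\{0\}$ with $\val_G(\phi_v)=v$ (which is possible precisely because $v$ lies in the image of $\val_G$ restricted to $L_{\RG}$) gives $\overline\val_{G,\RG}(t\phi_v)=(1,v)$, so the collection $\{t\phi_v\}_{v\in\Lattice(\RG\NO_G)}$ has valuations equal to $\overline S_{G,\RG}^{(1)}$ and is therefore a finite Khovanskii basis lying in the $j=1$ graded component. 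No step is really an obstacle here; the conceptual work has already been done in \cref{thm:main}, \cref{l:cone}, and the definition of $\RG$, and the present lemma is essentially a bookkeeping consequence.
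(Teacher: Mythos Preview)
Your proposal is correct and follows essentially the same approach as the paper: the core step is the integer decomposition property of $\RG\Delta_G$, used exactly as you describe to write any $(r,v)\in\overline S_{G,\RG}$ as a sum of degree-$1$ elements. The paper's own proof is in fact terser than yours, omitting the explicit identification of $\overline S_{G,\RG}$ via \cref{l:cone} and the justification of the Khovanskii basis claim via \cref{r:leaves}, but your additional detail is accurate and welcome.
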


\begin{proof}
Let $(j,v)\in \overline S_{G,\RG}$. Then because $\RG\Delta_G$  has the 
integer decomposition property and $v$ lies in its $j$-th dilation, we 
can write $v=\sum_{i=1}^j v_i$ where $v_i\in\Lattice(\RG\Delta_G)$.
Then  $(j,v)=\sum_{i=1}^j (1,v_i)$.  
Thus $(j,v)$ is in the semigroup generated by $\overline S^{(1)}_{G,\RG}$.
\end{proof}

\begin{cor}\label{c:SAGBI}
Suppose $G$ is represented by a plabic graph and $\RG=1$ (as in the case of $G=\Grec^{k,n}$, see~\cref{lem:intclosed}). Then
the set  $\{tP_{\lambda}/P_\Max\mid \lambda\in \mathcal P_G\}$ is a Khovanskii basis of
the algebra $R$.%
\end{cor}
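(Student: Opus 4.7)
The plan is to deduce this corollary directly from \cref{l:RG-version} together with \cref{l:injection} (noting the likely typo: the indexing set should be $\mathcal{P}_{k,n}$, since it is the Pl\"ucker coordinates of $\X$ labeled by $\mathcal P_{k,n}$, not $\mathcal P_G$, that are in bijection with lattice points of $\Delta_G$). Since $\RG=1$ by hypothesis, we have $R_{\RG}=R$ and $\RG\Delta_G=\Delta_G$, so \cref{l:RG-version} applies immediately to the data $(R,\overline{\val}_G)$ on $\X$ with ample divisor $D_{n-k}$.

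First, I would invoke \cref{l:RG-version}, which tells us that once we produce, for each lattice point $v\in\Lattice(\Delta_G)$, an element $\phi_v\in L_1\setminus\{0\}$ satisfying $\val_G(\phi_v)=v$, the collection $\{t\phi_v\mid v\in\Lattice(\Delta_G)\}$ is automatically a finite Khovanskii basis of $R$ (lying in the degree one graded piece). Thus the only content of the corollary is to exhibit such a choice of representatives $\phi_v$.

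Next, I would apply \cref{l:injection} (the plabic graph case of our main results). This corollary provides exactly what is needed: for any reduced plabic graph $G$ of type $\pi_{k,n}$, the lattice points of $\Delta_G$ are precisely the $\binom{n}{k}$ distinct points $\val_G(P_\lambda/P_\Max)$ as $\lambda$ ranges over $\mathcal{P}_{k,n}$. Since $P_\lambda/P_\Max\in L_1=H^0(\X,\mathcal{O}(D_{n-k}))$, setting $\phi_v:=P_\lambda/P_\Max$ (for the unique $\lambda$ with $\val_G(P_\lambda/P_\Max)=v$) gives the required system of representatives.

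Substituting this choice back into the conclusion of \cref{l:RG-version} yields that $\{tP_\lambda/P_\Max\mid \lambda\in\mathcal{P}_{k,n}\}$ is a Khovanskii basis of $R$, finishing the proof. There is no substantial obstacle here — all the heavy lifting (the equality $\Delta_G=\Q_G$ from \cref{thm:main}, the identification of lattice points via $\maxdiag$, and the semigroup-generation statement of \cref{l:RG-version}) has already been done. The only mild subtlety worth flagging in the write-up is verifying the correspondence between the lattice points of $\Delta_G$ and the Pl\"ucker coordinates $P_\lambda/P_\Max$ (as opposed to, say, some $P_\lambda/P_\Max^r$ with $r>1$), which is why the assumption $\RG=1$ is essential: it ensures that the Khovanskii basis can be taken inside $L_1$, where the natural candidates are exactly these ratios of Pl\"ucker coordinates.
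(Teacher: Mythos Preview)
Your proposal is correct and takes essentially the same approach as the paper: the paper's proof reads ``This corollary is a special case of \cref{l:RG-version}, combined with \cref{c:nchoosek},'' which is exactly your argument (you cite \cref{l:injection} instead of \cref{c:nchoosek}, but the relevant content---that the lattice points of $\Delta_G$ are precisely the $\binom{n}{k}$ valuations $\val_G(P_\lambda/P_\Max)$---is the same). Your observation about the typo ($\mathcal P_G$ should be $\mathcal P_{k,n}$) is also correct.
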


\begin{proof}
This corollary is a special case of \cref{l:RG-version}, combined with \cref{c:nchoosek}.
\end{proof}

It now follows 
that associated to every seed $\Sigma_G^{\mathcal X}$ we obtain a flat degeneration of $\X$ to a  
toric variety. 

\begin{cor}\label{c:degeneration}
Suppose $\Sigma_G^{\mathcal X}$ is an arbitrary $\mathcal X$-cluster seed of type $\pi_{k,n}$ and $\RG\in\Z_{>0}$ is 
as in \cref{c:RG}. Then we have a flat degeneration of $\X$ to the normal projective toric variety $\X_0$ associated to the polytope $\RG\Delta_G$ (i.e. to  
the Newton-Okounkov body associated to the rescaled divisor $\RG D_{n-k}$).
\end{cor}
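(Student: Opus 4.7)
The plan is to deduce Corollary \ref{c:degeneration} as a direct application of Proposition \ref{p:degeneration}, taking $Y = \X_{\RG}$, the ample divisor to be $\RG D_{n-k}$, and the valuation to be the extended valuation $\overline{\val}_{G,\RG}$ on $R_{\RG}$ from Definition \ref{d:RG}. The target toric variety will be $\X_0$, defined as the normal projective toric variety whose moment polytope is $\RG\Delta_G$.

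First I would check each hypothesis of Proposition \ref{p:degeneration} in turn. The variety $Y = \X_{\RG}$ is irreducible and projective, and is projectively normal in its embedding by the same Borel--Weil argument recalled after equation \eqref{e:projnormal}, applied to the ample divisor $\RG D_{n-k}$. The valuation $\overline{\val}_{G,\RG}$ has one-dimensional leaves: this follows from Lemma \ref{l:okounkovlemma} applied to the $\mathcal{X}$-cluster seed $\Sigma_G^{\mathcal{X}}$ (which ensures $\val_G$ has one-dimensional leaves on each $L_r$), together with the graded structure of $R_{\RG}$. The divisor $\RG D_{n-k}$ is very ample since $D_{n-k}$ defines the Pl\"ucker embedding and $\RG \geq 1$.

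Next I would verify the two conditions on the value semigroup and its polytope. By Theorem \ref{thm:main}, $\Delta_G = \Q_G$ is a rational polytope; by the very definition of $\RG$ in \cref{c:RG}, the dilation $\RG\Delta_G$ has the integer decomposition property. Lemma \ref{l:RG-version} then shows that $\overline S_{G,\RG}$ is generated by its degree-one part $\overline S_{G,\RG}^{(1)}$, whose lattice points cut out precisely the polytope $\RG\Delta_G$ in the slice $\{1\}\times \R^{\mathcal{P}_G}$. Therefore the cone $C$ spanned by $\overline S_{G,\RG}^{(1)}$ intersects $\{1\} \times \R^{\mathcal{P}_G}$ in $\{1\} \times \RG\Delta_G$, matching the polytope $\Delta$ appearing in Proposition \ref{p:degeneration}.

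With all hypotheses of Proposition \ref{p:degeneration} verified, the proposition produces a flat family $\mathcal{Y} \to \mathbb{A}^1$ whose generic fibers are isomorphic to $\X_{\RG}$ and whose special fiber over $0$ is the normal projective toric variety with moment polytope $\RG\Delta_G$; this limit is by construction $\X_0$. Since $\X_{\RG}$ is isomorphic to $\X$ as an abstract variety (they differ only in their projective embedding), this gives the claimed flat degeneration. There is no real obstacle beyond bookkeeping: the substantive work (rationality of $\Delta_G$, generation of $\overline S_{G,\RG}$ in degree one, and the general toric degeneration machinery) is all already in place, so the proof reduces to an application of the black box Proposition \ref{p:degeneration}.
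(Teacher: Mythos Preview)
Your proposal is correct and follows essentially the same approach as the paper: both verify the hypotheses of Proposition~\ref{p:degeneration} for $(R_{\RG},\overline{\val}_{G,\RG})$ by invoking Lemma~\ref{l:RG-version} for degree-one generation of $\overline S_{G,\RG}$, the definition of $\RG$ for the integer decomposition property of $\RG\Delta_G$, and then read off the toric degeneration. The paper additionally cites Lemma~\ref{l:cone} to pin down that the degree-one valuations are exactly $\{1\}\times\Lattice(\RG\Delta_G)$, which you use implicitly; otherwise the arguments coincide.
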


\begin{proof} 
By  \cref{l:RG-version} the ring $R_{\RG}$ has a finite Khovanskii basis which is contained in its $j=1$ graded component. By \cref{l:cone} the image of this Khovanskii basis under $\overline\val_{G,\RG}$ is precisely the set of all of the lattice points of $\{1\}\x\RG\Delta_{G}$ (after adjusting according to \cref{d:RG}).  By \cref{c:RG} the polytope $\Delta=\RG\Delta_{G}$ 
has the integer decomposition property. Therefore the conditions of \cref{p:degeneration} are satisfied and we obtain a toric degeneration of $\X$ to the toric variety $\X_0$ associated to $\RG\Delta_G$.
\end{proof}

\section{The cluster expansion of the superpotential and explicit inequalities for $\Q_G  = \Delta_G$}\label{s:clusterexpansion}

Since Newton-Okounkov bodies are defined as a closed convex hull of infinitely many points, very often it is difficult
to give a simple description of them.
However, now that we have proved that $\Delta_G = \Q_G$, we have an inequality description of $\Delta_G$ coming
from the cluster expansion of the superpotential $W$.
In the case where $G$ is a reduced plabic graph of type $\pi_{k,n}$, 
a combinatorial  formula for the cluster expansion of $W$ was given 
in \cite[Section 12]{MarshRietsch}, which followed from 
the work of Marsh and Scott~\cite[Theorem 1.1]{MarshScott}. 
We use this formula to give the inequality description of $\Delta_G = \Q_G$ when $G$
is a plabic graph.

\subsection{The cluster expansion of $W$}
Recall from 
\eqref{e:Wq} that 
$$W= \sum_{i=1}^{n}q^{\delta_{i,n-k}}\frac{p_{{\mu}_i^\square}}{p_{\mu_i}}.$$
Fix a cluster associated to a plabic graph $G$. In order to give the cluster expansion of $W$ it is enough
to give the cluster expansion of each term 
$ W_i={p_{\mu_i^\square}}/{p_{\mu_i}}$.

\begin{definition}[Edge weights]\label{d:edgeweights}
We assign monomials in the elements of $\PCG$ to edges of $G$ as follows.  
Let $v$ be the unique black vertex incident with an edge $e$.  The {\it weight} $w_e$ of $e$
is defined to be the product of the Pl\"ucker coordinates labelling the faces of $G$ which are 
incident with $v$ but not with the rest of $e$ (i.e.\ excluding the two faces on each 
side of $e$).  
(See \cref{fig:weighting} for an illustration of the rule.)
And the weight $w_M$ of a matching $M$ is the product of the weights of all edges in the matching.
\end{definition}

\begin{figure}[h]
\includegraphics[width=2.5cm]{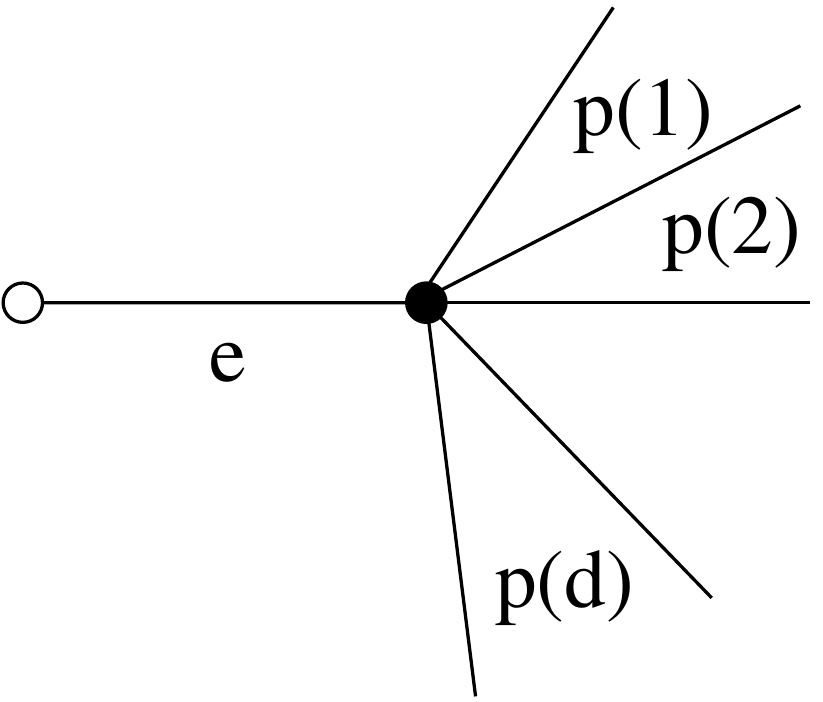}
\caption{Weighting of an edge: $w_e=\p(1)\p(2)\cdots \p(d)$.}
\label{fig:weighting}
\end{figure}

\begin{theorem}[{\cite[(12.2)]{MarshRietsch}}]
\label{eq:superpotentialexpansion}
Fix a reduced plabic graph $G$ of type $\pi_{k,n}$, and let 
$J^i$ be the $(n-k)$-element subset
$\{i+k+1, i+k+2,\dots, i-1\} \cup \{i+1\}$ (with indices considered modulo $n$ as usual).
Then we have that 
	\begin{equation}\label{eq:pM}
\frac{p_{\mu_i^\square}}{p_{\mu_i}} = 
		\sum_{M} p_M, \hspace{.5cm} \text{ where } \hspace{.5cm}
	p_M := \frac{w_M}{\prod_{p\in \PCG} p} p_{\mu_{i-1}} p_{\mu_{i+1}} p_{\mu_{i+2}} \dots p_{\mu_{i+k}}, 
\end{equation}
and the sum is over the set $\Match_G^{J^i}$ of all matchings $M$ of $G$ with boundary $J^i$, compare Section~\ref{s:matchings}.
\end{theorem}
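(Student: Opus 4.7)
The plan is to derive this formula by specializing the Marsh-Scott expansion theorem \cite[Theorem~1.1]{MarshScott}, which expresses every Pl\"ucker coordinate of $\checkX$ as an explicit Laurent polynomial in the cluster $\wPC(G)$ indexed by perfect matchings of $G$. For a $k$-subset $\nu \subseteq [n]$, Marsh-Scott writes $p_\nu$ as a sum over perfect matchings of $G$ with boundary determined by $\nu$, where each matching $M$ contributes an explicit monomial built from the edge weights $w_e$ of \cref{d:edgeweights} together with an appropriate product of frozen Pl\"ucker coordinates, divided by a universal denominator of the form $\prod_{p \in \PCG} p$.

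The first step is to apply the Marsh-Scott expansion directly to the Pl\"ucker coordinate $p_{\mu_i^{\square}}$ appearing in the numerator of $W_i$. This produces a sum over matchings of $G$ with a specific boundary subset. The second step is to observe that $p_{\mu_i}$ is itself one of the cluster variables in $\wPC(G)$, since $\mu_i$ labels a boundary face of $G$ and is therefore a frozen variable (cf.\ the discussion following \cref{def:facelabels}). Consequently, dividing by $p_{\mu_i}$ cancels exactly one factor from the frozen-variable product in the numerator of each summand in the Marsh-Scott expansion of $p_{\mu_i^{\square}}$. After this cancellation, the remaining frozen product in the numerator of $p_M$ becomes exactly $p_{\mu_{i-1}} p_{\mu_{i+1}} p_{\mu_{i+2}} \cdots p_{\mu_{i+k}}$, matching the formula in \eqref{eq:pM}.

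The third step is to verify that the boundary set $J^i = \{i+k+1, i+k+2, \ldots, i-1\} \cup \{i+1\}$ is the correct one indexing the matchings. This is a direct combinatorial check using the definitions of $\mu_i^{\square}$ (via the set $J_i^+$ in \cref{def:superpotential}) and the west/south-step dictionary between Young diagrams and $k$- or $(n-k)$-subsets of $[n]$ recalled in \cref{Young}. The set $J^i$ arises once one translates the boundary of a matching associated to $p_{\mu_i^{\square}}$ under the Marsh-Scott conventions from $k$-subsets (sources) into $(n-k)$-subsets (sinks).

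The principal obstacle is bookkeeping rather than conceptual: reconciling the source/sink conventions, the Young diagram/subset bijection via west versus south steps, and the form of the Marsh-Scott universal denominator with the frozen-variable product in \eqref{eq:pM}. Since the identity is already established in \cite[Section~12]{MarshRietsch} by precisely this line of reasoning, the argument amounts to checking that the combinatorial data on both sides of \eqref{eq:pM} agree after unpacking definitions.
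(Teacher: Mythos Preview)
Your approach is correct and matches the paper's own treatment: the paper does not supply a proof of this theorem at all, but simply cites it as \cite[(12.2)]{MarshRietsch}, noting in the preamble to Section~\ref{s:clusterexpansion} that the formula ``followed from the work of Marsh and Scott~\cite[Theorem 1.1]{MarshScott}.'' Your sketch---specialize the Marsh--Scott matching expansion to $p_{\mu_i^{\square}}$, then divide by the frozen variable $p_{\mu_i}$ and track the resulting boundary set---is exactly the derivation that \cite[Section~12]{MarshRietsch} carries out, so there is nothing further to compare.
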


\begin{example}\label{ex:expansion}
Let $k=3$, $n=5$, and $G$ the graph shown in \cref{fig:expanG25}.
We have $\mu_1 = \ydiagram{3}$, $\mu_2 = \ydiagram{3,3}$, $\mu_3 = \ydiagram{2,2}$,
$\mu_4 = \ydiagram{1,1}$, $\mu_5 = \emptyset$.
If $i=2$ then 
there is a unique matching $M$ of $G$ with boundary 
$J^i = J^2 = \{1,3\}$ 
as shown at the left.  This matching has weight 
$w_{M} = p_{\ydiagram{1}} p_{\ydiagram{2}}^2$, so 
$\frac{p_{\mu_i^\square}}{p_{\mu_i}} = 
\frac{p_{\ydiagram{2}}}{p_{\ydiagram{3,3}}}$.
(Recall that $p_{\emptyset} = 1$.)

If $i=3$,  
there are two matchings of $G$ with boundary 
$J^i = J^3 = \{2,4\}$. 
The maximal matching $M_3$ with boundary $J^3$ is shown at the right of \cref{fig:expanG25}
and it has weight $w_M = p_{\ydiagram{1}} p_{\ydiagram{2,2}} p_{\ydiagram{3}}$; so 
one of the two summands in 
$\frac{p_{\mu_i^\square}}{p_{\mu_i}}$ is $\frac{p_{\ydiagram{3}}}{p_{\ydiagram{2}}}$.
\end{example}

\begin{figure}[h]
\includegraphics[height=2.8cm]{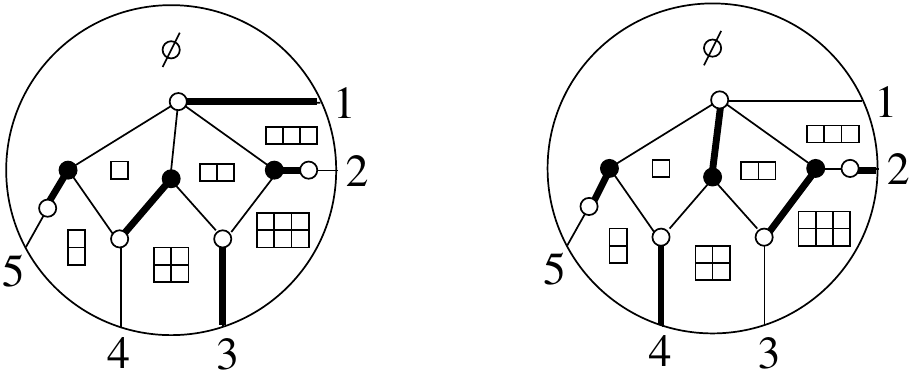}
\caption{A graph $G$ of type $\pi_{3,5}$ together with
 the unique matching with boundary
$J^2 = \{1,3\}$ (left) and the maximal matching with boundary $J^3=\{2,4\}$ (right).}
\label{fig:expanG25}
\end{figure}

Recall the definition of the superpotential polytope, \cref{l:B-modelPolytope}, and 
the generalized superpotential polytope, \cref{def:generalQ}. 
We can now use \cref{thm:main} and 
\cref{eq:superpotentialexpansion}
to write down the inequalities 
cutting out $\Q_G(r_1,\dotsc,r_n)$ and as a special case $\Q_G^r$.

\begin{prop}\label{p:matchingineqs}
Let $r_1,\dotsc, r_n\in\R$. The generalized superpotential polytope $\Q_G(r_1,\dotsc, r_n)$ is cut out by linear inequalities associated to matchings $M\in\Match^{J^i}_G$, where $1 \leq i \leq n$.
Namely for $M\in\Match_G^{J^{i}}$, the associated  inequality is 
\begin{equation}\label{e:speciali}
\Trop_G(p_M) 
	+ r_i \geq 0.
	\end{equation}
\end{prop}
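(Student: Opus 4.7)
The plan is to deduce this immediately from the definition of $\Q_G(r_1,\dots,r_n)$ together with the cluster expansion formula for the superpotential in \cref{eq:superpotentialexpansion} and the elementary behavior of tropicalization under sums.

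First, I would unpack the definitions. By \cref{def:generalQ}, a point $v \in \R^{\mathcal{P}_G}$ lies in $\Q_G(r_1,\dots,r_n)$ if and only if $\Trop_G(W_i)(v) + r_i \geq 0$ for every $i = 1, \dots, n$. By \cref{eq:superpotentialexpansion}, we have the cluster expansion
\[
W_i = \frac{p_{\mu_i^\square}}{p_{\mu_i}} = \sum_{M \in \Match_G^{J^i}} p_M,
\]
where, inspecting the formula \eqref{eq:pM} for $p_M$, each $p_M$ is a positive Laurent \emph{monomial} (with coefficient $1$) in the cluster variables of $\PCG$. In particular each $\Trop_G(p_M)$ is a single linear function on $\R^{\mathcal{P}_G}$.

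Next, I would apply the tropicalization rules \eqref{eq:min-etc}: since $W_i$ is expressed as a sum of positive Laurent monomials, we have
\[
\Trop_G(W_i)(v) = \min_{M \in \Match_G^{J^i}} \Trop_G(p_M)(v).
\]
Therefore the defining inequality $\Trop_G(W_i)(v) + r_i \geq 0$ is equivalent to the system of inequalities $\Trop_G(p_M)(v) + r_i \geq 0$ as $M$ ranges over $\Match_G^{J^i}$. Taking the intersection over all $i$ yields the desired description.

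There is no serious obstacle in this proof — the genuine content has already been absorbed into \cref{eq:superpotentialexpansion} (which identifies the matchings as the terms in the cluster expansion of $W_i$) and into the definition of tropicalization. The only point worth emphasizing in the write-up is that each $p_M$ is a single Laurent monomial, so that each inequality $\Trop_G(p_M)(v) + r_i \geq 0$ is genuinely a (closed affine) half-space cut, and the resulting collection indeed realizes $\Q_G(r_1,\dots,r_n)$ as an intersection of half-spaces indexed by pairs $(i, M)$ with $M \in \Match_G^{J^i}$.
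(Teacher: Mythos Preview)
Your proof is correct and takes essentially the same approach as the paper: the proposition is stated as an immediate consequence of the cluster expansion formula in \cref{eq:superpotentialexpansion} together with the definition of $\Q_G(r_1,\dots,r_n)$ and the fact that tropicalization turns a sum of positive monomials into a minimum. The paper does not write out a separate proof beyond noting this, so your unpacking of the steps is entirely adequate.
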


By Theorem~\ref{thm:main}, which identifies the Newton-Okounkov body $\Delta_G$ with the superpotential polytope $\Q_G$, we obtain the following description of $\Delta_G$.

\begin{cor}\label{c:matchingineqs}
The Newton-Okounkov body $\Delta_G$ is a polytope determined by certain linear inequalities associated to matchings $M\in\Match^{J^i}_G$, where $1 \leq i \leq n$.
Namely for $M\in\Match_G^{J^{i}}$, the associated  inequality is 
\eqref{e:speciali}, where $r_{i}=0$ for $i\neq n-k$ and $r_{n-k}=1$.
\end{cor}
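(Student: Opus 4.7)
The plan is to assemble this corollary as an immediate consequence of the two results it cites, together with the definitional unwinding of how $\Q_G$ sits inside the family $\Q_G(r_1,\dots,r_n)$. First I would invoke \cref{thm:main} to replace the Newton-Okounkov body $\Delta_G$ by the superpotential polytope $\Q_G = \Q_G^1$. Then by \cref{def:generalQ} we have the identification $\Q_G^1 = \Q_G(r_1,\dots,r_n)$ precisely when $r_{n-k} = 1$ and $r_i = 0$ for all $i \ne n-k$, since in this situation the generalized superpotential polytope reduces to the single-parameter polytope $\Q_G^r$ at $r=1$.

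Next I would apply \cref{p:matchingineqs} to this specialization. That proposition hands us, for each $i \in \{1,\dots,n\}$ and each matching $M \in \Match_G^{J^i}$, the linear inequality $\Trop_G(p_M) + r_i \geq 0$, with the $p_M$ being the Laurent monomials from the cluster expansion \eqref{eq:pM} of $W_i = p_{\mu_i^\square}/p_{\mu_i}$ via \cref{eq:superpotentialexpansion}. Substituting the values $r_{n-k}=1$ and $r_i=0$ for $i \ne n-k$ then gives the inequalities as stated in the corollary, and taking the conjunction over all $(i,M)$ produces precisely the inequality presentation of $\Delta_G$.

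Since the only real content is the chain $\Delta_G = \Q_G = \Q_G(0,\dots,0,1,0,\dots,0)$ followed by a citation of \cref{p:matchingineqs}, there is no genuine obstacle to the proof; the substance lives in the earlier results (\cref{thm:main} for the equality of the Newton-Okounkov body with the superpotential polytope, \cref{eq:superpotentialexpansion} for the matching expansion of each $W_i$, and \cref{p:matchingineqs} for tropicalising those expansions into half-space inequalities). The only care required is to make sure that in Proposition \ref{p:matchingineqs} the normalisation convention $p_\emptyset = 1$ is used consistently so that the tropicalisation $\Trop_G(p_M)$ is a well-defined $\R$-linear function on $\R^{\mathcal{P}_G}$, and that the indexing matches: the inequalities coming from matchings in $\Match_G^{J^i}$ correspond to the summand $W_i$ of $W$.
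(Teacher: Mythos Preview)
Your proposal is correct and follows essentially the same approach as the paper: the paper simply remarks that by \cref{thm:main} the Newton-Okounkov body $\Delta_G$ coincides with the superpotential polytope $\Q_G$, and then reads off the inequalities from \cref{p:matchingineqs} in the special case $r_{n-k}=1$, $r_i=0$ for $i\neq n-k$. Your additional comments on the normalisation $p_\emptyset=1$ and the indexing are fine but not strictly needed, as these are already built into the earlier results being cited.
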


\begin{example}
We continue \cref{ex:expansion}.  
When $i=2=n-k$ we have the term $qW_i = q\frac{p_{\mu_i^\square}}{p_{\mu_i}} = 
q\frac{p_{\ydiagram{2}}}{p_{\ydiagram{3,3}}}$ of $W$, which gives rise to the inequality
$r+v_{\ydiagram{2}}-v_{\ydiagram{3,3}} \geq 0$.  
When $i=3$  we have that one of the summands in 
$W_i = \frac{p_{\mu_i^\square}}{p_{\mu_i}}$ is $\frac{p_{\ydiagram{3}}}{p_{\ydiagram{2}}}$,
which gives rise to the inequality $v_{\ydiagram{3}} - v_{\ydiagram{2}} \geq 0$.
This matches up with our description of $\Q_G$ 
from \cref{ex:Q}.
\end{example}

\section{The Newton-Okounkov polytope $\Delta_G(D)$ for more general divisors $D$}\label{s:generalD}
 
In this section we consider the Newton-Okounkov body of a 
general divisor $D$
which is an integer combination of the boundary divisors $D_j$ in $\X$, and 
we prove the analogue of $\Delta_G=\Q_G$. 

Recall that we defined a polytope $\Q_G(r_1,\dotsc,r_n)$  using tropicalisation of the individual summands $W_j$ of the superpotential, see \cref{def:generalQ}.
The result below generalizes 
\cref{thm:main}.

\begin{thm}\label{t:genD}
For the divisor $D=r_1D_1+ r_2 D_2+\dotsc + r_n D_n$ with $r_i\in\Z$, the associated Newton-Okounkov polytope is given by 
\begin{equation}\label{e:genD}
\Delta_G(D)=\Q_G(r_1,\dotsc,r_n).
\end{equation}
Moreover unless $r:=\sum r_j \geq 0$, both 
$\Delta_G(D)$ and $\Q_G(r_1,\dotsc,r_n)$ are the empty set.
\end{thm}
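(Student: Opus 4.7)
My plan is to reduce \cref{t:genD} to the $D = D_{n-k}$ case already established in \cref{thm:main} ($\Delta_G = \Q_G$), by exploiting the linear equivalences $D_j \sim D_{n-k}$ on $\X$. Set $r := \sum_j r_j$. If $r < 0$, then for every $s \ge 1$, $sD \sim sr D_{n-k}$ is a divisor of negative degree so $L_{sD} = 0$, giving $\Delta_G(D) = \emptyset$. The analogous statement $\Q_G(r_1,\dotsc,r_n) = \emptyset$ I would derive by the same kind of direct-inequality argument that yields $\Q_G^r = \emptyset$ for $r<0$ (Proposition~\ref{p:empty}), summing the $n$ defining inequalities and invoking the $T^\vee$-invariance of $\prod_i W_i$ to bound things above.

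Now assume $r \ge 0$. The rational function $g := \prod_{j \neq n-k} (P_{\mu_j}/P_{\max})^{r_j}$ has divisor $D - r\,D_{n-k}$, so multiplication by $g^s$ is a vector-space isomorphism $L_{sD} \overset{\sim}{\longrightarrow} L_{sr D_{n-k}}$ for every $s \ge 1$. Since $\val_G$ is a valuation this isomorphism translates valuations by $sv$ with $v := \val_G(g)$; moreover $v \in \Z^{\mathcal{P}_G}$ because $g$ is a Laurent monomial on each $\mathcal{X}$-cluster torus, by \cref{p:PmuiMonomial}. Taking closed convex hulls over $s$ yields
\[
\Delta_G(D) = r\,\Delta_G - v.
\]
What I then need to establish is the mirror-side identity
\[
\Q_G(r_1,\dotsc,r_n) = r\,\Q_G - v,
\]
from which \cref{t:genD} follows immediately via \cref{thm:main}.

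To prove this second identity, I would exploit the fact that each summand $W_i = p_{\mu_i^\square}/p_{\mu_i}$ is a $T^\vee$-semi-invariant of weight $\chi_i := e_{i+k+1} - e_{i+k}$; consequently its $\mathcal A$-cluster expansion $\mathbf{W}_i^G$ is a sum of Laurent monomials all sharing the common weight $\chi_i$. Under the cluster-duality dictionary relating $\mathcal X$- and $\mathcal A$-cluster exponents, the translation $v$ arises as (the image of) a $T^\vee$-character, and this forces the quantity $\Trop_G(W_i)(d+v) - \Trop_G(W_i)(d)$ to be a constant $c_i$ independent of $d$. A direct bookkeeping calculation---unpacking $v = \sum_{j \neq n-k} r_j \val_G(P_{\mu_j}/P_{\max})$ and pairing with the weights $\chi_i$---then gives $c_i = r_i - r\,\delta_{i,n-k}$, which is exactly the shift required for the inequality descriptions of $\Q_G(r_1,\dotsc,r_n)$ and $r\,\Q_G - v$ to coincide.

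The main obstacle is precisely this last compatibility step: confirming that the shift vector $v$ produced on the $\mathcal X$-side (valuations of a Pl\"ucker-monomial quantity on $\X$) pairs with $\mathcal A$-cluster exponents in the way dictated by $T^\vee$-weights. I would attack this either (i) by first verifying the shift identity for the distinguished seed $G = G_{k,n}^{\rect}$, where both sides become completely explicit via \cref{prop:MR} and \cref{p:rect-val}, and then propagating the identity to arbitrary seeds using mutation-equivariance on both sides (\cref{c:PolytopeMutation} for $\Q_G(r_1,\dotsc,r_n)$, together with \cref{thm1:tropcluster} and \cref{lem:monomial} for $v$), or (ii) directly from the twist-map formula \cref{p:lefttwist}, which makes precise the bridge between Pl\"ucker-coordinate valuations on the $\mathcal X$-side and frozen-variable expressions on the $\mathcal A$-side.
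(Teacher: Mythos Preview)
Your proposal is essentially correct and follows the same architecture as the paper: both sides are shown to be translates of the $D=rD_{n-k}$ case by the same vector $v_D=-\sum_j r_j\val_G(P_{\mu_j})$, and then \cref{thm:main} finishes the job. Your argument for $\Delta_G(D)=r\Delta_G+v_D$ via the rational function $g$ is exactly the paper's \cref{p:DeltaTrans}.

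Two points of comparison are worth flagging. First, for the $\Q_G$ translation identity, your $T^\vee$-weight heuristic is suggestive but not a proof as stated: knowing that all monomials $p_M$ in $\mathbf{W}_i^G$ share the same $T^\vee$-weight tells you their exponent vectors differ by elements of a certain sublattice, but you still must check that $v$ pairs to zero against that sublattice, and the cluster-duality ``dictionary'' you invoke is not made precise. The paper avoids this altogether: it proves the key evaluation $\Trop_G(W_i)(e^{(j)})=\delta_{i,j}-\delta_{i,n-k}$ (\cref{l:coefficient}) at a seed where $\mu_i^\square\in\mathcal P_G$ (so $\Trop_G(W_i)$ is \emph{linear}), and then uses that $v_D$ is \emph{balanced} (\cref{p:PmuiMonomial}, \cref{lem:monomial}) to get the additivity $\Psi_{G,G'}(e+v_D)=\Psi_{G,G'}(e)+v_{D,G'}$, which propagates the translation through all seeds. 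Your option~(i) is precisely this, so your fallback recovers the paper's argument.

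Second, for the $r<0$ emptiness of $\Q_G(r_1,\dotsc,r_n)$, your ``sum the $n$ inequalities and use $T^\vee$-invariance of $\prod_i W_i$'' yields only $\Trop_G(\prod_i W_i)(v)\ge -r>0$, and it is not clear why this is a contradiction without further analysis of $\prod_i W_i$. The paper instead gets emptiness as an immediate corollary of the translation identity: $\Q_G(r_1,\dotsc,r_n)=\Q_G^r+v_D$, and $\Q_G^r=\emptyset$ for $r<0$ is checked at the rectangles seed via the Gelfand--Tsetlin description and then propagated by mutation (\cref{p:empty}). This is both cleaner and requires no new ingredients.
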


One way to prove \eqref{e:genD} is to try to mimic the proof of \cref{thm:main}:
to first prove it when $G = G^{\rect}_{{k,n}}$, 
and then to show that when one mutates away from 
$G$, the lattice points of both sides satisfy 
the tropical mutation formulas.  
For $\Q_G(r_1,\dotsc,r_n)$
this follows from \cref{c:PolytopeMutation}, but 
for $\Delta_G(D)$ the mutation property requires more work.
While one can complete the proof using this strategy,
we instead deduce the theorem from Theorem~\ref{thm:main}:
 we show that changing the divisor from $r D_{n-k}$ to $D=r_1 D_1 + \dotsc + r_n D_n$ 
with $r= r_1 + \dotsc + r_n$ 
translates both sides of \eqref{e:genD} by the same vector, see Proposition~\ref{p:DeltaTrans} and Proposition~\ref{p:GammaTrans}.

\begin{remark}\label{r:empty}
If $r=\sum r_j<0$, the line bundle $\mathcal O(D)=\mathcal O(r)$ has no non-zero global sections, and hence 
$\Delta_G(D)$ is clearly the empty set.   We demonstrate an analogous result for 
$\Q_G(r_1,\dotsc,r_n)$ in \cref{p:empty}. 
\end{remark}

If $r=0$ then $\mathcal O(D)$ is the structure sheaf $\mathcal O$ and $\Delta_G(D)$ consists of a single point. Namely 
\[
f_D:=\prod_{j=1}^{n} P_{\mu_{j}}^{-r_j} 
\]
is a rational function on $\X$ (since $\sum r_j=0$) and spans $H^0(\X,\mathcal O(D))\cong\C$. Moreover $H^0(\X,\mathcal O(sD))$ is the one-dimensional vector space spanned by $(f_D)^s$. By the definition of $\Delta_G(D)$ we immediately obtain
$
\Delta_G(D)=\{ v_D \}
$, 
where $v_D=-\sum_j r_j\val_G(P_{\mu_j})$ is the valuation $\val_G(f_D)$. 

In order to prove the theorem, we need the following lemma about the 
valuations of the
frozen variables (recall that the frozen variables are the Pl\"ucker coordinates 
$P_{\mu_j}$ for $0 \leq j \leq n-1$).

\begin{lem}\label{l:coefficient}
Fix $j\in\{0,1,\dotsc,n-1\}$ and let $e=e^{(j)}=\val_G(P_{\mu_j})$. Then we have
\begin{equation}\label{e:tropfrozen}
\Trop_G(p_{\muibox}/p_{\mui})(e^{(j)})=\delta_{i,j}-\delta_{i,n-k}=\begin{cases}1 & i=j\ne n-k,\\
-1 & i=n-k,\ j\ne n-k,\\
0 & \text{otherwise.}\end{cases}
\end{equation}
\end{lem}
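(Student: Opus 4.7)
My plan is to first reduce by mutation-invariance to checking the identity in a single cluster seed, and then verify it in the rectangles seed $G = G_{k,n}^{\rect}$ by direct computation using the explicit formulas available there.

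For the reduction step, note that the right-hand side of \eqref{e:tropfrozen} depends only on $i$ and $j$. For the left-hand side, \cref{p:PmuiMonomial}(3) tells us that $e^{(j)}$ transforms under mutation $G \to G'$ by the tropicalized $\mathcal{A}$-cluster mutation $\Psi_{G,G'}$, since $P_{\mu_j}$ is a Laurent monomial with balanced exponent vector in any $\mathcal{X}$-cluster. On the other hand, the universal positivity of each $W_i$ combined with \cref{l:PosSetTrop} (which itself rests on \cref{l:valandTrop} and \cref{l:piGmutation}) yields the dual relation $\Trop_{G'}(W_i) \circ \Psi_{G,G'} = \Trop_G(W_i)$. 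Composing gives $\Trop_{G'}(W_i)(e^{(j), G'}) = \Trop_G(W_i)(e^{(j), G})$, so the LHS is seed-invariant. Since all $\mathcal{A}$-cluster seeds of type $\pi_{k,n}$ are mutation-connected, it suffices to work with $G = G^{\rect}_{k,n}$.

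In the rectangles cluster, the cluster expansion of $W$ is given explicitly by Marsh--Rietsch's formula \cref{prop:MR}, and grouping the summands by the numerator Pl\"ucker coordinate $p_{\mu_i^\square}$ yields the Laurent polynomial $\LW^G_i$ representing each $W_i$ (as illustrated by \eqref{e:Li}). The coordinates of $e^{(j)}$ are read off from \cref{p:rect-val}: $e^{(j)}_{i' \times j'} = \maxdiag\bigl((i' \times j') \setminus \mu_j\bigr)$, which admits a simple closed form since $\mu_j$ itself is a rectangle. The case $j = n-k$ is then immediate, because $e^{(n-k)} = 0$ and each Laurent monomial in $\LW^G_i$ tropicalizes to $0$ at the origin. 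For $j \neq n-k$, one computes the tropical evaluation of each summand of $\LW^G_i$ at $e^{(j)}$ and verifies that their minimum equals $\delta_{i,j} - \delta_{i,n-k}$; each summand is a small expression in the values $\maxdiag\bigl((i' \times j') \setminus \mu_j\bigr)$, so its tropicalization reduces to a comparison involving $i$, $j$, $n-k$.

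The main obstacle is the detailed case analysis in the final step: one must verify simultaneously that no monomial in $\Trop_G(W_i)(e^{(j)})$ evaluates to a value smaller than $\delta_{i,j} - \delta_{i,n-k}$, and that at least one monomial attains this value exactly. The delicate subcases are precisely those where $i$ is near $j$ or near $n-k$, because several monomials may a priori compete for the minimum; in the generic case the monomial giving the minimum is forced by the rectangular geometry of $\mu_j$ and of the rectangles appearing in $\LW^G_i$. Organizing the argument by the position of $i$ relative to $j$ and $n-k$, and invoking basic inequalities among $\maxdiag$-values of rectangles (in particular that adding a box to a rectangle increases $\maxdiag$ by at most one), completes the proof.
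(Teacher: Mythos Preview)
Your mutation-invariance reduction is correct and matches the paper's argument exactly: both use \cref{p:PmuiMonomial} to see that $e^{(j)}$ transforms via $\Psi_{G,G'}$, and the universal positivity of $W_i$ together with \cref{l:valandTrop}/\cref{l:piGmutation} to see that $\Trop_G(W_i)$ is compatible with $\Psi_{G,G'}$, so that $\Trop_G(W_i)(e^{(j)})$ is seed-independent.

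The difference lies in the choice of base seed. You fix $G=G_{k,n}^{\rect}$ for all $i$; there $p_{\mu_i^\square}$ is generally \emph{not} a cluster variable (for $i\neq n-k$, $\mu_i^\square$ is not a rectangle), so $\LW_i^G$ has several Laurent monomials and you must take a minimum over all of them, leading to the case analysis you describe. The paper instead picks, for each $i$ separately, a plabic graph $G$ with $\mu_i^\square\in\mathcal P_G$. In such a seed $W_i=p_{\mu_i^\square}/p_{\mu_i}$ is a single cluster Laurent monomial, so $\Trop_G(W_i)(e^{(j)})=e^{(j)}_{\mu_i^\square}-e^{(j)}_{\mu_i}$ is linear, and by \cref{t:ValuationsFormula} this equals $\maxdiag(\mu_i^\square\setminus\mu_j)-\maxdiag(\mu_i\setminus\mu_j)$, which is straightforward to evaluate to $\delta_{i,j}-\delta_{i,n-k}$. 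Your approach works but costs a nontrivial case analysis; the paper's choice of seed collapses the computation to a single difference of two $\maxdiag$ values.
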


\begin{proof}
We  check the identity for $\Trop_G(p_{\muibox}/p_{\mui})(e^{(j)})$ first in the case where $G$ is a plabic graph and $\mathcal P_G$ contains $\muibox$. Indeed, in this case the identity follows easily from the max diag formula,~\cref{t:ValuationsFormula}.
Now we can obtain any other seed from this one by a sequence of mutations. Since $e=\val_G(P_{{\mu_j}})$ mutates by the tropical $\mathcal A$-cluster mutation formula, 
see 
	\cref{p:PmuiMonomial}, 
	this implies that the quantity 
$\Trop_G(p_{\muibox}/p_{\mui})(e)$ is independent of the choice of seed $G$. Thus the identity~\eqref{e:tropfrozen} holds in general.
\end{proof}

\begin{proposition}\label{p:DeltaTrans}
Let $D=\sum r_iD_i$ and $r=\sum_j r_j$.  The Newton-Okounkov body  
$\Delta_G(D)$ is obtained from $\Delta_G(rD_{n-k})$ by translation. Explicitly, if $v_D:=-\sum_j r_j\val_G(P_{\mu_j})$, we have
\begin{equation}
\Delta_G(D)=\Delta_G(rD_{n-k})+v_D. 
\end{equation}
Note that $\Delta_G(rD_{n-k})=r \Delta_G$ if $r\ge 0$ and $\Delta_G(rD_{n-k})=\emptyset$ if $r<0$, see Remark~\ref{r:empty}.
\end{proposition}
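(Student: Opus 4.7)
The strategy is to realize the isomorphism $\mathcal{O}(sD)\cong\mathcal{O}(srD_{n-k})$ (which holds since both line bundles have degree $sr$ in $\operatorname{Pic}(\X)\cong\Z$) as multiplication by an explicit rational function on $\X$ whose $\val_G$-valuation equals $sv_D$. The translation statement for $\Delta_G(D)$ then follows immediately by passing to closed convex hulls.

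Concretely, I would introduce the rational function
\[
h_D \;:=\; \prod_{j=1}^{n}\bigl(P_{\mu_j}/P_{\max}\bigr)^{-r_j}\;\in\;\C(\X)^{*}.
\]
Since $P_{\mu_j}$ and $P_{\max}$ are both global sections of $\mathcal{O}(1)$ with zero loci $D_j$ and $D_{n-k}$ respectively, the ratio has divisor $D_j-D_{n-k}$, and therefore $\operatorname{div}(h_D)=rD_{n-k}-D$. Because $h_D$ is a product of ratios of Pl\"ucker coordinates nonvanishing on $\Xcirc$, it is regular and nonvanishing there. A direct divisor check then shows that multiplication by $h_D^{s}$ defines a $\C$-linear bijection $L_{srD_{n-k}}\xrightarrow{\sim} L_{sD}$: an element $g$ satisfies $\operatorname{div}(g)+srD_{n-k}\ge 0$ if and only if $\operatorname{div}(h_D^s g)+sD=\operatorname{div}(g)+srD_{n-k}\ge 0$, so the map is well-defined with inverse given by multiplication by $h_D^{-s}$.

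Applying the valuation $\val_G$ and using that it is a valuation,
\[
\val_G(h_D^{s}g)\;=\;s\,\val_G(h_D)+\val_G(g),
\]
where $\val_G(h_D)=-\sum_j r_j\val_G(P_{\mu_j}/P_{\max})=v_D$ by the paper's convention $\val_G(P_{\mu_j}):=\val_G(P_{\mu_j}/P_{\max})$. Hence $\val_G(L_{sD})=sv_D+\val_G(L_{srD_{n-k}})$ for every $s\ge 1$. Taking the closed convex hull of $\frac{1}{s}\val_G(L_{sD})$ over all $s\ge 1$ then produces $\Delta_G(D)=v_D+\Delta_G(rD_{n-k})$, which is the desired translation.

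For the last assertion, when $r<0$ the line bundle $\mathcal{O}(D)$ has negative degree in $\operatorname{Pic}(\X)\cong\Z$, so $L_{sD}=H^0(\X,\mathcal{O}(sD))=0$ for every $s\ge 1$ and $\Delta_G(D)=\emptyset$. No serious obstacle arises in this argument: once the translating function $h_D$ is identified, every step reduces to routine divisor bookkeeping together with the multiplicative property of $\val_G$.
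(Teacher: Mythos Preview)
Your proof is correct and is essentially the same as the paper's: both introduce the rational function $\prod_j P_{\mu_j}^{-r_j}\,P_{\max}^{r}$ (your $h_D$), observe that multiplication by its $s$-th power gives an isomorphism $L_{srD_{n-k}}\xrightarrow{\sim} L_{sD}$, and deduce the translation of valuation sets and hence of the Newton--Okounkov bodies. The only cosmetic difference is that you verify the divisor condition more explicitly and handle the $r<0$ case at the end rather than the beginning.
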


\begin{proof}
We may suppose that $r=\sum_j r_j\ge 0$. 
To show that 
$\Delta_G(D)=r \Delta_G+v_D$ 
it suffices to check that for every $s\in\Z_{>0}$,
\begin{equation}\label{e:NeedForGeneralDelta}
\frac 1s\val_G(L_{sD})=%
\frac 1 s\val_G(L_{sr})+v_D.
\end{equation}
However for any $D=\sum r_jD_j$ with $r=\sum r_j$ we have an isomorphism of vector spaces
 \[
m: L_r\to L_{D} \quad \text{ given by }\quad
f  \mapsto  f \frac{P_{\max}^{r}}{\prod_j P_{\mu_j}^{r_j}}.
\]
This isomorphism shifts valuations and gives the equality $\val_G(L_{D})=\val_G(L_{r})+v_D$. If we replace $D$ by $sD$, then the resulting equation for $\val_G(L_{sD})$ implies \eqref{e:NeedForGeneralDelta}. This proves the desired formula for $\Delta_G(D)$.  
\end{proof}

\begin{proposition}\label{p:GammaTrans}
Let $r_1,\dotsc, r_n\in\R$ and $r:=\sum_j r_j$.  Then
$\Q_G(r_1,\dots,r_n)$ is related to $\Q_G^r$ by translation, 
\begin{equation}
\Q_G(r_1,\dots,r_n)= \Q_G^r+v_D \text{ where }
v_D:=-\sum_j r_j\val_G(P_{\mu_j}).
\end{equation}
\end{proposition}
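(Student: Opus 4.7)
The plan is to exhibit the translation $v \mapsto v + v_D$ as a bijection from $\Q_G^r$ onto $\Q_G(r_1,\ldots,r_n)$. Unwrapping the definitions (\cref{l:B-modelPolytope} and \cref{def:generalQ}), this reduces to proving, for each $i$ and every $v \in \R^{\mathcal P_G}$, the identity
\[
\Trop_G(W_i)(v + v_D) \;=\; \Trop_G(W_i)(v) + (\delta_{i,n-k}\, r - r_i).
\]
Writing $v_D = -\sum_j r_j\, e^{(j)}$ with $r = \sum_j r_j$, this would follow from the ``directional affine linearity''
\[
\Trop_G(W_i)(v + t\, e^{(j)}) \;=\; \Trop_G(W_i)(v) + t(\delta_{i,j} - \delta_{i,n-k}) \qquad (\ast)
\]
for all $v, t, i, j$; summing $(\ast)$ against the weights $-r_j$ over $j$ then yields the desired identity.

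The identity $(\ast)$ strengthens \cref{l:coefficient}, which treats only the base case $v=0$, $t=1$, to the assertion that $\Trop_G(W_i)$ is genuinely affine linear along the $e^{(j)}$-direction (not merely piecewise linear). Expanding $W_i$ as a positive Laurent polynomial $\sum_k c_k m_k$ in the $\mathcal A$-cluster $\PCG$, the tropical function $\Trop_G(W_i) = \min_k \Trop_G(m_k)$ is the pointwise minimum of linear functionals, so $(\ast)$ is equivalent to the assertion that \emph{every} monomial $m_k$ in the expansion has the \emph{same} $e^{(j)}$-slope; given \cref{l:coefficient}, this common slope must equal $\delta_{i,j} - \delta_{i,n-k}$.

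The fact that every monomial in the expansion of $W_i$ shares this common slope follows from equivariance for the action of the maximal torus $T^\vee$ of $GL_n^\vee$ on $\opencheckX$. Each cluster variable $p_\nu \in \PCG$ is a $T^\vee$-weight vector with some weight $\omega_\nu$, and $W_i = p_{\mu_i^\square}/p_{\mu_i}$ is itself a $T^\vee$-weight vector, of weight $\omega_{\mu_i^\square} - \omega_{\mu_i}$. Therefore every monomial in the Laurent expansion of $W_i$ in $\PCG$ carries this same $T^\vee$-weight. Granting the existence of a rational cocharacter $\lambda_j$ of $T^\vee$ such that $\langle \omega_\nu, \lambda_j\rangle = e^{(j)}_\nu$ for all $\nu \in \mathcal P_G$, the $e^{(j)}$-slope of any Laurent monomial factors through its $T^\vee$-weight via the pairing with $\lambda_j$; in particular the common slope for the monomials of $W_i$ is $\langle \omega_{\mu_i^\square} - \omega_{\mu_i}, \lambda_j\rangle$, which by \cref{l:coefficient} equals $\delta_{i,j} - \delta_{i,n-k}$.

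The main obstacle is the existence of the cocharacter $\lambda_j$, equivalently the compatibility $\sum_\nu c_\nu e^{(j)}_\nu = 0$ whenever $\sum_\nu c_\nu \omega_\nu = 0$. I would handle this geometrically: the $T^\vee$-action on $\opencheckX$ preserves the cluster torus $\mathbb T^\vee_G$ and so induces a cocharacter map from the rationalised $T^\vee$-cocharacter lattice into $\Q^{\mathcal P_G}$. The vector $e^{(j)}$ lies in the image of this map, as one checks from the description on the frozen coordinates $p_{\mu_\ell}$: the $T^\vee$-weights $\omega_{\mu_\ell}$ span the $T^\vee$-character lattice over $\Q$ (a short computation using $J_\ell = [\ell+1, \ell+k]$), and the values $e^{(j)}_{\mu_\ell}$ there extend consistently to a unique rational cocharacter.
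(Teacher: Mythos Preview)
Your reduction to the identity $(\ast)$ is correct, and your observation that it is equivalent to all Laurent monomials of $W_i$ in the cluster $\PCG$ sharing a common pairing with $e^{(j)}$ is the right idea. However, your argument for the existence of the rational cocharacter $\lambda_j$ has a genuine gap. The claim that the frozen weights $\omega_{\mu_\ell}$ (for $\ell=0,\ldots,n-1$) span the character lattice of $T^\vee$ over $\Q$ is false whenever $\gcd(k,n)>1$: these vectors are the rows of the circulant matrix with first row $(1^k,0^{n-k})$, whose $\Q$-rank is $n-\gcd(k,n)+1$; for instance when $k=2$, $n=4$ one has the relation $\omega_{\mu_0}+\omega_{\mu_2}=\omega_{\mu_1}+\omega_{\mu_3}$. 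Moreover, even when spanning does hold, you do not explain why a cocharacter matching $e^{(j)}$ on the \emph{frozen} coordinates would automatically match it on the \emph{mutable} ones; that is a separate statement you have not addressed.

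The clean fix bypasses $T^\vee$ entirely and uses a tool already in the paper. By \cref{p:PmuiMonomial}(2) the vector $e^{(j)}=\val_G(P_{\mu_j})$ is \emph{balanced} for $Q(G)$, i.e.\ orthogonal to every mutable column of the exchange matrix $B$. Since the $\mathcal A$-seed $\check\Sigma_G^{\mathcal A}$ uses the same quiver, the assignment $p_\nu\mapsto e^{(j)}_\nu$ defines a $\Z$-grading on the cluster algebra $\C[\opencheckX]$ for which both monomials in every exchange relation have equal degree; hence every cluster variable, in particular $p_{\mu_i^\square}$, is homogeneous, and all monomials in the Laurent expansion of $W_i$ share the same $e^{(j)}$-degree. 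Combined with \cref{l:coefficient} this yields $(\ast)$ directly. This is close in spirit to the paper's own proof but organized differently: the paper first treats a seed with $\mu_i^\square\in\mathcal P_G$ (where $\Trop_G(W_i)$ is genuinely linear) and then transports the resulting translation identity to arbitrary seeds via the tropicalized mutations $\Psi_{G,G'}$, using precisely the balancedness of $v_D$ to obtain the additivity $\Psi_{G,G'}(e+v_{D,G})=\Psi_{G,G'}(e)+v_{D,G'}$. Both routes therefore rest on \cref{p:PmuiMonomial}; your torus-equivariance detour is neither needed nor, as written, complete.
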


\begin{proof}
We want to show that the map $\R^{\mathcal P_{G}}\to\R^{\mathcal P_{G}}$ which sends $v$ to $d=v+v_D$ bijectively takes $\Q_G^r$ to $\Q_G(r_1,\dotsc, r_n)$. Since $\Q_G(r_1,\dotsc, r_n)$ is by definition the intersection of the sets $\PosSet^G_{(r_i)}(W_i):=\{d\mid \Trop_G(p_\muibox/p_\mui)(d)+r_i\ge 0
\}$, it suffices to show the analogous translation property for each such set. 

Note that in general $
\Trop_G(p_\muibox/p_\mui)(d)=\min_M(\Trop_G(p_M)(d))$,
where $p_\muibox/p_\mui=\sum_M p_M$ is the expansion of $p_\muibox/p_\mui$ as sum of Laurent monomials in the cluster variables associated to $G$. In the special case where $\muibox\in\mathcal P_G$ however, $\Trop_G(p_\muibox/p_\mui)(d)=d_{\muibox}-d_{\mui}$ is linear. 

Let us assume first that $\muibox\in\mathcal P_G$. In this case by linearity we have that, for any $v\in\R^{\mathcal P_G}$, 
\begin{equation}
\Trop_G(p_{\muibox}/p_{\mui})(v+v_D)=\Trop_G(p_{\muibox}/p_{\mui})(v)+\Trop_G(p_{\muibox}/p_{\mui})(v_D)=\Trop_G(p_{\muibox}/p_{\mui})(v)-r_i + r\delta_{i,n-k},
\end{equation}
where we have evaluated $\Trop_G(p_{\muibox}/p_{\mui})(v_D)$ using Lemma~\ref{l:coefficient}. As a consequence
\begin{equation}\label{e:Qtranslation}
\Trop_G(p_{\muibox}/p_{\mui})(v+v_D)+r_i= \Trop_G(p_{\muibox}/p_{\mui})(v) + r\delta_{i,n-k}.
\end{equation}
From \eqref{e:Qtranslation} it follows that $v+v_D$ lies in 
$\PosSet^G_{(r_i)}(W_i)$ 
if and only if $v$ lies in 
$\PosSet^G_{(r\delta_{i,n-k})}(W_i)$. Thus we have that, whenever $p_{\muibox}$ is a cluster variable in the $\mathcal A$-cluster associated to $G$,
\begin{equation}\label{e:ConeiTranslate}
\PosSet^G_{(r_i)}(W_i)=\PosSet^G_{(r\delta_{i,n-k})}(W_{i})+v_D.
\end{equation}

We would like to apply a tropicalized $\mathcal A$-cluster mutation $\Psi_{G,G'}$ to both sides of \eqref{e:ConeiTranslate} to obtain the analogous identity for arbitrary seeds.  
Let us now write $v_{D,G}$ instead of $v_D$ to emphasise the dependence on $G$. Note that, since $v_{D,G}$ is a linear combination of elements of the form $\val_G(P_{\mui})$, the results of Section~\ref{s:frozen} imply that $v_{D,G}$  is {\it balanced} and transforms via tropicalized $\mathcal A$-cluster mutation if we mutate $G$. These two properties imply that for any $e\in\R^{\mathcal P_G}$, 
\begin{equation}\label{e:additive}
\Psi_{G,G'}(e+v_{D,G})= \Psi_{G,G'}(e)+v_{D,G'}.
\end{equation}
On the other hand by Lemma~\ref{l:PosSetTrop},
\begin{equation}\label{e:cone1mutation}
\Psi_{G,G'}(\PosSet^G_{(r_i)}(W_i))=\PosSet^{G'}_{(r_i)}(W_i) \quad\text{ and }\quad \Psi_{G,G'}(\PosSet^G_{(r\delta_{i,n-k})}(W_i))=\PosSet^{G'}_{(r\delta_{i,n-k})}(W_i).
\end{equation}
From \eqref{e:additive} and \eqref{e:cone1mutation} put together, we obtain that the translation identity \eqref{e:ConeiTranslate} is preserved under mutation. Thus \eqref{e:ConeiTranslate} holds for all seeds (and all $i$). 

As a consequence the polytope $\Q_G(r_1,\dotsc, r_n)$ is always the shift by $v_{D,G}$ of the polytope $\Q_G^r$.
\end{proof}

\begin{proposition}\label{p:empty}
If $r=\sum r_j<0$, then 
$\Q_G(r_1,\dotsc,r_n)$ is the empty set.
\end{proposition}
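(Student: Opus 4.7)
The plan is to reduce the claim to the one-parameter polytope $\Q_G^r$, then to a single seed, and finally to a direct check of the inequalities of the Gelfand--Tsetlin polytope.

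First I would apply \cref{p:GammaTrans}, which has just been proved without any sign restriction on $r = \sum_j r_j$. It gives
\[
\Q_G(r_1,\dots,r_n) = \Q_G^r + v_{D,G},
\]
so $\Q_G(r_1,\dots,r_n)$ is empty if and only if $\Q_G^r$ is empty. Thus it suffices to prove that $\Q_G^r = \emptyset$ whenever $r<0$, for every $\mathcal A$-cluster seed $G$ of type $\pi_{k,n}$.

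Next I would reduce the statement to a single choice of seed. Recall that $\Q_G^r = \Q_G(0,\dots,0,r,0,\dots,0)$ (with the $r$ in position $n-k$), and by \cref{c:PolytopeMutation} the tropicalized $\mathcal A$-cluster mutation $\Psi_{G,G'}$ restricts to a bijection
\[
\Psi_{G,G'} : \Q_G^r \longrightarrow \Q_{G'}^r
\]
for any pair of seeds related by a single mutation. Since all $\mathcal A$-seeds of type $\pi_{k,n}$ are connected by mutation, it therefore suffices to exhibit one seed $G$ for which $\Q_G^r$ is empty when $r<0$.

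I would then use $G = G^{\rect}_{k,n}$. By \cref{lem:integralGT}, the unimodular change of variables $F$ identifies $\Q^r_{G^{\rect}_{k,n}}$ with the Gelfand--Tsetlin polytope $GT_{r\omega_{n-k}}$ defined by \eqref{Eq3}--\eqref{Eq2}. Chaining the inequalities
\[
0 \le f_{1\times 1} \le f_{2\times 1} \le \dots \le f_{(n-k)\times 1} \le f_{(n-k)\times 2} \le \dots \le f_{(n-k)\times k} \le r
\]
forces $r \ge 0$. Hence for $r<0$ the Gelfand--Tsetlin polytope, and thus $\Q^r_{G^{\rect}_{k,n}}$, is empty. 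Combined with the two reduction steps above, this proves the proposition.

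There is no substantive obstacle: all the heavy lifting (the translation formula, the mutation-equivariance of the polytopes, and the Gelfand--Tsetlin identification on the rectangles cluster) has already been carried out earlier in the section. The proof is essentially a three-line assembly of those ingredients.
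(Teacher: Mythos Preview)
Your proof is correct and follows essentially the same route as the paper: reduce via the translation formula of \cref{p:GammaTrans} to $\Q_G^r$, use \cref{c:PolytopeMutation} to pass between seeds, and check emptiness directly on the rectangles seed via the Gelfand--Tsetlin identification of \cref{lem:integralGT}. The only difference is that you spell out the chain of inequalities $0 \le f_{1\times 1} \le \dots \le f_{(n-k)\times k} \le r$ explicitly, whereas the paper simply asserts that $GT_{r\omega_{n-k}}$ is ``clearly empty'' for $r<0$.
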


\begin{proof}
	By  \cref{p:GammaTrans}, $\Q_G(r_1,..,r_n)$ is related to $\Q_G^r$ by a translation. Therefore it suffices to check that $\Q_G^r$ is the empty set for $r<0$. In the case where $G$ is the rectangles cluster, $\Q_G^r$ is isomorphic via a unimodular transformation to the Gelfand-Tsetlin polytope (see Definition~\ref{d:GT}), which is clearly empty if $r<0$, and a point if $r=0$.  Now we know from \cref{c:PolytopeMutation} that the polytopes $\Q^r_G$ transform via tropicalized $\mathcal{A}$-cluster mutation when we mutate $G$. Therefore $\Q^r_G$ is also the empty set for a general seed.
\end{proof}

\begin{proof}[Proof of \cref{t:genD}]
If $r<0$ the result follows from \cref{r:empty} and  \cref{p:empty}.
Now suppose $r\ge 0$. By \cref{thm:main}, we have that $\Delta_G=\Delta_G(D_{n-k})$ 
and $\Q_G$ coincide, which implies that $r\Delta_G = \Q_G^r$, see Remark~\ref{rem:dilation}.  But now by 
\cref{p:DeltaTrans} and 
\cref{p:GammaTrans}, both $\Delta_G(D)$ and 
$\Q_G(r_1,\dotsc,r_n)$ are obtained from $r\Delta_G$ and $\Q^r_G$ by translation by the same vector.
\end{proof}

\section{The highest degree valuation and Pl\"ucker coordinate valuations}\label{sec:highvaluation}

Recall from \cref{de:val} that given an $\mathcal{X}$-seed $G$ of type 
$\pi_{k,n}$, we defined a valuation $\val_G: \C(\mathbb{X})\setminus \{0\}
\to \Z^{\mathcal{P}_G}$ using the lowest order term.
When $G$ is a plabic graph, the flow polynomials $P_{\lambda}^G$
express the Pl\"ucker coordinates in terms of the $\mathcal{X}$-coordinates,
and have strongly minimal and maximal terms, see \cref{prop:strongminimal}.
In \cref{t:ValuationsFormula}, we gave an explicit formula 
for the Pl\"ucker coordinate valuations $\val_G(P_{\lambda})$, such that 
the $\mu$-th coordinate $\val_G(P_{\lambda})_{\mu}$ 
is related to the smallest degree of $q$ that appears when the quantum product 
of two Schubert classes $\sigma_{\mu} \star \sigma_{\lambda^c}$ is expanded
in the Schubert basis.

In this section we briefly explain what is the analogue of \cref{t:ValuationsFormula}
if we define our valuation in terms of the highest order term instead of the lowest order
term.  We will find that our formula is again connected to quantum cohomology,
but this time to the \emph{highest} degree of $q$ that appears in a corresponding
quantum product.  In order to state our formula we first need to introduce some
notation.

\begin{definition}
Let $\mu$ be a partition in $\mathcal{P}_{k,n}$, so $\mu$ lies in an
$(n-k) \times k$ rectangle.  We let $\diag_0(\mu)$ denote the number
of boxes in $\mu$ along the main diagonal (with slope $-1$).  

Let us identify $\mu$ with the word $\omega(\mu)=(\oomega_1,\dots,\oomega_n)$ 
in $\{0,1\}^n$ obtained by reading the 
border of $\mu$ from southwest to northeast and associating a $0$ to each
horizontal step and a $1$ to each vertical step.  
Then the cyclic shift $S$ acts on partitions in $\mathcal{P}_{k,n}$ by mapping 
the partition corresponding to $(\oomega_1,\dots,\oomega_n)$ to the
partition corresponding to $(\oomega_2,\dots,\oomega_n,\oomega_1)$.
\end{definition}

\begin{example}
Let $\mu = \ydiagram{6,4,4,2}$, viewed inside a $4 \times 6$
rectangle.  Then 
$\omega(\mu) = (0,0,1,0,0,1,1,0,0,1)$, and  $\diag_0(\mu) = 3$.
Applying the cyclic shift to $\omega(\mu)$ gives 
$ (0,1,0,0,1,1,0,0,1,0)$, and hence
$S(\mu) = \ydiagram{5,3,3,1}$.
\end{example}

For partitions in $\mathcal{P}_{k,n}$,
$S^{n-k}(\emptyset) = S^{n-k}(1^{n-k} 0^k) = 0^k 1^{n-k} = \max$,
where $\max$ is the  $(n-k) \times k$ rectangle.

\begin{theorem}\label{t2:ValuationsFormula}
Let $G$ be a reduced plabic graph of type $\pi_{k,n}$.
Let $\val^G(P_{\lambda}) \in \Z^{\mathcal{P}_G}$ denote the exponent 
vector of the strongly {\em maximal} term 
of the flow polynomial $P_{\lambda}^G$. 
Then we have that 
\begin{equation}\label{val^G}
\val^G(P_{\lambda})_{\mu} = \diag_0(\mu) - \maxdiag(\lambda \setminus S^{n-k}(\mu)).
\end{equation}
\end{theorem}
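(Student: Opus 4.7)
The plan is to parallel the proof of \cref{t:ValuationsFormula}, replacing strongly minimal flows and the ``min'' tropical operation with strongly maximal flows and the ``max'' operation throughout. The argument consists of three steps: (i) derive a mutation rule for $\val^G(P_\lambda)$ under square moves of $G$; (ii) verify the closed-form formula in the base case $G = G_{k,n}^{\rect}$; and (iii) show that both sides of the desired equation transform identically under mutation, so that (ii) forces the formula for all $G$.

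For step (i), I would replay the proof of \cref{thm1:tropcluster}, tracking the strongly maximal term of each flow polynomial in place of the strongly minimal one. By \cref{cor:minmax} the matching lattice has a unique top element, so strongly maximal flows are well-defined and behave under oriented square moves dually to strongly minimal ones. A case analysis formally dual to the one in \cref{check-oriented-square}, with ``flips down'' in place of ``flips up'', yields the max-tropicalized mutation formula
\[
V_{\nu'_1} = \max(V_{\nu_2}+V_{\nu_4},\, V_{\nu_3}+V_{\nu_5}) - V_{\nu_1}.
\]

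For step (ii), I would analyze $G = G_{k,n}^{\rect}$ with the standard acyclic perfect orientation of \cref{rectangles-oriented}. Dually to \cref{p:rect-val}, the strongly maximal flow for $P_\lambda$ consists of paths that hug the \emph{northwest} boundary of the grid; its $(i\times j)$-exponent equals the number of such paths lying above the box labeled $i\times j$. To match this count with $\diag_0(i\times j) - \maxdiag(\lambda \setminus S^{n-k}(i\times j))$, I would use the cyclic symmetry of the boundary vertices induced by $S$, which recasts the northwest-hugging maximal flow for $P_\lambda$ as a southeast-hugging minimal flow for the cyclically shifted partition $S^{n-k}(\lambda)$ in a cyclically rotated picture; this reduces the count to \cref{p:rect-val} up to the correction term $\diag_0(i\times j)$ coming from the cyclic rotation of the source set.

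For step (iii), one can either construct a dual analog $x^\lambda(t) \in \opencheckX(\mathbf{K}_{>0})$ of the point $x_\lambda(t)$ from \cref{t:matrix} whose Pl\"ucker coordinates realize the right-hand side through appropriately dualized highest-order-term valuations and appeal to a max-version of \cref{l:piGmutation} (which follows from the original via the sign change $v \mapsto -v$), or one can verify directly that the closed-form expression $\diag_0(\mu) - \maxdiag(\lambda \setminus S^{n-k}(\mu))$ satisfies the three-term Pl\"ucker relation tropicalized using max on the five faces involved in a square move. Combining with the connectedness of plabic graphs under moves (\cref{rem:moves}) then finishes the argument. The main obstacle is step (ii): the cyclic correction term $\diag_0(\mu)$ is absent in the minimal case, and the key new input is a careful combinatorial identification of the strongly maximal flow in the rectangles graph as a cyclic-shift-twisted mirror of the strongly minimal flow.
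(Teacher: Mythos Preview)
Your proposal is correct and follows essentially the same three-step outline as the paper's own (sketched) proof: a max-version of \cref{thm1:tropcluster}, a direct verification in the rectangles chart, and a tropical-point argument to propagate the formula under mutation. The paper commits to your first alternative in step~(iii), constructing an explicit $x^{\lambda}(t)$ over generalized Puiseux series in $t^{-1}$ so that the highest-order-term valuation $\ValKhigh$ realizes the right-hand side; your observation that the max-version of \cref{l:piGmutation} follows from the min-version via $v\mapsto -v$ is exactly the content of passing from series in $t$ to series in $t^{-1}$.
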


Note that by \cite[Theorem 8.1]{PostnikovDuke}, the right-hand side of \eqref{val^G}
is equal to the
largest degree $d$ such that $q^d$ appears in the quantum product of the Schubert
classes $\sigma_{\mu}\star \sigma_{\lambda^c}$ in the quantum cohomology ring  $QH^*(Gr_k(\C^n))$, when this
product is expanded in the Schubert basis.

We now sketch the proof of 
\cref{t2:ValuationsFormula}, which is analogous to the proof of 
\cref{t:ValuationsFormula}.
\begin{proof}
Recall from 
 \cref{prop:strongminimal} that each flow polynomial 
$P_{\lambda} = P_{\lambda}^G$ has a maximal flow; its exponent vector 
is precisely $\val^G(P_{\lambda})$.  Now, following the proof of 
\cref{thm1:tropcluster}, we show that when we mutate $G$ at a square face,
obtaining $G'$, for any $\lambda$, the Pl\"ucker coordinate valuations
$\val^G(P_{\lambda})$ and $\val^{G'}(P_{\lambda})$ are related by 
	the tropicalized $\mathcal{A}$-cluster mutation $\Psi^{G,G'}$.  Here
$\Psi^{G,G'}$ is defined the same way as $\Psi_{G,G'}$ except that we
replace $\min$ by $\max$.  As in the proof of \cref{thm1:tropcluster},
the main step is to analyze how strongly maximal flows
change under an oriented square move.

Next, we prove an analogue of \cref{p:rect-val}, which gives
the formula for Pl\"ucker coordinate valuations when 
$G = G^{\rect}_{k,n}$.  Concretely, one can give a combinatorial proof that 
\begin{equation*}
\val^G(P_{\lambda})_{i \times j} = \diag_0(i \times j) - 
\maxdiag (\lambda \setminus S^{n-k}(i \times j)).
\end{equation*}
To complete the proof, we follow the proof of \cref{t:ValuationsFormula}
and in particular \cref{t:matrix}, explicitly constructing an 
element $x^{\lambda}(t)$ 
of the Grassmannian over Laurent series, such that 
$$\ValKhigh(p_{\mu}(x^{\lambda}(t)))  = 
\diag_0(\mu) - \maxdiag (\lambda \setminus S^{n-k}(\mu)).$$
But now we have to work with Laurent series (or generalized
Puiseux series) in $t\inv$, that is, series in $t$ whose terms are bounded from above
so that there always exists a maximal exponent.  Then 
$\ValKhigh(h(t))$ records the maximal exponent which occurs among the terms
of $h(t)$.
\end{proof}

\bibliographystyle{alpha}
\bibliography{bibliography}

\end{document}